\newtheorem{theorem}{Theorem}[section]
\newtheorem{corollary}[theorem]{Corollary}
\newtheorem{conjecture}[theorem]{Conjecture}
\newtheorem{lemma}[theorem]{Lemma}
\newtheorem{proposition}[theorem]{Proposition}
\newtheorem{definition}[theorem]{Definition}
\title[Topological entropy of quadratic polynomials]{Topological entropy of quadratic polynomials and 
dimension of sections of the Mandelbrot set}
\author{Giulio Tiozzo}
\begin{document}

\begin{abstract}

Let $c$ be a real parameter in the Mandelbrot set, and $f_c(z):= z^2 + c$. 
We prove a formula relating the topological entropy of $f_c$ 
to the Hausdorff dimension of the set of rays landing on the real Julia set 
$J(f_c) \cap \mathbb{R}$,
and to the Hausdorff dimension of the set of rays landing on the real 
section of the Mandelbrot set, to the right of the given parameter $c$.
We then generalize the result by looking at the entropy of Hubbard trees: namely,
we relate the Hausdorff dimension of the set of external angles which land on a certain 
slice of a principal vein in the Mandelbrot set to the topological entropy of 
the quadratic polynomial $f_c$ restricted to its Hubbard tree.
\end{abstract}

\maketitle

\tableofcontents

\section{Introduction} 

Let us consider the family of quadratic polynomials 
$$f_c(z) := z^2 + c, \quad \textup{with }c \in \mathbb{C}.$$
The \emph{filled Julia set} $K(f_c)$ of a quadratic polynomial $f_c$
is the set of points which do not escape to infinity under iteration, and the \emph{Julia set}
$J(f_c)$ is the boundary of $K(f_c)$.
The \emph{Mandelbrot set} $\mathcal{M}$ is the connectedness locus of the quadratic family, i.e.
$$\mathcal{M} := \{ c\in \mathbb{C} \ : \ \textup{ the Julia set of }f_c \textup{ is connected} \}.$$

A fundamental theme in the study of parameter spaces in holomorphic dynamics is that 
the local geometry of the Mandelbrot set near a parameter $c$ reflects the geometry
of the Julia set $J(f_c)$, hence it is related to dynamical properties of $f_c$. In this paper 
we will establish an instance of this principle, by looking at the Hausdorff dimension of certain 
sets of external rays.

Recall that a measure of the complexity of a continuous map is its \emph{topological entropy}, which
is essentially defined as the growth rate of the number of itineraries under iteration
 (see section \ref{section:htop}). 

In our case, the map $f_c(z) = z^2 + c$ is a degree-two ramified cover of the Riemann sphere $\hat{\mathbb{C}}$, 
hence a generic point has exactly $2$ preimages, and the topological entropy of $f_c$ always equals $\log 2$, 
independently of the parameter \cite{Ly}.
If $c$ is real, however, then $f_c$ can also be seen as a real interval map, and its restriction to the real line
 also has a well-defined topological entropy, which we will 
denote by $h_{top}(f_c, \mathbb{R})$.  The dependence of $h_{top}(f_c, \mathbb{R})$ on $c$ is much more interesting: 
indeed, it is a continuous, decreasing function of $c$ \cite{MT}, 
and it is constant on baby Mandelbrot sets \cite{Do} (see Figure \ref{MTentropy}).


\begin{figure}[ht!]
\centering
\fbox{\includegraphics[width=0.75 \textwidth]{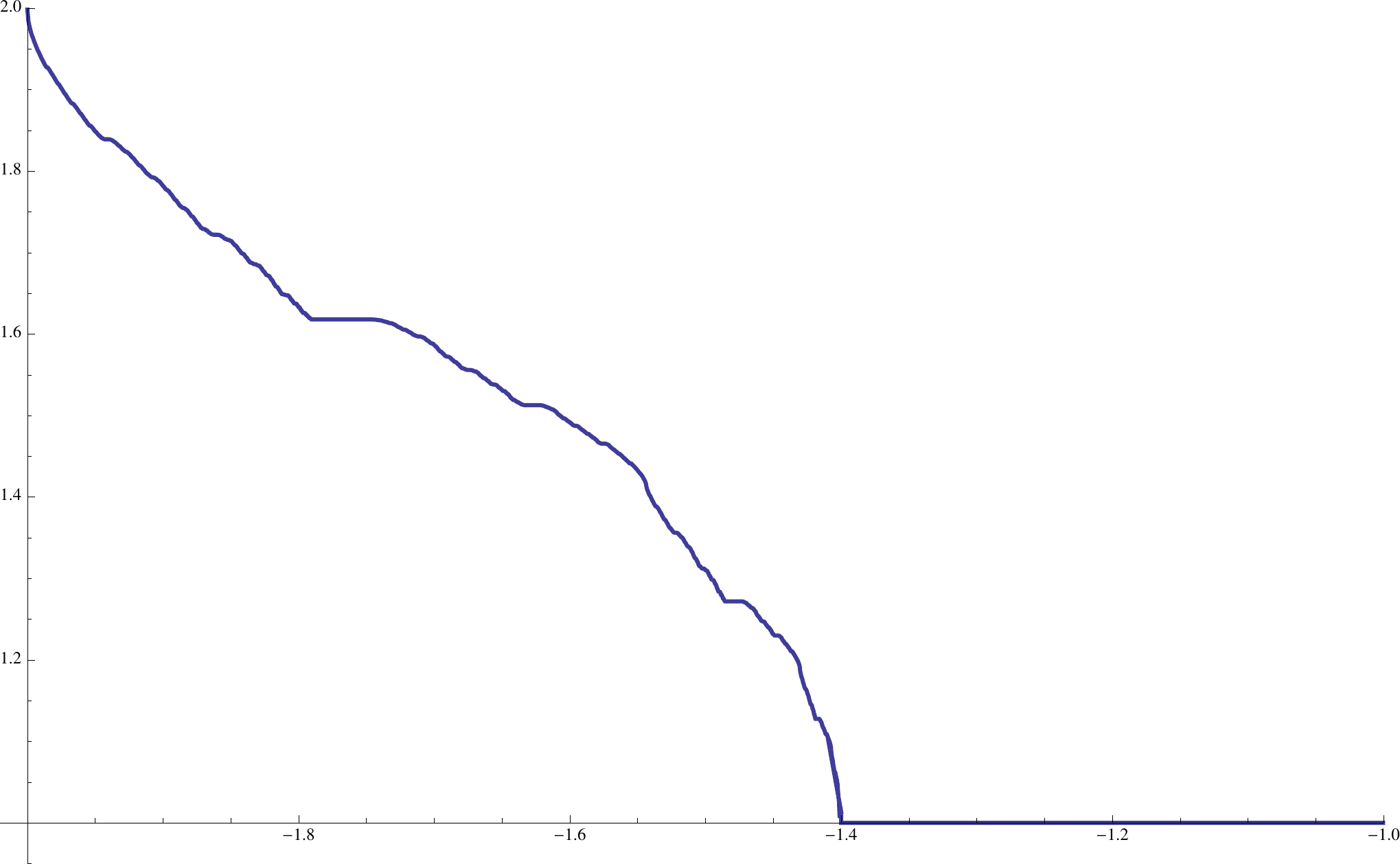}} 
\caption{Topological entropy of the real quadratic family $f_c(z) := z^2 + c$, as a 
function of $c$. For each value of $c \in [-2, -1]$, we plot the growth number
$e^{h_{top}(f_c)}$. }
\label{MTentropy}
\end{figure}

The Riemann map $\Phi_M : \hat{\mathbb{C}} \setminus \overline{\mathbb{D}} \to \hat{\mathbb{C}} \setminus \mathcal{M}$ 
uniformizes the exterior of the Mandelbrot set, and images of radial arcs are called \emph{external rays}. Each 
angle $\theta \in \mathbb{R}/\mathbb{Z}$ determines the external ray $R_M(\theta) := \Phi_M(\rho e^{2\pi i \theta})_{\{\rho > 1\}}$, 
which is said to \emph{land} if the limit as $\rho \to 1^+$ exists.

Given a subset $A$ of $\partial{\mathcal{M}}$, 
one can define the \emph{harmonic measure} $\nu_M$ as the probability that a random ray from infinity lands on $A$:
$$\nu_M(A) := \textup{Leb}(\{ \theta \in S^1 \ :  \ R_M(\theta) \textup{ lands on } A \}).$$
If one takes $A := \partial \mathcal{M} \cap \mathbb{R}$ to be the real slice of the boundary of $\mathcal{M}$, then the harmonic 
measure of $A$ is zero. 
However, the set of rays which land on the real axis has full Hausdorff dimension \cite{Za}. 
(By comparison, the set of rays which land on the main cardioid has zero Hausdorff dimension.)
As a consequence, it is more useful to look at Hausdorff dimension than harmonic measure; 
for each $c$, let us consider the section
$$P_c := \{ \theta \in S^1 :  R_M(\theta) \textup{ lands on }\partial{\mathcal{M}} \cap [c, 1/4] \}$$
of all parameter rays which 
land on the real axis, to the right of $c$. The function 
$$c \mapsto \textup{H.dim }P_c$$
increases from $0$ to $1$ as $c$ moves towards the tip of $\mathcal{M}$, 
reflecting the increased ``hairiness'' near the tip.
In the dynamical plane, one can consider the set of rays which land on the real slice of $J(f_c)$, 
and let $S_c$ be the set of external angles of rays landing on $J(f_c) \cap \mathbb{R}$. 
This way, we construct the function $c \mapsto \textup{H.dim }S_c$, which we want to compare to the 
Hausdorff dimension of $P_c$.

\begin{figure}
\centering
\fbox{\includegraphics[scale=0.2]{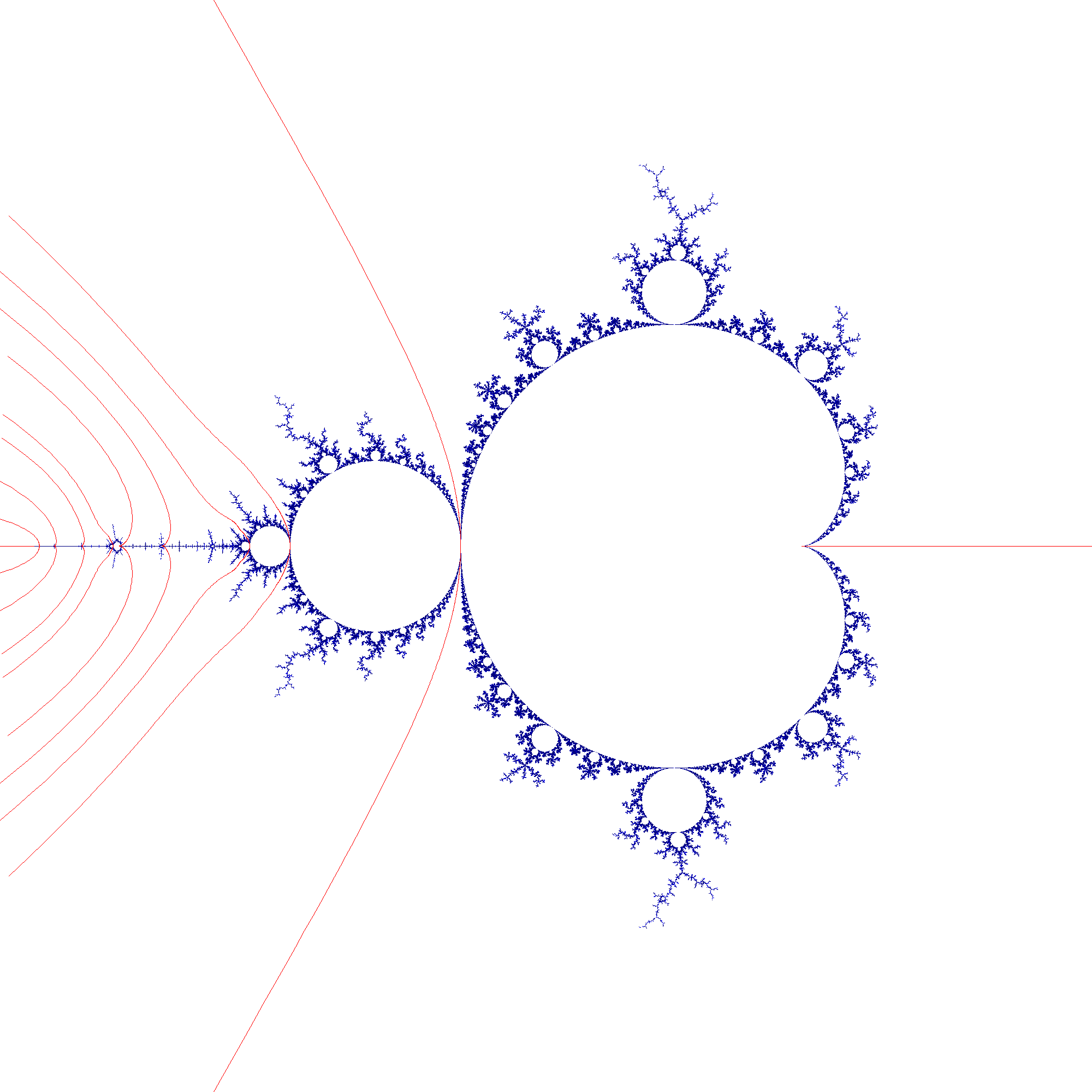}}
\caption{A few rays which land on the real slice of the Mandelbrot set.}
\end{figure}

The main result is the following identity:

\begin{theorem} \label{entropy_fmla} \label{realeq}
Let $c \in [-2, 1/4]$. Then we have
$$\frac{h_{top}(f_c,\mathbb{R})}{\log 2} = \textup{H.dim }S_c = \textup{H.dim }P_c.$$
\end{theorem}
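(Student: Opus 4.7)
The plan is to decompose Theorem~\ref{realeq} into two identities, $h_{top}(f_c,\mathbb{R})/\log 2 = \textup{H.dim}\,S_c$ and $\textup{H.dim}\,S_c = \textup{H.dim}\,P_c$, and to prove each by reducing it to the same symbolic model. For the first identity I would code the real dynamics by the partition given by the critical point $0$: on its invariant real interval (bounded by the $\beta$-fixed point) $f_c$ is a unimodal map, and points of $J(f_c)\cap\mathbb{R}$ are represented by sequences in a subshift $\Sigma_c\subseteq\{0,1\}^{\mathbb{N}}$ carved out by the kneading sequence of $c$, whose topological entropy equals $h_{top}(f_c,\mathbb{R})$ by Milnor--Thurston. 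On the angle side, the binary-expansion map $\pi:S^1\to\{0,1\}^{\mathbb{N}}$ semiconjugates the doubling map with the shift, and combining the Böttcher coordinate with the $z\mapsto\bar z$ symmetry one verifies that $\theta\in S_c$ iff $\pi(\theta)\in\Sigma_c$, up to the reflection $\theta\mapsto 1-\theta$. A Bowen/Furstenberg-type Hausdorff dimension formula for closed shift-invariant subsets, $\textup{H.dim}\,\pi^{-1}(\Sigma)=h_{top}(\Sigma)/\log 2$, then gives the first identity.

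For the parameter identity, I would invoke the Douady--Hubbard correspondence: at a postcritically finite real parameter $c'$, the parameter ray $R_M(\theta)$ lands at $c'$ precisely when the dynamical ray $R_{c'}(\theta)$ lands at the critical value $c'$. Since the topological entropy (equivalently, the kneading sequence) varies monotonically with $c$ along the real slice of $\mathcal{M}$, the angles of parameter rays landing on $[c,1/4]\cap\partial\mathcal{M}$ are those whose binary expansions are admissible for some $f_{c'}$ with $c'\ge c$; as admissibility becomes less restrictive when $c$ decreases, this collection coincides with $\Sigma_c$ up to countably many endpoint angles. Thus $P_c$ and $S_c$ both agree with $\pi^{-1}(\Sigma_c)$ modulo a set of Hausdorff dimension zero, and so they share the same Hausdorff dimension.

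The main obstacle, in my view, is the parameter identity: proving that every kneading-admissible binary sequence is actually realized as the external angle of some real parameter in $[c,1/4]\cap\partial\mathcal{M}$ demands a density/semicontinuity argument, exploiting that postcritically finite parameters are dense in the real slice of $\partial\mathcal{M}$ and that the parametrization $c'\mapsto\theta_{c'}$ is upper-semicontinuous in the appropriate sense. A secondary technical point is that for non-hyperbolic parameters some external rays may fail to land, so $S_c$ as defined is not \emph{a priori} describable by binary expansions alone; my plan is to sidestep this by taking $\pi^{-1}(\Sigma_c)$ as the combinatorial model throughout and proving separately that it agrees with $S_c$ on a full-Hausdorff-dimension subset.
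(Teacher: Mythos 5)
The first identity in your proposal ($h_{top}(f_c,\mathbb{R})/\log 2 = \textup{H.dim}\,S_c$) is sound in outline and matches the paper's route: code the dynamics symbolically, relate entropy of the Julia-set dynamics to entropy of the doubling map on a closed invariant set of angles (the paper uses Carath\'eodory semiconjugacy, bounded fibers, and Furstenberg's formula, i.e.\ Propositions~\ref{bdeg} and \ref{dim_entro}, together with Proposition~\ref{entronoFatou} to pass from $\mathbb{R}$ to the Hubbard tree and Proposition~\ref{biaccentro} to pass from $H_c$ to $S_c$). The details you hand-wave are exactly the ones the paper fills in, but your plan is viable there.

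The second identity is where your proposal breaks down, and the error is not a technical gap but a wrong identification of the set $P_c$. You assert that the angles of parameter rays landing on $\partial\mathcal{M}\cap[c,1/4]$ are ``those whose binary expansions are admissible for some $f_{c'}$ with $c'\ge c$,'' and that this collection ``coincides with $\Sigma_c$ up to countably many endpoint angles.'' This is false. Being a parameter angle is a strictly stronger condition than being dynamically admissible: in the paper's tent-map language, $S_c$ is the set of $\theta$ with $T^n(\ell(\theta))\le \ell_c$ for all $n\ge 1$ (a subshift of finite type, bounded above by a \emph{fixed} threshold $\ell_c$), whereas $\mathcal{R}$ (and hence $P_c = \mathcal{R}\cap[1-\theta_c,\theta_c]$) is the set of $\theta$ with $T^n(\ell(\theta))\le\ell(\theta)$ for all $n\ge 0$, i.e.\ the orbit is bounded above by its \emph{own initial value} (Propositions~\ref{symb_spine} and \ref{symb_Pc}). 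The latter is a genuinely thinner, non-Markovian set. The difference $S_c\setminus P_c$ is uncountable and in general of full dimension inside $S_c$; for instance for the airplane parameter $S_c$ is the golden-mean subshift of Hausdorff dimension $\log_2\frac{\sqrt 5+1}{2}$, and the paper explicitly points out that $P_c$ has no such clean description. If your identification were true modulo countable sets, Theorem~\ref{equaldim} would be trivial; instead it is, as the paper says, ``the bulk of the argument.''

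Consequently the density/semicontinuity step you propose cannot close the gap: density of postcritically finite real parameters gives you plenty of points in $P_c$, but says nothing about the \emph{Hausdorff dimension} of $P_c$ from below. What the paper actually does is build, for each non-renormalizable $c$ and each $c'>c$, an explicit piecewise-linear injection $F:H_{c'}\hookrightarrow P_c$ (Proposition~\ref{embedding1}); the construction hinges on the combinatorial notion of \emph{dominant} strings and on the nontrivial fact that dominant parameters are dense among non-renormalizable ones (Proposition~\ref{densitydominant}). Renormalizable parameters are then handled by the tuning formulas of Proposition~\ref{tuneddim}, continuity of entropy, and a separate analysis at the Feigenbaum point. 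None of this machinery appears in your proposal, and without something playing its role the dimension lower bound $\textup{H.dim}\,P_c\ge\textup{H.dim}\,S_c$ does not follow.
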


The first equality establishes a relation between entropy, Hausdorff dimension and the Lyapunov 
exponent of the doubling map (in the spirit of the ``entropy formulas'' \cite{Ma}, \cite{Yo}, \cite{LeYo}), 
while the second equality can be seen as an instance of Douady's principle relating the local geometry of the Mandelbrot set
to the geometry of the corresponding Julia set. Indeed, we can replace $P_c$ with the set of angles of rays 
landing on $[c, c+\epsilon]$ in parameter space, as long as $[c, c+\epsilon]$ does not lie in a tuned copy of the Mandelbrot 
set. Note that the set of rays which possibly do not land has zero capacity, hence the result is independent of the MLC conjecture.

A first study of the dimension of the set of angles of rays landing on the real axis has been done in \cite{Za}, 
where it is proven that the set of angles of parameter rays landing on the real slice of $\mathcal{M}$ has dimension $1$. 
Zakeri also provides estimates on the dimension along the real axis, and specifically asks for dimension bounds  
for parameters near the Feigenbaum point ($-1.75 \leq c \leq c_{Feig}$, see \cite{Za}, Remark 6.9). 
Our result gives an identity rather than an estimate, and the dimension of $S_c$ can be exactly computed 
in the case $c$ is postcritically finite (see following examples).

Recall the dimension of $S_c$ also equals the dimension of the set $B_c$ of angles landing at biaccessible points
(Proposition \ref{biaccentro}). 
Smirnov \cite{Sm} first showed that such set has positive Hausdorff dimension for Collet-Eckmann maps. More recent 
work on biaccessible points is due, among others, to Zakeri \cite{Za2} and Zdunik \cite{Zd}.
The first equality in Theorem \ref{entropy_fmla} has also been established independently by Bruin-Schleicher \cite{BS}.

A precise statement of the asymptotic similarity between $\mathcal{M}$ and Julia sets near Misiurewicz points is proven in \cite{TanL}.

\vskip 0.5cm

\noindent \textbf{Examples}

\begin{enumerate}
\item If $c = 0$, then $f^n_c(z) = z^{2^n}$ has only one lap for each $n$, hence the entropy is zero. 
Moreover, the characteristic ray is $\theta = 0$, hence $P_c$ consists of only one element and it has zero dimension.
Moreover, the Julia set is a circle and the set of rays landing on the real axis $S_c = \{0, \frac{1}{2}\}$ consists of 
two elements, hence the dimension is $0$.

\item If $c = -2$, then $f_c$ is a 2-1 surjective map from $[-2, 2]$ to itself, hence the entropy is $\log 2$. 
The Julia set is a real segment, hence all rays land on the real axis and the Hausdorff dimension of $S_c$ is $1$. 
Similarly, the set of rays $P_c$ is the set of all parameter rays which land on the real axis, which has Hausdorff dimension $1$.

\item The \emph{basilica map} $f_c(z) = z^2 -1$ has a superattracting cycle of period $2$, and for each $n$, $f_c^n$ has $2n+1$ critical 
points, hence the entropy is $\lim_{n \to \infty} \frac{\log (2n+1)}{n} = 0$.
The angles of rays landing on the Hubbard tree are $\theta = \frac{1}{3}, \frac{2}{3}$, 
and the set of rays landing on the real Julia set is countable, hence it has dimension $0$. 
In parameter space, the only rays which land on the real axis to the right of $c = -1$ are $\theta = 0, 1/3, 2/3$, hence their dimension 
is still zero.

\item The \emph{airplane map} has a superattracting cycle of period $3$, and its characteristic angle is $\theta_c = \frac{3}{7}$. 
The set of angles whose rays land on the Hubbard tree is the set of binary numbers with expansion which does not contain any sequence of 
three consecutive equal symbols. It is a Cantor set which can be generated by the automaton in Figure \ref{automa}, 
and its Hausdorff dimension is $\log_2 \frac{\sqrt{5}+1}{2}$.

\begin{figure}[ht]
\centering
\begin{minipage}{0.45\textwidth}
\fbox{
\includegraphics[scale=1.8]{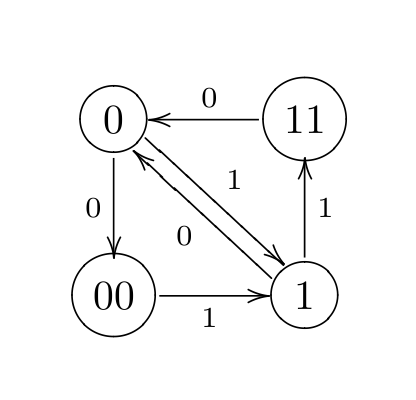}
}
\end{minipage}
\hfill
\begin{minipage}{0.45\textwidth}
\fbox{\includegraphics[scale=0.45]{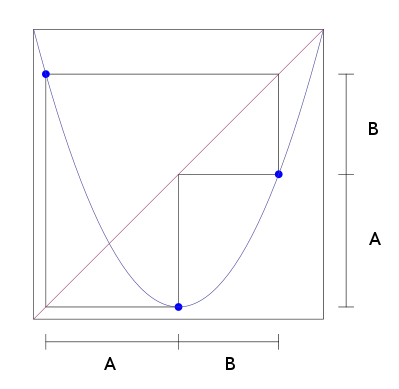}}
\end{minipage}
\caption{To the right: the combinatorics of the airplane map of period $3$. To the left: the automaton which 
produces all symbolic orbits of points on the real slice of the Julia set.}\label{automa}
\end{figure}

On the other hand, the topological dynamics of the real map is encoded by the right-hand side diagram: the interval $A$ is mapped onto 
$A \cup B$, and $B$ is mapped onto $A$. Then the number of laps of $f_c^n$ is given by the Fibonacci numbers, hence the topological 
entropy is the logarithm of the golden mean. It is harder to characterize explicitly the set of parameter rays which land on 
the boundary of $\mathcal{M}$ to the right of the characteristic ray: however, as a consequence of Theorem \ref{entropy_fmla}, the dimension of
 such set is also $\log_2 \frac{\sqrt{5}+1}{2}$.

\end{enumerate}

A more complicated example is the \emph{Feigenbaum point} $c_{Feig}$, the accumulation point of the period doubling cascades.
As a corollary of Theorem \ref{entropy_fmla}, we are able to answer a question of Zakeri (\cite{Za}, Remark 6.9):

\begin{corollary} The set of biaccessible angles for the Feigenbaum parameter $c_{Feig}$ has dimension zero:
$$\textup{H.dim }B_{c_{Feig}} = 0.$$
\end{corollary}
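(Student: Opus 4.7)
The plan is to reduce the corollary to the vanishing of the real topological entropy at the Feigenbaum parameter, and then invoke Theorem \ref{entropy_fmla}. Combining Proposition \ref{biaccentro}, which gives $\textup{H.dim } B_c = \textup{H.dim } S_c$ for every real $c \in [-2, 1/4]$, with Theorem \ref{entropy_fmla}, I would rewrite the quantity of interest as
$$\textup{H.dim } B_{c_{Feig}} = \frac{h_{top}(f_{c_{Feig}}, \mathbb{R})}{\log 2},$$
so it suffices to show that $h_{top}(f_{c_{Feig}}, \mathbb{R}) = 0$.

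For this, I would exploit that $c_{Feig}$ is the limit from the right of the period-doubling parameters $c_n$, each of which lies in a baby Mandelbrot set of period $2^n$ obtained by iteratively tuning the period-two component. On each such real baby copy the entropy is constant by Douady's theorem \cite{Do}. At the center $c_n$ itself the map carries a superattracting cycle of period $2^n$, and as in the basilica example of the introduction, the number of laps of $f_{c_n}^k$ grows only polynomially in $k$, so the entropy vanishes. Hence $h_{top}(f_c, \mathbb{R}) = 0$ on an entire sequence of real intervals accumulating on $c_{Feig}$ from the right. Since $c \mapsto h_{top}(f_c, \mathbb{R})$ is continuous by Milnor-Thurston \cite{MT}, the value at $c_{Feig}$ must also be zero.

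Substituting back yields $\textup{H.dim } B_{c_{Feig}} = 0$, as required. All the substance of the argument is absorbed into Theorem \ref{entropy_fmla}; the vanishing of the Feigenbaum entropy is classical, so there is no genuine obstacle to overcome once the main theorem is available. If one wanted to avoid appealing to \cite{Do}, one could instead directly invoke continuity and monotonicity of $h_{top}(f_c, \mathbb{R})$ together with the fact that $h_{top}(f_{c_n}, \mathbb{R}) = 0$ at the centers themselves.
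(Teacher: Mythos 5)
Your proof is correct, but it takes a genuinely different route from the paper. You reduce the statement, via Theorem~\ref{entropy_fmla} and Proposition~\ref{biaccentro}, to the vanishing of $h_{top}(f_{c_{Feig}},\mathbb{R})$, and then argue this by approaching $c_{Feig}$ \emph{from the right} through the period-doubling cascade (where entropy is already $0$) and invoking Milnor--Thurston continuity. The paper's actual argument (Proposition~\ref{feig} in the section on scaling at the Feigenbaum point) never passes through real entropy at all: it approaches $c_{Feig}$ \emph{from the left} using the parameters $\theta_n = \tau^n(\theta_0)$ obtained by repeatedly tuning the airplane by the basilica operator, applies the explicit tuning formula (Proposition~\ref{tuneddim}) to get $\textup{H.dim }H_{\theta_n} = 2^{-n}\log_2\tfrac{\sqrt5+1}{2}$, and then uses the monotonicity $H_{c_{Feig}} \subseteq H_{\theta_n}$ to squeeze the dimension to zero. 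That approach avoids continuity of entropy entirely, stays inside the paper's lamination/Hausdorff-dimension machinery, and yields the precise decay rate $\sim 2^{-n}$ which is also used to establish the non-H\"older-continuity statement in the same proposition. Your route is shorter because it outsources the key vanishing to a classical fact, but buys less.

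One imprecision worth noting: the assertion that ``on each such real baby copy the entropy is constant by Douady's theorem'' is not literally true for the period-doubling (satellite) tuning windows --- the period-$2$ baby copy, for instance, contains the basilica-tuned airplane, which has positive entropy, and by Proposition~\ref{tuneddim} its tip has $\textup{H.dim }H = 1/2$. Constancy on a baby copy requires the root entropy to already dominate $\log 2 / p$, which fails here since the root entropy is $0$. What you actually need is weaker and true: entropy is locally constant on hyperbolic windows, and the entire real segment $[c_{Feig}, 1/4]$ has zero entropy because it consists of the period-doubling cascade alone; your lap-number computation at the superattracting centers plus continuity then carries the argument. So the conclusion stands, but the appeal to Douady as stated is the weakest link and could be deleted in favor of the direct lap-count observation you already make.
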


\subsection{The complex case}

The result of Theorem \ref{realeq} lends itself to a natural generalization for complex quadratic polynomials, 
which we will now describe.

In the real case, we related the entropy of the restriction of $f_c$ on an invariant interval 
to the Hausdorff dimension of a certain set of angles of external rays landing on the real slice 
of the Mandelbrot set. 

In the case of complex quadratic polynomials, the real axis is no longer invariant, but 
we can replace it with the Hubbard tree $T_c$ (see section \ref{section:Htree}). In particular, we define the polynomial $f_c$ 
to be \emph{topologically finite} if the Julia set is connected and locally connected
and the Hubbard tree is homeomorphic to a finite tree (see Figure \ref{vein}, left). We thus define 
the entropy $h_{top}(f_c\mid_{T_c})$ of the restriction of $f_c$ to the Hubbard tree, and 
we want to compare it to the Hausdorff dimension of some subset of parameter space.
Let $H_c$ be the set of external rays which land on $T_c$.

\begin{figure}[h!]
\centering

\begin{minipage}{0.44 \textwidth}
\fbox{ \includegraphics[scale=0.13]{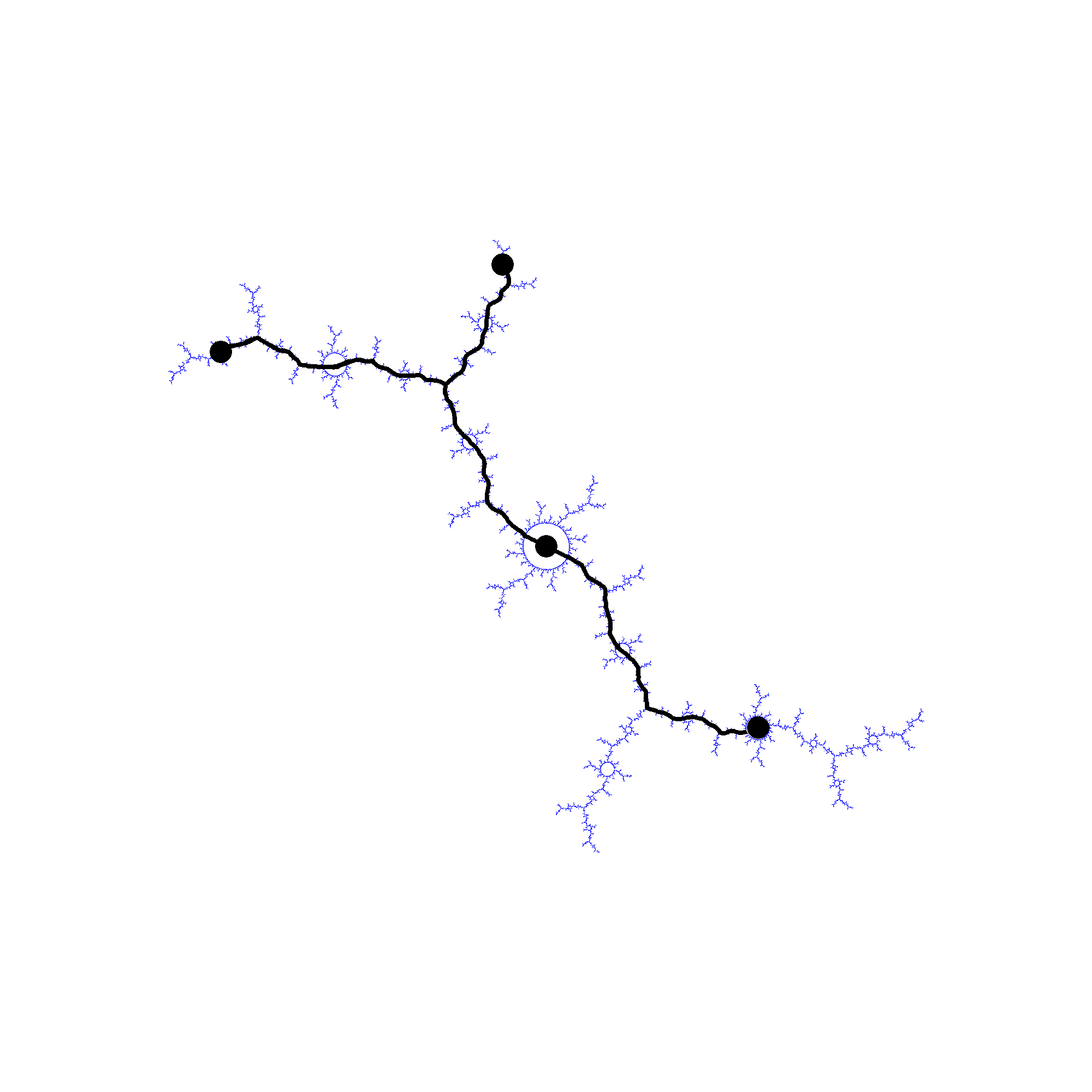}}
\end{minipage}
\hfill
\begin{minipage}{0.44 \textwidth}
\fbox{ \includegraphics[scale=0.13]{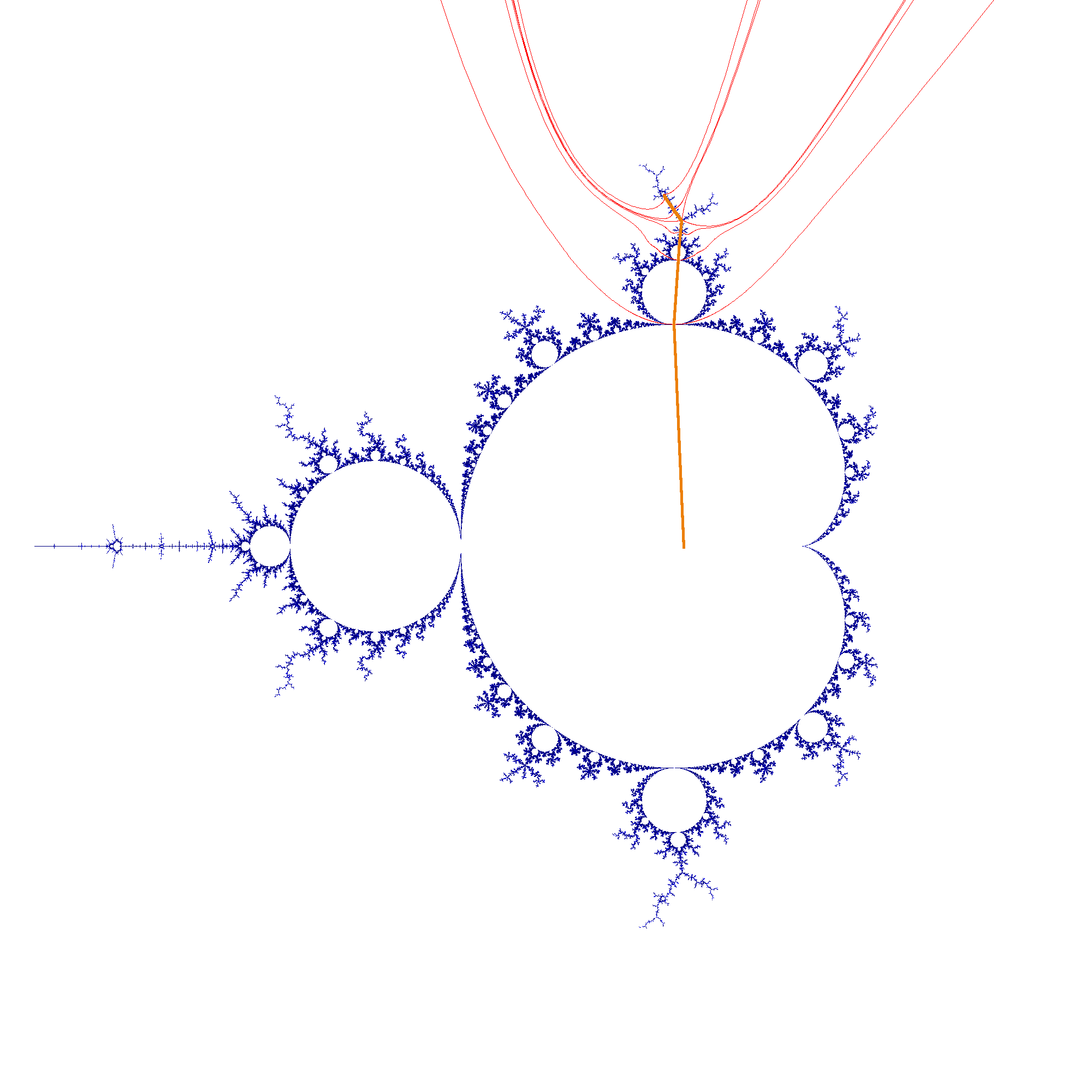}}
\end{minipage}

\caption{To the left: the Hubbard tree of the complex polynomial of period $4$ and characteristic 
angles $\theta = 3/15, 4/15$. To the right: the vein joining the center of the main cardioid with the main antenna in the $1/3$-limb ($\theta = 1/4$), 
and external rays landing on it. }
\label{vein} 
\end{figure}


In parameter space, a generalization of the real slice is a \emph{vein}: a vein $v$ is an embedded arc in 
$\mathcal{M}$, joining a parameter $c \in \partial \mathcal{M}$ with the center of the main cardioid. Given a vein $v$
and a parameter $c$ on $v$, we can define the set $P_c$  as the set of external angles of rays which land 
on $v$ closer than $c$ to the main cardioid:
$$P_c := \{ \theta \in \mathbb{R}/\mathbb{Z} \ : \ R_M(\theta) \textup{ lands on } v \cap [0, c] \}$$ 
where $[0, c]$ means the segment of vein joining $c$ to the center of the main cardioid (see Figure \ref{vein}, right).

Note that the set of topologically finite parameters contain the postcritically finite ones but it is 
much larger: indeed, every parameter $c \in \partial \mathcal{M}$ which is biaccessible (i.e. it belongs 
to some vein) is topologically finite (see section \ref{section:Htree}).

In the $\frac{p}{q}$-limb, there is a unique parameter $c_{p/q}$ such that the critical point lands on the $\beta$ fixed point 
after $q$ iterates (i.e. $f^q(0) = \beta$). The vein $v_{p/q}$ joining $c_{p/q}$ to $c = 0$ will be called the \emph{principal vein} of angle 
$p/q$. Note that $v_{1/2}$ is the real axis, while $v_{1/3}$ is the vein constructed by Branner and Douady \cite{BD}.
We can now extend the result of Theorem \ref{realeq} to principal veins:

\begin{theorem} \label{mainvein} \label{complexeq}
Let $v = v_{p/q}$ be principal vein in the Mandelbrot set, and $c \in v \cap \partial \mathcal{M}$ a parameter
along the vein. Then we have the equality
$$\frac{h_{top}(f_c \mid_{T_c})}{\log 2} = \textup{H.dim } H_c = \textup{H.dim } P_c.$$ 
\end{theorem}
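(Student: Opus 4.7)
The plan is to follow the blueprint of Theorem~\ref{realeq}, replacing the invariant real interval by the Hubbard tree $T_c$ and the real section of $\mathcal{M}$ by the principal vein $v_{p/q}$. The chain of equalities splits naturally into a dynamical half $h_{top}(f_c\mid_{T_c})/\log 2 = \textup{H.dim}\,H_c$ and a parameter-to-dynamics half $\textup{H.dim}\,H_c = \textup{H.dim}\,P_c$, and I would establish each separately.

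For the dynamical half, the first step is to build a Markov partition of $T_c$. Since $v_{p/q}$ is principal, $T_c$ is a sub-$q$-od meeting at the $\alpha$-fixed point and traversed by the critical orbit in a combinatorially controlled way; taking the orbits of the critical, fixed and branch points as vertices produces a finite directed graph whose topological entropy equals $h_{top}(f_c\mid_{T_c})$. Encoding external angles in binary via the doubling map $\sigma$, the set $H_c$ of angles landing on $T_c$ can then be described, modulo countably many doubly accessed points, as a sofic subshift of the full $2$-shift determined by the same automaton. Because $\sigma$ is uniformly expanding with constant Lyapunov exponent $\log 2$, the standard Moran/Bowen formula for the dimension of invariant sets under conformal expanding maps gives $\textup{H.dim}\,H_c = h_{top}(\sigma \mid H_c)/\log 2$, and the bounded-to-one semi-conjugacy via landing identifies the right-hand side with $h_{top}(f_c\mid_{T_c})/\log 2$.

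For the parameter equality I would invoke Douady's similarity principle together with the combinatorics of principal veins. As $c'$ traverses $v_{p/q}\cap[0,c]$, its pair of characteristic angles sweeps out a set whose symbolic description matches that of the angles landing on the arc of $T_c$ from the $\alpha$-fixed point to the critical value $c$. Using density of postcritically finite (Misiurewicz) parameters on $v_{p/q}$, I would verify this correspondence on a dense set and then extend by continuity of the Hausdorff dimension in $c$. The main obstacle is precisely this last step: one must rule out that angles accumulating on the vein but not arising as characteristic angles of any $c'$ contribute positive Hausdorff dimension to $P_c\setminus H_c$. I would expect to control this ``parasitic'' contribution through the puzzle/tableau machinery of Branner--Hubbard--Yoccoz adapted to the principal vein, showing that the exceptional set is either countable or has zero Hausdorff dimension for combinatorial reasons.
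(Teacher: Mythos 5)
Your first equality is in the right spirit, but note that the paper's argument (Theorem~\ref{entro_dim}) deliberately avoids any Markov partition of $T_c$: for a general parameter on the vein the map is only \emph{topologically} finite (the Hubbard tree is a finite $q$-od) but the critical orbit is typically infinite, so there is no finite directed graph with ``orbits of the critical, fixed and branch points as vertices.'' Instead the paper uses the Carath\'eodory-loop semi-conjugacy together with the uniform valence bound (Proposition~\ref{valencebound}), Bowen's bounded-to-one entropy invariance (Proposition~\ref{bdeg}), and the general Furstenberg dimension formula (Proposition~\ref{dim_entro}), which applies to arbitrary closed $D$-invariant subsets of the circle without any sofic structure. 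Your appeal to a Moran/Bowen formula would need to be replaced by exactly this.

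The second equality is where the proposal has a genuine gap, and it is a sign flip. You identify the difficulty as ruling out ``parasitic'' angles contributing positive Hausdorff dimension to $P_c\setminus H_c$, and propose puzzle/tableau estimates to kill that excess. But $P_c\subseteq H_c$ is the \emph{easy} inclusion (Corollary~\ref{easyincl} and Proposition~\ref{Htree_forbid}), since every ray landing on the vein below $c$ has a characteristic angle whose postcharacteristic leaf lies on $T_c$. The hard direction is the reverse: one must show $\textup{H.dim}\,H_c\le\textup{H.dim}\,P_c$, i.e.\ that parameter space contains \emph{enough} rays. ``Douady's similarity'' and density of Misiurewicz points on the vein give no quantitative lower bound on $\textup{H.dim}\,P_c$: $P_c$ consists only of characteristic angles (leaves of $QML$ below $\ell$), a thin subset of $H_c$, so you need a concrete mechanism to push a positive-dimension set into $P_c$. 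The paper's mechanism is the dominant-parameter embedding (Proposition~\ref{embedding}): for non-renormalizable real $c$ and any $c'>c$ one constructs a piecewise-linear $F$ with $F(H_{c'})\subseteq P_c$, which relies on a purely combinatorial notion of ``dominant'' kneading strings and a density theorem for them (Proposition~\ref{densitydominant}). This is transferred from the real case to the $p/q$-vein by an explicit combinatorial Branner--Douady surgery $\Psi_{p/q}$ (section~\ref{section:surgery}, Propositions~\ref{par_surgery}, \ref{dyn_surgery}, \ref{vein_inc}), and then one needs continuity of $h_{top}(f_c\mid_{T_c})$ along the vein (established via kneading theory for tree maps, Theorem~\ref{continuous}) to pass from $c'<c$ to $c$, plus a tuning argument (Proposition~\ref{tuneddim}) for the renormalizable parameters. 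None of these ingredients, or substitutes for them, appear in the proposal; without them the lower bound on $\textup{H.dim}\,P_c$ is not addressed.
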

We conjecture that the previous equality holds along any vein $v$.
Note that the statement can be given in more symmetric terms in the following way. If 
one defines for each $A \subseteq \mathcal{M}$, 
$$\Theta_M(A) := \{ \theta \in S^1 \ : \ R_M(\theta) \textup{ lands on }A \}$$
and similarly, for each $A \subseteq J(f_c)$, the set 
$$\Theta_c(A) := \{ \theta \in S^1 \ : \ R_c(\theta) \textup{ lands on }A \}$$
where $R_c(\theta)$ is the external ray at angle $\theta$ in the dynamical plane
for $f_c$, then Theorem \ref{mainvein} is equivalent to the statement
$$\textup{H.dim }\Theta_c([0, c]) = \textup{H.dim }\Theta_M([0, c]).$$

\subsection{Pseudocenters and real hyperbolic windows}

The techniques we use in the proof rely on the combinatorial analysis of the symbolic dynamics, 
and many ideas come from a connection with the dynamics of continued fractions. 
Indeed, on a combinatorial level the structure of the real slice of the Mandelbrot set 
is isomorphic to the structure of the bifurcation set $\mathcal{E}$ for continued fractions \cite{BCIT},
so we can use the combinatorial tools we developed in that case (\cite{CT}, \cite{CT2}) to analyze the quadratic family. 

For instance, in \cite{CT}, a key concept is the \emph{pseudocenter} of an interval, namely the (unique!) 
rational number with the smallest denominator.
When translated to the world of binary expansions, used to describe the parameter space of quadratic polynomials, the 
definition becomes

\begin{definition}
The \emph{pseudocenter} of a real interval $[a, b]$ with $|a - b| < 1$ is the unique dyadic rational 
number with shortest binary expansion.  
\end{definition}

E.g., the pseudocenter of the interval $[\frac{13}{15}, \frac{14}{15}]$ is $\frac{7}{8} = 0.111$, 
since $\frac{13}{15} = 0.\overline{1101}$ and $\frac{14}{15} = 0.\overline{1110}$. 
Recall that a \emph{hyperbolic component} $W \subseteq \mathcal{M}$ is a connected, open subset of parameters $c$ 
for which the critical point of $f_c$ is attracted to a periodic cycle. If $W$ intersects the real axis, 
we define the \emph{hyperbolic window} 
associated to $W$ to be the interval $(\theta_2, \theta_1) \subseteq [0, 1/2]$, where the rays 
$R_M(\theta_1)$ and $R_M(\theta_2)$ land on $\partial W \cap \mathbb{R}$. 

By translating the bisection algorithm of (\cite{CT}, section 2.4) in terms of kneading sequences, we get the 
following algorithm to generate all real hyperbolic windows (see section \ref{section:bisect}).

\begin{theorem} \label{algoint}
The set of all real hyperbolic windows in the Mandelbrot set can be generated as follows.
Let $c_1 < c_2$ be two real parameters on the boundary of $\mathcal{M}$, with external angles 
$0 \leq \theta_2 < \theta_1 \leq \frac{1}{2}$.
Let $\theta^*$ be the dyadic pseudocenter of the interval $(\theta_2, \theta_1)$, and let 
$$\theta^* = 0.s_1 s_2 \dots s_{n-1} s_n$$
be its binary expansion, with $s_n = 1$. Then the hyperbolic window of smallest period in the interval
 $(\theta_2, \theta_1)$ is the interval of external angles $(\alpha_2, \alpha_1)$ with 
$$\begin{array}{ccc}
\alpha_2 & := & 0.\overline{s_1 s_2 \dots s_{n-1}} \\
\alpha_1 & := & 0.\overline{s_1 s_2 \dots s_{n-1} \check{s}_1 \check{s}_2 \dots \check{s}_{n-1} }
\end{array}
$$
where $\check{s_i} := 1 - s_i$. All real hyperbolic windows are obtained by iteration of this algorithm, 
starting with $\theta_2 = 0$, $\theta_1 = 1/2$.
\end{theorem}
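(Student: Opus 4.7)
The plan is to exploit the combinatorial identification between the real slice of the Mandelbrot set and the continued-fraction bifurcation set $\mathcal{E}$ of \cite{BCIT}, and then transport the bisection algorithm of \cite{CT}, Section~2.4, to the world of binary expansions of external angles. Under the isomorphism of \cite{BCIT}, real hyperbolic windows correspond to the complementary intervals of $\mathcal{E}$, so the problem of listing them in order of period reduces to the problem, solved in \cite{CT}, of listing the complementary gaps of $\mathcal{E}$ in order of denominator. The key step in the continued-fraction algorithm is to locate the rational of smallest denominator inside a given interval; under the identification this becomes the dyadic number of shortest binary expansion, which is exactly our pseudocenter $\theta^*$.

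Next, I would translate the output of the bisection. Writing $\theta^* = 0.s_1 \dots s_{n-1} 1$, the ray $R_M(\theta^*)$ lands at a real Misiurewicz parameter $m^* \in [c_1, c_2]$, because a dyadic external angle corresponds to a parameter whose critical orbit eventually hits the $\beta$ fixed point; since $n$ is the minimal length of a terminating binary expansion inside $(\theta_2, \theta_1)$, the point $m^*$ is the Misiurewicz parameter of smallest preperiod on the real arc joining $c_1$ to $c_2$. On each side of $m^*$ there is a real hyperbolic component, and I would claim that the one of smallest period, call it $W$, is bounded by the two angles $\alpha_2$ and $\alpha_1$ of the statement. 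Using the Milnor--Thurston kneading correspondence between binary expansions of characteristic angles and kneading sequences of real quadratic maps, I would verify that $\alpha_2 = 0.\overline{s_1 \dots s_{n-1}}$ is the periodic angle of period $n-1$ whose kneading sequence agrees with the first $n-1$ entries of the dyadic kneading sequence of $\theta^*$ and lands at the root of $W$, while $\alpha_1 = 0.\overline{s_1 \dots s_{n-1} \check{s}_1 \dots \check{s}_{n-1}}$ is the period-$2(n-1)$ angle landing at the period-doubling satellite of $W$. The involution $\check{\cdot}$ on the second block encodes the orientation-reversal of $f_c^{n-1}$ on a neighborhood of the critical orbit that arises when one closes up the finite word to a periodic itinerary.

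The main obstacle is admissibility: one has to show that the two candidate sequences really are realized as external angles landing at root points of a real hyperbolic component, and that the bracketed component is indeed the one of smallest period in $(\theta_2, \theta_1)$. Minimality of the period should follow from minimality of $n$ in the expansion of $\theta^*$, because any periodic angle in $(\theta_2, \theta_1)$ of strictly smaller period would, after taking an appropriate doubling preimage, produce a dyadic number in $(\theta_2, \theta_1)$ of length less than $n$, contradicting the definition of the pseudocenter. Finally, iterating the procedure starting from $(\theta_2, \theta_1) = (0, 1/2)$ exhausts all real hyperbolic windows, since each such window contains a Misiurewicz tip of dyadic external angle in its interior, and any given dyadic angle is eventually singled out as the shortest one in the remaining subinterval at some stage of the recursion.
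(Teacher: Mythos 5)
Your overall strategy -- transporting the bisection algorithm for continued fractions from \cite{CT}/\cite{BCIT} to binary expansions of external angles -- is exactly the approach of the paper, which simply cites (\cite{BCIT}, Proposition 3) together with the dictionary $\mathcal{R} \cap [0,1/2] = \tfrac12 \Lambda$. However, your attempt to give this translation a self-contained geometric content contains a genuine error in the key step.

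You assert that $R_M(\theta^*)$ lands at a \emph{real} Misiurewicz parameter $m^* \in [c_1,c_2]$, and that the desired hyperbolic component is the smaller-period one of the two components adjacent to $m^*$. This is false. In fact the dyadic pseudocenter $\theta^*$ lies in the \emph{interior} of the hyperbolic window $(\alpha_2,\alpha_1)$ that the algorithm produces, and the ray $R_M(\theta^*)$ lands at a non-real parameter (a tip of a vein emanating from the component $W$). The very first step of the algorithm already shows this: starting from $(\theta_2,\theta_1) = (0,1/2)$ the pseudocenter is $\theta^* = 1/4$, and $R_M(1/4)$ lands at the complex tip $c_{1/3}$ of the $1/3$-limb, not on the real axis; moreover $1/4$ lies strictly between $\alpha_2 = 0$ and $\alpha_1 = 1/3$. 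Similarly, for the basilica step $\theta^* = 3/8$ lies inside $(1/3, 2/5)$, and one can check directly from Proposition \ref{symb_Pc} that $3/8 \notin \mathcal{R}$. The implication ``dyadic angle $\Rightarrow$ real Misiurewicz parameter'' does not hold; dyadic angles that \emph{are} real parameter angles are a proper subset, and the algorithm's pseudocenter is typically not among them. Because of this, the picture ``a real tip $m^*$ with one component on each side'' on which the rest of your argument (identification of $\alpha_2$, $\alpha_1$, and the admissibility discussion) is built does not hold, and the subsequent reasoning does not go through as written. By contrast, your argument for minimality of the period (a window of smaller period $p$ would contain a dyadic of denominator $2^{p+1}$, its own pseudocenter, contradicting minimality of $n$) is sound in spirit, and the exhaustion-by-iteration claim is reasonable, but these do not repair the incorrect identification at the center of the argument. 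The paper avoids all of this by relying on the verified BCIT correspondence rather than re-deriving the combinatorics from the complex-dynamical side.
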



\subsection{Thurston's point of view} \label{section:thurston}

The results of this paper relate to recent work of W. Thurston, who looked at the entropy of Hubbard trees 
as a function of the external angle. 
Indeed, every external angle $\theta$ of the Mandelbrot set combinatorially determines a lamination (see section \ref{section:lami})
and the lamination determines an abstract Hubbard tree, of which we can compute the entropy $h(\theta)$.

Thurston produced very interesting pictures (Figure \ref{Thentro}), suggesting that the complexity of the Mandelbrot set is encoded 
in the combinatorics of the Hubbard tree, and the variation in entropy reflects the geometry of $\mathcal{M}$.

In this sense, Theorems \ref{realeq} and \ref{complexeq} contribute to this program: in fact, the entropy grows as one goes 
further from the center of $\mathcal{M}$ (see also \cite{TL}), and our results make precise the relationship
between the increase in entropy and the increased hairiness of the Mandelbrot set.  


\begin{figure} 
\centering
\includegraphics[scale=0.25]{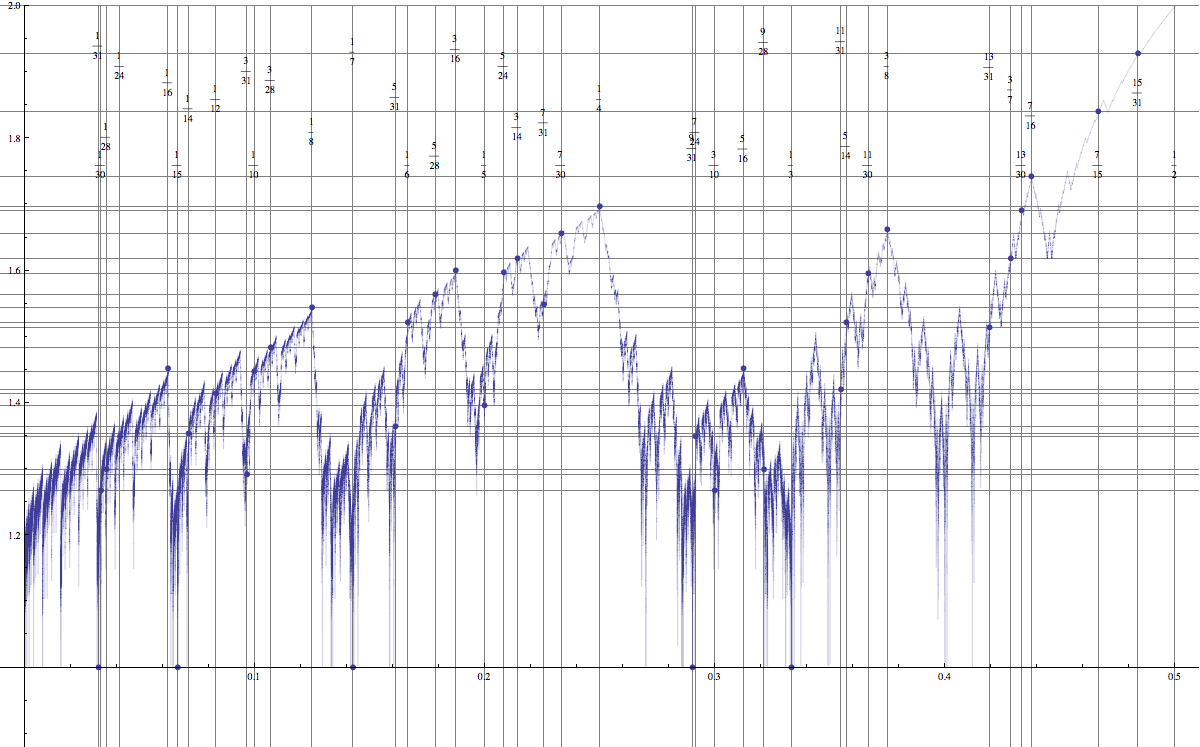}
\caption{Entropy of Hubbard trees as a function of the external angle (by W. Thurston).}
\label{Thentro}
\end{figure}

Bruin and Schleicher \cite{BS} recently proved that entropy is continuous as a function of the external angle. 

Note that Thurston's approach is in some sense dual to ours, since we look at the variation of entropy 
along the veins, i.e. from ``inside'' the Mandelbrot set as opposed to from ``outside'' as a function 
of the external angle.

We point out that the idea of the pseudocenter described in the introduction 
seems also to be fruitful to study the entropy of the Hubbard tree as a function of the external 
angle: indeed, we conjecture that the maximum of the entropy on any wake is achieved precisely at its pseudocenter.
Let us denote by $h(\theta)$ the entropy of the Hubbard tree corresponding to the parameter of external angle $\theta$. 

\begin{conjecture}
Let $\theta_1 < \theta_2$ be two external angles whose rays $R_M(\theta_1)$, $R_M(\theta_2)$ land on the same parameter 
in the boundary of the Mandelbrot set. Then the maximum of entropy on the interval $[\theta_1, \theta_2]$ is attained 
at its pseudocenter:
$$\max_{\theta \in [\theta_1, \theta_2]} h(\theta) = h(\theta^*)$$
where $\theta^*$ is the pseudocenter of the interval $[\theta_1, \theta_2]$.
\end{conjecture}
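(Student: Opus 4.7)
The plan is to combine the bisection algorithm of Theorem \ref{algoint} with the entropy--dimension identity of Theorem \ref{complexeq} to reduce the conjecture to a monotonicity statement for $h$ along and transverse to a distinguished vein. First, I would identify the pseudocenter $\theta^*$ dynamically: by Theorem \ref{algoint} its binary expansion $0.s_1 \ldots s_n$ encodes a specific kneading sequence, and its parameter ray lands at a Misiurewicz parameter $c^*$ sitting at the tip of a vein $v$ that traverses the wake $W$ bounded by $R_M(\theta_1)$ and $R_M(\theta_2)$ (dyadic angles in parameter space always land at such arc-endpoints, and the inductive structure of the bisection algorithm singles out precisely the ``central'' vein of the wake).

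Second, I would apply Theorem \ref{complexeq} along $v$: one obtains $h(c)/\log 2 = \textup{H.dim } P_c$, and since the family $\{P_c\}_{c \in v}$ is monotone as $c$ moves from the root of $W$ out to $c^*$, the function $h$ restricted to $v$ attains its maximum at $c^*$, proving $h(\theta^*) = \sup_{c \in v} h(c)$. For angles $\theta$ whose rays land at parameters off $v$, the landing parameter lies in some sub-wake $W' \subsetneq W$ attached to $v$ at a point $c_v$; I would argue $h(c) \leq h(c_v)$ by a combinatorial tuning/collapsing argument (the Hubbard tree of a sub-wake parameter refines a collapsed tree whose entropy equals $h(c_v)$), and then iterate across the nested sub-wake structure furnished by Theorem \ref{algoint}. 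By Bruin--Schleicher's continuity of $h(\theta)$ in $\theta$, it suffices to verify the inequality on the dense set of postcritically finite angles, which is combinatorially tractable through the kneading sequence model.

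The hard part will be the transverse monotonicity $h(c) \leq h(c_v)$ for $c \in W'$. Monotonicity along a principal vein is exactly what Theorem \ref{complexeq} packages, but transverse monotonicity is more delicate because it intertwines the ambient Hubbard tree combinatorics with the tuning structure producing the sub-limb. One needs a robust principle ensuring that collapsing a forward-invariant subtree does not increase entropy, and an extension of Theorem \ref{complexeq} beyond principal veins to cover arbitrary veins inside $W$. Establishing these two ingredients uniformly across the fractal nesting of wakes is the essential technical obstacle; once both are in hand, the bisection algorithm provides the combinatorial scaffold on which the induction closes.
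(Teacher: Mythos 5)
Note first that this statement is explicitly labeled a \emph{conjecture} in the paper; the paper offers no proof, so there is nothing to compare your attempt against. Evaluated on its own terms, your sketch contains a gap that is more serious than the ``technical obstacle'' you flag: the transverse monotonicity inequality $h(c)\leq h(c_v)$ is false. The monotonicity results the paper relies on (Tao Li's thesis \cite{TL}, and the Tan Lei argument reproduced in section~\ref{section:surgery}) assert that entropy is \emph{non-decreasing} as the characteristic leaf moves outward: if $\ell_{c_v}$ separates $\ell_c$ from the root of $QML$, then $h(c_v)\leq h(c)$. A parameter $c$ lying in a sub-wake $W'$ attached to $v$ at a branch point $c_v$ satisfies exactly this separation condition, so $h(c)\geq h(c_v)$ --- the reverse of what you claim. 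Your collapsing heuristic also runs backwards: restriction of $f_c$ to an invariant subtree cannot increase entropy, so the refined Hubbard tree of $c$ has entropy at least that of the coarser object it refines, again giving $h(c_v)\leq h(c)$. What the conjecture actually requires is $h(c)\leq h(\theta^*)$ for all $\theta$ in the wake, and this cannot be reduced to pushing $c$ back to the branch point: after branching at $c_v$, both $c$ and $c^*$ lie strictly farther out along \emph{incomparable} leaves, so vein monotonicity says nothing about how they compare. The real content of the conjecture is an entropy comparison across incomparable branches of $QML$, and no such principle appears in the paper; your proposal does not supply one.

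Two secondary issues. Theorem~\ref{complexeq} is proved only for the principal veins $v_{p/q}$; the vein joining the root of $W$ to the dyadic tip $c(\theta^*)$ is in general not principal, and the paper only \emph{conjectures} the entropy--dimension identity along arbitrary veins, so invoking it there is circular. And Theorem~\ref{algoint} is stated exclusively for real hyperbolic windows; the ``inductive sub-wake structure'' inside an arbitrary wake that you lean on is not furnished by that theorem and would have to be developed separately.
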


\subsection{Sketch of the argument}

The proof of Theorem \ref{entropy_fmla} is carried out in two steps. 
We first prove (Theorem \ref{entro_dim} in section \ref{section:entroH}) 
 the relationship between topological entropy $h_{top}(f_c \mid_{T_c})$ of the map restricted to the Hubbard tree and  
the Hausdorff dimension of the set $H_c$ of angles landing on the tree, for all topologically finite polynomials $f_c$.
The bulk of the argument is then proving the 
identity of Hausdorff dimensions between the real Julia set and the slices of $\mathcal{M}$:

\begin{theorem} \label{equaldim}
For any $c \in [-2, \frac{1}{4}]$, we have the equality
$$\textup{H.dim }S_c = \textup{H.dim }P_c.$$
\end{theorem}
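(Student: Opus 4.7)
The plan is to establish $\textup{H.dim}\,S_c = \textup{H.dim}\,P_c$ by giving both sets a common combinatorial description indexed by the kneading data of $c$. The dynamical side uses standard itinerary coding with respect to the kneading sequence $\nu(c)$, while the parameter side is controlled by the bisection algorithm of Theorem \ref{algoint}. Since both $S_c$ and $P_c$ are invariant under the symmetry $\theta \mapsto 1-\theta$, it suffices to work inside $[0,1/2]$.

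First I would characterize $S_c$ symbolically. Through the semiconjugacy between doubling on $S^1$ and $f_c$ on $J(f_c)$, an angle $\theta$ belongs to $S_c$ precisely when its binary expansion is an admissible itinerary for the kneading sequence $\nu(c)$ whose resulting landing point is fixed by complex conjugation, i.e.\ is real. This identifies $S_c$, up to a countable set of rational endpoints, with the image under base-$2$ evaluation of a subshift $\Sigma_c \subseteq \{0,1\}^{\mathbb N}$ whose forbidden words are determined explicitly by $\nu(c)$.

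Second I would characterize $P_c$ symbolically. Iterating Theorem \ref{algoint} starting from the initial interval $(0,1/2)$, the open complement of $P_c$ in $[0,1/2]$ decomposes as a disjoint countable union of real hyperbolic windows, each indexed by the finite binary word of its dyadic pseudocenter. Combined with Douady's correspondence — parameter rays landing on the root of a real hyperbolic component have the same external angles as the dynamical rays landing on the repelling characteristic cycle of the root polynomial — a hyperbolic window remains a genuine gap of $P_c$ if and only if the periodic pattern defining it is \emph{incompatible} with $\nu(c)$. Equivalently, $P_c$ is obtained from $[0,1/2]$ by removing exactly the open intervals associated with the forbidden words of $\Sigma_c$. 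Hence $S_c$ and $P_c$ are produced from $[0,1/2]$ by removing the same countable family of open intervals; since they then differ at most on the countable set of rational endpoints, their Hausdorff dimensions coincide.

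The main obstacle is making the combinatorial correspondence uniform across all $c \in [-2, 1/4]$, particularly for infinitely renormalizable parameters such as the Feigenbaum point, where $\nu(c)$ is not eventually periodic and Douady's correspondence is not directly available at $c$ itself. I would handle this by approximation: take a monotone sequence of postcritically finite real parameters $c_n$ converging to $c$, establish the equality $\textup{H.dim}\,S_{c_n} = \textup{H.dim}\,P_{c_n}$ on this dense set, and pass to the limit using monotonicity of $c \mapsto \textup{H.dim}\,S_c$ and $c \mapsto \textup{H.dim}\,P_c$ together with the nested structure of the two Cantor constructions. A minor side issue — that a priori not every external ray is known to land — is harmless, since the non-landing set has zero logarithmic capacity, hence zero Hausdorff dimension, and therefore does not affect either side of the identity.
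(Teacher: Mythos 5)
Your argument collapses at the central claim in the second paragraph, namely that ``$S_c$ and $P_c$ are produced from $[0,1/2]$ by removing the same countable family of open intervals; since they then differ at most on the countable set of rational endpoints, their Hausdorff dimensions coincide.'' This is false, and the failure is not a technicality. The two sets are defined by qualitatively different conditions: by Propositions \ref{symb_spine} and \ref{symb_Pc}, $\theta \in S_c$ iff $T^n(\ell(\theta)) \le \ell_c$ for all $n$, a \emph{fixed-threshold} condition, whereas $\theta \in P_c$ iff $\theta \in [1-\theta_c,\theta_c]$ and $T^n(\ell(\theta)) \le \ell(\theta)$ for all $n$, a \emph{self-referential} condition (the threshold varies with $\theta$). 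The complement of $S_c$ in $[0,1/2]$ is the set of angles whose doubling orbit eventually enters the open interval $(\theta_c, 1-\theta_c)$, a family of gaps that depends on $c$; the complement of $P_c$ in $[0,\theta_c]$ is the union of all real hyperbolic windows, a fixed family independent of $c$ beyond the cutoff. No hyperbolic window is ``kept'' by $P_c$ based on compatibility with $\nu(c)$: every real hyperbolic window contained in $[0,\theta_c]$ is a gap of $P_c$. A concrete counterexample: at $c = -2$, $S_c = S^1$ (the Julia set is an interval, all rays land on it) while $P_c = \mathcal{R}$ has zero harmonic measure. One set is everything, the other is a null Cantor set; they certainly do not differ by countably many points.

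The inequality $\textup{H.dim}\,P_c \le \textup{H.dim}\,S_c$ \emph{is} cheap (it is Corollary \ref{easyincl}), but the reverse direction is exactly what demands the paper's machinery: the construction of a piecewise-linear embedding $F(H_{c'}) \subseteq P_c$ via dominant parameters (Proposition \ref{embedding}), the proof that dominant parameters are dense among non-renormalizable angles (Proposition \ref{densitydominant}), continuity of entropy, and the tuning/renormalization bookkeeping (Proposition \ref{tuneddim}). If the two sets agreed up to countable sets, all of this would be superfluous — a useful sanity check for future attempts. Your closing approximation paragraph is the one reasonable piece, but even there, passing to the limit requires continuity of the dimension functions (which in the paper comes from continuity of entropy, itself a nontrivial fact \cite{MT}), not just monotonicity.
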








It is not hard to show that $P_c \subseteq H_c \subseteq S_c$ for any real parameter $c$ (Corollary \ref{easyincl}); it is much harder to give a
lower bound for the dimension of $P_c$ in terms 
of the dimension of $H_c$; indeed, it seems impossible to include a copy of $H_c$ 
in $P_c$ when $c$ belongs to some \emph{tuning window}, i.e. to some baby Mandelbrot set.
However, for non-renormalizable parameters we can prove the following:

\begin{proposition} \label{embedding1}
Given a non-renormalizable, real parameter $c$ and another real parameter $c' > c$, there exists 
a piecewise linear map $F : \mathbb{R}/\mathbb{Z} \to \mathbb{R}/\mathbb{Z}$ such that 
$$F(H_{c'}) \subseteq P_{c}.$$ 
\end{proposition}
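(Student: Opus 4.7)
The plan is to construct $F$ as the Douady--Hubbard tuning substitution associated with a postcritically finite parameter chosen just above $c$. Since $c$ is non-renormalizable, postcritically finite real parameters accumulate on $c$ from the right; I would pick a root $c_0$ of a real hyperbolic component with $c < c_0 \leq c'$, let $p$ be its period, and let $\alpha^- < \alpha^+$ be its two characteristic parameter angles. The baby Mandelbrot copy $M_{c_0}$ rooted at $c_0$ has real part $[c_0^{\mathrm{tip}}, c_0]$. If $c$ lay in the open interval $(c_0^{\mathrm{tip}}, c_0)$ then $c$ would be renormalizable by $c_0$, contradicting the hypothesis; combined with $c < c_0$, this forces $c \leq c_0^{\mathrm{tip}}$, hence $[c_0^{\mathrm{tip}}, c_0] \subseteq [c, 1/4]$.

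Next I would define $F := \tau_{c_0}$, the tuning substitution: on binary expansions of external angles, $F$ replaces each digit $0$ with the $p$-digit block $A$ and each digit $1$ with the $p$-digit block $B$, where $A$ and $B$ are read off the binary expansions of $\alpha^-$ and $\alpha^+$ in the notation of Theorem \ref{algoint}. Then $F$ is affine with slope $2^{-p}$ on each of the $2^p$ dyadic intervals of length $2^{-p}$, so it is piecewise linear on $\mathbb{R}/\mathbb{Z}$ and bi-Lipschitz onto its image.

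Finally, I would verify $F(H_{c'}) \subseteq P_c$ using the tuning dictionary: the substitution $\tau_{c_0}$ sends angles whose itinerary under doubling is admissible for the kneading of a real parameter $\mu \in [-2, 1/4]$ to parameter angles landing on the tuned parameter $c_0 \ast \mu \in [c_0^{\mathrm{tip}}, c_0] \subseteq [c, 1/4]$. For any $\theta \in H_{c'}$, the itinerary of $\theta$ under doubling is admissible for the kneading of $f_{c'}$, so $F(\theta)$ lands on a real parameter in $[c, 1/4]$, i.e., $F(\theta) \in P_c$. The main difficulty will be setting up the tuning dictionary precisely enough to identify \emph{dynamical} angles in $H_{c'}$ with \emph{parameter} angles landing on real parameters in the baby Mandelbrot set $M_{c_0}$; this step will combine Douady's principle along the real vein with the fact that tuning by a real parameter preserves the real axis, ensuring that the image consists of angles landing on real (rather than complex) points of $\partial \mathcal{M}$.
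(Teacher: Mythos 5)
Your proposal takes a genuinely different route from the paper's, but unfortunately the central claim fails. The paper constructs $F$ by prepending a carefully chosen \emph{dominant} string $S$ to the binary expansion of $1-\theta$; the dominance condition $XY << Y$ for every splitting $S = XY$ is exactly what forces the resulting angle to have maximal length at time zero, which is the combinatorial characterization of a real parameter angle (Proposition~\ref{symb_Pc}). You instead use the Douady--Hubbard tuning substitution $\tau_{c_0}$ and assert that for $\theta \in H_{c'}$, the image $\tau_{c_0}(\theta)$ is a real parameter angle. That assertion is false: tuning sends the \emph{characteristic (parameter) angle} of $\mu$ to the characteristic angle of $c_0 * \mu$, but $H_{c'}$ consists of \emph{dynamical} angles landing on the Hubbard tree of $f_{c'}$, and these satisfy the opposite extremality condition ($T^n(\ell(\theta)) \geq L_{c'}$ for all $n$, Proposition~\ref{symb_Htree}) rather than $T^n(\ell(\theta)) \leq \ell(\theta)$. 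A dynamical angle is not in general admissible as a kneading sequence, and tuning does not repair this.

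Here is a concrete counterexample. Take $c$ near $-2$ (non-renormalizable), $c'$ the airplane root, and $c_0$ the airplane component so that $\Sigma_0 = 011$, $\Sigma_1 = 100$. The angle $\theta = \frac{1}{7} = 0.\overline{001}$ belongs to $H_{c'}$ (its binary expansion has no run of length $\geq 3$). Then
\[
\tau_{c_0}(\theta) = 0.\overline{011\,011\,100} = \tfrac{220}{511}, \qquad \ell(\tau_{c_0}(\theta)) = \tfrac{440}{511}.
\]
Iterating the tent map: $T(\ell) = \tfrac{142}{511}$, $T^2(\ell) = \tfrac{284}{511}$, $T^3(\ell) = \tfrac{454}{511} > \tfrac{440}{511} = \ell$. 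So $\tau_{c_0}(\theta) \notin \mathcal{R}$ and in particular $\tau_{c_0}(\theta) \notin P_c$. The underlying issue is that $P_c$ is not forward-invariant under the doubling map while $H_{c'}$ is, so $H_{c'}$ is genuinely a larger set (in spite of matching Hausdorff dimension), and no substitution that treats all positions of the binary expansion symmetrically---as tuning does---can collapse $H_{c'}$ into $\mathcal{R}$. The paper's construction works precisely because it is \emph{not} a substitution: it only adjoins a fixed dominant prefix and uses the dominance of that prefix to control all future lengths. You would need to replace the tuning step by something of this kind (i.e.\ an argument exploiting the density of dominant parameters near a non-renormalizable $c$) before the proof can go through.
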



The proposition implies equality of dimension for all non-renormalizable parameters.
By applying tuning operators, we then get equality for all finitely-renormalizable parameters, 
which are dense hence the result follows from continuity.

Proposition \ref{embedding1} will be proved in section \ref{section:embed}. Its proof relies on the definition of a class of parameters, 
which we call \emph{dominant}, which are a subset of the set of non-renormalizable parameters.
We will show that for these parameters (which can be defined purely combinatorially) it is easier to 
construct an inclusion of the Hubbard tree into parameter space; finally, the most technical part (section \ref{proofdensity})
will be proving that such parameters are dense in the set of non-renormalizable angles.

In order to establish the result for complex veins, we first prove continuity of entropy along veins by a version 
of kneading theory for Hubbard trees (section \ref{section:knead}). Finally, 
we transfer the inclusion of Proposition \ref{embedding1} 
from the real vein to the other principal vein via a combinatorial version of the Branner-Douady surgery
(section \ref{section:surgery}). 

\subsection{Remarks and acknowledgements} 
The history of this paper is quite interesting. 
After the discovery of the connection between continued fractions and the real slice of $\mathcal{M}$ \cite{BCIT}, 
the statement for the real case (Theorem \ref{realeq}) came out of discussions with Carlo Carminati in spring 
2011, as an application of our 
combinatorial techniques (indeed, modulo translation to the complex dynamics language, the essential arguments
are contained in \cite{CT2}). At about the same time, I have been informed of the recent work of W. Thurston on the entropy of 
Hubbard trees, which sparked new interest and inspired the generalization to complex veins. 

I especially wish to thank 
A.M. Benini, Tan Lei, and C.T. McMullen for useful conversations, and D. Schleicher for pointing out reference \cite{Ri}.
Some of the pictures have been created with the software \begin{ttfamily}mandel\end{ttfamily} of W. Jung.

\section{External rays} \label{section:extrays}

Let $f(z)$ be a monic polynomial of degree $d$. Recall that the \emph{filled Julia set} $K(f)$ is the set of 
points which do not escape to infinity under iteration: 
$$K(f) := \{z \in \mathbb{C} \  : \ f^n(z) \textup{ does not tend to }\infty \textup{ as }n \to \infty \}.$$
The \emph{Julia set} $J(f)$ is the boundary of $K(f)$. 
If $K(f)$ is connected, then the complement of $K(f)$ in the Riemann sphere is simply connected, so it can 
be uniformized by the Riemann mapping 
$\Phi : \hat{\mathbb{C}} \setminus \overline{\mathbb{D}} \to \hat{\mathbb{C}}\setminus K(f)$
 which maps the exterior of the closed unit disk $\overline{\mathbb{D}}$ to the exterior of $K(f)$.
The Riemann mapping is unique once we impose $\Phi(\infty) = \infty$ and $\Phi'(\infty) = 1$.
With this choice, $\Phi$ conjugates the action of $f$ on the exterior of the filled Julia set
to the map $z \mapsto z^d$, i.e. 
\begin{equation} \label{semiconj1}
f(\Phi(z)) = \Phi(z^d).
\end{equation}
By Carath\'eodory's theorem (see e.g. \cite{Po}), the Riemann mapping extends to a continuous map 
$\overline{\Phi}$ on the boundary 
$\overline{\Phi} : \hat{\mathbb{C}} \setminus \mathbb{D} \to \hat{\mathbb{C}}\setminus \textup{int }K(f) $
if and only if the Julia set is locally connected. If this is the case, the restriction 
of $\overline{\Phi}$ to the boundary is sometimes called the \emph{Carath\'eodory loop} and it will be denoted as 
$$\gamma : \mathbb{R}/\mathbb{Z} \to J(f).$$
As a consequence of the eq. \eqref{semiconj1}, the action of $f$ on the set of angles is 
semiconjugate to multiplication by $d$ (mod $1$):
\begin{equation}
\gamma(d \cdot \theta) = f(\gamma(\theta)) \qquad \textup{for each }\theta \in \mathbb{R}/\mathbb{Z}. 
\end{equation}

In the following we will only deal with the case of quadratic polynomials of the form 
$f_c(z) := z^2 + c$, so $d = 2$ and we will denote as
$$D(\theta) := 2 \cdot \theta \mod 1 $$
the doubling map of the circle. Moreover, we will add the subscript $c$ when we need to make the dependence on the polynomial $f_c$
more explicit.  
Given $\theta \in \mathbb{R}/\mathbb{Z}$, the \emph{external ray} $R_c(\theta)$ is the image of the radial 
arc at angle $2 \pi \theta$ via the Riemann mapping $\Phi_c : \hat{\mathbb{C}} \setminus \overline{\mathbb{D}} \to \hat{\mathbb{C}} 
\setminus K(f_c)$:
$$R_c(\theta) := \{ \Phi_c( \rho e^{2 \pi i \theta} )\}_{\rho > 1}.$$
The ray $R_c(\theta)$ is said to \emph{land} at $x$ if 
$$\lim_{\rho \to 1^+} \Phi_c( \rho e^{2 \pi i \theta} ) = x.$$
If the Julia set is locally connected, then all rays land; in general, by Fatou's theorem, the set of angles for 
which $R_c(\theta)$ does not land has zero Lebesgue measure, and indeed it also has zero capacity and hence zero 
Hausdorff dimension (see e.g. \cite{Po}). It is however known that there exist non-locally connected Julia sets for 
polynomials \cite{MiLC}.
The ray $R_c(0)$ always lands on a fixed point of $f_c$ which is traditionally called the $\beta$ fixed point and denoted as 
$\beta$. The other fixed point of $f_c$ is called the $\alpha$ fixed point. Note that in the case $c = \frac{1}{4}$ 
one has $\alpha = \beta$. Finally, the critical point of $f_c$ will be denoted by $0$, and the critical value 
by $c$.

Analogously to the Julia sets, the exterior of the Mandelbrot set can be uniformized by the Riemann mapping 
$$\Phi_M : \Hat{\mathbb{C}} \setminus \overline{\mathbb{D}} \rightarrow \hat{\mathbb{C}} \setminus \mathcal{M}$$
with $\Phi_M(\infty) = \infty$, and $\Phi'(\infty) = 1$, and images of radial arcs are called \emph{external rays}. 
Every angle $\theta \in \mathbb{R}/\mathbb{Z}$ determines an external ray
$$R_M(\theta) := \Phi_M(\{ \rho e^{2\pi i \theta} : \rho >  1 \})$$ 
which is said to \emph{land} at $x$ if the limit $\lim_{\rho \to 1^+} \Phi_M(\rho e^{2 \pi i \theta})$ exists. 
According to the \emph{MLC conjecture} \cite{DH}, the Mandelbrot set is locally connected, and therefore all rays land on some point of the 
boundary of $\mathcal{M}$.

\subsection{Biaccessibility and regulated arcs}

A point $z \in J(f_c)$ is called \emph{accessible} if it is the landing point of at least one external ray. 
It is called \emph{biaccessible} if it is the landing point of at least two rays, i.e. there exist $\theta_1, 
\theta_2$ two distinct angles such that $R_c(\theta_1)$ and $R_c(\theta_2)$ both land at $z$. This is 
equivalent to say that $J(f_c) \setminus \{z \}$ is disconnected.


Let $K = K(f_c)$ be the filled Julia set of $f_c$. Assume $K$ is connected and locally connected. Then it is also path-connected 
(see e.g. \cite{Wi}, Chapter 8), so given any two points $x, y$ in $K$, there exists an arc in $K$ with endpoints $x, y$.

If $K$ has no interior, then the arc is uniquely determined by its endpoints $x$, $y$. Let us now describe how 
to choose a canonical representative inside the Fatou components in the case $K$ has interior.  
In this case, each bounded Fatou component eventually maps to a periodic Fatou component, which either contains an attracting cycle, 
or it contains a parabolic cycle on its boundary, or it is a periodic Siegel disk. 

Since we will not deal with the Siegel disk case in the rest of the paper, let us assume we are in one of the first two cases.
Then there exists a Fatou component $U_0$ which contains the critical point, and a biholomorphism $\phi_0 : U_0 \to \mathbb{D}$
to the unit disk mapping the critical point to $0$. The preimages $\phi_0^{-1}(\{\rho e^{2 \pi i \theta} \ : \ 0 \leq \rho < 1 \})$ of radial arcs in the unit disk 
are called \emph{radial arcs} in $U_0$. Any other bounded Fatou component $U$ is eventually mapped to $U_0$; let 
$k \geq 0$ be the smallest integer  such that $f_c^k(U) = U_0$. Then the map $\phi := \phi_0 \circ f_c^k$ is a biholomorphism of $U$ 
onto the unit disk, and we define radial arcs to be preimages under $\phi$ of radial arcs in the unit disk.   

An embedded arc $I$ in $K$ is called \emph{regulated} (or \emph{legal} in Douady's terminology \cite{DoCompact}) if the intersection between $I$ 
and the closure of any bounded Fatou component is contained in the union of at most two radial arcs.
With this choice, given any two points $x, y$ in $K$, there exists a unique regulated arc in $K$ with endpoints $x, y$ 
(\cite{Za-biacc}, Lemma 1). Such an arc will be denoted by $[x, y]$, and the corresponding open arc by 
$(x, y) := [x, y] \setminus \{x, y\}$.
A \emph{regulated tree} inside $K$ is a finite tree whose edges are regulated arcs. Note that, in the case $K$ has 
non-empty interior, regulated trees as defined need not be invariant for the dynamics, because $f_c$ need not map radial arcs 
to radial arcs. However, by construction, radial arcs in any bounded Fatou component $U$ different from $U_0$ map 
to radial arcs in $f_c(U)$. In order to deal with $U_0$, we need one further hypothesis. Namely, we will assume that $f_c$ 
has an attracting or parabolic cycle of period $p$ with real multiplier. Then we can find a parametrization $\phi_0 : U_0 \to \mathbb{D}$ 
such that the interval $I := \phi_0^{-1}((-1, 1))$ is preserved by the $p$-th iterate of $f_c$, i.e. $f_c^p(I) \subseteq I$. The interval 
$I$ will be called the \emph{bisector} of $U_0$.
Now note that, if the regulated arc $[x, y]$ does not contain $0$ in its interior and it only intersects 
 the critical Fatou component $U_0$ in its bisector, then we have 
$$f_c([x, y]) = [f_c(x), f_c(y)].$$
The \emph{spine} of $f_c$ is the regulated arc $[-\beta, \beta]$ joining the $\beta$ fixed point 
to its preimage $-\beta$. The biaccessible points are related to the points which lie on the spine by the following lemma.

\begin{lemma}  \label{biacc_lemma}
Let $f_c(z) = z^2 + c$ be a quadratic polynomial whose Julia set is connected and locally connected. Then the set of biaccessible 
points is 
$$\mathcal{B}  = J(f_c) \cap \bigcup_{n \geq 0} f_c^{-n}((-\beta, \beta)).$$
\end{lemma}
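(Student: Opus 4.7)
The plan is to prove the two inclusions of the lemma separately. For $\supseteq$, I will show (i) every point of $J(f_c) \cap (-\beta,\beta)$ is biaccessible and (ii) biaccessibility is preserved under $f_c^{-1}$, which together give that every iterated preimage of $(-\beta,\beta)$ lying in $J(f_c)$ is biaccessible. For the reverse inclusion $\subseteq$, I will use binary expansions of the two landing angles of a biaccessible point to force one of its forward iterates onto the open spine.

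For step (i), the spine $[-\beta,\beta]$ is the unique regulated arc in $K(f_c)$ joining the landing points of $R_c(0)$ and $R_c(1/2)$. If $z \in J(f_c) \cap (-\beta,\beta)$, removing $z$ separates this arc, and since $(-\beta,\beta)$ actually separates $K(f_c)$ into two nonempty continua, the set $K(f_c) \setminus \{z\}$ is disconnected. Under local connectedness, the classical prime-end correspondence (cf.\ \cite{Za-biacc}) implies that the number of rays landing at $z$ equals the number of local components of $K(f_c)\setminus\{z\}$ at $z$, so at least two rays land at $z$ and $z$ is biaccessible. For step (ii), if $w = f_c(z)$ is biaccessible with distinct landing angles $\theta_1,\theta_2$, then the four preimage angles $\{\theta_i/2,\ \theta_i/2 + 1/2 : i=1,2\}$ are all distinct and parametrize four external rays that are mapped by $f_c$ to $R_c(\theta_1) \cup R_c(\theta_2)$. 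These distribute into two pairs landing at $z$ and at $-z$ respectively, so each preimage of $w$ receives at least two rays; in particular $z$ is biaccessible.

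For $\subseteq$, let $z$ be biaccessible with landing angles $\theta_1 \neq \theta_2$; neither angle can be $0$ or $1/2$ because $\pm\beta$ are each accessed by only one ray. Write the binary expansions of $\theta_1,\theta_2$ and let $n \geq 0$ be the smallest index at which the $(n{+}1)$st digits differ. Then $\phi_i := D^n(\theta_i)$ have opposite leading binary digits, so one lies in $(0,1/2)$ and the other in $(1/2,1)$. The Jordan curve
$$\Gamma := R_c(\phi_1) \cup \{f_c^n(z)\} \cup R_c(\phi_2) \cup \{\infty\}$$
separates $\hat{\mathbb{C}}$ into two disks, one of which contains the ray $R_c(0)$ and hence $\beta$, and the other contains $R_c(1/2)$ and hence $-\beta$. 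The spine $[-\beta,\beta]$ is a connected subset of $K(f_c)$ meeting both components, so it must cross $\Gamma$; since $\Gamma \cap K(f_c) = \{f_c^n(z)\}$, this forces $f_c^n(z) \in [-\beta,\beta]$. Biaccessibility of $f_c^n(z)$ (inherited from $z$ via forward iteration, since $\phi_1 \neq \phi_2$) excludes the endpoints $\pm\beta$, so $f_c^n(z) \in (-\beta,\beta)$ and $z$ lies in $J(f_c) \cap f_c^{-n}((-\beta,\beta))$, as required.

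The main technical obstacle is the equivalence used in (i) between biaccessibility of $z \in J(f_c)$ and disconnectedness of $K(f_c) \setminus \{z\}$: this rests on local connectedness and prime-end theory, which I would either cite from \cite{Za-biacc} or derive directly from continuity of the Carathéodory loop. A secondary subtlety is the uniaccessibility of $\pm\beta$, needed to promote $f_c^n(z)\in[-\beta,\beta]$ to $f_c^n(z)\in(-\beta,\beta)$; this is standard when $\beta$ is repelling, and the parabolic case $c = 1/4$ (where $\alpha=\beta$) is handled as a limit. Once these ingredients are in hand, the combinatorial ``straddling'' argument with binary expansions is short and conceptual.
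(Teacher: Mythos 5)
Your overall architecture matches the paper's: prove that $J(f_c) \cap (-\beta,\beta)$ consists of biaccessible points, propagate biaccessibility backward to preimages, and for the reverse inclusion use a diameter-straddling argument to push a forward iterate onto the open spine. Your step (ii) (counting the four preimage angles) is a correct and slightly cleaner replacement for the paper's ``take the smallest $n$; $f^n$ is then a local homeomorphism at $x$'' phrasing, and your explicit Jordan curve $\Gamma$ just names what the paper leaves implicit in the $\subseteq$ direction; both of those parts are fine.

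The genuine gap is in step (i). You assert ``removing $z$ separates this arc, and since $(-\beta,\beta)$ actually separates $K(f_c)$ into two nonempty continua, the set $K(f_c)\setminus\{z\}$ is disconnected.'' Neither the interpolated claim nor the inference is justified. Removing $z$ from the spine disconnects the spine, but says nothing about disconnecting $K(f_c)$: the two half-spines could a priori be rejoined in $K(f_c)\setminus\{z\}$ by a detour through the closure of a bounded Fatou component incident to $z$. Even if one granted that $K(f_c)\setminus(-\beta,\beta)$ has two components $K^{\pm}$, the set $K(f_c)\setminus\{z\}$ also contains both open arcs $(-\beta,z)$ and $(z,\beta)$, which touch both $K^{+}$ and $K^{-}$, so disconnectedness still does not follow. (There is also a mismatch with the valence criterion you invoke: the standard statement --- McMullen, Theorem~6.6, which the paper cites later --- counts components of $J(f_c)\setminus\{z\}$, not of $K(f_c)\setminus\{z\}$.) This is precisely the point the paper takes care to argue. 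It sets $V:=R_c(0)\cup[-\beta,\beta]\cup R_c(1/2)$, writes $\mathbb{C}\setminus V=A_1\cup A_2$, and shows $z$ is accumulated by the basin of infinity from within both $A_1$ and $A_2$: if not, some neighborhood of $z$ on one side would lie in a single bounded Fatou component $U$, forcing a piece of the spine to sit in $\overline{U}$ in a way that contradicts the definition of regulated arc (a regulated arc meets $\overline{U}$ in at most two radial arcs, and $\partial U$ cannot disconnect the intersection $\overline{U}\cap I$). That is the missing ingredient; without it, step (i) has not established that two external rays land at $z$, and the $\supseteq$ inclusion is not proved.
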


\begin{proof}
Let $f = f_c$, and $x \in J(f) \cap (-\beta, \beta)$. The set $V := R_c(0) \cup [-\beta, \beta] \cup R_c(1/2)$ disconnects the plane
in two parts, $\mathbb{C} \setminus V = A_1 \cup A_2$. We claim that $x$ is the limit of points in the basin of infinity $U_\infty$ on 
both sides of $V$, i.e. for each $i = 1, 2$ there exists a sequence $\{x_n\}_{n \in \mathbb{N}} \subseteq A_i \cap U_\infty$ with $x_n \to x$; since the Riemann 
mapping $\Phi$ extends continuously to the boundary, this is enough to prove that there exist two external angles $\theta_1 \in (0, 1/2)$
and $\theta_2 \in (1/2, 1)$ such that $R_c(\theta_1)$ and $R_c(\theta_2)$ both land on $x$. Let us now prove the claim; if it is not true, 
then there exists an open neighborhood $\Omega$ of $x$ and an index $i\in\{1, 2\}$ such that $\Omega \cap A_i$ is connected and contained in the 
interior of the filled Julia set $K(f)$, hence $\Omega \cap A_i$ is contained in some bounded Fatou component. This implies that 
$\Omega \cap V$ lies in the closure of a bounded Fatou component, and $x$ on its boundary. However, this contradicts the definition 
of regulated arc, because if $U$ is a bounded Fatou component intersecting a regulated arc $I$, then $\partial U \cap I$ 
does not disconnect $\overline{U} \cap I$.
Suppose now that $x \in J(f)$ is such that 
 $f^n(x)$ belongs to $(-\beta, \beta)$ for some $n$. Then by the previous argument $f^n(x)$ is biaccessible, and since $f$ is a local 
homeomorphism outside the spine, $x$ is also biaccessible.

Conversely, suppose $x$ is biaccessible, and the two rays at angles $\theta_1$ and $\theta_2$ land on $x$, with $0 < \theta_1 < \theta_2 < 1$. 
Then there exists some $n$ for which $1/2 \leq D^n(\theta_2) - D^n(\theta_1) < 1$, hence $R_c(D^n(\theta_1))$ and $R_c(D^n(\theta_2))$ 
must lie on opposite sides with respect to the spine, and since they both land on $f^n(x)$, then $f^n(x)$ belongs to the spine. 
Since the point $\beta$ is not biaccessible (\cite{Mc}, Theorem 6.10), $f^n(x)$ must belong to $(-\beta, \beta)$. 
\end{proof}

\begin{lemma}
We have that $\alpha \in [0, c]$. 
\end{lemma}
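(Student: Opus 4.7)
The plan is to show that $\alpha$ is a cut point of $K(f_c)$ which separates the critical point $0$ from the critical value $c$; since the regulated arc $[0,c]$ is the unique arc in $K(f_c)$ with those endpoints, such a separation forces $\alpha \in [0,c]$.

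First, I would pin down the $f_c$-preimages of $\alpha$. Combining $f_c(\alpha) = \alpha$ (i.e.\ $\alpha - c = \alpha^2$) with $z^2 + c = \alpha$ yields $z^2 = \alpha^2$, so $f_c^{-1}(\alpha) = \{\alpha, -\alpha\}$. Then I would examine the regulated arc $[-\alpha, \alpha]$: both of its endpoints collapse to $\alpha$ under $f_c$, so $f_c$ fails to be injective on this arc; since $0$ is the only critical point, the folding must occur at $0$. This forces $0 \in (-\alpha, \alpha)$ and shows that $f_c$ maps each half $[0,\alpha]$ and $[-\alpha, 0]$ homeomorphically onto the regulated arc $[c, \alpha]$, sending $0 \mapsto c$ and $\pm \alpha \mapsto \alpha$.

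To finish, I would invoke the characteristic-wake picture at the $\alpha$ fixed point. When $c$ lies outside the main cardioid, $\alpha$ is repelling and biaccessible by Lemma~\ref{biacc_lemma}; the external rays landing at $\alpha$ partition $\mathbb{C} \setminus \{\alpha\}$ into sectors, and by the standard orbit-portrait analysis the critical value $c$ lies in the characteristic wake while the critical point $0$ lies in the pre-characteristic sector, which maps $2$-to-$1$ onto the characteristic wake through the fold at $0$. Consequently every arc in $K(f_c)$ joining $0$ to $c$ must pass through $\alpha$, giving $\alpha \in [0,c]$.

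The main obstacle is this last separation step: establishing cleanly that the critical point and critical value sit in different $\alpha$-sectors. The boundary situations ($c = 0$, where $\alpha$ coincides with the critical point, and $c = 1/4$, where $\alpha = \beta$) need to be verified by direct inspection, and the interior-of-cardioid range requires the observation that $0$, $\alpha$, $c$ all lie in the same bounded Fatou component, so that $\alpha \in [0,c]$ follows from the parametrization of radial arcs. In the generic repelling case, however, everything reduces to Milnor's orbit-portrait formalism for the $\alpha$-wake.
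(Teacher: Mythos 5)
Your argument is correct but takes a genuinely different route from the paper, whose own proof is just three lines: it quotes (\cite{Za-biacc}, Lemma 5) for the positional fact $\alpha \in (-\beta, 0)$ on the spine, deduces $-\alpha \in (\beta, 0)$ by the $z \mapsto -z$ symmetry, pushes forward by $f$ (a homeomorphism on $[\beta, 0]$) to get $\alpha = f(-\alpha) \in (\beta, c)$, and then concludes $\alpha \in (0, c)$ since $0 \in (\alpha, \beta)$. You instead rebuild the local picture from scratch ($f_c^{-1}(\alpha) = \{\pm\alpha\}$, the fold at $0$, $f_c$ mapping $[0,\alpha]$ homeomorphically onto $[c,\alpha]$) and then delegate the decisive separation of $0$ and $c$ to Milnor's orbit-portrait analysis at $\alpha$. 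That separation step is the real content and is sound: the critical value lies in the characteristic (smallest) sector, the critical point lies in a different one, and since the rays bounding those sectors miss $K(f_c)$, the point $\alpha$ alone disconnects $0$ from $c$ in $K(f_c)$, forcing $\alpha \in [0,c]$. Note, though, that your fold analysis, while correct, is not load-bearing for the conclusion — it gives nice supplementary structure but does not by itself produce the separation, which comes entirely from the orbit-portrait input. One small circularity to fix: invoking Lemma~\ref{biacc_lemma} to establish that $\alpha$ is biaccessible is awkward, since for the fixed point $\alpha$ that lemma would require $\alpha \in (-\beta,\beta)$, essentially the statement being proved; the cleaner input is simply that $q \geq 2$ dynamic rays land at $\alpha$ whenever $c$ lies outside the main cardioid (the rotation number $p/q$ has $q\geq 2$). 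In sum, the paper's proof is more economical because Zakeri's lemma already encodes the needed positional fact and the rest is elementary arc-pushing, while yours is more self-contained in the local analysis near $0$ and $\pm\alpha$ but leans on the heavier global orbit-portrait machinery for the key step.
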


\begin{proof}
Indeed, since $\alpha \in (-\beta, 0)$ (\cite{Za-biacc}, Lemma 5), we have 
$-\alpha \in (\beta, 0)$ and $\alpha = f(-\alpha) \in (\beta, c)$. Thus, since $0 \in (\alpha, \beta)$ 
we have  $\alpha \in (0, c)$.
\end{proof}
 
\begin{lemma} \label{increase}
For $x \in [0, \beta)$, we have $x \in (f(x), \beta)$.
\end{lemma}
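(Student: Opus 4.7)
The plan is to exploit the algebraic factorization
\[
f(x) - x \;=\; x^2 - x + c \;=\; (x-\alpha)(x-\beta),
\]
which holds because $\alpha$ and $\beta$ are the two fixed points of $f_c$. The previous lemma locates $\alpha$ on the regulated arc $[0, c]$, i.e.\ on the opposite side of the critical point $0$ from $\beta$ along the spine $[-\beta, \beta]$; in particular $\alpha \notin (0, \beta)$.

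First I would handle $x$ in the (real) interior $(0, \beta)$: the two factors $(x - \alpha)$ and $(x - \beta)$ have opposite signs (the first nonnegative since $\alpha \le 0 \le x$ by the previous lemma, the second strictly negative since $x < \beta$), so $f(x) - x < 0$, giving $f(x) < x < \beta$ and hence $x \in (f(x), \beta)$. Then I would handle the endpoint $x = 0$ separately: $f(0) = c$ sits on the spine strictly on the opposite side of $0$ from $\beta$ by the previous lemma, so $0 \in (c, \beta) = (f(0), \beta)$, completing the range $x \in [0, \beta)$.

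The only delicate point is the passage from the algebraic sign computation to the geometric statement phrased in terms of the regulated arc $(f(x), \beta)$. Along the real slice this is transparent, since regulated arcs are just real subintervals; more generally, one invokes $f$-invariance of the spine together with the fact that $\beta$ is a repelling fixed endpoint, so that points of $[0, \beta)$ are pushed away from $\beta$ by $f$. This orientation check is the only step that is not pure formal algebra, and I expect it to be the main (though minor) obstacle.
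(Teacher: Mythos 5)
Your algebraic argument via $f(x)-x = (x-\alpha)(x-\beta)$ is correct and pleasantly explicit when $c$ (and hence $x$, $\alpha$, $\beta$) is real: there the regulated arcs are genuine real subintervals, the previous lemma gives $\alpha \le 0$, and the sign computation immediately yields $f(x) < x < \beta$. In that restricted setting your proof is simpler and more concrete than the paper's.

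However, the lemma is stated and used in the general setting of a topologically finite quadratic polynomial with $c$ complex (it feeds into Proposition \ref{top_finite}, Lemma \ref{exttree}, and Proposition \ref{qstar}, which are invoked along the complex principal veins $v_{p/q}$). There $[0,\beta)$ and $(f(x),\beta)$ are regulated arcs in the filled Julia set, not real intervals, and the sign of a complex product $(x-\alpha)(x-\beta)$ carries no information about the tree order. You flag this yourself as ``the only delicate point,'' but the remedy you sketch --- invoke $f$-invariance of the spine and the repelling nature of $\beta$, so that ``points of $[0,\beta)$ are pushed away from $\beta$'' --- is circular: the repelling property only gives the conclusion for $x$ in a small neighbourhood of $\beta$, whereas the lemma asserts it for every $x \in [0,\beta)$. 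The missing step is exactly what the paper supplies: set $S = \{x \in [0,\beta] : x \in (\beta,f(x))\}$, note $S$ is open and nonempty (near $\beta$, by the repelling property), and observe that a boundary point of $S$ other than $\beta$ would have to be a fixed point of $f$ on $[0,\beta]$, which cannot exist since $\alpha \notin [0,\beta]$. In the real case that connectedness argument is equivalent to your sign computation, but in the complex case it is the whole proof, and your proposal does not supply it.
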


\begin{proof}
Let us consider the set $S = \{ x \in [0, \beta] \ : \ x \in (\beta, f(x)) \}$. The set is open 
by continuity of $f$. Since the $\beta$ fixed point is repelling, the set $S$ contains points in a neighborhood of $\beta$, 
so it is not empty. Suppose $S \neq [0, \beta)$ and let $x \in \partial S$, $x \neq \beta$. By continuity of $f$, $x$ must be a fixed point 
of $f$, but the only fixed point of $f$ in the arc is $\beta$.
\end{proof}

For more general properties of biaccessibility we refer to \cite{Za-biacc}.

\section{Laminations} \label{section:lami}

A powerful tool to construct topological models of Julia sets and the Mandelbrot set is 
given by laminations, following Thurston's approach. As we will see, laminations represent equivalence 
relations on the boundary of the disk arising from external rays which land on the same point.
We now give the basic definitions, and refer to \cite{ThLam} for further details.

A \emph{geodesic lamination} $\lambda$ is a set of hyperbolic geodesics in the closed unit disk $\overline{\mathbb{D}}$, called 
the \emph{leaves} of $\lambda$, such that no two leaves intersect in $\mathbb{D}$, and the union of all leaves 
is closed. 

A \emph{gap} of a lamination $\lambda$ is the closure of a component of the complement of the union of all leaves.
In order to represent Julia sets of quadratic polynomials, we need to restrict ourselves to \emph{invariant 
laminations}.

Let $d \geq 2$. The map $g(z) := z^d$ acts on the boundary of the unit disk, hence it induces a dynamics on the set of leaves.
Namely, the image of a leaf $\overline{pq}$ is defined as the leaf joining the images of the endpoint: 
$g(\overline{pq}) = \overline{g(p)g(q)}$. A lamination $\lambda$ is \emph{forward invariant} 
if the image of any leaf $L$ of $\lambda$ still belongs to $\lambda$. Note that the image leaf may be \emph{degenerate}, 
i.e. consist of a single point on the boundary of the disk.
 
A lamination is \emph{invariant} if in addition to being forward invariant it satisfies the additional conditions:
\begin{itemize}
 \item \emph{Backward invariance}: if $\overline{pq}$ is in $\lambda$, then there exists a collection
of $d$ disjoint leaves in $\lambda$, each joining a preimage of $p$ to a preimage
of $q$.
\item 
\emph{Gap invariance}: for any gap $G$, the hyperbolic convex hull of the image of
$G_0 = \overline{G} \cap S^1$ is either a gap, a leaf, or a single point.
\end{itemize}

In this paper we will only deal with quadratic polynomials, so $d = 2$ and the invariant laminations for 
the map $g(z) = z^2$ will be called \emph{invariant quadratic laminations}. 
A leaf of maximal length in a  lamination is called a \emph{major leaf}, and its image a \emph{minor leaf}.
Typically, a quadratic invariant lamination has $2$ major leaves, but the minor leaf is always unique.

If $J(f_c)$ is a Julia set of a quadratic polynomial, one can define the equivalence 
relation $\sim_c$ on the unit circle $\partial \mathbb{D}$ by saying that $\theta_1 \sim_c \theta_2$ if the rays $R_c(\theta_1)$ and
$R_c(\theta_2)$ land on the same point. 

From the equivalence relation $\sim_c$ one can construct a quadratic invariant lamination in the following way.
Let $E$ be an equivalence class for $\sim_c$. If $E = \{ \theta_1, \theta_2 \}$ contains two elements, then we define the leaf $L_E$
as $L_E := (\theta_1, \theta_2)$. If $E = \{\theta\}$ is a singleton, then we define $L_E$ to be the degenerate leaf $L_E:= \{\theta\}$. 
Finally, if $E = \{\theta_1, \dots, \theta_k\}$ contains more than two elements, 
with $0 \leq \theta_1 < \theta_2 < \dots < \theta_k < 1$, then we define $L_E$ to be the 
union of the leaves $L_E := (\theta_1, \theta_2) \cup (\theta_2, \theta_3) \cup \dots \cup (\theta_k, \theta_1)$. Finally, we let 
the associated lamination $\lambda_c$ be
$$\lambda_c := \bigcup_{E \textup{ equiv. class of }\sim_c} L_E.$$ 
The lamination $\lambda_c$ is an invariant quadratic lamination.
The equivalence relation $\sim_c$ can be extended to a relation $\cong_c$ on the closed disk 
$\overline{\mathbb{D}}$ by taking convex hulls, and 
the quotient of the disk by $\cong_c$ is a model for the Julia set:

\begin{theorem}[\cite{DoCompact}]
If the Julia set $J(f_c)$ is connected and locally connected, then it is homeomorphic to the quotient
of $\overline{\mathbb{D}}$ by the equivalence relation $\cong_c$. 
\end{theorem}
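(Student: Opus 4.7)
The plan is to build a continuous surjection $\pi:\overline{\mathbb{D}}\to K(f_c)$ whose point-fibers coincide exactly with the equivalence classes of $\cong_c$, so that the induced map $\overline{\mathbb{D}}/\cong_c\to K(f_c)$ is a continuous bijection from a compact space to a Hausdorff space, hence automatically a homeomorphism.

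On the boundary $\partial\mathbb{D}$, I would set $\pi=\gamma$, the Carath\'eodory loop, which exists because $J(f_c)$ is locally connected; by the very definition of $\sim_c$, the fibers of $\gamma$ on $\partial\mathbb{D}$ are precisely the equivalence classes of $\sim_c$. To extend across the interior, I would treat the Fatou components. Since $K(f_c)$ is locally connected, every bounded Fatou component $U$ is a Jordan domain, so its internal Riemann map extends continuously to $\overline{\phi_U}:\overline{\mathbb{D}}\to\overline{U}$. The set $E_U:=\{\theta:R_c(\theta)\text{ lands on }\partial U\}$ inherits a cyclic order matching that of $\partial U$, and its convex hull in $\overline{\mathbb{D}}$ bounds an open gap region $G_U$. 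Choosing an orientation-preserving homeomorphism $\overline{G_U}\to\overline{\mathbb{D}}$ that carries $E_U$ onto $S^1$ compatibly with $\gamma$ versus $\overline{\phi_U}|_{S^1}$, I can define $\pi$ on $\overline{G_U}$ by composition with $\overline{\phi_U}$.

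I would then verify that $\overline{\mathbb{D}}$ is exhausted by $\partial\mathbb{D}$, the convex hulls of multi-element $\sim_c$-classes, and the gap regions $G_U$; that the various definitions of $\pi$ agree on overlaps; and that $\pi$ is globally continuous. The last point reduces to the fact that the diameters of the Fatou components $U$ — equivalently of the gaps $G_U$ — shrink to zero near any accumulation in $J(f_c)$, which is a direct consequence of local connectivity of $K(f_c)$. A direct fiber-by-fiber verification then shows $\pi(x)=\pi(y)$ iff $x\cong_c y$: on $S^1$ this is the definition of $\sim_c$; on each gap $\pi$ is injective and disjoint from the other gaps; and on the convex hull of a finite $\sim_c$-class the map is constant by construction.

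The hardest step is the third: establishing a bijective correspondence between gaps of $\lambda_c$ and bounded Fatou components, and ensuring that $\pi$ is continuous at the boundary of each gap. This is the technical heart of Douady's pinched-disk model, and it is precisely where the hypothesis of local connectivity is used essentially — without it one cannot guarantee that each Fatou component is a Jordan domain, that the Carath\'eodory loop is defined, or that gap diameters shrink. Once this combinatorial and metric compatibility is in hand, the compactness-plus-Hausdorffness argument finishes the proof with essentially no further work.
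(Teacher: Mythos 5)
The paper does not prove this theorem; it is stated as a citation to Douady \cite{DoCompact}, so there is no in-paper argument to compare yours against. Your outline is the standard pinched-disk construction and is sound at the level of a sketch.

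Two remarks are worth making. Your map $\pi$ is built with target $K(f_c)$, not $J(f_c)$: since $\cong_c$ collapses only the convex hulls of $\sim_c$-classes, the Fatou gaps of $\lambda_c$ survive as two-dimensional pieces of the quotient, so $\overline{\mathbb{D}}/\cong_c$ models the filled Julia set $K(f_c)$, while $J(f_c)$ is modelled by the image of $S^1$ alone. The statement as transcribed in the paper says ``$J(f_c)$,'' which is a mild abuse whenever $K(f_c)$ has interior; your construction points to the correct target, so this is a flaw in the transcription, not in your argument.

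Second, you present ``every bounded Fatou component $U$ is a Jordan domain'' as immediate from local connectivity. Local connectivity of $J(f_c)$ gives, via Carath\'eodory, the continuous extension of $\phi_U$ to $\overline{\mathbb{D}}$; but injectivity of that extension on $S^1$ — equivalently, that no two internal rays of $U$ land at the same boundary point — is a further fact that uses the dynamics of the return map on $\partial U$, not a purely topological consequence. It belongs to the same ``technical heart'' you flag at the end, alongside the gap/Fatou-component bijection and the null-sequence property (only finitely many Fatou components have diameter $>\epsilon$, which is what drives global continuity of $\pi$ across gap boundaries). You have the right list of ingredients; just put the Jordan-domain step on it explicitly rather than treating it as free.
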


We define the the \emph{characteristic leaf} of a quadratic polynomial $f_c$ with Julia set connected and locally connected
to be the minor leaf of the invariant lamination $\lambda_c$. The endpoints of the characteristic leaf are 
called \emph{characteristic angles}.

\subsection{The abstract Mandelbrot set} \label{section:Mabs}

In order to construct a model for the Mandelbrot set, Thurston \cite{ThLam} defined the \emph{quadratic minor lamination} $QML$ 
as the union of the minor leaves of all quadratic invariant laminations (see Figure \ref{qml}). 

\begin{figure}[h!]  
\includegraphics[scale=0.5]{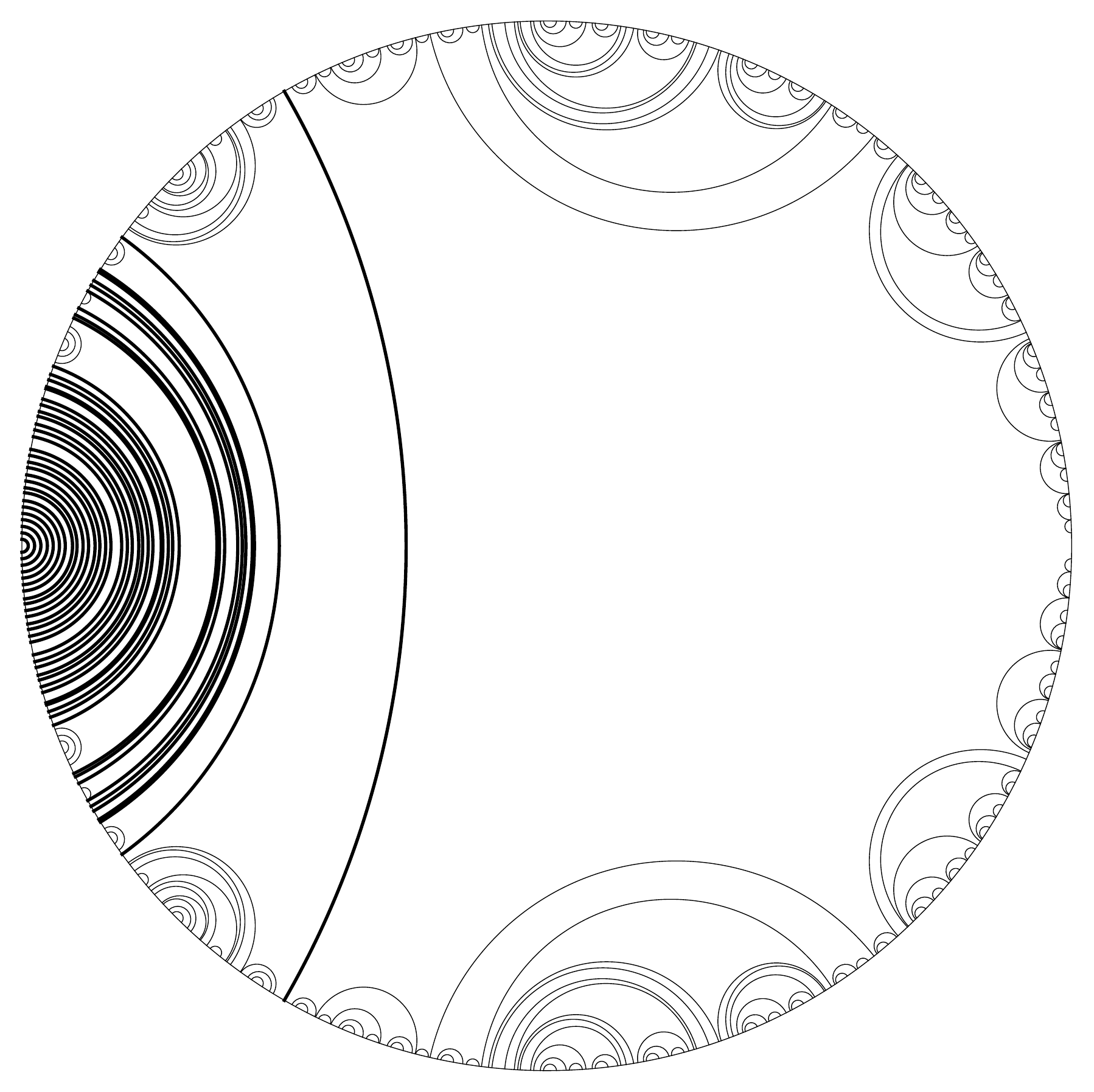}
\caption{Thurston's \emph{quadratic minor lamination}. The quotient of the unit disk by the equivalence relation 
given by the lamination is a topological model for $\mathcal{M}$. Leaves which are symmetric with respect
to complex conjugation (displayed thicker) correspond to rays landing on the real axis. }
\label{qml}
\end{figure}

As in the Julia set case, the lamination determines an equivalence relation $\cong_M$ on $\overline{\mathbb{D}}$ by identifying 
points on the same leaf, and also points in the interior of finite ideal polygons whose sides are leaves. The quotient 
$$\mathcal{M}_{abs} := \overline{\mathbb{D}}/\cong_M$$
is called \emph{abstract Mandelbrot set}. It is a compact, connected and locally connected space.
Douady \cite{DoCompact} constructed a continuous surjection 
$$\pi_M : \mathcal{M} \to \mathcal{M}_{abs}$$
which is injective if and only if $\mathcal{M}$ is locally connected.

The idea behind the construction is that leaves of $QML$ connect external angles whose corresponding rays in parameter space land on the 
same point. However, since we do not know whether $\mathcal{M}$ is locally connected, additional care is required. 
Indeed, let $\sim_M$ denote the equivalence relation on $\partial \mathbb{D}$ induced by the lamination $QML$, and   
$\theta_1 \asymp_M \theta_2$ denote that the external rays $R_M(\theta_1)$ and $R_M(\theta_2)$ land on the same point. 
The following theorem summarizes a few key results comparing the analytic and combinatorial models of the Mandelbrot set:

\begin{theorem} \label{yoccoz}
Let $\theta_1, \theta_2 \in \mathbb{R}/\mathbb{Z}$ be two angles. Then the following are true:
\begin{enumerate}
\item if $\theta_1 \asymp_M \theta_2$, then $\theta_1 \sim_M \theta_2$; 
\item if $\theta_1 \sim_M \theta_2$ and $\theta_1, \theta_2$ are rational, then 
$\theta_1 \asymp_M \theta_2$;
\item if $\theta_1 \sim_M \theta_2$ and $\theta_1, \theta_2$ are not infinitely renormalizable, then 
$\theta_1 \asymp_M \theta_2$.
\end{enumerate}
\end{theorem}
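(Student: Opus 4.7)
The three statements have markedly different flavors, so I would handle them in separate steps. The first is soft and essentially definitional once the parameter--dynamical dictionary is available, while (2) and (3) are genuinely deep landing results of Douady--Hubbard and Yoccoz respectively.

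For (1), the direction analytic $\Rightarrow$ combinatorial, I would use the parameter--dynamical correspondence of Douady--Hubbard. Assume $R_M(\theta_1)$ and $R_M(\theta_2)$ both land at a parameter $c$, which necessarily lies on $\partial\mathcal{M}$, so $J(f_c)$ is connected. The key dictionary is that an angle $\theta$ lies in the accumulation set of $R_M$ at $c$ if and only if the dynamical ray $R_c(\theta)$ accumulates at the critical value of $f_c$. Thus the two dynamical rays $R_c(\theta_1), R_c(\theta_2)$ both land at $c$, and by the construction of $\lambda_c$ from $\sim_c$ the pair $\{\theta_1,\theta_2\}$ is contained in the equivalence class of characteristic angles, so the leaf $(\theta_1,\theta_2)$ is precisely the minor leaf of $\lambda_c$. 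By the very definition of $QML$ as the union of minor leaves of all quadratic invariant laminations, $(\theta_1,\theta_2)$ belongs to $QML$, giving $\theta_1 \sim_M \theta_2$.

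For (2), I would invoke the Douady--Hubbard landing theorem for rational rays: every rational angle $\theta$ has $R_M(\theta)$ landing at a point of $\partial\mathcal{M}$ --- at the root of a hyperbolic component when $\theta$ is periodic under the doubling map, and at a Misiurewicz point when $\theta$ is strictly preperiodic. The rational leaves of $QML$ are exactly the minor leaves coming from these postcritically finite combinatorial classes: a periodic rational leaf corresponds to a hyperbolic component $W$, whose pair of characteristic angles are precisely the two rays co-landing at the root of $W$, while a preperiodic rational leaf corresponds to a Misiurewicz point at which, again by Douady--Hubbard, the endpoints of the leaf co-land. Unwinding the classification in each case, if $\theta_1 \sim_M \theta_2$ are both rational then they are exactly the two angles listed as characteristic angles of a common postcritically finite parameter, yielding $\theta_1 \asymp_M \theta_2$.

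For (3), the statement reduces to Yoccoz's theorem that $\mathcal{M}$ is locally connected at every parameter $c \in \partial\mathcal{M}$ that is at most finitely renormalizable. Local connectedness at such $c$ forces the Carath\'eodory extension of $\Phi_M$ at $c$ to collapse exactly those angles identified by $\sim_M$, so that the fiber $\pi_M^{-1}(c)$ is a single $\cong_M$-class, and two combinatorially equivalent angles accumulating at $c$ must actually land at the common point $c$. Combined with (1), this yields the equivalence in the finitely renormalizable range. The principal obstacle is Yoccoz's theorem itself, whose proof requires the full machinery of Yoccoz puzzles, tableaux, and quantitative control of annulus moduli at each renormalization level; in practice I would invoke it as a black box rather than reconstruct it. The restriction to finitely renormalizable combinatorics is essential, since beyond that range local connectedness of $\mathcal{M}$ is exactly the open MLC conjecture.
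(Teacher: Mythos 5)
Your overall route (Douady--Hubbard for the rational case, Thurston's QML classification for the combinatorial bookkeeping, Yoccoz's parapuzzle theorem for the finitely renormalizable case) is the same as the paper's, which simply cites Thurston's Theorem~A.3 for (1) and (2) and gives a short parapuzzle argument for (3). Two of your steps, however, are not quite sound as written.

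In (1), your dictionary lemma --- that $R_M(\theta)$ accumulating at $c$ is equivalent to $R_c(\theta)$ accumulating at the critical value --- is stated at a level of generality where it is not a ``soft'' fact, and you then silently upgrade accumulation to landing (``the two dynamical rays $R_c(\theta_1), R_c(\theta_2)$ both land at $c$''). For parameters with non-locally-connected Julia set the dynamical rays need not land at all, and the equivalence relation $\sim_c$ used to build $\lambda_c$ from landing behavior is not available. Thurston's argument for this implication is entirely combinatorial (one checks the conditions of Proposition~\ref{critQML} directly from the co-landing of the parameter rays), precisely to avoid this issue; as written, your deduction only works when $J(f_c)$ is locally connected.

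In (3), the gap is more serious: you invoke Yoccoz's local connectivity theorem at ``such $c$'' and conclude that ``two combinatorially equivalent angles accumulating at $c$ must actually land at the common point $c$,'' but nothing you have said establishes that $\theta_1$ and $\theta_2$ accumulate at the same parameter $c$ in the first place. The combinatorial relation $\sim_M$ lives in the abstract disk model, and you need a bridge from $\sim_M$-equivalence to a common accumulation point before local connectivity can finish the job. This bridge is exactly what the paper supplies: each parapuzzle piece at depth $n$ is bounded by pairs of rational parameter rays which co-land (the rational case (2)), the $\sim_M$-equivalence of $\theta_1, \theta_2$ forces both rays into the same parapuzzle piece at every depth, and Yoccoz's theorem says the nested intersection is a single point $c$. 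Hence both rays have impression contained in $\{c\}$ and so land there. Without the trapping-in-nested-parapuzzles step, invoking local connectivity at $c$ begs the question of which angles are associated to $c$ at all.
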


\begin{proof}
(1) and (2) are contained in (\cite{ThLam}, Theorem A.3). (3) follows from Yoccoz's theorem on landing of rays 
at finitely renormalizable parameters (see \cite{Hu} for the proof). Indeed, Yoccoz proves that external rays 
$R_M(\theta)$ with non-infinitely renormalizable combinatorics land, and moreover that the intersections of 
nested parapuzzle pieces contain a single point. Along the boundary of each puzzle piece lie pairs of external rays with rational 
angles (see also \cite{Hu}, sections 5 and 12) which land on the same point, and since the intersection of the nested 
sequence of puzzle pieces is a single point $c \in \partial \mathcal{M}$, the rays $\theta_1$ and $\theta_2$ land on the same point $c$.
\end{proof}

The following criterion makes it possible to check whether a leaf belongs to the quadratic 
minor lamination by looking at its dynamics under the doubling map:

\begin{proposition}[\cite{ThLam}] \label{critQML}
A leaf $m$ is the minor leaf of some invariant quadratic lamination (i.e. it belongs to $QML$) if and only if 
the following three conditions are met:
\begin{itemize}
\item[(a)] all forward images of $m$ have disjoint interiors; 
\item[(b)] the length of any forward image of $m$ is never less than the length of $m$;
\item[(c)] if $m$ is a non-degenerate leaf, then $m$ and all leaves on the forward orbit of $m$ are disjoint 
from the interiors of the two preimage leaves of $m$ of length at least $1/3$.
\end{itemize}
\end{proposition}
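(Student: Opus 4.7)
The plan is to prove both directions of this characterization. For necessity, suppose $m$ is the minor leaf of some invariant quadratic lamination $\lambda$. Property (a) is immediate: by forward invariance all iterates $g^n(m)$ lie in $\lambda$, so no two can cross, i.e.\ their interiors are disjoint. For (b), write $m = g(M)$ for a major leaf $M$, which by definition has maximal length among leaves of $\lambda$. If some forward image $g^k(m)$ were strictly shorter than $m$, then applying backward invariance to $g^k(m)$ produces a pair of preimage leaves, at least one of which realizes the full preimage length; iterating this $k$ further times yields a leaf of $\lambda$ strictly longer than $M$, contradicting maximality. For (c), the two preimage leaves of $m$ of length $\geq 1/3$ are precisely the two major leaves of $\lambda$, hence also belong to $\lambda$, so again the non-crossing property of $\lambda$ prevents any iterate of $m$ from entering their interiors.

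For sufficiency, assume $m$ satisfies (a), (b), (c), and construct an invariant quadratic lamination $\lambda_m$ whose minor is $m$. The construction is by iterated pullback. Put the forward orbit $\mathcal{O}(m) = \{g^n(m) : n \geq 0\}$ into $\lambda_m$, together with the two length-$\geq 1/3$ preimage leaves $M_1, M_2$ of $m$, which will serve as the majors. The leaves $M_1, M_2$ cut the closed disk into three regions, of which one, call it $C$, is the ``critical'' region (bounded only by pieces of $M_1 \cup M_2$ and the arc between them not containing $m$); the map $g$ sends each of the two non-critical regions homeomorphically onto the complement of $m$. Now inductively pull back: given a finite collection of already-placed leaves, each leaf $L \neq m$ already in $\lambda_m$ has two preimage leaves to adjoin, one in each non-critical region, and we define $\lambda_m$ to be the closure of the union of all these iterated lifts. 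Finally one checks that the resulting $\lambda_m$ is forward invariant (by construction the image of any lift eventually lands in $\mathcal{O}(m)$), backward invariant (by construction), and gap invariant.

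The main obstacle is verifying that $\lambda_m$ contains no crossing leaves, and this is exactly where hypotheses (a)--(c) are used. Condition (a) prevents crossings among the forward orbit of $m$. Condition (c) ensures that the entire forward orbit of $m$ lies outside the critical region $C$, so that pulling back leaves on $\mathcal{O}(m)$ through the two non-critical regions is unambiguous and crossing-free. Condition (b) is what forces $m$, and not some $g^k(m)$, to play the role of the minor leaf: it guarantees that the candidate majors $M_1, M_2$ really are of maximal length in $\lambda_m$, since their lengths exceed those of any iterated preimage of a leaf in $\mathcal{O}(m)$ precisely because the lengths along $\mathcal{O}(m)$ are bounded below by $|m|$. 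Once non-crossing is established, taking the closure preserves it, and identifying $m$ as the unique minor leaf completes the proof. The technical heart is the inductive non-crossing argument, which amounts to checking that at every pullback step the four possible lifts of a leaf distribute correctly relative to $M_1$ and $M_2$, a combinatorial verification that uses (c) in an essential way.
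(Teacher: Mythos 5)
This result is cited from Thurston \cite{ThLam} and is not proved in the paper, so there is no internal argument to compare against. Judged on its own merits, your sketch follows Thurston's general strategy (non-crossing and length arguments for necessity, iterated pullback for sufficiency), but it has real gaps.

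In the necessity direction, the argument for (b) is not a proof: backward invariance may well return the \emph{short} pair of preimage leaves, and the phrase ``at least one of which realizes the full preimage length'' is unjustified. The clean argument: let $k \geq 1$ be minimal with the length of $g^k(m)$ strictly smaller than that of $m$. Then $g^{k-1}(m) \in \lambda$ has length at least that of $m$, so it cannot be one of the two short preimage leaves of $g^k(m)$ (those have half the length of $g^k(m)$, hence strictly less than that of $m$); thus $g^{k-1}(m)$ is a long preimage, whose length is $1/2$ minus half the length of $g^k(m)$, which exceeds $1/2$ minus half the length of $m$, i.e. exceeds the length of the major $M$ --- contradicting maximality.

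In the sufficiency direction, you set up the pullback construction but explicitly defer the inductive non-crossing verification, which you rightly call the technical heart. That verification \emph{is} the content of the ``if'' direction; omitting it leaves the argument an outline, not a proof. Two further points would need care even in the outline: (i) you credit (c) alone with keeping the forward orbit of $m$ out of the central region $C$, but (c) only forbids crossing $M_1, M_2$; excluding an iterate $g^n(m)$ from lying entirely \emph{inside} $C$ also uses (b), since such a leaf maps forward into the short arc cut off by $m$, making $g^{n+1}(m)$ strictly shorter than $m$ unless $g^n(m) \in \{M_1, M_2\}$. (ii) Your pullback rule must be amended at $m$ itself: the short preimage pair of $m$ crosses $M_1$ and $M_2$ and must not be adjoined to $\lambda_m$.
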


For the rest of the paper we shall work with the abstract, locally connected model of $\mathcal{M}$ and study its dimension 
using combinatorial techniques; only at the very end (Proposition \ref{combvsanal}) we shall compare 
the analytical and combinatorial models and prove that our results hold for the actual Mandelbrot set even without assuming 
the MLC conjecture.

\section{Hubbard trees} \label{section:Htree}

Assume now that the polynomial $f = f_c(z) = z^2 + c$ has connected Julia set (i.e. $c\in \mathcal{M}$), and 
no attracting fixed point (i.e. $c$ lies outside the main cardioid). The \emph{critical orbit} of $f$ is the 
set $Crit(f) := \{ f^k(0) \}_{k \geq 0}$. Let us now give the fundamental

\begin{definition}
The \emph{Hubbard tree} $T$ for $f$ is the smallest regulated tree which contains
the critical orbit, i.e. 
$$T := \bigcup_{i, j \geq 0} [f^i(0), f^j(0)].$$
\end{definition}

Note that, according to this definition, the set $T$ need not be closed in general. We shall establish a few fundamental properties of 
Hubbard trees.

\begin{figure}
\fbox{ \includegraphics[width=0.95 \textwidth]{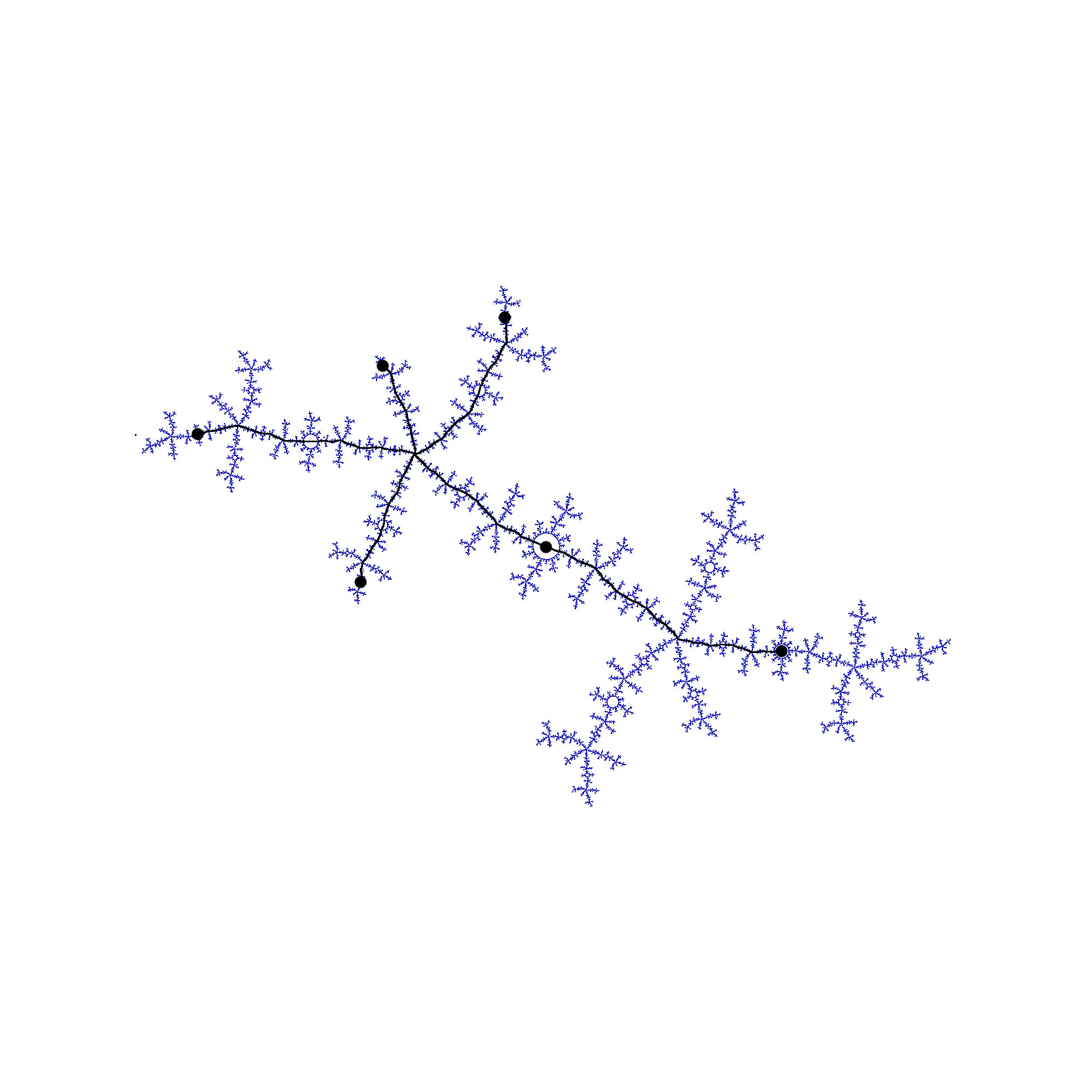}}
\caption{The Hubbard tree of the quadratic polynomial with characteristic leaf $(19/63, 20/63)$.
The map $f_c$ is postcritically finite, and the critical point belongs to a cycle of period $6$.
The parameter $c$ belongs to the principal vein in the $2/5$-limb.}
\end{figure}

\begin{lemma} \label{treebasic}
The following properties hold: 
\begin{enumerate}
 \item $T$ is the smallest forward-invariant set which contains the regulated arc $[\alpha, 0]$; 
\item $T = \bigcup_{n \geq 0}[\alpha, f^n(0)]$. 
\end{enumerate}
\end{lemma}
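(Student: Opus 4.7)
My plan is to prove (2) first and use it to deduce (1). The only real tool needed, beyond the previous lemma that $\alpha \in [0,c]$, is the standard tree identity: in any topological tree, the arc between two points $x,y$ is contained in $[x,z] \cup [y,z]$ for any third point $z$ (consider the median of the three points to see this). I will apply this repeatedly with $z = \alpha$.

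As a preliminary I would observe that $\alpha \in T$. Indeed, since $0 = f^0(0)$ and $c = f^1(0)$ both lie in $T$, the regulated arc $[0,c] = [f^0(0),f^1(0)]$ is contained in $T$ by definition; together with the preceding lemma giving $\alpha \in [0,c]$, this puts $\alpha$ in $T$. For claim (2), one direction is immediate: each $[\alpha,f^n(0)]$ is a regulated arc between two points of $T$, hence contained in $T$. Conversely, the tree identity applied with $z=\alpha$ yields $[f^i(0),f^j(0)] \subseteq [\alpha,f^i(0)] \cup [\alpha,f^j(0)]$, so every generator of $T$ lies in $\bigcup_{n\geq 0}[\alpha,f^n(0)]$.

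For claim (1), the containment $[\alpha,0] \subseteq T$ is immediate from the preliminary. To see forward invariance, I would use (2) and consider $f([\alpha,f^n(0)])$ for each $n$: if $0 \notin (\alpha,f^n(0))$, then $f$ restricts to a homeomorphism of the arc onto $[\alpha,f^{n+1}(0)] \subseteq T$; if $0$ lies in the interior, then $f([\alpha,f^n(0)]) = f([\alpha,0]) \cup f([0,f^n(0)]) = [\alpha,c] \cup [c,f^{n+1}(0)]$, and both of these arcs are in $T$ since their endpoints are. For minimality, let $S$ be any forward-invariant set containing $[\alpha,0]$; I would show by induction that $[\alpha,f^n(0)] \subseteq S$ for every $n$, which together with (2) forces $T \subseteq S$. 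The base case is the hypothesis. For the inductive step, if $0 \notin (\alpha,f^n(0))$, then $[\alpha,f^{n+1}(0)] = f([\alpha,f^n(0)]) \subseteq f(S) \subseteq S$; otherwise, the tree identity gives
\[
[\alpha,f^{n+1}(0)] \subseteq [\alpha,c] \cup [c,f^{n+1}(0)] = f([\alpha,0]) \cup f([0,f^n(0)]),
\]
and each piece lies in $f(S) \subseteq S$ by forward invariance applied to arcs already in $S$.

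The main obstacle, such as it is, is the bookkeeping around the folding of arcs by $f$ at the critical point: one has to keep track of when $0$ sits in the interior of the arc being pushed forward and invoke the tree identity to reassemble the image. Once that case analysis is set up, everything else is a direct application of tree convexity and the inductive use of forward invariance.
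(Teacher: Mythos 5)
Your proof is correct and takes essentially the same route as the paper: both hinge on the tree identity with apex $\alpha$ together with $\alpha\in[0,c]$ to get $T\subseteq\bigcup_n[\alpha,f^n(0)]$, and both obtain the converse inclusion and minimality by pushing arcs forward through the critical point, your version merely spelling out the fold-at-$0$ case analysis that the paper compresses into ``by definition $T$ is forward-invariant'' and ``$T_1=\bigcup_n f^n([\alpha,0])$.'' The one spot to tighten is the claim that $[\alpha,f^n(0)]\subseteq T$ because it is ``a regulated arc between two points of $T$'' --- the definition $T=\bigcup_{i,j}[f^i(0),f^j(0)]$ does not say $T$ is closed under taking arcs, but one more use of the tree identity with apex $0$ repairs it: $[\alpha,f^n(0)]\subseteq[\alpha,0]\cup[0,f^n(0)]\subseteq[0,c]\cup[0,f^n(0)]\subseteq T$.
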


\begin{proof}

Let now $T_1$ be the smallest forward-invariant set which contains the regulated arc $[\alpha, 0]$. 
By definition, $T$ is forward-invariant and contains $[\alpha, 0]$ since $\alpha \in [0, c]$, so 
$T_1 \subseteq T$.
Let now
$$T_2 := \bigcup_{n \geq 0} [\alpha, f^n(0)].$$ 
Since $[f^i(0), f^j(0)] \subseteq [\alpha, f^i(0)] \cup [\alpha, f^j(0)]$, then $T \subseteq T_2$.
By definition, 
$$T_1 = \bigcup_{n \geq 0} f^n([\alpha, 0]).$$
Since $f^i([\alpha, 0]) \supseteq [\alpha, f^i(0)]$, then $T_2 \subseteq T_1$, hence $T = T_1 = T_2$.
\end{proof}

The tree thus defined need not have finitely many edges. However, in the following we will restrict ourself to the case when $T$ 
is a finite tree. Let us introduce the definition: 

\begin{definition}
A polynomial $f$ is \emph{topologically finite} if the Julia set is locally connected and 
the Hubbard tree $T$ is homeomorphic to a tree with finitely many edges.
\end{definition}

Recall that a polynomial is called \emph{postcritically finite} if the critical orbit is finite. Postcritically 
finite polynomials are also topologically finite, but it turns out that the class of topologically 
finite polynomials is much bigger and indeed it contains all polynomials along the veins 
of the Mandelbrot set (see also section \ref{section:veins}).
 
\begin{proposition} \label{top_finite}
Let $f$ have locally connected  Julia set. 
Suppose there is an integer $n \geq 1$ such that $f^n(0)$ lies on the regulated arc $[\alpha, \beta]$, 
and let $N$ be the smallest such integer. 
Then $f$ is topologically finite, and the Hubbard tree $T$ of $f$ is given by 
$$T =  \bigcup_{i = 0}^N [\alpha, f^i(0)].$$
\end{proposition}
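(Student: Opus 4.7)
The plan is to prove both assertions simultaneously by showing that the finite union $T_N := \bigcup_{i=0}^N [\alpha, f^i(0)]$ coincides with the Hubbard tree $T$; topological finiteness will then follow at once, since $T_N$ is a finite union of regulated arcs and is therefore homeomorphic to a finite tree. By Lemma \ref{treebasic}(2), $T \supseteq T_N$ is immediate. For the reverse inclusion, by Lemma \ref{treebasic}(1) it suffices to show that $T_N$ is forward-invariant under $f$, since it trivially contains $[\alpha, 0]$.

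The key structural remark is that $T_N$ is a connected union of regulated arcs all passing through $\alpha$, hence is a subtree of the ambient regulated-tree structure on $K$. In particular, the regulated arc between any two of its points is itself contained in $T_N$. Consequently, to verify $f(T_N) \subseteq T_N$ it is enough to check that the image of each endpoint $f^i(0)$ ($0 \leq i \leq N$) lies in $T_N$: for then the image $f([\alpha, f^i(0)])$—whether $f$ restricts to a homeomorphism on the arc, or folds it at $0$ into $[\alpha, c] \cup [c, f^{i+1}(0)]$—is a connected set whose endpoints $\alpha, f^{i+1}(0)$ (and possibly $c = f(0)$) all lie in $T_N$, so it sits inside $T_N$ by the subtree property.

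For $0 \leq i \leq N-1$ this is tautological, so the real content is the boundary case $i = N$, which is where the hypothesis is used. I would split according to $f^N(0) \in [\alpha, \beta] = [\alpha, 0] \cup [0, \beta]$. In the sub-case $f^N(0) \in [\alpha, 0]$ one computes directly $f^{N+1}(0) \in f([\alpha, 0]) = [\alpha, c] \subseteq [0, f(0)] \subseteq T_N$. In the sub-case $f^N(0) \in [0, \beta]$ one has $f^{N+1}(0) \in f([0, \beta]) = [c, \beta] = [c, \alpha] \cup [\alpha, 0] \cup [0, \beta]$; the first two summands are already inside $T_N$, and if $f^{N+1}(0)$ falls in the third summand $[0, \beta]$, Lemma \ref{increase} forces $f^N(0)$ to separate $f^{N+1}(0)$ from $\beta$, whence $f^{N+1}(0) \in [0, f^N(0)] \subseteq [\alpha, f^N(0)] \subseteq T_N$ (using here that $[\alpha, f^N(0)]$ passes through $0$ in this sub-case).

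The step I expect to be most delicate is precisely this boundary analysis: controlling the folding of $f$ at the critical point when the arc $[\alpha, f^N(0)]$ crosses $0$, and invoking Lemma \ref{increase} to confine the new critical-orbit point $f^{N+1}(0)$ back inside the already-built part of the tree. Once the forward-invariance of $T_N$ is established, Lemma \ref{treebasic}(1) gives $T \subseteq T_N$, hence $T = T_N$, and both conclusions of the proposition follow.
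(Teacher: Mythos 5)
Your argument is correct and follows essentially the same route as the paper: establish $T_N\subseteq T$ from Lemma \ref{treebasic}(2), then show $T_N$ is forward-invariant and conclude $T\subseteq T_N$ from Lemma \ref{treebasic}(1), with the critical step being that $f^{N+1}(0)$ is confined inside $T_N$ via Lemma \ref{increase}. The only cosmetic difference is the case decomposition at the boundary step — you split at $0$ (giving sub-cases for where $f^{N+1}(0)$ lands in $[c,\alpha]\cup[\alpha,0]\cup[0,\beta]$), whereas the paper splits at $-\alpha$ (cases $f^N(0)\in[\alpha,-\alpha]$ versus the remainder), which packs the first two of your sub-cases into one — but the two decompositions are logically equivalent.
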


\begin{proof}
Let $T_N := \bigcup_{i = 0}^N [\alpha, f^i(0)]$. By Lemma \ref{treebasic} (2),  $T_N \subseteq T$.
Note now that for each $i$ we have 
$$f([\alpha, f^i(0)]) \subseteq [\alpha, c] \cup [\alpha, f^{i+1}(0)]$$
thus 
$$f(T_N) \subseteq T_N \cup [\alpha, f^{N+1}(0)].$$
Now, either $f^{N}(0)$ lies in $[\alpha, -\alpha]$, or by Lemma \ref{increase}, $f^N(0)$ lies between $\beta$ and $f^{N+1}(0)$.
In the first case, $[\alpha, f^{N+1}(0)] \subseteq [\alpha, c]$ and in the second case 
$[\alpha, f^{N+1}(0)] \subseteq [\alpha, f^N(0)]$; in both cases, $[\alpha, f^{N+1}(0)] \subseteq T_N$, so 
$T_N$ is forward-invariant and it contains $[\alpha, 0]$, so it contains $T$ by Lemma \ref{treebasic} (1). 
\end{proof}



\begin{proposition}
If the Julia set of $f$ is locally connected and the critical value $c$ is biaccessible, 
then $f$ is topologically finite.
\end{proposition}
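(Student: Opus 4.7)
The strategy is to reduce the statement to Proposition \ref{top_finite}: it suffices to exhibit an integer $n \geq 1$ such that $f^n(0)$ lies on the regulated arc $[\alpha,\beta]$. Once this is produced, Proposition \ref{top_finite} immediately gives both the local connectedness (from hypothesis) and the finite tree description $T = \bigcup_{i=0}^{n}[\alpha, f^i(0)]$, which is the definition of topological finiteness.

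The first step is to use biaccessibility to force the critical orbit onto the spine. Since $c$ is biaccessible it belongs to $J(f)$, so Lemma \ref{biacc_lemma} provides an integer $k \geq 0$ with $f^k(c) \in (-\beta,\beta)$. Setting $m := k+1 \geq 1$, this reads $f^m(0) \in (-\beta,\beta)$; let me take $m$ to be the smallest such index. If already $f^m(0) \in [\alpha,\beta]$, take $n = m$ and invoke Proposition \ref{top_finite}.

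Otherwise $f^m(0)$ lies in the complementary subarc $[-\beta,\alpha)$ of the spine, and I would push it forward once more. The key geometric input is the lemma above establishing $\alpha \in [0,c]$: this tells us that the regulated arc $[c,\beta]$ passes through $\alpha$, decomposing as $[c,\alpha] \cup [\alpha,\beta]$. Now the regulated arc $[-\beta, 0]$ contains the critical point only as an endpoint, and its intersection with the critical Fatou component (if one exists) is an initial subarc of the bisector; under these conditions $f$ maps regulated arcs to regulated arcs, so $f([-\beta,0]) = [\beta,c]$, and this restriction is a homeomorphism sending $-\beta \mapsto \beta$, $\alpha \mapsto \alpha$, and $0 \mapsto c$. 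Consequently $f$ carries the subarc $[-\beta,\alpha]$ homeomorphically onto $[\alpha,\beta]$, so $f^{m+1}(0) \in [\alpha,\beta]$. Setting $n := m+1$ and applying Proposition \ref{top_finite} completes the proof.

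The main obstacle is the justification in Step 3 that $f$ acts on $[-\beta,0]$ as a homeomorphism onto $[c,\beta]$ in the prescribed way, with the subarcs $[-\beta,\alpha]$ and $[\alpha,0]$ mapping to $[\alpha,\beta]$ and $[\alpha,c]$ respectively. This rests on two facts already recorded in Section \ref{section:extrays}: the injectivity of $f$ on any regulated arc avoiding the critical point in its interior, together with the chosen parametrization of the critical Fatou component $U_0$ making the bisector forward invariant (relevant only if $f$ has a periodic critical Fatou cycle with real multiplier, which is the setting under which regulated arcs behave well under $f$). These are the hypotheses of the framework introduced before Lemma \ref{biacc_lemma}, and once they are in place the identification $f([-\beta,0]) = [c,\beta]$ is routine.
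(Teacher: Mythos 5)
Your proof is correct and follows the same route as the paper's: invoke Lemma \ref{biacc_lemma} to place some $f^m(0)$ on the open spine $(-\beta,\beta)$, observe that the spine splits at $\alpha$ into $[-\beta,\alpha]\cup[\alpha,\beta]$ with $f([-\beta,\alpha])=[\alpha,\beta]$, so that one of $f^m(0)$, $f^{m+1}(0)$ lands on $[\alpha,\beta]$, and then apply Proposition \ref{top_finite}. The paper compresses the second step into a single clause (``either $f^n(c)$ or $f^{n+1}(c)$ lie on $[\alpha,\beta]$''); your write-up simply makes explicit why that dichotomy holds, so the extra verbosity is elaboration rather than a different argument.
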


\begin{proof}
Since $c$ is biaccessible, by Lemma \ref{biacc_lemma} there exists $n \geq 0$ such that $f^n(c)$ belongs to the spine 
$[-\beta, \beta]$ of the Julia set. Then either $f^n(c)$ or $f^{n+1}(c)$ lie on $[\alpha, \beta]$, so $f$ is topologically finite by 
Proposition \ref{top_finite}. 
\end{proof}


Let us define the \emph{extended Hubbard tree} $\widetilde{T}$ to be the union of the Hubbard tree and 
the spine: 

$$\widetilde{T} := T \cup [-\beta, \beta].$$

Note the extended tree is also forward invariant, i.e. $f(\widetilde{T}) \subseteq \widetilde{T}$. Moreover, it is related to the 
usual Hubbard tree in the following way:

\begin{lemma} \label{exttree}
The extended Hubbard tree eventually maps to the Hubbard tree:
$$\widetilde{T} \setminus \{ \beta, - \beta\} \subseteq \bigcup_{n \geq 0} f^{-n}(T).$$
\end{lemma}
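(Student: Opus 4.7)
The plan is to first show $f(\widetilde T)\subseteq\widetilde T$, and then use Lemma \ref{increase} to argue that every forward orbit starting in $\widetilde T\setminus\{\pm\beta\}$ eventually lands in $T$.

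For the forward invariance, Lemma \ref{treebasic} gives $f(T)\subseteq T$, so we only need $f([-\beta,\beta])\subseteq\widetilde T$. Since $f(\pm\beta)=\beta$ and $f(0)=c$, with $0$ the unique critical point and lying on the spine, the image $f([-\beta,\beta])$ is the regulated arc $[c,\beta]$. The preceding lemma gives $\alpha\in[0,c]\subseteq T$, and since $\alpha$ also lies on the spine, $\alpha$ separates $c$ from $\beta$ in the regulated tree; thus $[c,\beta]=[c,\alpha]\cup[\alpha,\beta]$, with $[c,\alpha]\subseteq[0,c]\subseteq T$ and $[\alpha,\beta]\subseteq[-\beta,\beta]$, so $f(\widetilde T)\subseteq\widetilde T$. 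Writing further $[\alpha,\beta]=[\alpha,0]\cup[0,\beta]$ and using $[\alpha,0]\subseteq T$, we obtain the key observation
\[
f([0,\beta])\subseteq T\cup[0,\beta],
\]
i.e.\ the arc $[0,\beta]$ is forward-invariant \emph{modulo} $T$.

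Now fix $y\in(0,\beta)$. Lemma \ref{increase} asserts that $y$ lies strictly between $f(y)$ and $\beta$ on the regulated arc $[f(y),\beta]$. Combined with the previous display, if $f^n(y)\in(0,\beta)$ then either $f^{n+1}(y)\in T$ (and we are done) or $f^{n+1}(y)\in[0,\beta]$ strictly closer to $0$ than $f^n(y)$. An orbit remaining in $(0,\beta)$ indefinitely would therefore be strictly monotone toward $0$ along $[0,\beta]$, converging to a fixed point of $f$ inside $[0,\beta]$; the only such fixed point is $\beta$ itself, which a decreasing sequence cannot approach. Hence some $f^n(y)\in T$, settling the case $y\in(0,\beta)$.

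The remaining case $y\in[-\beta,\alpha)\setminus\{-\beta\}$ reduces to the above in a single iterate: $f$ maps $[-\beta,0]$ homeomorphically onto $[c,\beta]$ with $f(-\beta)=\beta$ and $f(\alpha)=\alpha$, so $f(y)\in(\alpha,\beta]$; since $\pm\beta$ are the only preimages of $\beta$ and $y\neq-\beta$, in fact $f(y)\in(\alpha,\beta)$. Either $f(y)\in(\alpha,0]\subseteq T$, or $f(y)\in(0,\beta)$ and the previous paragraph applies. Since $\widetilde T\setminus T\subseteq[-\beta,\alpha)\cup(0,\beta]$, this exhausts all cases. The main subtlety is that $[0,\beta]$ is not forward-invariant but only invariant modulo $T$; combining this with the monotonicity of Lemma \ref{increase} and the fact that $\beta$ is the only fixed point of $f$ in $[0,\beta]$ is what makes the finite-time escape into $T$ work cleanly.
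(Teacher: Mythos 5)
Your argument is correct, and it follows essentially the same route as the paper's: both proofs use $f([-\beta,0])=[c,\beta]$ to reduce matters to $[\alpha,\beta)$ (the paper phrases this as $f([\alpha,-\beta))=[\alpha,\beta)$, you handle it as the case $y\in[-\beta,\alpha)\setminus\{-\beta\}$), and both then invoke Lemma \ref{increase} to show that an orbit trapped in $(0,\beta)$ would be monotone toward $0$ and hence converge to a fixed point of $f$ in $[0,\beta)$, which does not exist. Your write-up is somewhat more explicit about the decomposition $[c,\beta]=[c,0]\cup[0,\beta]$ and about why the monotone sequence cannot limit on $\beta$, but there is no genuine difference in the underlying idea.
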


\begin{proof}
Since $f([\alpha, -\beta)) = [\alpha, \beta)$, we just need to check that every element $z \in [\alpha, \beta)$ 
eventually maps to the Hubbard tree. Indeed, either there exists $n \geq 0$ such that $f^n(z) \in [\alpha, c] \subseteq T$, 
or, by Lemma \ref{increase}, the sequence $\{f^n(z)\}_{z \geq 0}$ all lies on $[0, \beta)$ and it is ordered along the 
segment, i.e. for each $n$, $f^{n+1}(z)$ lies in between $0$ and $f^n(z)$. Then the sequence must have a limit point, and such
limit point would be a fixed point of $f$. However, $f$ has no fixed points on $[0, \beta)$, contradiction. 
\end{proof}

\subsection{Valence}

If $T$ is a finite tree, then the \emph{degree} of a point $x \in T$ is the 
number of connected components of $T \setminus \{ x \}$, and is denoted by $deg(x)$.
Moreover, let us denote by $deg(T)$ denote the largest degree of a point on the tree:
$$deg(T) := \max\{ deg(x) \ : \ x \in T \}.$$
On the other hand, for each $z \in J(f)$, we call \emph{valence} of $z$ the number of external rays which land on $z$ and 
denote it as 
$$val(z) := \# \{ \theta \in \mathbb{R}/\mathbb{Z} \ : \ R_c(\theta) \textup{ lands on } z \}.$$
The valence of $z$ also equals the number of connected components of $J(f) \setminus \{z\}$ 
(\cite{Mc}, Theorem 6.6), also known as the \emph{Urysohn-Menger index} of $J(f)$ at $z$.



\begin{proposition} \label{valencebound}
Let $T$ be the extended Hubbard tree for a topologically finite quadratic polynomial $f$.  
Then the number of rays $N$ landing on $x \in T$ is bounded above by
$$N \leq 2 \cdot deg(T).$$
\end{proposition}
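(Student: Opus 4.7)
Plan. The proof rests on the dynamical behavior of the valence function under $f$. The key pointwise identity is that for $y \in J(f_c)$ with $y \neq 0$ one has $val(y) = val(f(y))$, while $val(0) = 2\,val(c)$. Indeed, away from the critical point the local inverse of $f$ near $f(y)$ is single-valued, so the angle-doubling map $\theta \mapsto 2\theta$ identifies the rays landing at $y$ bijectively with those landing at $f(y)$. At $y = 0$, both preimages of each ray landing at $c = f(0)$ collapse onto $0$ itself, doubling the valence.

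First I would dispose of the easy cases: if $x$ lies in a bounded Fatou component then $val(x) = 0$, and if $x \in \{\pm\beta\}$ then $val(x) = 1$, so the bound holds trivially. Otherwise $x \in J(f_c) \cap T$. By Lemma \ref{exttree} some iterate $f^N(x)$ lies in the Hubbard tree proper (the regulated hull of the critical orbit), which is forward invariant and finite, so the orbit of $x$ is eventually periodic on a cycle through some point $p$. Crucially, this orbit visits the critical point $0$ at most once: either $0$ is periodic, in which case $0$ lies in the Fatou set and is avoided by the orbit of $x \in J$, or $0 \in J$ is strictly preperiodic and hence never returns to itself. Iterating the pointwise identity along the orbit therefore yields $val(x) \le 2\,val(p)$.

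It remains to bound $val(p)$ for the periodic point $p \in J \cap T$. I claim $val(p) \le deg(p) \le deg(T)$. By McMullen's description of valence, $val(p)$ equals the number of connected components of $J \setminus \{p\}$; each of the $deg(p)$ local branches of $T$ at $p$ is a regulated arc lying in a distinct such component, giving $deg(p) \le val(p)$. The reverse inequality is the main obstacle, and amounts to ruling out \emph{hidden} components of $J \setminus \{p\}$ not meeting the tree near $p$. Here one uses that the first-return map $f^k$ (with $k$ the period of $p$) simultaneously permutes the components of $J \setminus \{p\}$ and the local branches of $T$ at $p$: a hidden component would produce, under iteration, a periodic cycle of such components avoiding the postcritical set altogether, contradicting the minimality of $T$ as the smallest forward-invariant regulated tree containing the critical orbit together with the spine $[-\beta,\beta]$. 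Combining the two steps yields $val(x) \le 2\,val(p) \le 2\,deg(T)$, as required.
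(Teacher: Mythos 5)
There is a genuine gap in the middle of your argument. You claim that, because the Hubbard tree is a finite tree and forward invariant, the orbit of a point $x \in J \cap T$ is eventually periodic, landing on a cycle through some periodic point $p$. This is false: ``finite tree'' means finitely many vertices and edges as a topological graph, not finitely many points, and the dynamics on it are typically far from eventually periodic. For instance, for $c=-2$ the extended Hubbard tree is the interval $[-2,2]$ and $f_c$ acts as the Chebyshev map, which has uncountably many points with dense (non-preperiodic) orbits. Everything after the reduction to a periodic point $p$ therefore fails to apply to a general $x \in T$, which is exactly the case the proposition is about. The paper avoids this entirely: in Lemma \ref{max_val} it bounds $val(x)$ by $\max_n deg(f^n(x))$ for \emph{arbitrary} (not eventually periodic) $x$ whose orbit misses the critical point, using a quantitative escape argument. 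If $val(x)$ exceeded every $deg(f^n(x))$, one would find two rays landing at some forward image $y$ bounding a sector disjoint from the tree; because the doubling map expands angular gaps, after finitely many more iterates that sector straddles the spine, so its landing point lies in $(-\beta,\beta)$, forcing the sector to cross the tree and hence strictly increasing the degree — a contradiction with maximality, with no periodicity needed.

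The second difficulty is that even in the periodic case your step $val(p) \le deg(p)$ rests on a ``hidden component'' argument that does not close. A hidden component of $J \setminus \{p\}$ cycling under $f$ would not contradict minimality of the extended tree: $\widetilde{T} = T \cup [-\beta,\beta]$ is defined as a union, not as a minimal forward-invariant object, and in any case there is no reason the Hubbard tree should ``see'' every cycle of components in the Julia set. What actually rules out the hidden sector is the spine crossing argument above, which you would need to reproduce. On the other hand, the parts of your plan that are sound — the identity $val(y) = val(f(y))$ for $y \neq 0$, $val(0) = 2\,val(c)$, and the observation that the orbit of $x \in J$ hits $0$ at most once because $0$ cannot be periodic in $J$ — are exactly the bookkeeping the paper uses to pass from Lemma \ref{max_val} to the factor $2$ in the final bound, so those pieces you have right.
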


The proposition follows easily from the 

\begin{lemma} \label{max_val}
Let $T$ be the extended Hubbard tree for $f$, and $x \in T$ a point on the tree which never maps to the critical point. 
Then the number of rays $N$ landing on $x$ is bounded above by
$$N \leq \max \{ deg(f^n(x)) \ : \ n \geq 0 \}.$$
\end{lemma}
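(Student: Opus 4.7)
The plan is to exploit two monotonicity facts along the forward orbit $(f^n(x))_{n\ge 0}$: the valence $val(f^n(x))$ is constant, while the tree-degree $deg(f^n(x))$ is non-decreasing; the bound $N\le \max_n deg(f^n(x))$ then follows once we show the degree catches up with the valence. Since $f$ is a degree-two branched cover with unique critical point $0$, and the hypothesis rules out $f^n(x)=0$, $f$ is a local homeomorphism at every point of the orbit. Consequently the doubling map is injective on the finite set of angles landing at $f^n(x)$, yielding $val(f^{n+1}(x))\ge val(f^n(x))$. Pulling a ray at $f(x)$ back through both branches of $f^{-1}$ gives the identity $val(x)+val(-x)=2\,val(f(x))$, and since both summands are bounded above by $val(f(x))$ we conclude $val(x)=val(-x)=val(f(x))$. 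Hence $val(f^n(x))=N$ for every $n$.

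For the non-decrease of $deg$, the extended Hubbard tree $T$ is forward invariant, $f(T)\subseteq T$, and local injectivity of $f$ at $f^n(x)$ sends distinct local branches of $T$ at $f^n(x)$ into distinct local branches of $T$ at $f^{n+1}(x)$, so $deg(f^{n+1}(x))\ge deg(f^n(x))$. Since $T$ is a finite tree, this non-decreasing sequence is bounded and hence stabilizes at some $N_0\le deg(T)$.

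It remains to show $N_0=N$. The cases $N\le 2$ are immediate once one notes $x\in T$ forces $deg(x)\ge 1$ and that a biaccessible non-pre-critical point cannot be an endpoint of $T$. For $N\ge 3$ the point $x$ is a branch point of the locally connected Julia set $J(f)$, and such branch points are pre-periodic: the rays landing there have eventually periodic angles under doubling. After replacing $x$ by a forward iterate we may assume $x$ is periodic of period $p$. The first-return map $f^p$ then permutes the $N$ local sectors at $x$, and because $f^p$ preserves cyclic order locally while no ray at a branch point can be individually fixed by $f^p$ (which would contradict $val(x)\ge 3$), this permutation must be a single $N$-cycle. Connectedness of $T$ and $x\in T$ furnish at least one sector meeting $T$; forward invariance of $T$ under $f^p$ then propagates this through the whole cycle, so $deg(x)=N$.

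I expect the main obstacle to lie in the final paragraph: verifying that the first-return map at a periodic branch point on the tree acts on the sectors as a single cyclic rotation, and that the tree meets every sector. The first claim is a rotation-number calculation based on the fact that all rays at a periodic branch point are themselves periodic under doubling with a common period dividing $Np$, so their cyclic orbit has size at least $N$; the second uses invariance of $T$ under $f^p$ and the fact that $x$ is not an isolated point of $T$. The preparatory steps on valence and degree are essentially formal, but a delicate ancillary step is justifying the reduction to periodic $x$ (i.e., preperiodicity of branch points of $J(f)$), which I would invoke via the Douady--Hubbard/Kiwi theory for topologically finite parameters.
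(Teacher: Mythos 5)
Your preparatory observations (valence constant, degree non-decreasing along the orbit) match the paper's first sentence, but from there the paper takes a completely different and much shorter route, which you missed. The paper argues by contradiction: if $\max_n deg(f^n(x)) < N = val(x)$, pick $y = f^M(x)$ where the maximum is attained. Since $val(y) > deg(y)$, some sector at $y$ bounded by two rays $R_c(\theta_1)$, $R_c(\theta_2)$ contains no part of $T$. Now iterate the doubling map on the pair $(\theta_1,\theta_2)$: since distinct, their separation grows until for some $k$ the two angles lie on opposite sides of the diameter $\{0,1/2\}$, so $R_c(D^k\theta_1)$ and $R_c(D^k\theta_2)$ land on opposite sides of the spine. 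Their common landing point $z=f^k(y)$ therefore lies on $[-\beta,\beta]$, and in fact in its interior since $val(z)=N\ge 2$ while $\beta$ receives only one ray. But $[-\beta,\beta]\subseteq T$ (this is why the lemma is stated for the \emph{extended} tree), so the sector at $z$ does meet $T$ and $deg(z)>deg(y)$, contradicting maximality. The whole point of working with the extended tree is precisely to make the spine available; your proof never uses it.

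Your route has genuine gaps. In the $N=2$ case the assertion that ``a biaccessible non-pre-critical point cannot be an endpoint of $T$'' is essentially the lemma itself for $N=2$; you give no argument, and indeed such points can be endpoints of the Hubbard tree proper --- only the spine saves the day, and you do not invoke it. In the $N\ge 3$ case you lean on two substantial external facts: that branch points of a locally connected quadratic Julia set are preperiodic (no-wandering-triangles), and that the first-return map at a periodic branch point acts on the rays as a single nontrivial rotation (orbit-portrait theory). These are true but heavy, are not established anywhere in the paper, and require care to apply (your sub-claim that a single fixed ray would contradict $val(x)\ge 3$ is not self-evident --- what one actually needs is the rotation structure of the portrait). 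Even granting them, the reduction ``replace $x$ by a periodic iterate'' needs to be checked against the hypothesis and the quantity you are bounding. In short: your outline could be patched into a proof, but it replaces a three-line spine argument with a case analysis resting on unproven machinery, and the $N=2$ case as written is circular.
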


\begin{proof}
Note that, since the forward orbit of $x$ does not contain the critical point, $f^n$ is a local homeomorphism in a neighborhood of $x$; 
thus, for each $n \geq 0$, $val(f^n(x)) = val(x)$ and $deg(f^n(x)) \geq deg(x)$. 
Suppose now the claim is false: let $N$ be such that $deg(f^N(x)) = \max \{ deg(f^n(x)) \ : \ n \geq 0 \}
 < val(x)$, and denote $y = f^N(x)$. Then there are two angles $\theta_1$, $\theta_2$ such that the rays 
$R_c(\theta_1)$ and $R_c(\theta_2)$ both land at $y$, and the sector between $R_c(\theta_1)$ and 
$R_c(\theta_2)$ does not intersect the tree. Then, there exists $M \geq 0$ such that the rays 
$R_c(D^M(\theta_1))$ and $R_c(D^M(\theta_2))$ lie on opposite sides of the spine, thus their common 
landing point $z := f^M(y)$ must lie on the spine. Moreover, since $val(z) = val(x) \geq 2$ while only 
one ray lands on the $\beta$ fixed point, $z$ must lie in the interior of the spine. This means that the 
sector between the rays $R_c(D^M(\theta_1))$ and $R_c(D^M(\theta_2))$ intersects the spine, so 
$deg(f^M(y)) > deg(y)$, contradicting the maximality of $N$.
\end{proof}

\begin{proof}[Proof of Proposition \ref{valencebound}]
If $val(x) > 0$, then $x$ lies in the Julia set $J(f_c)$. 
Now, if the forward orbit of $x$ does not contain the critical point, the claim follows immediately from the Lemma. 
Otherwise, let $n \geq 0$ be such that $f^n(x) = 0$ is the critical point. 
Note that this $n$ is unique, because otherwise the critical point would be periodic, so it would not lie in the Julia set. 
Hence, by applying the Lemma to the critical value $f^{n+1}(x)$, we have 
$$val(f^{n+1}(x)) \leq deg(T).$$
Finally, since the map $f_c$ is locally a double cover at the critical point, 
$$val(x) = val(f^n(x)) = 2 \cdot val(f^{n+1}(x)) \leq 2 \cdot deg(T).$$ 
\end{proof}

\section{Topological entropy} \label{section:htop}

Let $f : X \to X$ be a continuous map of a compact metric space $(X, d)$. A measure of the complexity of the 
orbits of the map is given by its \emph{topological entropy}. Let us now recall its definition. Useful 
references are \cite{dMvS} and \cite{CFS}.

Given $x \in X$, $\epsilon > 0$ and $n$ an integer, we define the ball $B_f(x, \epsilon, n)$ as the set of points whose 
orbit remains close to the orbit of $x$ for the first $n$ iterates:
$$B_f(x, \epsilon, n) := \{ y \in X \ : \ d(f^i(x), f^i(y)) < \epsilon \ \forall 0 \leq i \leq n \}.$$
A set $E \subseteq X$ is called $(n, \epsilon)$\emph{-spanning} if every point of $X$ remains close to some 
point of $E$ for the first $n$ iterates, i.e. if $X = \bigcup_{x \in E} B_f(x, \epsilon, n)$.
Let $N(n, \epsilon)$ be the minimal cardinality of a $(n, \epsilon)$-spanning set. The topological entropy is 
the growth rate of $N(n, \epsilon)$ as a function of $n$: 

\begin{definition} The \emph{topological entropy} of the map $f : X \to X$ is defined as
$$h_{top}(f) := \lim_{\epsilon \to 0^+} \lim_{n \to \infty} \frac{1}{n} \log N(n, \epsilon).$$
\end{definition}
   
When $f$ is a piecewise monotone map of a real interval, it is easier to compute the 
entropy by looking at the number of laps. 
Recall the \emph{lap number} $L(g)$ of a piecewise monotone interval map $g : I \to I$ is the smallest 
cardinality of a partition of $I$ in intervals such that the restriction of $g$ to any such interval is monotone. 
The following result of Misiurewicz and Szlenk relates the topological entropy to the growth rate of the lap 
number of the iterates of $f$: 

\begin{theorem}[\cite{MS}]
Let $f : I \to I$ be a piecewise monotone map of a close bounded interval $I$, and let $L(f^n)$
be the lap number of the iterate $f^n$. Then the following equality holds:
$$h(f) = \lim_{n \to \infty} \frac{1}{n} \log L(f^n).$$
\end{theorem}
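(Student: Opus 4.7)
The plan is to establish both inequalities $h(f) \leq \sigma$ and $h(f) \geq \sigma$ separately, where $\sigma := \lim_{n \to \infty} \frac{1}{n} \log L(f^n)$. First I would verify that this limit exists. On each lap $J$ of $f^n$ the restriction $f^n|_J$ is monotone, hence injective, so $f^{n+m}|_J = f^m \circ f^n|_J$ decomposes into at most $L(f^m)$ monotone pieces (the preimages under $f^n|_J$ of the laps of $f^m$ meeting the interval $f^n(J)$). Summing over the $L(f^n)$ laps of $f^n$ gives the submultiplicativity $L(f^{n+m}) \leq L(f^n) L(f^m)$; Fekete's lemma then yields $\sigma = \inf_n \tfrac{1}{n}\log L(f^n)$.

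For the upper bound $h(f) \leq \sigma$, the goal is to produce an $(n,\epsilon)$-spanning set whose cardinality grows like $L(f^n)$. Fix $\epsilon > 0$ and choose a finite $\epsilon/2$-net $E \subset I$. For each lap $J$ of $f^n$ and each $e \in E$ one selects representatives $x_{J,e} \in J$ whose orbit segment $f^0(x), \dots, f^n(x)$ is as close as possible to $e$ along a prescribed time; using that $f^n|_J$ is a homeomorphism onto an interval and that $f$ is uniformly continuous on $I$, one checks that this collection $(n,\epsilon)$-spans $I$. This gives $N(n,\epsilon) \leq |E|\cdot L(f^n)$ and therefore $h(f) \leq \sigma$ after taking $\tfrac{1}{n}\log$ and $n \to \infty$.

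The lower bound $h(f) \geq \sigma$ is the main obstacle. Naively, one wants to pick a single point from each lap of $f^n$ and obtain an $(n,\epsilon)$-separated set of size $L(f^n)$, but individual laps can become arbitrarily short, so different representatives in adjacent laps may fail to be separated at any uniform scale $\epsilon > 0$ independent of $n$. The Misiurewicz–Szlenk approach is a careful pigeonhole argument: one groups the laps of $f^n$ according to the itinerary of a representative point with respect to a fixed partition of $I$ into intervals of length $\epsilon$, and shows that at most subexponentially many laps can share the same length-$n$ itinerary. The key combinatorial input is that two laps of $f^n$ sharing such an itinerary must be separated by turning points whose forward orbits remain in a controlled small neighborhood, and only boundedly many such configurations can occur at each stage. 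Thus one extracts a subsystem of $c(\epsilon)\,L(f^n)$ laps whose representatives are pairwise $(n,\epsilon)$-separated, giving $h(f) \geq \sigma - o(1)$ as $\epsilon \to 0$ and completing the equality.
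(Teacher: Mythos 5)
The paper does not prove this statement: it is cited directly as a result of Misiurewicz--Szlenk, and the only related remark in the text (after the lemma from [BdC] in Section 13) simply defers again to the literature (de Melo--van Strien, Theorem II.7.2). So the comparison must be with the classical proof, not with anything in the paper.

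Your overall structure (submultiplicativity via monotone laps, then two separate inequalities) is the right one, and the Fekete step is fine. For the upper bound, the construction you sketch is murky --- picking in each lap a representative ``close to $e$ along a prescribed time'' does not obviously control all $n+1$ iterates simultaneously, so it is not clear your set is $(n,\epsilon)$-spanning. The standard route is cleaner: bound $(n,\epsilon)$-\emph{separated} sets. Since every $f^k$ with $0\le k\le n$ is monotone on a lap $J$ of $f^n$, consecutive separated points $x_i<x_{i+1}$ in $J$ must satisfy $|f^{k_i}(x_i)-f^{k_i}(x_{i+1})|\ge\epsilon$ for some $k_i$, and monotonicity gives at most $|I|/\epsilon$ indices with the same $k_i$; summing over $k$ bounds the number of separated points per lap by $(n+1)|I|/\epsilon+1$, so $S(n,\epsilon)\le L(f^n)\bigl((n+1)|I|/\epsilon+1\bigr)$ and the polynomial factor vanishes in the limit. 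The genuine gap is in your lower bound. The claim that ``at most subexponentially many laps can share the same length-$n$ itinerary'' with respect to a fixed $\epsilon$-partition is exactly the crux of the theorem, and you give no argument for it. A naive attempt (between two turning points of $f^n$ at the same level $k$ and hitting the same turning point of $f$ there must lie a turning point of lower level, yielding a nested tree of sub-blocks) produces a bound like $(r+1)^n$ with $r$ the number of turning points of $f$, which is exponential and useless. Controlling this requires a real argument about how the orbits of the finitely many turning points of $f$ distribute relative to the partition --- this is where Misiurewicz--Szlenk (and the exposition in de Melo--van Strien, which proceeds through a careful analysis of covers and, in related form, Misiurewicz's horseshoe construction) do the actual work. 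As written, your lower-bound paragraph names the difficulty but then assumes away precisely the lemma that would resolve it.
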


Another useful property of topological entropy is that it is invariant under dynamical extensions 
of bounded degree:

\begin{proposition}[\cite{Bo}]\label{bdeg}
Let $f: X \to X$ and $g : Y \to Y$ be two continuous maps of compact metric spaces, and let $\pi : X \to Y$ 
a continuous, surjective map such that $g \circ \pi = \pi \circ f$. Then 
$$h_{top}(g) \leq h_{top}(f).$$
Moreover, if there exists a finite 
number $d$ such that for each $y \in Y$ the fiber $\pi^{-1}(y)$ has cardinality always smaller than $d$, then 
$$h_{top}(g) = h_{top}(f).$$ 
\end{proposition}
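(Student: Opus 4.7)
The plan is to work with the spanning-set definition of topological entropy and exploit the semiconjugacy $\pi \circ f = g \circ \pi$ together with the compactness of $X$ and $Y$.

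For the inequality $h_{top}(g) \le h_{top}(f)$, I would push spanning sets forward through $\pi$. By uniform continuity of $\pi$ on the compact space $X$, for every $\eta > 0$ there exists $\epsilon > 0$ such that $d_X(x,x') < \epsilon$ implies $d_Y(\pi x, \pi x') < \eta$; combining this with the semiconjugacy gives the inclusion $\pi\bigl(B_f(x,\epsilon,n)\bigr) \subseteq B_g(\pi x, \eta, n)$. Hence if $E \subseteq X$ is $(n,\epsilon)$-spanning for $f$, then $\pi(E) \subseteq Y$ is $(n,\eta)$-spanning for $g$ (using surjectivity of $\pi$ to lift an arbitrary target point to $X$), so $N_g(n,\eta) \le N_f(n,\epsilon)$. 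Taking logs, dividing by $n$, and letting first $n \to \infty$ and then $\eta \to 0$ yields the inequality.

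For the reverse inequality under the bounded-fiber hypothesis, the plan is to invoke Bowen's relative entropy inequality: for any factor map between compact metric systems,
$$h_{top}(f) \le h_{top}(g) + \sup_{y \in Y} h(f, \pi^{-1}(y)),$$
where $h(f, K)$ denotes the entropy of $f$ restricted to the compact set $K$, computed via the minimal number of $(n,\epsilon)$-Bowen balls needed to cover $K$. Since a finite set $K$ of cardinality at most $d$ is always covered by at most $d$ Bowen balls centered at its own points, one has $h(f, K) \le \limsup_n \frac{1}{n} \log d = 0$, and so the supremum on the right vanishes under the hypothesis $|\pi^{-1}(y)| \le d$ for all $y$. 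Combined with the first inequality this forces the equality $h_{top}(f) = h_{top}(g)$.

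The main obstacle is verifying Bowen's inequality itself, and the subtle point is the following: given an $(n,\delta)$-spanning set $F_\delta \subseteq Y$ and, for each $y \in F_\delta$, a cover of the fiber $\pi^{-1}(y)$ by $(n,\epsilon)$-Bowen balls, one would like these lifts to assemble into an $(n,\epsilon')$-spanning set for $X$. The naive attempt fails because the orbit of a point $x \in X$ whose projection $\pi x$ shadows $y$ need not remain uniformly close to the orbit of any single preimage of $y$, as the lift may jump between sheets at different time steps. The correct argument replaces the static fibers $\pi^{-1}(y)$ by the dynamical tubes $\pi^{-1}(B_g(y,\delta,n))$ and uses the upper semicontinuity of the multivalued map $y \mapsto \pi^{-1}(y)$: since $\pi$ is a closed map of compact metric spaces, a uniform $\delta$ can be chosen so that $d_Y(y,y') < \delta$ forces $\pi^{-1}(y')$ into the $\epsilon$-neighborhood of $\pi^{-1}(y)$. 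Iterating this control along the orbit produces the required bound on $r_n(\epsilon', X, f)$ in terms of $r_n(\delta, Y, g)$ and $\sup_y r_n(\epsilon, \pi^{-1}(y), f)$. Since this is precisely the content of the result established in \cite{Bo}, I would appeal to it directly rather than reprove it in detail.
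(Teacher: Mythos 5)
Your proposal is correct, and it reproduces the standard route: the easy inequality by pushing forward spanning sets via uniform continuity of $\pi$, and the reverse inequality by Bowen's relative entropy inequality $h_{top}(f) \le h_{top}(g) + \sup_{y} h(f,\pi^{-1}(y))$ together with the observation that a set of cardinality $\le d$ has Bowen entropy zero. The paper itself gives no proof and simply cites \cite{Bo}; what you sketch (including the subtlety about orbits jumping between sheets, handled via upper semicontinuity of $y \mapsto \pi^{-1}(y)$ and dynamical tubes rather than static fibers) is precisely the content of Bowen's Theorem~17, so your appeal to \cite{Bo} for that step is the same move the paper makes.
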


In order to resolve the ambiguities arising from considering different restrictions of the same map, 
if $K$ is an $f$-invariant set we shall use the notation $h_{top}(f, K)$ to denote the topological entropy 
of the restriction of $f$ to $K$. 

\begin{proposition}[\cite{Do}, Proposition 3] \label{entronoFatou}
Let $f: X \to X$ a continuous map of a compact metric space, and let $Y$ be a closed subset of $X$ such that 
$f(Y) \subseteq Y$. Suppose that, for each $x\in X$, the distance $d(f^n(x), Y)$ tends to zero, uniformly on 
any compact subset of $X \setminus Y$. Then $h_{top}(f, Y) = h_{top}(f, X)$.
\end{proposition}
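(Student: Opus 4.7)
The target is to prove both $h_{top}(f,Y)\leq h_{top}(f,X)$ and its reverse. The first is immediate from the definition via $(n,\epsilon)$-separated sets: any $(n,\epsilon)$-separated subset of the $f$-invariant set $Y$ is automatically $(n,\epsilon)$-separated inside $X$, so the separated cardinalities satisfy $s_n^Y(\epsilon)\le s_n^X(\epsilon)$ for every $n$ and $\epsilon$, and the corresponding entropies inherit the inequality.

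For the reverse direction, my plan is to invoke the variational principle, which states that for a continuous self-map of a compact metric space the topological entropy equals the supremum of the measure-theoretic entropies $h_\mu(f)$ taken over all $f$-invariant Borel probability measures $\mu$. Concretely, I would show that every such $\mu$ on $X$ is concentrated on $Y$; by restriction and extension by zero this identifies the spaces of $f$-invariant probability measures on $X$ and on $Y$ while preserving metric entropies, forcing the two suprema to coincide.

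The claim $\mu(Y)=1$ follows from Poincar\'e recurrence. Since $\mu$-almost every point of $X$ is recurrent, it is enough to rule out a recurrent point $x\in X\setminus Y$. If such an $x$ existed, then $Y$ being closed gives $\rho:=d(x,Y)>0$, so the ball $B(x,\rho/2)$ is disjoint from the neighborhood $\{y\in X: d(y,Y)<\rho/2\}$ of $Y$. The hypothesis forces $d(f^n(x),Y)\to 0$, so $f^n(x)$ enters this neighborhood from some time on, and in particular $f^n(x)\notin B(x,\rho/2)$ for all large $n$, contradicting the recurrence of $x$.

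The main subtle point in this plan is the appeal to the variational principle, but this is a classical and robust tool for continuous maps of compact metric spaces. It is worth noting that only the pointwise statement $d(f^n(x),Y)\to 0$ for each $x$ is actually used in the argument, not the stronger uniformity on compact subsets of $X\setminus Y$ that the hypothesis supplies. A more elementary, direct attack through Bowen spanning sets would naturally want to prove that $f^n(X)\subseteq \{y:d(y,Y)<\epsilon\}$ for all large $n$ and then cover orbits by Bowen balls anchored in $Y$, but the hypothesis does not force small neighborhoods of $Y$ to be forward invariant (consider $f(x)=x^2$ on $[0,1]$ with $Y=\{0,1\}$), so this uniform inclusion can genuinely fail; the measure-theoretic route bypasses the difficulty entirely.
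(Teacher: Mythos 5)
The paper does not actually reprove this statement; it quotes it verbatim from Douady (\cite{Do}, Proposition~3), so there is no in-paper argument to compare against. Your proof is correct and self-contained: monotonicity handles one inequality, and the variational principle together with the observation that every $f$-invariant probability measure charges $Y$ handles the other. The recurrence argument for $\mu(X\setminus Y)=0$ is sound --- a recurrent point off $Y$ would have to return to a ball disjoint from a fixed neighborhood of $Y$, yet its orbit is eventually trapped in that neighborhood. You correctly identify measure-preserving-system isomorphism as the reason $h_\mu(f)=h_{\mu|_Y}(f|_Y)$ when $\mu(Y)=1$.

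Two remarks. First, your closing observation that only the pointwise convergence $d(f^n(x),Y)\to 0$ is used, not the uniform-on-compacta strengthening, is sharp; the uniformity in Douady's statement is presumably what makes a direct Bowen-ball bookkeeping argument go through, which is likely the route the original reference takes, while the variational principle bypasses it entirely (at the cost of invoking a heavier theorem). Second, the concentration step can be done even more cheaply than via Poincar\'e recurrence: take $g(x)=d(x,Y)$, continuous and bounded; by invariance $\int g\,d\mu=\int g\circ f^n\,d\mu$ for all $n$, and since $g\circ f^n\to 0$ pointwise and boundedly, dominated convergence gives $\int g\,d\mu=0$, hence $g=0$ $\mu$-a.e., i.e.\ $\mu(Y)=1$. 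This avoids the (mild) second-countability input to almost-everywhere recurrence.
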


The following proposition is the fundamental step to relate entropy and Hausdorff dimension of invariant 
subsets of the circle (\cite{Fu}, Proposition III.1; see also \cite{Bi}):
 
\begin{proposition} \label{dim_entro}
Let $d \geq 1$, and $\Omega \subset \mathbb{R}/\mathbb{Z}$ be a closed, invariant set for the map 
$Q(x) := dx \mod 1$. Then the topological entropy of the restriction of $Q$ to $\Omega$ is related 
to the Hausdorff dimension of $\Omega$ in the following way:
$$\textup{H.dim }\Omega = \frac{h_{top}(Q, \Omega)}{\log d}.$$ 
\end{proposition}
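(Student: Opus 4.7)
The plan is to exploit the fact that $Q(x) = dx \bmod 1$ is piecewise linear with constant expansion factor $d$, so the natural Markov partition provides simultaneously a symbolic coding (which controls the entropy) and an efficient cover by intervals of controlled length (which controls the Hausdorff dimension).

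First I would introduce the partition $\mathcal{P} = \{[k/d, (k+1)/d]\}_{k=0}^{d-1}$ and its $n$-th refinement $\mathcal{P}_n := \bigvee_{i=0}^{n-1} Q^{-i}\mathcal{P}$. Each element of $\mathcal{P}_n$ is an interval of length exactly $d^{-n}$, and there are $d^n$ such elements. Let $N_n := \#\{J \in \mathcal{P}_n : J \cap \Omega \neq \emptyset\}$. The first key identity is
\[
h_{top}(Q, \Omega) = \lim_{n \to \infty} \frac{\log N_n}{n}.
\]
This follows from the fact that the diameter of $\mathcal{P}_n$ tends to zero and that two points lying in distinct atoms of $\mathcal{P}_n$ are $(n,\epsilon)$-separated for any $\epsilon < 1/d$, while any Bowen ball $B_Q(x,\epsilon,n)$ is contained in a bounded number of atoms of $\mathcal{P}_{n}$; this squeezes $N_n$ between the separated and spanning counts up to a multiplicative constant.

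Next, for the upper bound on dimension, I would cover $\Omega$ directly by the $N_n$ atoms of $\mathcal{P}_n$ that meet it, each of diameter $d^{-n}$. If $s > h_{top}(Q,\Omega)/\log d$, then $N_n \, d^{-ns} \leq \exp\bigl(n(h_{top}(Q,\Omega) + o(1) - s \log d)\bigr) \to 0$, so $\mathcal{H}^s(\Omega) = 0$ and thus $\textup{H.dim }\Omega \leq h_{top}(Q,\Omega)/\log d$.

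For the matching lower bound I would invoke the variational principle to produce a $Q$-invariant Borel probability measure $\mu$ supported on $\Omega$ with $h_\mu(Q) = h_{top}(Q,\Omega)$. Since $\mathcal{P}$ is a generating partition, the Shannon--McMillan--Breiman theorem gives, for $\mu$-a.e.\ $x$, that $-\tfrac{1}{n}\log \mu(P_n(x)) \to h_\mu$, where $P_n(x) \in \mathcal{P}_n$ is the atom containing $x$. Because $P_n(x)$ is an interval of length $d^{-n}$, this yields a pointwise lower local dimension of $h_\mu/\log d$ at $\mu$-a.e.\ point; the mass distribution principle (Frostman's lemma) then gives $\textup{H.dim }\Omega \geq h_\mu(Q)/\log d = h_{top}(Q,\Omega)/\log d$, completing the proof. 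The main obstacle is the lower bound, which intrinsically couples measure-theoretic entropy with geometric measure theory; the upper bound and the identification of $h_{top}(Q,\Omega)$ with the exponential growth rate of $N_n$ are essentially combinatorial consequences of $Q$ being uniformly expanding with a Markov partition.
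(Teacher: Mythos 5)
The paper does not prove this proposition: it cites it directly as Furstenberg (Proposition III.1 of \cite{Fu}) and Billingsley (\cite{Bi}), so there is no argument in the text to compare against. Your proof is a correct and complete derivation along the standard lines, and in fact the lower-bound half (variational principle, Shannon--McMillan--Breiman, mass distribution principle) is essentially Billingsley's argument, while Furstenberg's original proof is more combinatorial and works directly with the box counts $N_n$.

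Two small technical points worth tightening. First, the claim that ``two points lying in distinct atoms of $\mathcal{P}_n$ are $(n,\epsilon)$-separated for any $\epsilon<1/d$'' is false: two points straddling a boundary $k/d^n$ lie in distinct atoms but can be arbitrarily close and remain close for many iterates. What you actually need (and also state) is the correct inclusion in the other direction: since $Q$ expands by exactly $d$ on any arc of length $<1/d$, one shows $B_Q(x,\epsilon,n)\subset(x-\epsilon d^{-n},x+\epsilon d^{-n})$ for $\epsilon<1/(2d)$, so each Bowen ball meets at most two atoms of $\mathcal{P}_n$; conversely each atom of $\mathcal{P}_{n+k}$ is contained in a Bowen $(n,\epsilon)$-ball once $d^{-k}<\epsilon$. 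These two inclusions squeeze $N(n,\epsilon)$ between $c_\epsilon N_n$ and $C_\epsilon N_n$ and give $h_{top}(Q,\Omega)=\lim_n\frac{1}{n}\log N_n$ as you want. Second, in the lower bound you should note that Shannon--McMillan--Breiman requires ergodicity, which is handled either by ergodic decomposition (some ergodic component has entropy $\geq h_\mu-\delta$) or by quoting Bowen's theorem that an expansive map of a compact space has an ergodic measure of maximal entropy; alternatively, since you only need the inequality $\textup{H.dim}\,\Omega\geq h_\mu/\log d$ for a sequence of ergodic $\mu$ with $h_\mu\to h_{top}$, you can dispense with the existence of a maximizing measure altogether. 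With these adjustments the argument is sound.
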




\section{Invariant sets of external angles} \label{section:invsets}

Let $f_c$ be a topologically finite quadratic polynomial, and $T_c$ its Hubbard tree. 
One of the main players in the rest of the paper is the set $H_c$ of angles of external rays 
landing on the Hubbard tree:
$$H_c := \{ \theta \in \mathbb{R}/\mathbb{Z} \ : R_c(\theta) \textup{ lands on } T_c \}.$$

Note that, since $T_c$ is compact and the Carath\'eodory loop is continuous by local connectivity, 
$H_c$ is a closed subset of the circle. Moreover, since $T_c \cap J(f_c)$ 
is $f_c$-invariant, then $H_c$ is invariant for the doubling map, i.e. $D(H_c) \subseteq H_c$.

Similarly, we will denote by $S_c$ the set of angles of rays landing on the spine $[-\beta, \beta]$, 
and $B_c$ the set of angles of rays landing on the set of biaccessible points.

\begin{proposition} \label{biaccentro}
Let $f_c$ be a topologically finite quadratic polynomial. Then 
$$\textup{H.dim }H_c = \textup{H.dim }S_c = \textup{H.dim }B_c.$$
\end{proposition}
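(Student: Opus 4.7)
The plan is to establish the cycle of inequalities
$$\textup{H.dim}\,S_c \leq \textup{H.dim}\,H_c \leq \textup{H.dim}\,B_c \leq \textup{H.dim}\,S_c,$$
each via an explicit set-theoretic inclusion. I will use repeatedly that the doubling map $D$ is locally bi-Lipschitz off the critical angle, so each preimage $D^{-n}(X)$ has the same Hausdorff dimension as $X$, and that Hausdorff dimension of a countable union equals the supremum of the dimensions of its members.

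The equality $\textup{H.dim}\,B_c = \textup{H.dim}\,S_c$ is the most direct and uses Lemma \ref{biacc_lemma}: biaccessible points are exactly those whose forward orbit enters the open spine $(-\beta,\beta)$, so, up to the two angles $\{0,1/2\}$ landing on $\pm\beta$, we have simultaneously $S_c \subseteq B_c$ and $B_c \subseteq \bigcup_{n \geq 0} D^{-n}(S_c)$, from which both directions follow.

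For $\textup{H.dim}\,S_c \leq \textup{H.dim}\,H_c$, I would use the extended Hubbard tree $\widetilde{T}_c = T_c \cup [-\beta,\beta]$ together with Lemma \ref{exttree}, which asserts $\widetilde{T}_c \setminus \{\pm\beta\} \subseteq \bigcup_{n} f_c^{-n}(T_c)$. Lifting this inclusion through the Carath\'eodory loop gives $(H_c \cup S_c) \setminus \{0,1/2\} \subseteq \bigcup_{n} D^{-n}(H_c)$, so $\textup{H.dim}(H_c \cup S_c) = \textup{H.dim}\,H_c$, and in particular $\textup{H.dim}\,S_c \leq \textup{H.dim}\,H_c$.

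The remaining inequality $\textup{H.dim}\,H_c \leq \textup{H.dim}\,B_c$ closes the cycle and is the main technical point. Here the plan is to show that $H_c \setminus B_c$ is at most finite. A point $x \in T_c \cap J(f_c)$ fails to be biaccessible exactly when the Carath\'eodory fiber $\gamma^{-1}(x)$ is a singleton; by the quotient description of the locally connected $K(f_c)$, this happens iff $x$ is not a cut point of $K(f_c)$. If $x$ has degree $\geq 2$ in the finite tree $T_c$, then it is interior to a regulated arc $[y_1,y_2]$, and uniqueness of the regulated arc in $K(f_c)$ forces $K(f_c) \setminus \{x\}$ to be disconnected, so $x$ is biaccessible. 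The degree-one vertices of $T_c$ form a finite set and carry at most $2\deg(T_c)$ rays apiece by Proposition \ref{valencebound}. The delicate case I anticipate is when the regulated arc crosses a Fatou component at $x$, since a priori paths around the component might circumvent $x$; but only finitely many Fatou components are crossed by the finite tree $T_c$ and each contributes finitely many boundary crossings, so these exceptional points are again finite, and $H_c \subseteq B_c \cup F$ with $F$ finite, as required.
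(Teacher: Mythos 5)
Your proposal follows the same overall architecture as the paper's proof: both arguments rest on the same two inclusions extracted from Lemma~\ref{biacc_lemma} and Lemma~\ref{exttree}, namely
$S_c \setminus \{0,1/2\} \subseteq B_c \subseteq \bigcup_{n\geq 0} D^{-n}(S_c)$ and
$S_c \setminus \{0,1/2\} \subseteq \bigcup_{n\geq 0} D^{-n}(H_c)$,
together with the inclusion $H_c \subseteq B_c$. So far you and the paper coincide exactly. The only divergence is in how much work you invest in the third ingredient: the paper dismisses $H_c \subseteq B_c$ as ``clear,'' whereas you devote the bulk of your argument to verifying that $H_c \setminus B_c$ is at most finite.

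That extra care is not misplaced --- ``clear'' is doing real work in the paper --- but the specific justification you give is shaky in one spot. You argue that if $x$ has degree $\geq 2$ in $T_c$ then ``uniqueness of the regulated arc in $K(f_c)$ forces $K(f_c) \setminus \{x\}$ to be disconnected.'' This inference is not valid as stated: uniqueness of the \emph{regulated} arc from $y_1$ to $y_2$ does not preclude a non-regulated path in $K(f_c)$ from $y_1$ to $y_2$ that avoids $x$, and it is precisely such bypasses (through Fatou components that the tree merely touches at a single boundary point) that could make $x$ a non-cut point. You do anticipate part of this in the ``delicate case,'' but the danger is not only at crossings of Fatou components --- it is present whenever $x$ lies on the boundary of any Fatou component, and the way you phrase it suggests the remaining degree-$\geq 2$ case is settled by regulated-arc uniqueness alone, which it is not. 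A cleaner route to $H_c \subseteq B_c$ (up to the two angles $0, 1/2$) is via Lemma~\ref{biacc_lemma} itself: $T_c$ is generated by forward iteration from $[\alpha, 0] \subseteq (-\beta,\beta)$, and one can show the orbit of any non-endpoint of $T_c \cap J(f_c)$ enters $(-\beta,\beta)$, making it biaccessible directly. Alternatively, your finite-set fallback (finitely many endpoints of $T_c$, each carrying at most $2\deg(T_c)$ rays by Proposition~\ref{valencebound}) suffices for the dimension inequality without resolving the cut-point question precisely, so the conclusion of your argument is correct even if the intermediate justification is loose.

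In short: same approach and same two key lemmas as the paper; the added detail on the $H_c \subseteq B_c$ step is welcome, but its cut-point sub-argument should either be replaced by an orbit-enters-the-spine argument via Lemma~\ref{biacc_lemma}, or be explicitly reduced to the finite-set observation you already make.
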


\begin{proof}
Lemma \ref{biacc_lemma} implies the inclusion 
$$S_c \setminus \{0, 1/2 \} \subseteq B_c \subseteq \bigcup_{n \geq 0}^\infty D^{-n}(S_c)$$
hence 
$$\textup{H.dim }S_c \leq \textup{H.dim }B_c \leq \sup_{n \geq 0} \textup{H.dim } D^{-n}(S_c) = \textup{H.dim }S_c.$$ 
Moreover, it is clear that $H_c \subseteq B_c$, and by Lemma \ref{exttree} one also has 
$$S_c \setminus \{ 0, 1/2 \} \subseteq \bigcup_{n \geq 0}^\infty D^{-n}(H_c)$$
hence $\textup{H.dim }S_c \leq \textup{H.dim }H_c \leq \textup{H.dim }B_c$.
\end{proof}

We will now characterize the set $H_c$ and other similar sets of angles purely in terms of the dynamics
of the doubling map on the circle, as the set of points whose orbit never hits certain open intervals.
 
In order to do so, we will make use of the following lemma: 

\begin{lemma} \label{inv_set}
Let $X \subseteq S^1$ be a closed, forward invariant set for the doubling map $D$, so that $D(X) \subseteq X$, 
and let $U \subseteq S^1$ be an open set, disjoint from $X$. Suppose moreover that 
\begin{enumerate}
\item $D^{-1}(X) \setminus X \subseteq U$;
\item $\partial U \subseteq X$.
\end{enumerate}
Then $X$ equals the set of points whose orbit never hits $U$:
$$X = \{ \theta \in S^1 \ : \ D^n(\theta) \notin U \ \ \forall n \geq 0 \}.$$
\end{lemma}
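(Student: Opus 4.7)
Write $Y := \{\theta \in S^1 : D^n(\theta) \notin U \text{ for all } n \geq 0\}$. The plan is to establish the two inclusions $X \subseteq Y$ and $Y \subseteq X$ separately.

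The inclusion $X \subseteq Y$ is immediate: forward invariance of $X$ together with $U \cap X = \emptyset$ shows that an orbit starting in $X$ stays in $X$, and hence never enters $U$.

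For the converse, I argue by contradiction: suppose $\theta \in Y \setminus X$. A first observation, using only hypothesis (1), is that the entire forward orbit of $\theta$ must avoid $X$; for if $n \geq 1$ were the smallest index with $D^n(\theta) \in X$, then $D^{n-1}(\theta) \in D^{-1}(X) \setminus X \subseteq U$, contradicting $\theta \in Y$. Now let $J_n$ be the connected component of $S^1 \setminus X$ containing $D^n(\theta)$. The crucial use of hypothesis (2) is to show $J_n \cap U = \emptyset$: the set $J_n \cap U$ is open in $J_n$, and it is also closed in $J_n$ because $\partial U \subseteq X$ is disjoint from $J_n$; since $J_n$ is connected and $D^n(\theta) \in J_n \setminus U$, this clopen subset must be empty. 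Combining this with hypothesis (1) exactly as above, $D$ sends $J_n$ into $S^1 \setminus X$: if some $x \in J_n$ had $D(x) \in X$ then $x \in D^{-1}(X) \setminus X \subseteq U$, contradicting $J_n \cap U = \emptyset$. By connectedness of $D(J_n)$, this gives $D(J_n) \subseteq J_{n+1}$.

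To close the argument I invoke the expansion of the doubling map. The cases $|X| \leq 1$ are trivial (if $X = \{p\}$ is a fixed point, hypotheses (1) and (2) force $U = S^1 \setminus \{p\}$ and the conclusion is immediate), so I may assume $|X| \geq 2$; then every component of $S^1 \setminus X$ has length strictly less than $1$. The inclusion $D(J_n) \subseteq J_{n+1}$ forces $|J_n| < 1/2$, since otherwise $D(J_n)$ would cover $S^1$ and in particular meet $X$. Consequently $D$ is injective on $J_n$ and $|J_{n+1}| \geq 2|J_n|$. Iterating yields $|J_n| \geq 2^n |J_0|$, which eventually exceeds $1$---a contradiction, since $J_0$ is a nonempty open arc. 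The subtle step is really the clopen argument identifying $J_n \cap U = \emptyset$; this is the sole place hypothesis (2) enters the proof, and it is exactly what prevents an orbit from leaking out of $X$ through the part of a component of $S^1 \setminus X$ that may overlap $U$.
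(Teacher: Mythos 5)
Your proof is correct and takes essentially the same approach as the paper's: both track the connected component $J_n$ of $S^1 \setminus X$ containing the orbit, both use the expansion of the doubling map, and both use the key connectedness/clopen observation that a component of $S^1 \setminus X$ cannot partially overlap $U$ because $\partial U \subseteq X$. The only organizational difference is that the paper argues directly by locating the first $k$ with $D^k(V) \cap X \neq \emptyset$ and concluding $D^{k-1}(V) \subseteq U$, whereas you argue by contradiction from the unbounded growth $|J_n| \geq 2^n|J_0|$; both implicitly require $X \neq \emptyset$, which is true in all applications.
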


\begin{proof}
Let $\theta$ belong to $X$. By forward invariance, $D^n(\theta) \in X$ for each $n \geq 0$, and since $X$ and $U$ are disjoint, 
then $D^n(\theta) \notin U$ for all $n$. Conversely, let us suppose that $\theta$ does not belong to $X$, 
and let $V$ be the connected component of the complement of $X$ containing $\theta$; since the doubling map is uniformly expanding,
there exists some $n$ such that $f^n(V)$ is the whole circle, hence there exists an integer $k \geq 1$ such that 
$D^k(V) \cap X \neq \emptyset$, but $D^{k-1}(V) \cap X = \emptyset$; 
then, $D^{k-1}(V)$ intersects $D^{-1}(X) \setminus X$, so by (1) it intersects $U$. 
Moreover, since $\partial U \subseteq X$ we have $D^{k-1}(V) \cap \partial U =\emptyset$, so 
$D^{k-1}(V)$ is an open set which intersects $U$ but does not intersect its boundary, hence $D^{k-1}(V) \subseteq U$ and, since 
$\theta \in V$, we have $D^{k-1}(\theta) \in U$.
\end{proof}


Let us now describe combinatorially the set of angles of rays landing on the Hubbard tree. Let $T_c$ be the Hubbard tree of $f_c$; 
since $T_c$ is a compact set, then $H_c = \gamma^{-1}(T_c)$ is a closed subset of the circle. Among all connected components of 
the complement of $H_c$, there are finitely many $U_1, U_2, \dots, U_r$ which contain rays which land on the preimage $f_c^{-1}(T_c)$.
The angles of rays landing on the Hubbard tree are precisely the angles whose future trajectory for the doubling map never hits the $U_i$: 

\begin{proposition}[\cite{TL}] \label{forbidden_Htree}
Let $T_c$ be the Hubbard tree of $f_c$, and $U_1, U_2, \dots, U_r$ be the connected components of the complement of 
$H_c$ which contain rays landing on $f_c^{-1}(T_c)$. Then the set $H_c$ of angles of rays landing on $T_c$ equals
$$H_c = \{ \theta \in \mathbb{R}/\mathbb{Z} \ : \ D^n(\theta) \notin U_i \ \ \forall n \geq 0 \ \forall i = 1, \dots, r \}.$$ 
\end{proposition}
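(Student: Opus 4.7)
The plan is to reduce the statement to a direct application of Lemma \ref{inv_set}, taking $X := H_c$ and $U := U_1 \cup \cdots \cup U_r$. What must be checked is that $H_c$ is closed and forward invariant under $D$, that $U$ is open and disjoint from $H_c$, that $D^{-1}(H_c)\setminus H_c \subseteq U$, and that $\partial U \subseteq H_c$.

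The first batch of hypotheses is essentially formal. Since $f_c$ is topologically finite, $T_c$ is compact, and by local connectivity of $J(f_c)$ the Carath\'eodory loop $\gamma:\mathbb{R}/\mathbb{Z}\to J(f_c)$ is continuous, so $H_c = \gamma^{-1}(T_c\cap J(f_c))$ is closed. Forward invariance of $H_c$ under $D$ follows from the semiconjugacy $\gamma\circ D = f_c\circ\gamma$ together with $f_c(T_c)\subseteq T_c$: if $\gamma(\theta)\in T_c$, then $\gamma(D(\theta)) = f_c(\gamma(\theta))\in T_c$. The set $U$ is open as a union of components of $S^1\setminus H_c$ and trivially disjoint from $H_c$, and $\partial U\subseteq H_c$ because $H_c$ is closed (every boundary point of a component of its complement lies in $H_c$).

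The one substantive step is verifying $D^{-1}(H_c)\setminus H_c\subseteq U$. Using the semiconjugacy once more, $\theta\in D^{-1}(H_c)$ if and only if $f_c(\gamma(\theta))\in T_c$, i.e.\ the ray $R_c(\theta)$ lands on $f_c^{-1}(T_c)$. Thus $D^{-1}(H_c)\setminus H_c$ consists exactly of angles of rays landing on $f_c^{-1}(T_c)\setminus T_c$. Any such angle lies in $S^1\setminus H_c$, hence in some connected component of this complement; by the very definition of $U_1,\dots,U_r$, that component must be one of the $U_i$, so $\theta\in U$. (Finiteness of $r$ is harmless for the argument but can be seen from the fact that $f_c^{-1}(T_c)\setminus T_c$ is a finite union of arcs attached to the finite tree $T_c$, so only finitely many sectors at the attaching points can be hit.)

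With all four hypotheses in place, Lemma \ref{inv_set} yields
\[
H_c \;=\; \{\theta\in\mathbb{R}/\mathbb{Z}\ :\ D^n(\theta)\notin U \ \forall n\geq 0\} \;=\; \{\theta\ :\ D^n(\theta)\notin U_i\ \forall n\geq 0,\ \forall i=1,\dots,r\},
\]
which is exactly the claim. I do not anticipate genuine obstacles: the content of the proposition is packaged into the abstract Lemma \ref{inv_set}, and the Carath\'eodory semiconjugacy translates the four hypotheses directly into properties of the Hubbard tree and its first preimage under $f_c$.
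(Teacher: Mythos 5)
Your proof is correct and follows the same route as the paper: both reduce the statement to Lemma \ref{inv_set} with $X = H_c$ and $U = U_1\cup\cdots\cup U_r$, verify forward invariance via the Carath\'eodory semiconjugacy, and identify $D^{-1}(H_c)\setminus H_c$ with the angles of rays landing on $f_c^{-1}(T_c)\setminus T_c$, which by definition lie in the $U_i$. Your justification of $\partial U \subseteq H_c$ (directly from closedness of $H_c$ and the fact that the $U_i$ are components of the open complement) is a bit cleaner than the paper's, which instead argues that boundary angles are limits of $H_c$-angles and invokes continuity of the Riemann map, but this is a cosmetic difference, not a different approach.
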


\begin{proof}
It follows from Lemma \ref{inv_set} applied to $X = H_c$ and $U = U_1 \cup \dots \cup U_r$. Indeed, $D(H_c) \subseteq H_c$ since 
$T_c \cap J(f_c)$ is forward-invariant under $f_c$. The set $U$ is disjoint from $H_c$ by definition of the $U_i$. Moreover, if $\theta$ belongs to 
$D^{-1}(H_c) \setminus H_c$, then $R_c(\theta)$ lands on $f_c^{-1}(T_c)$, so $\theta$ belongs to some $U_i$. Finally, let us check that 
for each $i$ we have the inclusion $\partial U_i \subseteq H_c$. Indeed, if $U$ is non-empty then $H_c$ has no interior (since 
it is invariant for the doubling map and does not coincide with the whole circle), so angles on the boundary of $U_i$ are limits 
of angles in $H_c$, so their corresponding rays land on the Hubbard tree by continuity of the Riemann mapping on the boundary.
\end{proof}

\section{Entropy of Hubbard trees} \label{section:entroH}
 

We are now ready to prove the relationship between the topological entropy of a topologically finite 
quadratic polynomial $f_c$ and the Hausdorff dimension of the set of 
rays which land on the Hubbard tree $T_c$: 

\begin{theorem} \label{entro_dim} 
Let $f_c(z) = z^2 + c$ be a topologically finite quadratic polynomial, let $T_c$ be its Hubbard tree and 
$H_c$ the set of external angles of rays which land on the Hubbard tree. 
Then we have the identity
$$\frac{h_{top}(f_c \mid_{T_c})}{\log 2}  = \textup{H.dim }H_c.$$
\end{theorem}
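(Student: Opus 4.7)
The plan is to assemble the equality from three ingredients corresponding to the main propositions already collected in Section \ref{section:htop}. First, Furstenberg's formula (Proposition \ref{dim_entro}) applies directly: the set $H_c$ is, as noted in Section \ref{section:invsets}, closed in $\mathbb{R}/\mathbb{Z}$ and invariant under the doubling map $D$, so
$$\textup{H.dim }H_c \;=\; \frac{h_{top}(D, H_c)}{\log 2}.$$
Everything then reduces to proving the dynamical identity $h_{top}(D, H_c) = h_{top}(f_c, T_c)$.

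The second ingredient is the Carath\'eodory loop $\gamma$, which is well-defined and continuous because local connectivity of $J(f_c)$ is built into the hypothesis of topological finiteness. By definition of $H_c$, the restriction $\gamma : H_c \to T_c \cap J(f_c)$ is a continuous surjection, and by equation (2) of Section \ref{section:extrays} it satisfies $\gamma \circ D = f_c \circ \gamma$. Each fibre $\gamma^{-1}(x)$ is exactly the set of external angles landing at $x$, i.e.\ the valence $val(x)$, and Proposition \ref{valencebound} bounds this uniformly by $2\cdot deg(T_c)$, which is finite precisely because the polynomial is topologically finite. Thus Proposition \ref{bdeg} applies and yields
$$h_{top}(D, H_c) \;=\; h_{top}(f_c, T_c \cap J(f_c)).$$

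The third ingredient is the comparison between entropy on the Julia portion $T_c \cap J(f_c)$ and on the full tree $T_c$, which may contain bisectors of bounded Fatou components. Since $f_c$ is topologically finite, there are only finitely many such bisector arcs, they form a periodic cycle of closed sets under $f_c$, and on each of them the first return map is conjugate to $z\mapsto z^d$ restricted to $(-1,1)$ in the attracting case, or to a parabolic interval map in the parabolic case; in either case the dynamics on a bisector has zero topological entropy. Decomposing $T_c$ into the closed invariant piece $T_c \cap J(f_c)$ together with these finitely many periodic bisector orbits, and building $(n,\epsilon)$-spanning sets piecewise, one concludes that
$h_{top}(f_c, T_c) = \max\{h_{top}(f_c, T_c \cap J(f_c)), 0\} = h_{top}(f_c, T_c \cap J(f_c)),$
which combined with the previous two displayed equations proves the theorem.

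The main obstacle is this third step. One is tempted to invoke Proposition \ref{entronoFatou} with $Y = T_c \cap J(f_c)$, but this fails in the attracting case since orbits inside a Fatou component converge to the attracting cycle, which lies in the interior of $Y^c$, not to $Y$. The piecewise-spanning-set argument above circumvents this issue by exploiting that the extra dynamics is concentrated on a finite union of intervals with a single repelling-to-contracting behaviour. All other ingredients (closedness and $D$-invariance of $H_c$, continuity and surjectivity of $\gamma$, and the equivariance relation) are either immediate or have been established earlier in Sections \ref{section:extrays}--\ref{section:invsets}.
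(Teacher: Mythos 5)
Your first two steps — Furstenberg's dimension formula (Proposition \ref{dim_entro}), and the bounded-fibre semiconjugacy argument via the Carath\'eodory loop, Proposition \ref{valencebound}, and Proposition \ref{bdeg} — coincide exactly with the paper's proof. The third step is where you diverge: the paper simply applies Proposition \ref{entronoFatou} with $X = T_c$, $Y = T_c \cap J(f_c)$, whereas you observe, correctly, that its hypothesis fails whenever $c$ has an attracting cycle, since the attracting periodic point lies on a bisector (hence in $T_c \setminus Y$) and its forward orbit never approaches $Y$. The paper's invocation is therefore only literally valid when $J(f_c)$ has empty interior, or in the parabolic case where the cycle sits on $\partial U_0 \subseteq Y$; for attracting (including superattracting) parameters an argument of the sort you supply is genuinely needed. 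Your fix — split $T_c$ into the two closed forward-invariant pieces $T_c \cap J(f_c)$ and the closure of the Fatou part, use that the entropy of a finite union of closed invariant sets equals the maximum of the entropies, and check that the bisector dynamics carries zero entropy — is the right one. Two small imprecisions worth tightening: in the attracting but non-superattracting case the first return map on the central bisector is the restriction of a degree-two Blaschke product, not of $z \mapsto z^2$ (it is still a unimodal map whose critical orbit converges monotonically to an interior attracting fixed point, hence eventually constant kneading and zero entropy, so the conclusion stands); and the assertion that only finitely many bisector arcs occur, all lying on one periodic cycle, deserves a sentence of justification, since preperiodic Fatou components could a priori also meet $T_c$. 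Neither affects the soundness of the decomposition, and your scrutiny of the paper's third step is a genuine improvement over the written proof.
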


\begin{proof}
Let $\gamma : \mathbb{R}/\mathbb{Z} \to J(f_c)$ the Carath\'eodory loop. 
We know that
$$\gamma( D( \theta)) = f_c(\gamma(\theta)).$$
By Proposition \ref{valencebound}, the cardinality of the preimage of any point is bounded; 
hence, by Theorem \ref{bdeg}, we have 
$$h_{top}(f_c, J(f_c) \cap T_c) = h_{top}(D, \gamma^{-1}(J(f_c )\cap T_c)) = h_{top}(D, H_c).$$
Moreover, Proposition \ref{entronoFatou} implies 
$$h_{top}(f_c, J(f_c) \cap T_c) = h_{top}(f_c, T_c).$$
Then we conclude, by the dimension formula of Proposition \ref{dim_entro}, that
$$\textup{H.dim } H_c = \frac{h_{top}(D, H_c)}{\log 2}.$$ 
\end{proof}

The exact same argument applies to any compact, forward invariant set $X$ in the Julia set: 

\begin{theorem}  
Let $f_c$ be a topologically finite quadratic polynomial, and $X \subseteq J(f_c)$ compact and 
invariant (i.e. $f_c(X) \subseteq X$). Let define the set
$$\Theta_c(X) := \{\theta \in \mathbb{R}/\mathbb{Z} \ : \ R_c(\theta) \textup{ lands on }X \};$$ 
then we have the equality 
$$\frac{h_{top}(f_c \mid_{X})}{\log 2}  = \textup{H.dim }\Theta_c(X).$$
\end{theorem}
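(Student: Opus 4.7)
The strategy is to replicate the proof of Theorem \ref{entro_dim} essentially verbatim, noting one welcome simplification: since $X$ already lies in the Julia set, the preliminary step of passing from $T_c$ to $J(f_c)\cap T_c$ via Proposition \ref{entronoFatou} is not needed, so the argument goes directly through the Carath\'eodory semiconjugacy.

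First I would observe that $\Theta_c(X) = \gamma^{-1}(X)$, where $\gamma:\mathbb{R}/\mathbb{Z} \to J(f_c)$ is the Carath\'eodory loop (which exists since topological finiteness implies local connectivity of $J(f_c)$). Because $X$ is closed and $\gamma$ is continuous, $\Theta_c(X)$ is closed; because $\gamma \circ D = f_c \circ \gamma$ and $f_c(X)\subseteq X$, it is also forward invariant under the doubling map. The restriction $\gamma|_{\Theta_c(X)} : \Theta_c(X) \to X$ is thus a continuous surjection which semiconjugates $(D,\Theta_c(X))$ to $(f_c,X)$.

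The main point to check is that the fibers of this semiconjugacy are uniformly bounded, so that Proposition \ref{bdeg} applies. The fiber over $z\in J(f_c)$ has cardinality $val(z)$, and for a topologically finite polynomial this valence is globally bounded on the whole Julia set, not just on the extended Hubbard tree. Indeed, by Lemma \ref{biacc_lemma}, every biaccessible $z\in J(f_c)$ eventually maps into the spine $[-\beta,\beta]\subseteq \widetilde{T}_c$; combining this with Lemma \ref{max_val} and accounting for the local degree-two behavior at a possible critical orbit point, one obtains the uniform bound $val(z) \leq 2\,deg(T_c)$ for every $z \in J(f_c)$, exactly as in the proof of Proposition \ref{valencebound}. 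So $|\gamma^{-1}(z)| \leq 2\,deg(T_c)$ for all $z \in X$.

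With the uniform fiber bound in hand, Proposition \ref{bdeg} gives
$$h_{top}(f_c, X) \;=\; h_{top}(D, \Theta_c(X)),$$
and since $\Theta_c(X)$ is a closed, $D$-invariant subset of $\mathbb{R}/\mathbb{Z}$, Proposition \ref{dim_entro} (with $d=2$) yields
$$h_{top}(D, \Theta_c(X)) \;=\; \log 2 \cdot \textup{H.dim}\,\Theta_c(X).$$
Combining the two identities produces the desired equality. The only step requiring any thought is the uniform valence bound on all of $J(f_c)$; the rest is a mechanical assembly of Bowen's theorem together with the Furstenberg-type dimension formula.
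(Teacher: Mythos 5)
Your proof is correct and follows exactly the route the paper intends: the paper states this theorem with the one-line remark that ``the exact same argument applies'' as in Theorem \ref{entro_dim}, namely the Carath\'eodory semiconjugacy plus Proposition \ref{bdeg} plus Proposition \ref{dim_entro}, and you have carried this out faithfully (correctly dropping the Proposition \ref{entronoFatou} step, which was only needed to discard the Fatou part of $T_c$). You also identify and fill the one genuine gap in the paper's ``same argument'' shorthand: Proposition \ref{valencebound} as stated only bounds valence on the extended Hubbard tree, whereas for arbitrary $X \subseteq J(f_c)$ one needs a bound on all of $J(f_c)$. Your derivation of this global bound is sound; one could even simplify it slightly, since if $n$ is the first time the orbit of a biaccessible $z$ enters the spine, that orbit cannot have passed through $0$ (which already lies in $(-\beta,\beta)$), so $val(z) = val(f^n(z)) \leq 2\,deg(T_c)$ with no extra factor needed, but your slightly more cautious bookkeeping arrives at the same $2\,deg(T_c)$ bound.
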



\section{Combinatorial description: the real case} \label{section:symbolic}

Suppose $c \in \partial \mathcal{M} \cap \mathbb{R}$.  
By definition, the \emph{dynamic root} $r_c$ of $f_c$ is the critical value $c$ if $c$ belongs to the Julia set, 
otherwise it is the smallest value of $J(f_c) \cap \mathbb{R}$ larger than $c$. This means that $r_c$ lies 
on the boundary of the bounded Fatou component containing $c$.

Recall that the \emph{impression} of a parameter ray $R_M(\theta)$ is the set of all 
$c \in \partial \mathcal{M}$ for which there is a sequence $\{w_n \}$ such that 
$|w_n | > 1$, $w_n \to e^{2\pi i \theta}$, and $\Phi_M^{-1}(w_n) \to c$. We denote
the impression of $R_M(\theta)$ by $\hat{R}_M(\theta)$. It is a non-empty, compact, connected subset of $\partial \mathcal{M}$.
Every point of $\partial \mathcal{M}$ belongs to the impression of at least one parameter ray. Conjecturally,
every parameter ray $R_M(\theta)$ lands at a well-defined point $c(\theta) \in \partial \mathcal{M}$ and $\hat{R}_M(\theta) = {c(\theta)}$.

In the real case, much more is known to be true. First of all, every real Julia set is locally connected \cite{LvS}. 
The following result summarizes the situation for real maps.

\begin{theorem}[\cite{Za}, Theorem 3.3]
Let $c \in \partial \mathcal{M} \cap \mathbb{R}$. Then there exists a unique angle $\theta_c \in [0, 1/2]$ such that
the rays $R_c(\pm \theta_c)$ land at the dynamic root $r_c$ of $f_c$. In the parameter plane, the
two rays $R_M (\pm \theta_c)$, and only these rays, contain $c$ in their impression. 
\end{theorem}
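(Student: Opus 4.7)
The plan is to split the statement into its dynamical and parametric halves. For the dynamical half, the key input is that every real Julia set is locally connected (Levin--van Strien), so the Carath\'eodory loop $\gamma_c : \mathbb{R}/\mathbb{Z} \to J(f_c)$ is well-defined and continuous; and since $f_c$ commutes with complex conjugation when $c \in \mathbb{R}$, one has the symmetry $\gamma_c(-\theta) = \overline{\gamma_c(\theta)}$. The dynamic root $r_c$ is a real point on $\partial K(f_c)$ (either the critical value itself when $c \in J(f_c)$, or a boundary point of the bounded Fatou component containing $c$), hence accessible; pick $\theta_c \in [0,1/2]$ with $\gamma_c(\theta_c) = r_c$. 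Applying conjugation, $R_c(-\theta_c)$ also lands at $\overline{r_c} = r_c$.

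For uniqueness of the pair in the dynamical plane, I would show that $r_c$ has valence exactly two. By McMullen's formula, $val(r_c)$ equals the number of connected components of $J(f_c) \setminus \{r_c\}$. The definition of the dynamic root forces $r_c$ to lie in the interior of the spine $[-\beta,\beta]$ and to be the \emph{first} point of $J(f_c) \cap \mathbb{R}$ to the right of $c$; consequently no branch of $J(f_c)$ meets $r_c$ transversely to the real axis, for such a branch would either contradict the ``first'' property (by producing Julia-set points in $(c, r_c)$) or force $r_c$ to bound a bounded Fatou component on both real sides, which is ruled out by the real structure. Thus $J(f_c) \setminus \{r_c\}$ splits into exactly two components (the basin side through the critical value and the side continuing toward $\beta$), giving $val(r_c) = 2$ and the uniqueness of the unordered pair $\{\theta_c, -\theta_c\}$.

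For the parameter plane half, I would invoke Thurston's combinatorial model. The pair $\{-\theta_c, \theta_c\}$ is the minor leaf $m_c$ of the invariant quadratic lamination $\lambda_c$ attached to $f_c$, and verifying conditions (a)--(c) of Proposition \ref{critQML} places $m_c$ in $QML$. Theorem \ref{yoccoz} then transports this to the parameter plane: part (2) handles the case of rational $\theta_c$ (postcritically preperiodic or parabolic $c$), and part (3) handles the non-infinitely-renormalizable case, showing in either situation that $R_M(\pm\theta_c)$ land at $c$ and in particular have $c$ in their impressions. Conversely, if a third ray $R_M(\theta')$ had $c \in \widehat{R}_M(\theta')$, combining with part (1) one obtains $\theta' \sim_M \theta_c$, and since the equivalence class of $\theta_c$ under $\sim_M$ consists of exactly the two characteristic angles, $\theta' \in \{\pm\theta_c\}$.

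The main obstacle is the infinitely renormalizable real case, where MLC is not known and Yoccoz's theorem does not apply directly, so one cannot simply quote Theorem \ref{yoccoz}(3). To bridge this gap I would exploit the one-dimensional nature of the real family: using the bisection algorithm of Theorem \ref{algoint} to approximate $c$ by postcritically finite real parameters from both sides, together with monotonicity of $c \mapsto \theta_c$ along $[-2, 1/4]$ (which comes from monotonicity of the kneading invariant), one squeezes the set of angles with $c$ in their impression between two sequences both converging to $\{\pm\theta_c\}$, yielding uniqueness.
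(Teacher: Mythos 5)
The paper does not prove this theorem; it is cited verbatim from Zakeri \cite{Za}, and the only commentary given is the one-sentence remark immediately following: the original argument ``builds on the previous results of Douady--Hubbard and Tan Lei for the case of periodic and preperiodic critical points and uses density of hyperbolicity in the real quadratic family to get the claim for all real maps.'' So there is no in-paper proof to compare against; I can only compare your proposal to that sketch and to the surrounding machinery the paper does develop.

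Your outline is structurally aligned with the sketch (handle combinatorially nice cases via Douady--Hubbard/Tan Lei, then extend by real density of hyperbolicity), and the dynamical half is reasonable given local connectivity of real Julia sets. However, two steps in the parameter-plane half have genuine gaps. First, the converse direction is not sound as stated. Theorem \ref{yoccoz}(1) converts $\asymp_M$ (same \emph{landing} point) to $\sim_M$; but your hypothesis on a competing ray $R_M(\theta')$ is only that $c \in \widehat{R}_M(\theta')$, i.e.\ that $c$ lies in the prime-end \emph{impression}. Without MLC, or at least without knowing that $\theta'$ is finitely renormalizable, this does not give $\theta' \asymp_M \theta_c$, so you cannot invoke part (1) to conclude $\theta' \sim_M \theta_c$. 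This is precisely the content that makes the impression-formulation of the statement nontrivial, and your proposal quietly conflates ``impression contains $c$'' with ``lands at $c$.'' Second, the closing paragraph on the infinitely renormalizable real case is only a statement of intent: to rule out a stray angle $\theta \notin \{\pm\theta_c\}$ one must show that $\widehat{R}_M(\theta)$ is eventually separated from $c$ by some pair of rational parameter rays (say, rays landing on a common real hyperbolic component between $\theta$ and $\theta_c$). Monotonicity of the kneading invariant together with approximation by hyperbolic windows from both sides is the right raw material, but the separation step needs to be carried out; it is not automatic that ``both sequences of angles converge to $\theta_c$'' forces the \emph{impressions} of intermediate rays to miss $c$. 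Zakeri's proof (and the paper's remark) identifies real density of hyperbolicity as the key input exactly because it supplies these separating rational ray pairs arbitrarily close to $c$; your write-up should make that mechanism explicit rather than leaving it at the level of a squeeze.

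Minor: when $\theta_c \in \{0, 1/2\}$ the pair $\{\pm\theta_c\}$ degenerates to a single angle, which occurs at $c = 1/4$ and $c = -2$; your valence-two argument and the symmetry step should note these endpoint cases separately.
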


The theorem builds on the previous results of Douady-Hubbard \cite{DH} and Tan Lei \cite{TanL} for the case of 
periodic and preperiodic critical points and uses density of hyperbolicity in the real quadratic family to get the claim 
for all real maps.

To each angle $\theta \in S^1$ we can 
associate a \emph{length} $\ell(\theta)$ as the length (along the circle) of the chord delimited by the leaf joining $\theta$ to 
$1-\theta$ and containing the angle $\theta = 0$. In formulas, it is easy to check that 
$$\ell(\theta) := \left\{ \begin{array}{ll} 2\theta & \textup{if }0 \leq \theta < \frac{1}{2} \\
                                            2-2\theta &  \textup{if }\frac{1}{2} \leq \theta < 1. \\
                          \end{array}
\right.$$
For a real parameter $c$, we will denote as $\ell_c$ the length of the characteristic leaf 
$$\ell_c := \ell(\theta_c).$$
The key to analyzing the symbolic dynamics of $f_c$ is the following interpretation
in terms of the dynamics of the tent map. Since all real Julia sets are locally connected, for $c$ real all dynamical 
rays $R_c(\theta)$ have a well-defined limit $\gamma_c(\theta)$, which belongs to $J(f_c)$. 
Let us moreover denote by $T$ the full tent map on the interval $[0,1]$, defined as 
$T(x) := \min \{2x, 2-2x\}$. The following diagram is commutative:

$$\xymatrix{
  S^1 \ar@(ul,ur)^D \ar[d]^\ell \ar[r]^{\gamma_c} & J(f_c) \ar@(ul,ur)^{f_c}\\
[0,1] \ar@(ur,r)^T
  }
$$

This means that we can understand the dynamics of $f_c$ on the Julia set in terms of the dynamics of the tent 
map on the space of lengths. 
First of all, the set of external angles corresponding 
to rays which land on the real slice of the Julia set can be given the following characterization:

\begin{proposition} \label{symb_spine}
Let $c \in [-2, \frac{1}{4}]$. Then the set $S_c$ of external angles of rays which land 
on the real slice $J(f_c) \cap \mathbb{R}$ of the Julia set is
$$S_c = \{ \theta \in \mathbb{R}/\mathbb{Z} \ : \ T^n(\ell(\theta)) \leq \ell_c \quad \forall n \geq 1 \}.$$
\end{proposition}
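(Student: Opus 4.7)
The strategy is to recast the orbit condition $T^n(\ell(\theta)) \le \ell_c$ purely in terms of the doubling map and then apply Lemma~\ref{inv_set}. Since $\ell \circ D = T \circ \ell$ (a direct check from the piecewise-linear definitions), setting $U := (\theta_c, 1-\theta_c)$ gives the equivalence $T^n(\ell(\theta)) \le \ell_c \Leftrightarrow D^n(\theta) \notin U$. The right-hand side of the proposition therefore equals $D^{-1}(Z)$, where $Z := \{\psi : D^n(\psi) \notin U \ \forall n \ge 0\}$.

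The core geometric fact to establish is the following: \emph{if $\psi \in S_c \cap U$, then $\gamma_c(\psi) < c$}. Indeed, $\psi \in U$ forces $\gamma_c(\psi)$ to lie in the closure of the limb $L_c$ (the component of $K(f_c) \setminus \{r_c\}$ containing the critical value), whose intersection with $\mathbb{R}$ is contained in $[-\beta, r_c]$. Since $\psi$ lies in the open arc $U$ and since, by the cited theorem of Zakeri, the only rays landing at $r_c$ are $R_c(\pm\theta_c)$ with $\pm\theta_c \in \partial U$, we have $\gamma_c(\psi) \neq r_c$. By definition of the dynamic root, $J(f_c) \cap [c, r_c) = \emptyset$, so $\gamma_c(\psi) \in J(f_c) \cap [-\beta, c)$, giving $\gamma_c(\psi) < c$.

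Two consequences follow. First (forward-invariance): if $\theta \in S_c$ then $D\theta \notin U$, for otherwise $D\theta \in S_c \cap U$ by forward-invariance of $S_c$, and the key fact would give $\gamma_c(D\theta) < c$, contradicting $\gamma_c(D\theta) = \gamma_c(\theta)^2 + c \ge c$. Second (lifting): if $D\theta \in S_c \setminus U$, then $\theta \in S_c$, because $D\theta \notin U$ places $\gamma_c(D\theta)$ in $J(f_c) \cap [r_c, \beta]$, so $\gamma_c(D\theta) - c \ge 0$ and both preimages $\pm\sqrt{\gamma_c(D\theta) - c}$ are real, one of them being $\gamma_c(\theta)$. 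Now apply Lemma~\ref{inv_set} to $X := S_c \setminus U$: $X$ is closed and disjoint from $U$; the first consequence gives $D(X) \subseteq X$; the angles $\theta_c, 1-\theta_c$ land at $r_c \in \mathbb{R}$ and lie outside the open $U$, so $\partial U \subseteq X$; the second consequence yields $D^{-1}(X) \setminus X \subseteq U$, since $D\theta \in X \subseteq S_c$ forces $\theta \in S_c$, and then $\theta \notin X$ forces $\theta \in U$. The lemma therefore identifies $Z$ with $X = S_c \setminus U$, and combining the two consequences gives $D^{-1}(Z) = S_c$, as required.

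The main obstacle is verifying the key geometric fact, which relies on a clean description of the critical wake in the dynamical plane and on the identification of $\pm\theta_c$ as exactly the rays landing at the dynamic root $r_c$ for real parameters; once this is in place, the proof reduces to an application of the abstract Lemma~\ref{inv_set}.
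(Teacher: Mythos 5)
Your proof is correct and takes essentially the same approach as the paper: it applies Lemma~\ref{inv_set} to the set $X$ of angles of rays landing on $[c,\beta]$ (which you identify as $S_c\setminus U$) and then pulls back by one step of the doubling map, using $D^{-1}(X)=S_c$. The paper's version is terse and does not spell out the wake/landing argument needed to verify the hypotheses of Lemma~\ref{inv_set}; your ``key geometric fact'' is precisely that implicit content, so the two proofs coincide in substance.
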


\begin{proof}
Let $X$ be the set of angles of rays landing on the segment $[c, \beta]$. Since $f_c^{-1}([c, \beta]) = [-\beta, \beta]$, 
then $D^{-1}(X)$ is the set of angles landing on the spine. Thus, if we set $U := (\theta_c, 1-\theta_c)$ then the hypotheses
of Lemma \ref{inv_set} hold, hence we get the following description:
$$S_c = \{ \theta \in \mathbb{R}/\mathbb{Z} \ : \ D^n(\theta) \notin (\theta_c, 1-\theta_c) \ \ \forall n \geq 1 \}$$
hence by taking the length on both sides
$$\theta \in S_c \quad \Leftrightarrow \quad \ell(D^n(\theta)) \leq \ell(\theta_c) \ \ \forall n \geq 1 $$
and by the commutative diagram we have $\ell(D^n(\theta)) = T^n(\ell(\theta))$, which, when substituted into the previous 
equation, yields the claim.                                                                                                                      \
\end{proof}

Recall that for a real polynomial $f_c$ the Hubbard tree is the segment $[c, f_c(c)]$. 
Let us denote as $L_c := \ell(D(\theta_c))$ the length of the leaf which corresponds to $f_c(c) = c^2 + c$. The 
set of angles which land on the Hubbard tree can be characterized as:

\begin{proposition} \label{symb_Htree}
The set $H_c$ of angles of external rays which land on the Hubbard tree for $f_c$ is:
$$H_c := \{ \theta \in \mathbb{R}/\mathbb{Z} \ : \ T^n(\ell(\theta)) \geq L_c \ \quad \forall n \geq 0 \}.$$
\end{proposition}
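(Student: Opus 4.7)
The plan is to characterize $H_c$ by applying Lemma \ref{inv_set} with a carefully chosen forbidden set. Set
\[
U := \{\theta \in \mathbb{R}/\mathbb{Z} : \ell(\theta) < L_c\};
\]
this is the open arc of length $L_c$ centered at $\theta = 0$, and its boundary $\partial U = \{D(\theta_c),\, 1-D(\theta_c)\}$ consists precisely of the two external angles whose rays land on the tip $f_c(c) = c^2+c$ of the Hubbard tree (since $L_c = \ell(D(\theta_c))$). Using the commutation $\ell \circ D = T \circ \ell$, the condition ``$T^n(\ell(\theta)) \geq L_c$ for all $n \geq 0$'' translates directly into ``$D^n(\theta) \notin U$ for all $n \geq 0$'', so the proposition reduces to applying Lemma \ref{inv_set} to $X = H_c$ with this $U$.

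The set $H_c = \gamma_c^{-1}(T_c)$ is closed (as $T_c$ is compact and the Carath\'eodory loop $\gamma_c$ is continuous by local connectivity of real Julia sets \cite{LvS}) and forward $D$-invariant (since $T_c \cap J(f_c)$ is $f_c$-invariant). The inclusion $\partial U \subseteq H_c$ is immediate: both angles $\pm D(\theta_c)$ land on $c^2+c \in T_c$.

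The remaining hypotheses $U \cap H_c = \emptyset$ and $D^{-1}(H_c) \setminus H_c \subseteq U$ both come from the same planar picture. The two rays $R_c(D(\theta_c))$ and $R_c(-D(\theta_c))$, together with their common landing point $c^2+c$, form a simple closed curve $\Gamma \subset \hat{\mathbb{C}}$ splitting the sphere into two regions $W_0$ and $W_{1/2}$, where $W_0$ contains the ray $R_c(0)$ (and so the positive real axis beyond $\beta$). Because $\Gamma$ meets the real line only at $c^2+c$ and $\infty$, the two halves $(-\infty, c^2+c)$ and $(c^2+c, +\infty)$ of the real axis lie in $\overline{W_{1/2}}$ and $\overline{W_0}$ respectively; in particular $T_c = [c, c^2+c]$ meets $\overline{W_0}$ only at the tip $c^2+c$, while the remaining real preimage $f_c^{-1}(T_c) \setminus T_c = (c^2+c, -c]$ (obtained by solving $z^2+c \in [c, c^2+c]$) lies inside $\overline{W_0}$. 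Rays $R_c(\theta)$ with $\theta \in U$ are contained in $W_0$, so their landing points can meet $T_c$ only at $c^2+c$, which is reached only by $\partial U$-angles; this gives $U \cap H_c = \emptyset$. Conversely, if $\theta \in D^{-1}(H_c) \setminus H_c$ then $\gamma_c(\theta) \in f_c^{-1}(T_c) \setminus T_c \subseteq \overline{W_0}$, forcing $\theta \in \overline{U}$; and since the landing point is strictly past $c^2+c$, one has $\theta \neq \pm D(\theta_c)$, hence $\theta \in U$.

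Applying Lemma \ref{inv_set} then identifies $H_c$ with $\{\theta : D^n(\theta) \notin U \text{ for all } n \geq 0\}$, which is the desired characterization. The main obstacle is the geometric partition argument of the third paragraph: one must cleanly verify that $\Gamma$ cuts the real axis exactly at the tip of the Hubbard tree and that the extra preimage branch $(c^2+c, -c]$ sits on the $W_0$-side, so that the landing angles for rays meeting $f_c^{-1}(T_c) \setminus T_c$ fall in the desired forbidden arc $U$.
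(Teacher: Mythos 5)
Your proposal is correct and takes the same route as the paper: both proofs invoke Lemma~\ref{inv_set} with the identical forbidden arc $U = (D(\theta_c), 1-D(\theta_c))$ and then translate the $D$-avoidance condition into the stated $T$-inequality via $\ell \circ D = T \circ \ell$. The paper's proof is a one-liner that simply asserts the lemma's hypotheses hold (citing the preimage structure of the Hubbard tree), whereas you verify them explicitly through the planar separation argument with the Jordan curve $\Gamma$ formed by the two rays landing at the tip $c^2+c$; that added detail is consistent with, and fills in, the paper's terse justification.
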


\begin{proof}
Since the Hubbard tree is $[c, f_c(c)]$ and its preimage is $[0, c]$, one can take $U = (D(\theta_c), 1-D(\theta_c))$ 
(where we mean the interval containing zero) and $X = H_c$, and we get by Lemma \ref{inv_set}
$$H_c = \{ \theta \in S^1 \ : \ D^n(\theta) \notin U \ \ \forall n \geq 0 \}$$ 
hence in terms of length 
$$H_c = \{ \theta \in S^1 \ : \ \ell(D^n(\theta)) \geq \ell(D(\theta_c)) \  \ \forall n \geq 0\}$$ 
which yields the result when you substitute $\ell(D^n(\theta)) = T^n(\ell(\theta))$ and $L_c = \ell(D(\theta_c))$.
\end{proof}



 



\subsection{The real slice of the Mandelbrot set}
Let us now turn to parameter space. We are looking for a combinatorial description 
of the set of rays which land on the real axis. However, in order to account 
for the fact that some rays might not land, let us define the set $\mathcal{R}$ of \emph{real parameter angles} as 
the set of angles of rays whose prime-end impression intersects the real axis:

$$\mathcal{R} := \{ \theta \in S^1 \ : \ \hat{R}_M(\theta) \cap \mathbb{R} \neq \emptyset \}.$$ 

The set $\mathcal{R}$ is also the closure (in $S^1$) of the union of the angles of rays landing on the boundaries of all 
real hyperbolic components.
Combinatorially, elements of $\mathcal{R}$ correspond to leaves which are maximal in their orbit under the dynamics 
of the tent map: 
\begin{proposition} \label{symb_Pc}
The set $\mathcal{R}$ of real parameter angles can be characterized as
$$\mathcal{R} = \{ \theta \in S^1 \ : \ T^n(\ell(\theta)) \leq \ell(\theta) \ \ \forall n \geq 0 \}.$$
\end{proposition}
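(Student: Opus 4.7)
My plan is to prove the identity $\mathcal{R}=\mathcal{P}$ directly, where $\mathcal{P}$ denotes the set on the right-hand side, by establishing both inclusions. The key bridge between dynamical and parameter space will be Proposition \ref{symb_spine}.

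For $\mathcal{R}\subseteq\mathcal{P}$: note that $\mathcal{P}$ is closed, being an intersection over $n\geq 0$ of closed conditions, so it suffices to verify the inequality on the dense subset of characteristic angles $\theta=\theta_c$ with $c\in\partial\mathcal{M}\cap\mathbb{R}$. For such a $c$ the dynamic root $r_c$ is a real point of $J(f_c)$, hence $\theta_c\in S_c$; since the real Julia set is $f_c$-invariant, $S_c$ is $D$-invariant, and $D^n(\theta_c)\in S_c$ for every $n\geq 0$. Applying Proposition \ref{symb_spine} with $\theta'=D^n(\theta_c)$ and invoking the semi-conjugacy $\ell\circ D=T\circ\ell$ from the commutative diagram yields
\[ T^{n+m}(\ell(\theta_c)) = T^m(\ell(D^n(\theta_c))) \leq \ell_c = \ell(\theta_c) \quad \forall\,n\geq 0,\ m\geq 1, \]
which contains the required $T^k(\ell(\theta_c))\leq\ell(\theta_c)$ for all $k\geq 0$.

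For $\mathcal{P}\subseteq\mathcal{R}$: given $\theta\in\mathcal{P}$ set $x:=\ell(\theta)$, so $x$ belongs to $\mathcal{P}^*:=\{y\in[0,1]:T^n(y)\leq y\,\forall n\}$. A direct check shows $\mathcal{P}^*\subseteq\{0\}\cup[2/3,1]$, since $T(y)>y$ on $(0,2/3)$. The essential step is the Milnor--Thurston realization theorem \cite{MT}: the map $c\mapsto\ell_c$ on $\partial\mathcal{M}\cap\mathbb{R}$ is a weakly monotone continuous surjection onto $\mathcal{P}^*$, the inclusion $\ell_c(\partial\mathcal{M}\cap\mathbb{R})\subseteq\mathcal{P}^*$ being the content of the previous part and the reverse being the classical statement that every admissible tent-map kneading invariant is realized by a real quadratic polynomial on the boundary of $\mathcal{M}$. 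Consequently there is $c\in\partial\mathcal{M}\cap\mathbb{R}$ with $\ell_c=x$, whence by Zakeri's theorem $\theta_c=x/2$; since $\ell$ is at most $2$-to-$1$, $\theta$ equals $\theta_c$ or $1-\theta_c$, and both lie in $\mathcal{R}$ by the complex-conjugation symmetry of $\mathcal{M}$.

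The main obstacle is the surjectivity in the second inclusion. A naive intermediate value argument on the Cantor-like set $\partial\mathcal{M}\cap\mathbb{R}$ fails, because $\ell_c$ is constant on the interior of every real hyperbolic window and so its image a priori has gaps; the decisive point is that the gaps in $\ell_c(\partial\mathcal{M}\cap\mathbb{R})$ coincide exactly with the gaps in $\mathcal{P}^*$, since any $x$ lying strictly between two consecutive plateau levels of $\ell_c$ escapes above itself under a few iterations of $T$ and hence cannot belong to $\mathcal{P}^*$. This precise matching of combinatorial admissibility with topological realizability along the real axis is the content of kneading theory which is the essential input beyond the soft manipulations above.
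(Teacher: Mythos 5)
Your forward inclusion $\mathcal{R}\subseteq\mathcal{P}$ follows the paper's route (apply Proposition~\ref{symb_spine} to $\theta_c$ and use the conjugacy $\ell\circ D=T\circ\ell$); the extra layer of first passing to $D^n(\theta_c)\in S_c$ and then quantifying over $m$ is harmless but unnecessary, since $\theta_c\in S_c$ already gives $T^n(\ell_c)\leq\ell_c$ for all $n\geq 1$ directly. The converse inclusion, however, you handle by a genuinely different method. The paper argues contrapositively and explicitly: if $\theta\notin\mathcal{R}$ then $\theta$ lies in the opening $(\alpha,\omega)$ of a real hyperbolic window of period $n$, it writes out the binary expansions $\alpha=0.\overline{s_1\cdots s_n}$, $\omega=0.\overline{\check s_1\cdots \check s_n s_1\cdots s_n}$, observes that $2\alpha$ and $2\omega$ are fixed by $T^n$, and checks $T^n(x)>x$ on $(2\alpha,2\omega)$. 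You instead go forward: starting from $x=\ell(\theta)\in\mathcal{P}^*$, you invoke the Milnor--Thurston realization theorem to produce $c\in\partial\mathcal{M}\cap\mathbb{R}$ with $\ell_c=x$, then feed the output into Zakeri's uniqueness statement to conclude $\theta\in\{\theta_c,1-\theta_c\}\subseteq\mathcal{R}$. Both are correct, and the trade-off is clear: the paper's computation is elementary and self-contained (needing nothing beyond the combinatorics of binary expansions), whereas yours outsources the key surjectivity to a substantial external theorem. It is worth noting that the ``decisive point'' you raise in your final paragraph --- that the plateau gaps of $\ell_c$ must coincide with the gaps of $\mathcal{P}^*$ because intermediate $x$-values escape above themselves under $T$ --- is precisely the explicit calculation the paper carries out; so your appeal to kneading theory is in effect importing, as a black box, the same window computation the paper prefers to display in the open. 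If you want to keep the higher-level framing, you should at minimum state what form of the realization theorem you are using (monotone, continuous kneading map for the quadratic family onto the full set of admissible tent-map parameters), since the naive intermediate-value argument on the Cantor set $\partial\mathcal{M}\cap\mathbb{R}$ does not suffice on its own, as you yourself point out.
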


\begin{proof}
Let $\theta_c$ be the characteristic angle of a real quadratic polynomial. Since the corresponding dynamical ray 
$R_c(\theta)$ lands on the spine, by Proposition \ref{symb_spine} applied to $\ell(\theta_c) = \ell_c$ we have for each $n \geq 0$
$$T^n(\ell(\theta_c)) \leq \ell(\theta_c).$$ 
Conversely, if $\theta$ does not belong to $\mathcal{R}$ then it belongs to the opening of some real hyperbolic component $W$. 
By symmetry, we can assume $\theta$ belongs to $[0, 1/2]$: then $\theta$ must belong to the interval $(\alpha, \omega)$, whose
endpoints have binary expansion
$$\begin{array}{ll} \alpha = 0.\overline{s_1 \dots s_n} \\
   \omega = 0.\overline{\check{s_1}\dots \check{s_n} s_1 \dots s_n}
  \end{array}$$
where $n$ is the period of $W$, and $s_1 = 0$ (recall the notation $\check{s_i} := 1- s_i$); in this case it is easy to check that 
both $\ell(\alpha) = 2\alpha$ and $\ell(\omega) = 2 \omega$ are fixed points of $T^n$, 
and $T^n(x) > x$ if $x \in (2 \alpha, 2\omega)$. 
The description is equivalent to the one given in (\cite{Za}, Theorem 3.7).
\end{proof}

Note moreover that the image of characteristic leaves are the shortest leaves in the orbit:

\begin{proposition} \label{postchar}
The set $\mathcal{R} \setminus \{ 0 \}$ of non-zero real parameter angles can be characterized as
$$\mathcal{R} \setminus \{ 0 \} = \{ \theta \in [1/4, 3/4] \ : \ T^n(\ell(D(\theta))) \geq \ell(D(\theta)) \ \ \forall n \geq 0 \}.$$
\end{proposition}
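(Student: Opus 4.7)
The plan is to reduce Proposition \ref{postchar} to the characterization of $\mathcal{R}$ from Proposition \ref{symb_Pc} by purely studying tent map dynamics. Setting $x = \ell(\theta)$ and using the semiconjugacy $\ell \circ D = T \circ \ell$, the image length is $y := T(x) = \ell(D(\theta))$. Proposition \ref{symb_Pc} says $\theta \in \mathcal{R} \setminus \{0\}$ iff condition (A) holds: $x > 0$ and $T^n(x) \leq x$ for all $n \geq 0$. The claim then reduces to showing (A) is equivalent to condition (C): $x \geq 1/2$ together with $T^n(y) \geq y$ for all $n \geq 0$. Note the constraint $\theta \in [1/4, 3/4]$ is precisely $x = \ell(\theta) \geq 1/2$.

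For the implication (A) $\Rightarrow$ (C), the lower bound $x \geq 1/2$ is immediate because $T(z) = 2z > z$ on $(0, 1/2)$. For the inequalities on $y$, I would argue by contradiction: assume $m \geq 2$ is minimal with $T^m(x) < T(x)$ and let $w = T^{m-1}(x)$. By (A) one has $w \leq x$, and by minimality $w \geq T(x) = 2 - 2x$. Splitting on the two branches of the tent map at $w$ gives either $w > x$ (violating (A) via $T(w) = 2-2w < 2-2x$) or the sandwich $2 - 2x \leq w < 1 - x$, which forces $x > 1$; the boundary case $x = 1$ must be handled separately, but there $y = 0$ and the inequality is automatic.

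For the reverse (C) $\Rightarrow$ (A), the key observation is that the single inequality $T(y) \geq y$ already upgrades $x \geq 1/2$ to $x \geq 2/3$: when $y \geq 1/2$, the condition $2 - 2y \geq y$ forces $y \leq 2/3$ hence $x \geq 2/3$, while when $y < 1/2$ one already has $x > 3/4$. This yields $y \leq x$, handling the $n = 0$ case of $T^n(y) \leq x$. For $n \geq 1$, if some $T^{n_0}(y) > x$ (choose $n_0$ minimal), then $T^{n_0}(y) > 2/3 > 1/2$, so the next iterate is $2 - 2 T^{n_0}(y) < 2 - 2x = y$, contradicting (C). Hence $T^n(x) = T^{n-1}(y) \leq x$ for all $n \geq 1$, proving (A).

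The main difficulty is purely bookkeeping, namely tracking the two branches of the tent map and handling the degenerate endpoints $x = 1$ and $y = 0$. Structurally this mirrors Thurston's condition (b) of Proposition \ref{critQML} on minor leaves: the characteristic angle's $\ell$-value is maximal along its tent-map orbit, while its image's $\ell$-value is minimal.
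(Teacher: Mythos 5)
Your proof is correct and follows the same strategy as the paper: reduce to Proposition \ref{symb_Pc} and exploit that $T$ is an order-reversing bijection from $[1/2,1]$ onto $[0,1]$. The paper compresses this into one line; your careful split on the two branches of the tent map (and the resulting upgrade from $x \geq 1/2$ to $x \geq 2/3$) fills in exactly the detail the paper leaves implicit, namely why the orbit conditions on $x$ and on $y = T(x)$ are equivalent even though the orbit need not remain in $[1/2,1]$.
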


\begin{proof}
Since $\theta \in \mathcal{R} \setminus \{0\}$, then $\ell(\theta) \geq 2/3$, so $\ell(D(\theta)) \leq 1/3$. 
The claim follows then from the previous proposition by noting that $T$ maps $[1/2, 1]$ homeomorphically to 
$[0,1]$ and reversing the orientation.
\end{proof}

In the following it will be useful to introduce the following slice of $\mathcal{R}$, by taking for each 
$c \in [-2, 1/4]$ the set of angles of rays whose impression intersects the real axis to the right of $c$.

\begin{definition}
Let $c \in [-2, 1/4]$. Then we define the set 
$$P_c := \mathcal{R} \cap [1-\theta_c, \theta_c]$$
where $\theta_c \in [0, 1/2]$ is the characteristic ray of $f_c$, and $[1-\theta_c, \theta_c]$ 
is the interval containing $0$.
\end{definition}

A corollary of the previous description is that parameter rays landing on $\partial \mathcal{M} \cap \mathbb{R}$ to the right of 
$c$ also land on the Hubbard tree of $c$:

\begin{corollary}   \label{easyincl}
Let $c \in [-2, 1/4]$. Then the inclusion
$$P_c \setminus \{ 0 \} \subseteq H_c$$
holds.
\end{corollary}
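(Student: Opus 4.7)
The plan is to translate the inclusion $P_c \setminus \{0\} \subseteq H_c$ into the language of the tent map via the symbolic descriptions already established, and then verify it by a short orbit-comparison argument. Set $y := \ell(\theta)$ for a fixed $\theta \in P_c \setminus \{0\}$. By Proposition~\ref{symb_Htree}, what I must prove is $T^n(y) \geq L_c$ for every $n \geq 0$, where $L_c = T(\ell_c) = 2 - 2\ell_c$.

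The hypothesis $\theta \in P_c \setminus \{0\}$ gives two facts. Since $\theta$ lies in the arc $[1-\theta_c,\theta_c]$ containing $0$, one checks directly from the definition of $\ell$ that $y = \ell(\theta) \leq \ell_c$; and since $\theta \in \mathcal{R}$, Proposition~\ref{symb_Pc} gives $T^n(y) \leq y$ for every $n \geq 0$. Applying the latter with $n=1$ to $y > 0$ already forces $y \geq 2/3$: any $y \in (0,1/2]$ has $T(y) = 2y > y$, and any $y \in (1/2,1]$ with $T(y) = 2 - 2y \leq y$ satisfies $y \geq 2/3$. Hence $\ell_c \geq y \geq 2/3$, so $L_c = 2 - 2\ell_c \leq 2/3 \leq y$, which settles the case $n = 0$.

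For $n \geq 1$, the length bound $y \geq 2/3$ automatically places $\theta$ in $[1/3,2/3] \subset [1/4,3/4]$, so Proposition~\ref{postchar} applies to $\theta$ and yields $T^m(T(y)) \geq T(y)$ for all $m \geq 0$. Combined with $y \leq \ell_c$ this gives $T^n(y) \geq T(y) = 2 - 2y \geq 2 - 2\ell_c = L_c$ for every $n \geq 1$, completing the argument. The proof is purely combinatorial and I do not expect any real obstacle; the only fine point is to confirm that the side condition $\theta \in [1/4,3/4]$ required by Proposition~\ref{postchar} is automatic, but this follows from the length bound $\ell(\theta) \geq 2/3$ derived above.
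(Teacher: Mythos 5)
Your proof is correct and follows essentially the same route as the paper's: translate $P_c$ and $H_c$ to tent-map orbit conditions (Propositions \ref{symb_Pc}/\ref{postchar} and \ref{symb_Htree}), use $\ell(\theta) \leq \ell_c$ to push the post-image length $\ell(D(\theta))$ above $L_c$, and then use the minimality property of lengths along the orbit (Proposition \ref{postchar}) to propagate the bound to all iterates. If anything, your write-up is slightly more careful than the paper's: you explicitly derive $\ell(\theta) \geq 2/3$, which is the hidden fact needed both to justify the monotonicity flip $\ell(\theta) \leq \ell_c \Rightarrow \ell(D(\theta)) \geq L_c$ (one needs both lengths $>1/2$ for $T(x)=2-2x$) and to settle the $n=0$ case $\ell(\theta) \geq L_c$, which the paper's two-line proof leaves implicit. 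You also cite Proposition \ref{postchar} for the inequality $T^n(\ell(D(\theta))) \geq \ell(D(\theta))$, which is the statement actually being used; the paper's proof nominally cites Proposition \ref{symb_Pc} at that point, but the displayed inequality there is in fact the content of Proposition \ref{postchar}.
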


\begin{proof}
Let $\theta \neq 0$ belong to $P_c$. Then $\ell(\theta) \leq \ell(\theta_c)$, hence also 
$\ell(D(\theta)) \geq \ell(D(\theta_c))$. Now, by Proposition \ref{symb_Pc},
$$T^n(\ell(D(\theta))) \geq \ell(D(\theta)) \geq \ell(D(\theta_c))$$
for each $n \geq 0$, hence $\theta$ belongs to $H_c$ by Proposition \ref{symb_Htree}.
\end{proof}

\section{Compact coding of kneading sequences} \label{section:coding}

In order to describe the combinatorics of the real slice, we will now associate to each real external ray an infinite sequence
of positive integers. The notation is inspired by the correspondence with continued 
fractions established in \cite{BCIT}. Indeed, because of the isomorphism, the set of integer sequences which arise from 
parameters on the real slice of $\mathcal{M}$ is exactly the same as the set of sequences of partial quotients of elements 
of the bifurcation set $\mathcal{E}$ for continued fractions.

Let $\Sigma := (\mathbb{N}^+)^\mathbb{N}$ be the space of infinite sequences of positive integers, 
and $\sigma : \Sigma \to \Sigma$ be the shift operator. Sequences of positive integers will also be called \emph{strings}.

Let us now associate a sequence of integers to each angle. Indeed, let $\theta \in \mathbb{R}/\mathbb{Z}$, and 
write $\theta$ as a binary sequence: if $0 \leq \theta < 1/2$, we have 
$$\theta =  0.\underbrace{0\dots0}_{a_1}\underbrace{1\dots1}_{a_2}\underbrace{0\dots0}_{a_3}\dots \qquad a_i \geq 1$$
while if $1/2 \leq \theta < 1$ we have 
$$\theta =  0.\underbrace{1\dots1}_{a_1}\underbrace{0\dots0}_{a_2}\underbrace{1\dots1}_{a_3}\dots \qquad a_i \geq 1.$$
In both cases, let us define the sequence $w_\theta$ by counting the number of repetitions of the same symbol:
$$w_\theta := (a_1, a_2, a_3, \dots).$$
Note moreover that $w_\theta$ only depends on $\ell(\theta)$, which in both cases is given by 
$$\ell(\theta) = 0.\underbrace{0\dots 0}_{a_1-1}\underbrace{1\dots1}_{a_2}\underbrace{0\dots0}_{a_3}\dots\qquad a_i \geq 1.$$
Note that we have the following commutative diagram:
$$\xymatrix{
  \mathbb{R}/\mathbb{Z} \ar@(ul, ur)^D \ar[r]^\ell & [0,1] \ar@(ul,ur)^T \ar[r] &  \Sigma \ar@(ul,ur)^F 
}$$
where $F((a_1, a_2, \dots)) = (a_1 - 1, a_2, \dots)$ if $a_1 > 1$, and $F((1, a_2, \dots)) = (a_2, \dots)$.

If $\theta_c$ is the characteristic angle of a real hyperbolic component, we denote by $w_c$ the string 
associated to the \emph{postcharacteristic leaf} $L_c = (D(\theta_c), 1-D(\theta_c))$.
For instance, the airplane component has root $\theta_c = 3/7 = 0.\overline{011}$, so $D(\theta_c) = 1/7 = 0.\overline{001}$ and 
$w_c = \overline{(2,1)}$.



\subsection{Extremal strings}

Let us now define the \emph{alternate lexicographic order} on the set of strings of positive integers.
Let $S = (a_1, \dots, a_n)$ and $T = (b_1, \dots, b_n)$ be two finite strings of positive integers of equal length, 
and let $k := \min \{ i \geq 1 \ : \ a_i \neq b_i \}$ the first different digit. We will say that $S < T$ if 
$k \leq n$ and either 
$$ k \textup{ is odd  and }a_k > b_k $$
or 
$$ k \textup{ is even  and }a_k < b_k. $$
For instance, in this order $(2, 1) < (1, 2)$, and $(2, 1) < (2, 3)$. 
The order can be extended to an order on the set $\Sigma := (\mathbb{N}^+)^\mathbb{N}$ of infinite strings of positive integers.
Namely, if $S = (a_1, a_2, \dots)$ and $T = (b_1, b_2, \dots)$ are two infinite strings, then $S < T$ 
if there exists some $n \geq 1$ for which $(a_1, a_2, \dots, a_n) < (b_1, b_2, \dots, b_n)$.
We will denote as $\overline{S}$ the infinite periodic string $(S, S, \dots)$.

Note that as a consequence of our ordering we have, for two angles $\theta$ and $\theta'$, 
$$w_{\theta} < w_{\theta'} \Leftrightarrow \ell(\theta) > \ell(\theta')$$
and on the other hand, for two real $c, c' \in \partial \mathcal{M} \cap \mathbb{R}$, 
$$w_{c} < w_{c'} \Leftrightarrow \ell(\theta_c) < \ell(\theta'_c).$$
The following is a convenient criterion to compare periodic strings:

\begin{lemma}[\cite{CT}, Lemma 2.12] \label{stringlemma} 
Let $S$, $T$ be finite strings of positive integers. Then 
\begin{equation} 
ST < TS \Leftrightarrow \overline{S} < \overline{T}.
\end{equation}
\end{lemma}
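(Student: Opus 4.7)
My plan is to induct on $|S|+|T|$, using the symmetry $AB < BA \iff BA > AB$ to reduce to the case $|S| \leq |T|$. The first observation is that the initial $|S|$ letters of $ST$ coincide with the initial $|S|$ letters of $\overline{S}$, and likewise for $TS$ and $\overline{T}$. Hence if $S$ is not a prefix of $T$, the first disagreement between $S$ and $T$ occurs at some position $k \leq |S|$ and is also the first disagreement between $ST$ and $TS$ and between $\overline{S}$ and $\overline{T}$, with the same pair of symbols in all three cases; alternate lex then returns the same verdict on both sides of the claim.

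The interesting case is $T = SU$ with $|U| \geq 1$. In this case $ST = SSU$ and $TS = SUS$ share the initial segment $S$, as do $\overline{S}$ and $\overline{T} = \overline{SU}$. Stripping this shared prefix reduces the two comparisons to $SU$ versus $US$ and to $\overline{S}$ versus $\overline{US}$, but now measured from position $|S|+1$. Since alternate lex depends only on the parity of the first disagreement position, such a shift either preserves the order (when $|S|$ is even) or reverses it (when $|S|$ is odd), and the same flip occurs on both sides of the target equivalence, so it cancels. It thus suffices to prove
\[SU < US \iff \overline{S} < \overline{US}.\]
The inductive hypothesis applied to the pair $(S,U)$, for which $\max(|S|,|U|) \leq |T|-1$, gives $SU < US \iff \overline{S} < \overline{U}$. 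The remaining link is the auxiliary identity $\overline{S} < \overline{U} \iff \overline{S} < \overline{US}$.

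I would prove this auxiliary identity directly. Since $\overline{U}$ and $\overline{US}$ agree on their first $|U|$ positions, if $\overline{S}$ first disagrees with $\overline{U}$ at some $j \leq |U|$ then it also first disagrees with $\overline{US}$ at the same $j$ with the same pair of symbols. If instead $\overline{S}$ agrees with $U$ on all of $1,\dots,|U|$, then the period-$|S|$ structure of $\overline{S}$ forces $U_i = S_{((i-1) \bmod |S|)+1}$; substituting this into the formulas for $\overline{U}$ and $\overline{US}$ on positions $|U|+1,\dots,2|U|$ shows that the two sequences continue to agree on that range as well. If $\overline{S}$ diverges from $\overline{U}$ within this extended block, the comparison again matches on both sides; if agreement persists further, then $|U|$ must be a multiple of $|S|$, so $U$ is a power of $S$ and consequently $\overline{S} = \overline{U} = \overline{US}$, making both equivalences vacuously correct.

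The main obstacle is precisely this auxiliary equivalence: one cannot shrink it by invoking the main lemma on $(S,US)$, because $|US| = |T|$ is not strictly smaller than the inductive parameter, so the induction does not close. The fix is the periodicity-propagation argument above, which exploits the rigidity imposed on $U$ by a long initial agreement with the periodic sequence $\overline{S}$.
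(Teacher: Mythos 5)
The paper cites this lemma to \cite{CT}, Lemma 2.12, and does not reprove it, so there is no internal proof to compare against; I will therefore only assess the argument on its own terms.

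Your reduction is sound: by symmetry you may assume $|S| \le |T|$; if $S$ is not a prefix of $T$ the first disagreement decides both sides identically; and when $T = SU$ you correctly observe that stripping the common prefix $S$ shifts the comparison position by $|S|$ in both $ST$ vs.\ $TS$ and $\overline{S}$ vs.\ $\overline{T}$, so the parity flips cancel and the claim reduces to $SU < US \iff \overline{S} < \overline{US}$, and then by induction on $|S|+|T|$ (applied to $(S,U)$, which has strictly smaller sum since $|U|=|T|-|S|<|T|$) to the auxiliary identity $\overline{S} < \overline{U} \iff \overline{S} < \overline{US}$.

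The gap is in your proof of that auxiliary identity. You correctly note that $\overline{U}$ and $\overline{US}$ agree on positions $1,\dots,2|U|$ whenever $U$ is a prefix of $\overline{S}$, and that a first disagreement of $\overline{S}$ with $\overline{U}$ inside that window decides both sides. But the assertion that if $\overline{S}$ and $\overline{U}$ agree beyond position $2|U|$ then $|U|$ must be a multiple of $|S|$ (hence $U$ a power of $S$) is false when $|U|<|S|$. Take $S=(1,1,1,1,1,2)$ and $U=(1,1)$: here $U$ is a prefix of $\overline{S}$, the sequences $\overline{S}=1,1,1,1,1,2,\dots$ and $\overline{U}=1,1,1,1,\dots$ first disagree at position $6>2|U|=4$, yet $|U|=2$ is not a multiple of $|S|=6$ and $\overline{S}\ne\overline{U}$, so you cannot conclude. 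What you actually need is that $\overline{U}$ and $\overline{US}$ agree up to the true first disagreement position $j$ of $\overline{S}$ and $\overline{U}$, and this requires showing $j\le|S|+|U|$, which does not follow from the $2|U|$ window alone when $|U|$ is small relative to $|S|$.

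The fix is not hard and preserves your overall strategy. First, by Fine--Wilf (or a direct two-periods argument), if $\overline{S}$ and $\overline{U}$ agree on $|S|+|U|-\gcd(|S|,|U|)$ positions they are equal; so if $\overline{S}\ne\overline{U}$ the first disagreement $j$ satisfies $j\le|S|+|U|$. Second, show $\overline{U}$ and $\overline{US}$ agree on all of $1,\dots,j$: for $i\le|U|$ this is trivial, and for $|U|<i\le j$ one has $\overline{US}_i=(US)_i=S_{i-|U|}$ (valid since $i\le|S|+|U|$), while $\overline{U}_i=\overline{U}_{i-|U|}=\overline{S}_{i-|U|}=S_{i-|U|}$, using $1\le i-|U|\le j-1$ (so $\overline{S}$ and $\overline{U}$ still agree there) and $i-|U|\le|S|$. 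With this the first disagreement of $\overline{S}$ with $\overline{US}$ is again at $j$ with the same symbol pair, and the auxiliary identity, hence your lemma, follows.
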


In order to describe the real kneading sequences, we need the 
\begin{definition}
A finite string of positive integers $S$ is called \emph{extremal} if 
$$XY < YX$$ 
for every splitting $S = XY$ where $X$, $Y$ are nonempty strings.
\end{definition}

For instance, the string $(2, 1, 2)$ is extremal because $(2, 1, 2) < (2, 2, 1) < (1, 2, 2)$. Note that 
a string whose first digit is strictly larger than the others is always extremal.





Extremal strings are very useful because they parametrize purely periodic (i.e. rational with odd denominator)
parameter angles on the real axis:

\begin{lemma}
A purely periodic angle $\theta \in [1/4, 3/4]$ belongs to the set $\mathcal{R}$ if and only if there exists an 
extremal string $S$ for which 
$$w_{D(\theta)} = \overline{S}.$$
\end{lemma}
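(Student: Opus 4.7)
The plan is to convert the analytic characterization of $\mathcal R$ from Proposition~\ref{postchar} into the language of $F$-orbits on string space, and then identify the resulting symbolic condition with extremality via Lemma~\ref{stringlemma}.

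First, observe that if $\theta \in [1/4, 3/4]$ is purely periodic, then so is $D(\theta)$, and hence $w := w_{D(\theta)}$ admits a unique primitive period $S = (a_1, \dots, a_k)$; since any extremal string is automatically primitive (a non-trivial cyclic self-symmetry would violate the strict inequality $XY<YX$), the lemma reduces to showing that this $S$ is extremal if and only if $\theta\in\mathcal R$. By Proposition~\ref{postchar}, $\theta\in\mathcal R\setminus\{0\}$ iff $T^n(\ell(D(\theta)))\geq \ell(D(\theta))$ for all $n\geq 0$. Using the commutative diagram of Section~\ref{section:symbolic}, which conjugates $T$ on $[0,1]$ with $F$ on string space and translates the natural order on lengths into the alternate lexicographic order on strings, this inequality becomes the condition $F^n(w) \geq w$ for every $n \geq 0$, i.e.\ $w$ is the minimum of its $F$-orbit.

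Next, I would describe the $F$-orbit of $w=\overline S$ explicitly. Writing $n = (a_1+\dots+a_{i-1}) + j$ with $0 \leq j < a_i$ (indices taken cyclically mod $k$), a direct computation gives
\[
F^n(w) \;=\; (a_i - j,\, a_{i+1},\, \dots,\, a_k,\, a_1,\, \dots,\, a_{i-1},\, a_i,\, a_{i+1},\, \dots),
\]
which at $j=0$ is precisely the cyclic shift $\overline{\sigma^{i-1}(S)}$. Since position~$1$ is odd, reducing the first entry strictly enlarges a string in alternate lex order; hence for $j>0$ we have $F^n(w) > \overline{\sigma^{i-1}(S)}$. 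Consequently the minimum of the $F$-orbit is attained on one of the cyclic shifts, and the condition $F^n(w)\geq w$ for all $n$ reduces to $\overline S \leq \overline{\sigma^{i}(S)}$ for every $i = 1, \dots, k-1$.

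Finally, I would use Lemma~\ref{stringlemma} to descend from infinite to finite comparison. For any nontrivial split $S=XY$ the corresponding cyclic shift is $YX$; primitivity of $S$ forces $XY\neq YX$ as finite words, so the first position at which $\overline{XY}$ and $\overline{YX}$ differ lies within the first $|S|$ entries, and the alternate lex comparison of the infinite periodic strings agrees with that of $XY$ and $YX$. Therefore $\overline S < \overline{\sigma^i(S)}$ for every nontrivial $i$ is equivalent to $XY < YX$ for every nontrivial split $S=XY$, i.e.\ $S$ is extremal, giving both directions simultaneously. The main obstacle is the bookkeeping in the explicit form of $F^n(w)$ together with the primitivity step needed to pass between infinite and finite comparisons; once these are in place, the equivalence follows at once from Proposition~\ref{postchar} and Lemma~\ref{stringlemma}.
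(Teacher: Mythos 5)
Your proof is correct and takes essentially the same route as the paper: apply Proposition~\ref{postchar}, translate the inequality $T^n(x)\geq x$ into the condition that $\overline{S}$ is minimal among its cyclic shifts (observing that the intermediate $F$-iterates, which merely decrement the first entry, are automatically larger at the odd first position), and identify cyclic minimality with extremality of the primitive period. Where the paper compresses the string-orbit analysis into ``by writing out the binary expansion'' and handles primitivity only in its final clause ``$S$ must be of the form $S=P^k$ with $P$ extremal,'' you spell both steps out explicitly, but the underlying argument is the same.
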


\begin{proof}
Let $\theta \in [1/4, 1/2]$ be purely periodic for the doubling map. Then we can write its expansion as 
$$\theta = 0.\overline{01^{a_1}0^{a_2}\dots0^{a_n-1}}$$
with $a_i \geq 1$, and $n$ even. Then $x:= \ell(D(\theta)) = 0.\overline{0^{a_1-1} 1^{a_2} \dots 1^{a_n} 0 }$, 
and by Proposition \ref{postchar} the angle $\theta$ belongs to $\mathcal{R}$ if and only if 
$$T^n(x) \geq x \qquad \textup{for all }n \geq 0.$$
By writing out the binary expansion one finds out that this is equivalent to the statement
$$0.\overline{0^{a_k -1} 1^{a_{k+1}} \dots 1^{a_{k-1}} 0} \geq 0.\overline{0^{a_1-1}1^{a_2} \dots 1^{a_n}0} \qquad \textup{for all }1 \leq k \leq n$$
which in terms of strings reads
$$(\overline{a_k, \dots, a_n, a_1, \dots, a_{k-1}}) \geq (\overline{a_1, \dots, a_n}) \qquad \textup{for all }1 \leq k \leq n.$$
The condition is clearly satisfied if $S = (a_1, \dots, a_n)$ is extremal. Conversely, if the condition is satisfied then 
$S$ must be of the form $S = P^k$ with $P$ an extremal string.
\end{proof}

\subsection{Dominant strings}

The order $<$ is a total order on the strings of positive integers of fixed given length; in order to be able to compare strings of different lengths 
we define the partial order 
$$S << T \quad \textup{if }\exists i \leq \min\{|S|, |T|\} \textup{ s.t. }S_1^i < T_1^i$$
where $S_1^i := (a_1, \dots, a_i)$ denotes the truncation of $S$ to the first $i$ characters. 
Let us note that:
\begin{enumerate}
\item if $|S| = |T|$, then $S < T$ if and only if $S << T$;
\item if $S, T, U$ are any strings, $S << T \Rightarrow SU << T, S << TU$; 
\item If $S << T$, then $S \cdot z < T \cdot w$ for any $z, w \in (\mathbb{N}^+)^\mathbb{N}$.
\end{enumerate}

\begin{definition} \label{dom_string}
A finite string $S$ of positive integers is called \emph{dominant} if it has even length and 
$$XY << Y$$
for every splitting $S = XY$ where $X$, $Y$ are finite, nonempty strings.
\end{definition}

Let us remark that every dominant string is extremal, while the converse is not true.
For instance, the strings $(5,2,4,3)$ and $(5,2,4,5)$ are both extremal, but the first is dominant while the second is not.
On the other hand, a string whose first digit is strictly large than the others is always dominant (as a corollary, there exist 
dominant strings of arbitrary length).


\begin{definition}
A real parameter $c$ is \emph{dominant} if there exists a dominant string $S$ such that
$$w_c = \overline{S}.$$
\end{definition}

The airplane parameter $\theta_c = 0.\overline{011}$ is dominant because $w_c = \overline{(2, 1)}$, 
and $(2, 1)$ is dominant.
On the other hand, the period-doubling of the airplane ($\theta_c = 0.\overline{011100}$) is not dominant 
because its associated sequence is $\overline{(3)}$, and dominant strings must be of even length. In general, we 
will see that tuning always produces non-dominant parameters.

However, the key result is that dominant parameters are dense in the set of non-renormalizable angles:

\begin{proposition} \label{densitydominant}
Let $\theta_c \in [0, 1/2]$ be the characteristic angle of a real, non-renormalizable parameter $c$, with $c \neq -1$. Then 
$\theta_c$ is limit point from below of characteristic angles of dominant parameters.
\end{proposition}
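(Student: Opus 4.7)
My strategy is to exploit the order-preserving correspondence between characteristic angles and their postcharacteristic sequences. For $\theta_c \in [1/4, 1/2]$ one has $\ell(D(\theta_c)) = 2 - 4\theta_c$, so smaller $w_c$ in the alternate lexicographic order corresponds to smaller $\theta_c$. Approximating $\theta_c$ from below by characteristic angles of dominant parameters therefore reduces to constructing, for each $n$, a dominant string $S_n$ of even length such that $\overline{S_n} < w_c$ in alt-lex and the initial agreement of $\overline{S_n}$ with $w_c$ grows without bound. Once this is achieved, the lemma characterizing purely periodic angles in $\mathcal{R}$ via extremal strings turns each $\overline{S_n}$ into the postcharacteristic sequence of a real hyperbolic component, whose characteristic angle $\theta_{c_n}$ is then dominant and satisfies $\theta_{c_n} \uparrow \theta_c$ in the usual topology (since the map $w \mapsto \ell(D(\theta))$ is continuous in the product topology on sequences).

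To build $S_n$, I would start from the even-length prefix $P_n = (a_1,\dots,a_{2n})$ of $w_c$ and, where necessary, perturb its final digit(s) to achieve two things simultaneously: (a) the periodic extension $\overline{S_n}$ remains strictly less than $w_c$ in alt-lex, and (b) $S_n$ is dominant, i.e.\ $S_n << Y$ for every nonempty proper suffix $Y$. For shifts $j$ close to $2n$ the suffix $Y$ is short, and dominance can be forced by inflating a digit in the modified tail so that an early witness appears in $Y$. For shifts $j$ close to $0$ the suffix $Y$ is long, the tail is too far away to help, and the inequality $P_n << \sigma^j P_n$ must already be visible in the unmodified prefix; the weak version of this condition, $\sigma^j w_c \geq w_c$ in alt-lex for all $j \geq 0$, is delivered by Proposition \ref{postchar} since $w_c$ is the postcharacteristic sequence of an actual parameter on $\partial \mathcal{M} \cap \mathbb{R}$.

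The main obstacle, and the technical heart of the argument, is upgrading these weak inequalities to strict ones at infinitely many scales $n$, uniformly over all small $j$. Precisely this upgrade is controlled by renormalizability: if $c$ were renormalizable, $w_c$ would decompose into a self-similar block pattern inherited from the tuning operation, and some shift $\sigma^j w_c$ would agree with $w_c$ on arbitrarily long prefixes, obstructing every finite truncation from being dominant (this is consistent with the earlier remark that tuning always produces non-dominant parameters). The non-renormalizability hypothesis rules out such block structures and yields a sequence of indices $n_k \to \infty$ at which the prefix, after a controlled tail perturbation, becomes dominant. The exclusion $c \neq -1$ takes care of the unique degenerate case: $w_{-1} = \overline{(3)}$ has odd period, whereas dominant strings are by definition of even length. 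The required combinatorial analysis parallels the density arguments carried out in \cite{CT}, \cite{CT2} for the bifurcation set of continued fractions and, thanks to the isomorphism of \cite{BCIT}, transports to the present setting with essentially notational changes.
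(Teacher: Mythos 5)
Your proposal takes a genuinely different route from the paper, but the route has a gap at exactly the point you flag as the ``technical heart'' — and the gap is substantive, not just a matter of filling in routine details.

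The paper does not work with even-length prefixes $P_n$ of $w_c$ plus tail perturbations. Instead it first reduces, by density of roots of maximal tuning windows among non-renormalizable angles, to the case $w_c = \overline{S}$ with $S$ extremal of even length and $1$ not a prefix. It then runs an induction on $|S|$: if $S$ is already dominant, the sequence $S^n 1 1$ does the job; if not, the obstruction is precisely the existence of nonempty prefix-suffixes of $S$. One selects $Y_\star \in PS(S)$ of odd length with residual suffix $Z_\star$ minimal, looks at the root $\alpha(Y_\star)$ of the maximal tuning window containing $\overline{Y_\star}$, invokes the inductive hypothesis on that (shorter) root to manufacture a dominant $T$ with $\alpha(Y_\star) < \overline{T} < \overline{Z}$ for all residual suffixes $Z$, and then shows via Lemma~\ref{inductivestep} that $S^n T^m$ is dominant for $m$ large. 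The combinatorial content lives in Lemmas~\ref{prefsuffisodd}, \ref{newstringlemma}, and \ref{largerthanprim}, which control exactly the ``long-suffix'' shifts you correctly identify as the danger.

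Your plan replaces all of this with the assertion that non-renormalizability ``rules out such block structures and yields a sequence of indices $n_k \to \infty$ at which the prefix, after a controlled tail perturbation, becomes dominant.'' This is where the proof is missing. You argue by a heuristic contrapositive — renormalizable parameters produce self-similar block patterns that obstruct dominance — but the converse you need is false as stated: an extremal string associated to a non-renormalizable parameter can perfectly well have nonempty $PS(S)$. The paper's own example $(5,2,4,5)$ is extremal but not dominant precisely because $(5) \in PS(S)$, and nothing about such a string forces renormalizability. So ``non-renormalizable $\Rightarrow$ no bad overlaps'' does not hold, and the tail perturbation you describe is not shown to repair the failure of $P_n << \sigma^j P_n$ at those shifts $j$ where an overlap occurs. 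The paper's induction exists to deal with exactly this case, and your sketch does not reproduce, or replace, that mechanism. Appealing to the analogous density argument in \cite{CT}, \cite{CT2} is reasonable as a source of intuition, but the argument still has to be carried out, and the step you are deferring is the whole content of the proposition.

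Two smaller points. First, the degenerate case: $w_{-1} = \overline{(1)}$, not $\overline{(3)}$; the string $\overline{(3)}$ belongs to the period-doubling of the airplane ($\theta_c = 4/9$), as the paper notes when discussing non-dominant parameters produced by tuning. Your reasoning about odd period lengths blocking dominance is the right reason to exclude $c=-1$, but the example attached to it is wrong. Second, even granting the direction conventions in the paper's statement of the alt-lex order, you should double-check the claimed inequality direction $\overline{S_n} < w_c$ against an explicit example (say the airplane vs.\ basilica) before relying on it, since a sign slip there silently reverses the whole approximation.
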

 
Since the proof of the proposition is quite technical, it will be postponed to section \ref{proofdensity}.

\subsection{The bisection algorithm} \label{section:bisect}

Let us now describe an algorithm to generate all real hyperbolic windows (see Figure \ref{bisection}). 

\begin{theorem} \label{algo}
The set of all real hyperbolic windows in the Mandelbrot set can be generated as follows.
Let $c_1 < c_2$ be two real parameters on the boundary of $\mathcal{M}$, with external angles 
$0 \leq \theta_2 < \theta_1 \leq \frac{1}{2}$.
Let $\theta^*$ be the dyadic pseudocenter of the interval $(\theta_2, \theta_1)$, and let 
$$\theta^* = 0.s_1 s_2 \dots s_{n-1} s_n$$
be its binary expansion, with $s_n = 1$. Then the hyperbolic window of smallest period in the interval
 $(\theta_2, \theta_1)$ is the interval of external angles $(\alpha_2, \alpha_1)$ with 
$$\begin{array}{ccc}
\alpha_2 & := & 0.\overline{s_1 s_2 \dots s_{n-1}} \\
\alpha_1 & := & 0.\overline{s_1 s_2 \dots s_{n-1} \check{s}_1 \check{s}_2 \dots \check{s}_{n-1} }
\end{array}
$$
(where $\check{s_i} := 1 - s_i$). All hyperbolic windows are obtained by iteration of this algorithm, 
starting with $\theta_2 = 0$, $\theta_1 = 1/2$.
\end{theorem}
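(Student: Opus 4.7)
The plan is to translate the bisection algorithm for the bifurcation set $\mathcal{E}$ of continued fractions (\cite{CT}, Section 2.4) to the real slice of $\mathcal{M}$ via the combinatorial machinery of Section \ref{section:coding}. By the characterization of purely periodic angles in $\mathcal{R}$ via extremal strings given in that section, the characteristic angles of real hyperbolic components are classified by extremal strings under the run-length encoding $\theta \mapsto w_{D(\theta)}$; under the correspondence of \cite{BCIT}, the \emph{rational with smallest denominator in an interval} (the continued-fraction pseudocenter) becomes the \emph{dyadic with shortest binary expansion} (the pseudocenter notion of the present paper).

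First I would set up a dictionary between dyadics $\theta^* = 0.s_1 \ldots s_n$ with $s_n = 1$ and real hyperbolic windows $(\alpha_2, \alpha_1) \subseteq (0, 1/2)$. Given such a $\theta^*$, I would verify that $\alpha_2 = 0.\overline{s_1 \ldots s_{n-1}}$ is the characteristic angle of a real hyperbolic component $W$ of period $n-1$, and that $\alpha_1 = 0.\overline{s_1 \ldots s_{n-1}\check{s}_1 \ldots \check{s}_{n-1}}$ is the characteristic angle of its period-doubled component $W'$ of period $2(n-1)$. The first statement amounts to showing that the associated run-length string $w_{D(\alpha_2)}$ is extremal; the second is the standard period-doubling prescription at the level of binary expansions. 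A direct comparison of expansions past the $(n-1)$-st digit then yields $\alpha_2 < \theta^* < \alpha_1$, so $\theta^*$ lies strictly inside the window and the assignment $\theta^* \mapsto (\alpha_2, \alpha_1)$ gives a bijection onto the set of real hyperbolic windows in $(0, 1/2)$.

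For the minimality claim, the key observation is the inverse dictionary: every hyperbolic window of period $q$ has a dyadic pseudocenter of binary length exactly $q + 1$, and uniqueness of the pseudocenter follows since any two distinct length-$n$ dyadics admit a midpoint of length at most $n-1$. Thus, if $(\beta_2, \beta_1) \subseteq (\theta_2, \theta_1)$ is any hyperbolic sub-window of period $q$, its pseudocenter lies in $(\theta_2, \theta_1)$ and has length $q + 1 \geq n$, so $q \geq n - 1$, with equality for the window produced by the algorithm. Exhaustion then follows by recursion on the sub-intervals $(\theta_2, \alpha_2)$ and $(\alpha_1, \theta_1)$: every real hyperbolic window has a finite-length dyadic pseudocenter, which is located after finitely many bisections starting from $(0, 1/2)$.

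The main technical hurdle is the verification in the dictionary step that the formulas really produce the two ends of an honest hyperbolic window --- in particular, the extremality of $w_{D(\alpha_2)}$ and the identification of $\alpha_1$ with the period-doubled root of $W$. This is a combinatorial calculation parallel to that of (\cite{CT}, Section 2.4), using Lemma \ref{stringlemma} to compare cyclic rotations of periodic strings. Once this dictionary is in place, the minimality and exhaustion statements are routine bookkeeping.
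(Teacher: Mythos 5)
The paper's own argument is a pure translation: Proposition~7 of \cite{BCIT} identifies $\mathcal{R} \cap [0,1/2]$ with $\tfrac{1}{2}\Lambda$, and Proposition~3 of \cite{BCIT} is then cited for the bisection itself. So both your argument and the paper's lean on the same underlying combinatorics, and your reconstruction (dictionary, minimality via pseudocenter lengths, exhaustion by recursion) has the right shape. You also correctly locate the real hurdle in the extremality of $w_{D(\alpha_2)}$.

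There is, however, a genuine gap in how you set up the dictionary. You assert that the assignment $\theta^* \mapsto (\alpha_2, \alpha_1)$ is a bijection from dyadics onto real hyperbolic windows, and that for an arbitrary dyadic $\theta^* = 0.s_1\ldots s_n$ one can ``verify'' that $\alpha_2 = 0.\overline{s_1\ldots s_{n-1}}$ is a characteristic angle. This is false for general $\theta^*$: take $\theta^* = 5/16 = 0.0101$, so $\alpha_2 = 0.\overline{010} = 2/7$. Then $\ell(2/7) = 4/7$ and $T(4/7) = 6/7 > 4/7$, so $2/7 \notin \mathcal{R}$ by Proposition~\ref{symb_Pc}, and $w_{D(\alpha_2)}$ is not extremal. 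The verification you propose therefore cannot succeed for arbitrary $\theta^*$; it succeeds exactly \emph{because} $\theta^*$ is the pseudocenter of an interval whose endpoints $\theta_1, \theta_2$ already lie in $\mathcal{R}$, and the combinatorial content of the theorem is to propagate the tent-map maximality of $\ell(\theta_1)$ and $\ell(\theta_2)$ to the orbit of $\ell(D(\alpha_2))$. Your outline never invokes that hypothesis on $\theta_1, \theta_2$, yet your minimality and exhaustion steps take for granted that $(\alpha_2,\alpha_1)$ is an honest window; they inherit the gap. To close it you would need to make the argument of \cite{BCIT} (equivalently \cite{CT}, \S 2.4) explicit, showing where the assumption $\theta_1, \theta_2 \in \mathcal{R}$ enters --- which is precisely the step the paper outsources.
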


\begin{figure}[h!]

\setlength{\fboxsep}{0pt}%
\setlength{\fboxrule}{0.5pt}%
\fbox{\includegraphics[scale=0.75]{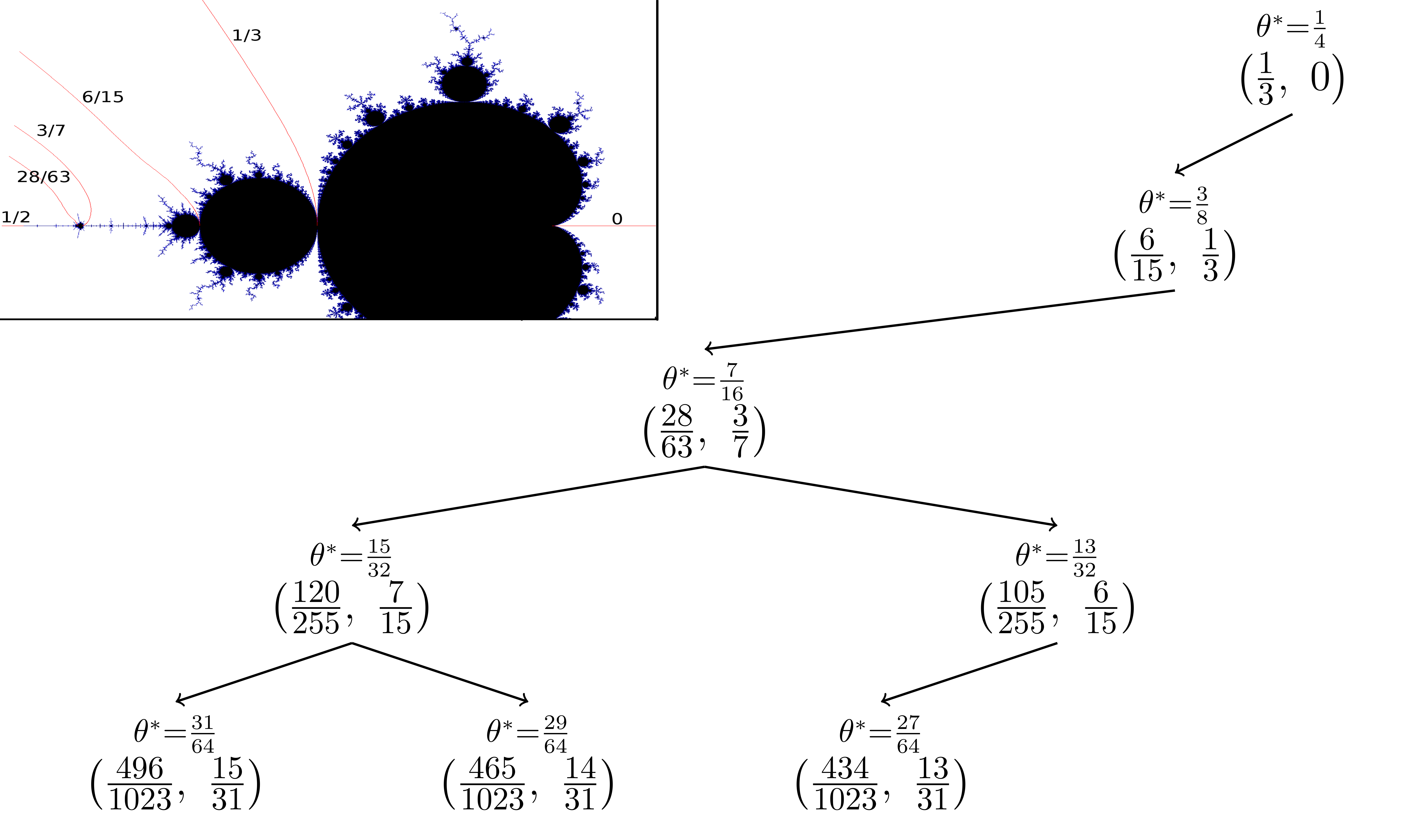}}

\caption{The first few generations of the bisection algorithm which produces all real hyperbolic 
windows between external angles $0$ and $\frac{1}{2}$. Every interval represents a hyperbolic component, 
and we display the angles of rays landing at the endpoints as well as the pseudocenter $\theta^*$. 
The root of the tree ($\theta^* = \frac{1}{4}$) corresponds to the real slice of the main cardioid, its 
child is the ``basilica'' component of period $2$ ($\theta^* = \frac{3}{8}$), then $\theta^* = \frac{7}{16}$ 
corresponds to the ``airplane'' component of period $3$ etc. Some branches 
of the tree do not appear because some pairs of components have an endpoint in common (due to period doubling). }

\label{bisection}

\end{figure}

\begin{proof}[Proof of Theorem \ref{algo}.]
The theorem is a rephrasing, in the language of complex dynamics, of (\cite{BCIT}, Proposition 3).
Indeed, the set $\Lambda$ of \cite{BCIT} is almost precisely the set $\mathcal{R}$ of real parameter angles; 
precisely, we have the equality $\mathcal{R} \cap [0, 1/2] = \frac{1}{2} \Lambda$ (\cite{BCIT}, Proposition 7), 
and the intervals $J_d = (r^-, r^+)$ of (\cite{BCIT}, Section 4.1) determine exactly the hyperbolic windows
 $[\alpha_2, \alpha_1]$ defined in the statement of the theorem, via the translation $\alpha_1 = \frac{r^-}{2}$ and $\alpha_2 = \frac{r^+}{2}$. 
\end{proof}

\noindent \textbf{Example}

\medskip
Suppose we want to find all hyperbolic components between the airplane 
parameter (of period $3$) and the basilica parameter (of period $2$).
The ray landing on the root of the airplane component has angle $\theta_1 = \frac{3}{7}$,
while the ray landing immediately to the left of the basilica has angle $\theta_2 = \frac{2}{5}$.  Let us apply the algorithm: 
$$\begin{array}{lll}
   \theta_2 = \frac{2}{5} & = & 0.011001100110 \dots \\
\theta_1 = \frac{3}{7} & = & 0.011011011011\dots \\
\hline \\
\theta^* & = & 0.01101
  \end{array}$$
hence $\alpha_1 = 0.\overline{0110} = \frac{2}{5}$ and $\alpha_2 = 0.\overline{01101001} = \frac{7}{17}$ and we get the 
component of period $4$ which is the doubling of the basilica. Note we do not always get the doubling of the previous 
component; indeed, the next step would be
$$\begin{array}{lll}
\theta_2 = \frac{7}{17} & = & 0.011010010110 \dots \\
\theta_1 = \frac{3}{7} & = & 0.011011011011\dots \\
\hline \\
\theta^* & = & 0.011011
  \end{array}$$
hence $\alpha_1 = 0.\overline{01101}$ and we get a component of period $5$.
Iteration of the algorithm produces all real hyperbolic components.
We conjecture that a similar algorithm holds in every vein.

\section{A copy of the Hubbard tree inside parameter space} \label{section:embed}

We saw that the set of rays which land on the real axis in parameter space also land in the dynamical plane. 
In order to establish equality of dimensions, we would like to prove the other inclusion. Unfortunately, 
in general there is no copy of $H_c$ inside $P_c$ (for instance, is $c$ is the basilica tuned with itself, 
then the Hubbard tree is a countable set, while only two pairs of rays land in parameter space to the right of $c$).
However, outside of the baby Mandelbrot sets, one can indeed map the combinatorial model for the Hubbard tree 
into the combinatorial model of parameter space:

\begin{proposition} \label{embedding}
Given a non-renormalizable, real parameter $c$ and another real parameter $c' > c$, there exists 
a piecewise linear map $F : \mathbb{R}/\mathbb{Z} \to \mathbb{R}/\mathbb{Z}$ such that 
$$F(H_{c'}) \subseteq P_{c}.$$ 
\end{proposition}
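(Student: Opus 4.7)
The plan is to exploit the density of \emph{dominant} parameters guaranteed by Proposition \ref{densitydominant}. Since $c$ is non-renormalizable, I can choose a dominant parameter $c_0$ with $c < c_0 \leq c'$ whose characteristic angle $\theta_{c_0}$ lies as close to $\theta_c$ (from below) as desired. Let $S$ be the dominant string, of even length $2m$, such that $w_{c_0} = \overline{S}$, and let $V$ be the binary word (of length $n = \sum_i a_i$) such that $\theta_{c_0} = 0.\overline{V}$ on $[0,1/2]$.

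Next I would construct $F$ by ``prepending $V$'' to the binary expansion. Concretely, on the upper half-circle set
$$F(\theta) := V \cdot 2^{-n} + \theta \cdot 2^{-n},$$
with $V$ regarded as a nonneg.\ integer through its binary digits, and extend to the lower half-circle by the symmetry $F(\theta) = 1 - F(1-\theta)$. This $F$ is piecewise linear, and its image lies in an interval around $\theta_{c_0}$ of diameter $2^{-n}$; by taking $c_0$ close enough to $c$ (using that the approximants from Proposition \ref{densitydominant} accumulate on $\theta_c$) this image fits inside $[1-\theta_c,\theta_c]$, which handles the containment in the correct parameter window.

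The core verification is that $F(\theta) \in \mathcal{R}$ for every $\theta \in H_{c'}$. I would use the symbolic descriptions from Propositions \ref{symb_Htree} and \ref{symb_Pc}. By choice $c_0 \leq c'$, monotonicity of $L$ (after handling the fold at $1/4$) gives $H_{c'} \subseteq H_{c_0}$, so any $\theta \in H_{c'}$ has $T^k\ell(\theta) \geq L_{c_0}$ for all $k$, equivalently $F^k w_{D(\theta)} \leq \overline{S}$ in alternate lex order. The orbit $\{T^k\ell(F(\theta))\}_k$ splits in two regimes. For $k \geq n$, $T^k\ell(F(\theta)) = T^{k-n}\ell(\theta)$, which by hypothesis is bounded above by the value with $w$-string $\overline{S}$, hence bounded above by $\ell(F(\theta))$ (whose string begins with $S$ followed by $w_{D(\theta)} \leq \overline{S}$). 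For $0 < k < n$, the dominance of $S$ is decisive: splitting $S = XY$ with $|X| = k$ gives $S \ll Y$ by Definition \ref{dom_string}, and by properties of $\ll$ this pushes through to the inequality $T^k\ell(F(\theta)) \geq \ell(F(\theta))$, i.e.\ the string associated to the shift is strictly larger than the string associated to $F(\theta)$.

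The hard part is precisely this boundary analysis at $0 < k < n$, where the partial shift still sits inside the prepended prefix $V$; the dominance condition is exactly what is needed to force the comparison to go in the required direction, and dovetails with the clean case $k \geq n$ at the transition. Secondary technical issues are the orientation-dependence of the comparison $L_{c_0} \leq L_{c'}$ through the critical fold at $1/4$ (handled by noting both $\theta_{c_0}$ and $\theta_{c'}$ lie in $[1/4,1/2]$ for non-trivial $c$), the verification that $F$ is well defined on the whole circle as a piecewise linear map compatible with the symmetry $\theta \mapsto 1-\theta$, and the harmless special case $c = -1$ excluded in Proposition \ref{densitydominant} (the basilica parameter), which can be handled by noting $P_c$ is a finite set there and choosing $F$ to be a constant landing on $0 \in P_c$.
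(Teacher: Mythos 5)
Your overall strategy is the same as the paper's (choose a nearby dominant parameter $c_0$, then build a piecewise-linear map $F$ that ``prepends'' its combinatorics). But two things in the construction and verification do not go through as written.

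\textbf{The map $F$ differs from the paper's in a way that matters at the seam.} You append $\theta$ directly after the word $V$, so $\ell(F(\theta)) = 0.1^{s_1}0^{s_2}\cdots0^{s_m}\theta_2\theta_3\cdots$. When $\theta_2=0$ the block $0^{s_m}$ merges with the initial zeros of the tail, and the itinerary of $\ell(F(\theta))$ does not begin with $S$; the dominance comparisons you invoke for $0<k<n$ then compare the shifted word against a word that is not $S\cdot(\cdot)$, and the inequality is no longer automatic. The paper's map $F(\theta) = s + (1-\theta)/2^{N+1}$ deliberately appends the \emph{reflected} angle $1-\theta$ (whose first digit is $1$ since $\theta<1/2$), so the tail of $\ell(\phi)$ always starts with a $1$ that terminates the block $0^{s_n}$ cleanly.

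\textbf{The $k\ge n$ step is not established.} You argue that $T^{k-n}\ell(\theta)$ is ``bounded above by the value with $w$-string $\overline S$.'' The hypothesis $\theta\in H_{c'}\subseteq H_{c_0}$ gives, via Proposition \ref{symb_Htree}, the inequality $T^{j}\ell(\theta)\ge L_{c_0}$, and $L_{c_0}$ is precisely the $\ell$-value with $w$-string $\overline S$; since $w$-order and $\ell$-order are reversed, the bound on the $w$-string in fact gives a \emph{lower} bound on $T^{j}\ell(\theta)$, not an upper one. What is actually needed is $\sup_j T^j\ell(\theta)\le \ell(F(\theta))$. The available upper bound on the left is $\ell_{c'}$ (from $\theta\in S_{c'}$ via Proposition \ref{symb_spine}), so one must show $\ell_{c'}\le \ell(F(\theta))$; unwinding your $F$, this amounts to $\theta_{c'}\le (V+1)/(2^n+2)$, which does \emph{not} follow merely from $\theta_{c'}\le\theta_{c_0}=V/(2^n-1)$ — indeed it fails exactly when $\theta_{c_0}\ge 1/3$ and $\theta_{c'}$ is taken close to $\theta_{c_0}$. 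The paper closes this gap by imposing, when choosing the auxiliary dominant parameter $c''$, that $S$ is a prefix of $w_c$ but \emph{not} a prefix of $w_{c'}$; this (together with the form of $\ell(\phi)$) forces $\ell' < \ell(\phi)$ and handles $k>N$, while the borderline $k=N$ is dispatched by the explicit identity $\max H_{c''}=2s/(1-2^{-N})$. Your write-up neither imposes the ``not a prefix of $w_{c'}$'' condition nor supplies the quantitative gap $\theta_{c_0}-\theta_{c'}\gtrsim \theta_{c_0}2^{-n}$ that would let the append-$\theta$ variant survive; either fix is necessary. (Also, the displayed inequality $T^k\ell(F(\theta))\ge \ell(F(\theta))$ should read $\le$.)

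The remaining ingredients (density of dominant parameters from Proposition \ref{densitydominant}, the containment of the image in the interval $[1-\theta_c,\theta_c]$, the symmetric extension of $F$, and the $c=-1$ degenerate case) are fine, so the proposal is fundamentally on the right track but needs both the ``not a prefix of $w_{c'}$'' condition and a correct treatment of the $k\ge n$ comparison.
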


\begin{proof}
Let us denote $\ell := \ell(c)$ and $\ell' := \ell(c')$ the lengths of the characteristic leaves. 
Let us now choose a dominant parameter $c''$ in between $c$ and $c'$ and such that its corresponding 
string $w_{c''} = \overline{S}$ with $S$ dominant, in such a way that $S$ is a prefix of $w_c$ and not a prefix 
of $w_{c'}$. Let us denote by $\ell'' := \ell(c'')$ the length of the characteristic leaf of $c''$.

If $S = (s_1, s_2, \dots, s_n)$ (recall $n$ must be even), let us define the dyadic number 
$$s := 0.01^{s_1}0^{s_2}\dots1^{s_{n-1}} 0^{s_n}$$ and the ``length'' of $S$ to be $N := s_1+s_2+ \dots + s_n$.
Then, let us construct the map 
\begin{equation} \label{defF}
F(\theta) := \left\{ \begin{array}{ll} 
                        s + \frac{1-\theta}{2^{N+1}} & \textup{ if } 0 \leq \theta < \frac{1}{2} \\                        
			(1-s) + \frac{\theta}{2^{N+1}} & \textup{ if } \frac{1}{2} \leq \theta < 1 \\
                       \end{array} 
		\right.
\end{equation}
Let us now check that $F$ maps $[0, \frac{1}{2}) \cap H_{c'}$ into $P_{c''} \subseteq P_c$ (then the other 
half follows by symmetry). In order to verify the claim, 
let us pick $\theta \in H_{c'}$, $0 < \theta < \frac{1}{2}$. We need to check that $\phi := F(\theta)$ satisfies:
\begin{enumerate}
\item $\ell(\phi) \leq \ell''$; 
\item $T^n(\ell(\phi)) \leq \ell(\phi) \qquad \forall n \geq 0$.
\end{enumerate}

(1) 
Since $\theta$ belongs to $H_{c'}$, by Proposition \ref{symb_Htree} we have 
$$\ell(\theta) \geq L_{c'} \geq L_{c''}.$$
Moreover, equation \eqref{defF} implies
$$\ell(\phi) = 2s + 2^{-N} ( 1 - \ell(\theta)/2 )$$
while by the definition of $s$ one has
$$\ell'' = 2s + 2^{-N} ( 1 - L_{c''}/2  )$$
hence combining with the previous inequality we get $\ell(\phi) \leq \ell''$.

(2) If $1 \leq n < N$, then 
either $T^n(\ell(\phi)) \leq \frac{1}{2} < \ell(\phi)$, or $T^n(\ell(\phi))$ is of the form 
$$0.1^{s_k}0^{s_{k+1}}\dots0^{s_n}\dots$$
which is less than $0.1^{s_1}0^{s_2}\dots1^{s_n}$ because of dominance. If instead $n > N$,
$T^n(\ell(\phi)) = T^{n-N-1}(\ell(\theta)) \leq \ell'$, 
and $\ell' < \ell(\phi)$ because $\ell(\phi)$ begins with $0.1^{s_1}0^{s_2}\dots0^{s_n}$, 
and $S$ is not a prefix of $w_{\theta'}$. Finally, let $\hat{\theta} := \max\{\theta, 1-\theta\}$ and 
analyze the $N^{th}$ iterate: we have
$$T^N(\ell(\phi)) = \hat{\theta} \leq 2s + \frac{\hat{\theta}}{2^N} = \ell(\phi)$$
because $\hat{\theta}$ belongs to $H_{c'} \subseteq H_{c''}$, and $\max\{\theta \in [0,1] \ : \theta \in H_{c''} \} = 2s/(1-2^{-N})$.  
\end{proof}










\section{Renormalization and tuning} \label{section:renorm}

The Mandelbrot set has the remarkable property that near every point of its boundary there are infinitely 
many copies of the whole $\mathcal{M}$, called \emph{baby Mandelbrot sets}.
A \emph{hyperbolic component} $W$ of the Mandelbrot set is a connected component of the interior of $\mathcal{M}$
such that all $c \in W$, the orbit of the critical point is attracted to a periodic cycle under iteration
of $f_c$.

Douady and Hubbard \cite{DH} related the presence of baby copies of $\mathcal{M}$ to renormalization 
in the family of quadratic polynomials. 
More precisely, they associated to any hyperbolic component $W$ a \emph{tuning map} $\iota_W : \mathcal{M} \rightarrow \mathcal{M}$ which 
maps the main cardioid of $\mathcal{M}$ to $W$, and such that the image of the whole $\mathcal{M}$ under $\iota_W$ is a baby copy of $\mathcal{M}$.

The tuning map can be described in terms of external angles in the following terms \cite{DoAlgo}. Let $W$ be a hyperbolic component, and 
$\eta_0$, $\eta_1$ the angles of the two external rays which land on the root of $W$. 
Let $\eta_0 = 0.\overline{\Sigma_0}$ and $\eta_1 = 0.\overline{\Sigma_1}$ be the (purely periodic) binary expansions of the two angles 
which land at the root of $W$. 
Let us define the map $\tau_W : \mathbb{R}/\mathbb{Z} \rightarrow \mathbb{R} /\mathbb{Z}$ in the following way:
$$\theta = 0.\theta_1\theta_2\theta_3 \dots \mapsto \tau_W(\theta) = 0.\Sigma_{\theta_1}\Sigma_{\theta_2}\Sigma_{\theta_3}\dots$$
where $\theta = 0.\theta_1 \theta_2 \dots$ is the binary expansion of $\theta$, and its image is given by substituting the binary 
string $\Sigma_0$ to every occurrence of $0$ and $\Sigma_1$ to every occurrence of $1$.

\begin{proposition}[\cite{Do}, Proposition 7]
The map $\tau_W$ has the property that, if $\theta$ is a characteristic angle of the parameter $c \in \partial \mathcal{M}$, then 
$\tau_W(\theta)$ is a characteristic angle of the parameter $\iota_W(c)$.
\end{proposition}


If $W$ is a real hyperbolic component, then $\iota_W$ preserves the real axis. The image of the tuning operator 
is the \emph{tuning window} $\Omega(W)$ with
$$ \Omega(W) := [\omega(W), \alpha(W)]$$
where 
$$\begin{array}{lll}
  \alpha(W) & := & 0.\overline{\Sigma_0} \\
 \omega(W) & := & 0.\Sigma_0\overline{\Sigma_1}.
  \end{array}
$$
The point $\alpha(W)$ will be called the \emph{root} of the tuning window. Overlapping tuning windows are nested, 
and we call \emph{maximal tuning window} a tuning window which is not contained in any other tuning window.

Let us describe the behavior of Hausdorff dimension with respect to the tuning operator:

\begin{proposition} \label{tuneddim}
Let $W$ be a hyperbolic component of period $p$ with root $r(W)$, and let $c \in \mathcal{M}$. Then 
we have the equalities 
$$\textup{H.dim }H_{\tau_W(c)} = \max \left\{ \textup{H.dim }H_{r(W)}, \textup{H.dim }\tau_W(H_c) \right\}$$
$$\textup{H.dim }P_{\tau_W(c)} = \max \left\{ \textup{H.dim }P_{r(W)}, \textup{H.dim }\tau_W(P_c) \right\}.$$
Moreover, 
$$\textup{H.dim }\tau_W(H_c) = \frac{1}{p} \textup{H.dim }H_c.$$
\end{proposition}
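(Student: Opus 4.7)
The plan is to establish the three identities in reverse order, as the scaling identity is the key computational input and the two max-formulas then follow from decomposition arguments.

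For the scaling identity $\textup{H.dim }\tau_W(H_c) = \frac{1}{p}\textup{H.dim }H_c$, I would give a direct symbolic cylinder count. By construction $\tau_W$ is the substitution which replaces each binary digit of $\theta$ by the length-$p$ block $\Sigma_0$ or $\Sigma_1$. Consequently the length-$pn$ binary cylinders needed to cover $\tau_W(H_c)$ are in bijection with the length-$n$ binary cylinders needed to cover $H_c$; if $N_n$ denotes this common count, then
$$\textup{H.dim }\tau_W(H_c) \;=\; \lim_{n\to\infty}\frac{\log N_n}{pn\log 2} \;=\; \frac{1}{p}\,\textup{H.dim }H_c,$$
where the last equality uses the entropy-dimension formula of Proposition \ref{dim_entro}.

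For the dynamical equality I would exploit the structure of the Hubbard tree of a renormalizable polynomial. The tree $T_{\tau_W(c)}$ decomposes as the union of a macroscopic skeleton, homeomorphic to $T_{r(W)}$, joining the $p$ small Julia sets arising from the renormalization, together with small copies of $T_c$ sitting inside each small Julia set. The angles of rays landing on the macroscopic skeleton contribute exactly the set $H_{r(W)}$; the angles landing inside the critical small Julia set are precisely $\tau_W(H_c)$; and the angles landing inside the other $p-1$ small Julia sets are $D$-preimages of $\tau_W(H_c)$. Hence
$$H_{\tau_W(c)} \;=\; H_{r(W)} \;\cup\; \bigcup_{k\geq 0} D^{-k}\bigl(\tau_W(H_c)\bigr),$$
and since $D$-preimages preserve Hausdorff dimension, dimension of the union is the maximum of the two.

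For the parameter-space equality I would use the baby Mandelbrot structure of the tuning window $\Omega(W) = [\omega(W),\alpha(W)]$, whose root $\alpha(W)$ coincides with $r(W)$. Angles in $P_{\tau_W(c)}$ split naturally into two families: those landing on $\partial\mathcal{M} \cap \mathbb{R}$ to the right of $r(W)$, which form $P_{r(W)}$ and are independent of the choice of $c$, and those landing on $\partial\Omega(W)$ between $\tau_W(c)$ and $r(W)$; by the description of the tuning operator the latter are precisely the images under $\tau_W$ of the angles landing to the right of $c$ in the unrenormalized parameter space, i.e.\ the set $\tau_W(P_c)$. Taking dimensions of this union yields the claim. \textbf{The main obstacle} is the precise identification in the second step that $H_{\tau_W(c)}$ equals, up to $D$-grand orbit, the union $H_{r(W)} \cup \tau_W(H_c)$: this requires a careful analysis in the lamination language of Section \ref{section:lami} of how the minor leaves of $f_{\tau_W(c)}$ are generated from those of $f_{r(W)}$ and $f_c$ by the tuning substitution, ensuring in particular that no angles are lost when passing between the macroscopic and small scales and that the external-angle coordinates on each small Julia set are inherited from $f_c$ exactly via $\tau_W$ composed with an appropriate iterate of $D$.
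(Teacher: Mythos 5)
Your proposal is correct and follows essentially the same line as the paper: decompose the dynamical set into a macroscopic contribution from $r(W)$ and a tuned contribution from $\tau_W$, decompose $P_{\tau_W(c)}$ according to whether rays land outside or inside the tuning window, and obtain the factor $\frac{1}{p}$ from Proposition \ref{dim_entro} applied to the conjugacy between $(D, H_c)$ and $(D^p, \tau_W(H_c))$. The one place the paper is more careful than your sketch (and it is exactly the loose end you flagged): it does not assert the clean equality $H_{\tau_W(c)} = H_{r(W)} \cup \bigcup_{k} D^{-k}(\tau_W(H_c))$, but only the two-sided inclusion $H_{r(W)} \cup \tau_W(H_c) \subseteq H_{\tau_W(c)} \subseteq H_{r(W)} \cup \bigcup_{n} D^{-n}\bigl(\tau_W(H^{ext}_c)\bigr)$, passing through the extended Hubbard tree and Lemma \ref{exttree} to absorb the spine; this suffices for the dimension count and avoids having to pin down precisely which portion of the small copies lies on $T_{\tau_W(c)}$.
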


\begin{proof}
Let $c' := \tau_W(c)$. The Julia set of $f_{c'}$ is constructed by taking the Julia set of 
$f_{r(W)}$ and inserting a copy of the Julia set of $f_c$ inside every bounded Fatou component. 
Hence in particular, the extended Hubbard tree of $J(f_{c'})$ contains a topological copy $T_1$ of the 
extended Hubbard tree of $f_{r(W)}$ which contains the critical value $c'$. The set of angles which land on $T_1$ 
are precisely the image $\tau_W(H^{ext}_{c})$ via tuning of the set $H^{ext}_c$ of angles which land on the extended 
Hubbard tree of $H_c$.
Let $\theta \in H_{c'}$ be an angle whose ray lands on the Hubbard tree of $f_{c'}$. Then 
either $\theta$ also belongs to $H_{r(W)}$ or it lands on a small copy of the extended Hubbard tree 
of $f_{r(W)}$, hence it eventually maps to $T_1$. Hence we have the inclusions
$$H_{r(W)} \cup \tau_W(H_{c}) \subseteq H_{c'} \subseteq H_{r(W)} \cup \bigcup_{n \geq 0} D^{-n}(\tau_W(H^{ext}_{c})) $$
from which the claim follows, recalling that $H^{ext}_c \setminus\{-\beta, \beta\} \subseteq \bigcup_{n \geq 0} D^{-n}(H_c)$. 

In parameter space, one notices that the set of rays landing on the vein $v$ for $c'$
either land between $0$ and $r(W)$, or between $r(W)$ and $c'$. In the latter case, 
they land on the small copy of the Mandelbrot set with root $r(W)$, so they are in the image of $\tau_W$. 
Hence
$$P_{c'} = P_{r(W)} \cup \tau_W(P_c)$$
and the claim follows. The last claim follows by looking at the commutative diagram 
$$\xymatrix{
H_c \ar@(ul,ur)^D \ar[r]^{\tau_W} & \tau_W(H_c). \ar@(ul,ur)^{D^p}
}$$
Since $\tau_W$ is injective and continuous restricted to $H_c$ (because $H_c$ does not contain dyadic rationals) we have by Proposition \ref{bdeg}
$$h_{top}(D, H_c) = h_{top}(D^p, \tau_W(H_c))$$
and, since $H_c$ is forward invariant we can apply Proposition \ref{dim_entro} and get
$$\textup{H.dim }\tau_W(H_c) = \frac{h_{top}(D^p, \tau_W(H_c))}{p \log 2} = \frac{1}{p} \frac{h_{top}(D, H_c)}{\log 2} = \frac{1}{p} \textup{H.dim }H_c$$
from which the claim follows.

\end{proof}





\subsection{Scaling and continuity at the Feigenbaum point}

Among all tuning operators is the operator $\tau_W$ where $W$ is the basilica component 
of period $2$ (the associated strings are $\Sigma_0 = 01$, $\Sigma_1 = 10$). We will denote this particular operator 
simply with $\tau$. The fixed point of $\tau$ is the external angle of the \emph{Feigenbaum point} $c_{Feig}$.

Let us explicitly compute the dimension at the Feigenbaum parameter. Indeed, let $c_0$ be the airplane 
parameter of angle $\theta_0 = 3/7$, and consider the sequence of parameters of angles $\theta_n := \tau^n(\theta_0)$ 
given by successive tuning.

The set $H_{c_0}$ is given by all angles with binary sequences which do not contain $3$ consecutive equal symbols, 
hence the Hausdorff dimension is easily computable (see example 4 in the introduction):
$$\textup{H.dim }H_{\theta_0} = \log_2 \frac{\sqrt{5} + 1}{2}.$$
Now, by repeated application of Proposition \ref{tuneddim} we have 
$$\textup{H.dim }H_{\theta_n} = \frac{\textup{H.dim }H_{\theta_0}}{2^n}.$$
Note that the angles $\theta_n$ converge from above to the Feigenbaum angle $\theta_F$, also $\textup{H.dim }H_{c_{Feig}} = 0$; 
moreover, since $\theta_n$ is periodic of period $2^n$, 
$$ \theta_n - \theta_F \asymp 2^{-2^n} $$
and together with 
\begin{equation} \label{eqfeig}
\textup{H.dim }H_{\theta_n} - \textup{H.dim }H_{\theta_F} = \frac{\textup{H.dim }H_{\theta_0}}{2^n} 
\end{equation}
we have proved the

\begin{proposition} \label{feig}
For the Feigenbaum parameter $c_{Feig}$ we have
$$\textup{H.dim }S_{c_{Feig}} = 0$$
and moreover, the entropy function $\theta \mapsto h(\theta)$ is not H\"older-continuous at the Feigenbaum point.
Similarly, the dimension of the set of biaccessible angles for the Feigenbaum parameter is $0$. 
\end{proposition}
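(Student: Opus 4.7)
The plan is to prove three facts in order: (i) $\textup{H.dim }H_{c_{Feig}} = 0$, from which (ii) both $\textup{H.dim }S_{c_{Feig}} = 0$ and $\textup{H.dim }B_{c_{Feig}} = 0$ follow, and finally (iii) to combine the geometric rate given by equation \eqref{eqfeig} with the double-exponential convergence $\theta_n \to \theta_F$ to rule out H\"older regularity of $h$ at $\theta_F$.

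For (i), the key observation is the self-similarity $c_{Feig} = \tau(c_{Feig})$, where $\tau = \tau_W$ denotes tuning by the basilica component $W$ (of period $p = 2$). Applying Proposition \ref{tuneddim} at $c = c_{Feig}$ yields
\[
\textup{H.dim }H_{c_{Feig}} \;=\; \max\bigl\{\textup{H.dim }H_{r(W)},\ \tfrac{1}{2}\,\textup{H.dim }H_{c_{Feig}}\bigr\}.
\]
The root $r(W) = -3/4$ lies on the boundary of the main cardioid: its characteristic angle is $\theta_c = 1/3$, so $L_{r(W)} = 2/3$, and from Proposition \ref{symb_Htree} together with the elementary observation that requiring $T^n(x) \geq 2/3$ for all $n \geq 0$ forces $x = 2/3$, one reads off the two-point set $H_{r(W)} = \{1/3, 2/3\}$; hence $\textup{H.dim }H_{r(W)} = 0$. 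The self-consistency relation $d = \max\{0, d/2\}$ for $d = \textup{H.dim }H_{c_{Feig}} \in [0,1]$ forces $d = 0$. Part (ii) then follows from the inclusions underlying Proposition \ref{biaccentro} (Lemmas \ref{biacc_lemma} and \ref{exttree}), which remain valid for any real quadratic with locally connected Julia set, in particular for the Feigenbaum parameter.

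For (iii), Theorem \ref{entro_dim} translates dimensions into entropies by $h(\theta) = \log 2 \cdot \textup{H.dim }H_{c(\theta)}$, so equation \eqref{eqfeig} becomes $|h(\theta_n) - h(\theta_F)| \asymp 2^{-n}$. On the other hand, since $\tau$ acts on binary expansions by the length-doubling substitution $0 \mapsto 01$, $1 \mapsto 10$, the iterates $\tau^n(\theta_0)$ and $\tau^n(\theta_F) = \theta_F$ agree in their first $\asymp 2^n$ binary digits, so $|\theta_n - \theta_F| \asymp 2^{-2^n}$. Therefore, for every exponent $\alpha > 0$,
\[
\frac{|h(\theta_n) - h(\theta_F)|}{|\theta_n - \theta_F|^\alpha} \;\asymp\; 2^{\alpha\,2^n - n} \;\longrightarrow\; +\infty \qquad (n \to \infty),
\]
ruling out H\"older continuity of $h$ at $\theta_F$.

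The main subtlety is the direct verification that $H_{r(W)}$ is finite at the parabolic basilica root $c = -3/4$ (which is where a naive application of continuity of entropy would fail); once that is in hand, everything reduces to self-similarity under $\tau$ combined with the mismatch between the arithmetic rate $2^{-n}$ of entropy decay and the double-exponential rate $2^{-2^n}$ of angle convergence.
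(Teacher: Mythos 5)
Your argument is correct, and for the first assertion it takes a genuinely different (and cleaner) route than the paper. The paper establishes $\textup{H.dim }H_{c_{Feig}} = 0$ by fixing $\theta_0 = 3/7$ (the airplane), computing $\textup{H.dim }H_{\theta_0} = \log_2 \frac{\sqrt{5}+1}{2}$, iterating Proposition \ref{tuneddim} to obtain $\textup{H.dim }H_{\theta_n} = 2^{-n}\,\textup{H.dim }H_{\theta_0} \to 0$ for $\theta_n = \tau^n(\theta_0)$, and then appealing to monotonicity to push the limit onto $\theta_F$. You instead exploit the fixed-point relation $\tau(c_{Feig}) = c_{Feig}$ to get the self-consistency identity $d = \max\{\,0,\, d/2\,\}$ directly from Proposition \ref{tuneddim}, forcing $d = 0$ with no limiting argument at all. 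The verification that $\textup{H.dim }H_{r(W)} = 0$ at the parabolic root $r(W) = -3/4$ (via $L_{r(W)} = 2/3$ and the observation that $T^n(x) \geq 2/3$ for all $n$ forces $x = 2/3$) is a nice direct check that the paper only implicitly relies on. What your self-similarity argument does \emph{not} produce, by itself, is the quantitative geometric decay needed for the H\"older statement: part (iii) of your proposal quotes equation \eqref{eqfeig}, but that equation is exactly the content of the paper's limiting construction, i.e.\ the formula $h(\theta_n) = 2^{-n}h(\theta_0)$ for some explicit $\theta_0$ with $h(\theta_0) > 0$. So for (iii) you in effect still need to reintroduce the sequence $\theta_n = \tau^n(\theta_0)$ and run the same iterated application of Proposition \ref{tuneddim}; this is straightforward given (i) and Proposition \ref{tuneddim}, but it should be stated rather than cited. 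The digit-agreement argument giving $|\theta_n - \theta_F| \asymp 2^{-C 2^n}$ via the length-doubling substitution is sound and in fact slightly more explicit than the paper's terse ``$\theta_n$ is periodic of period $2^n$.''
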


Note that it also follows that the entropy $h(c):= h_{top}(f_c, [-\beta, \beta])$ as a function of the parameter $c$ 
has vertical tangent at $c = c_{Feig}$, as shown in Figure \ref{MTentropy}. Indeed, if $c_n \to c_{Feig}$ is the 
sequence of period doubling parameters converging to the Feigenbaum point, it is a deep result 
\cite{Ly2} that $|c_n - c_{Feig}| \asymp \lambda^{-n}$, 
where $\lambda \cong 4.6692\dots$ is the Feigenbaum constant; hence, by equation \eqref{eqfeig}, we have 
$$\frac{h(c_n) - h(c_{Feig})}{|c_n - c_{Feig}|} \asymp \left(\frac{\lambda}{2}\right)^n \to \infty.$$

\subsection{Proof of Theorem \ref{equaldim}}

Let us now turn to the proof of equality of dimensions between $H_c$ and $P_c$.
Recall we already established $P_c \subseteq H_c$, hence we are left with proving 
that for all real parameters $c \in \partial \mathcal{M} \cap \mathbb{R}$, 
$$\textup{H.dim }H_c \leq \textup{H.dim }P_c.$$
By Proposition \ref{feig}, the inequality holds for the Feigenbaum point and for all $c > c_{Feig}$. 
Moreover, by Proposition \ref{embedding} and continuity of entropy (\cite{MT}, see also section \ref{section:knead}), we have 
the inequality for any $c \in \partial \mathcal{M} \cap \mathbb{R}$ which is non-renormalizable.
Let now $\tau$ be the tuning operator whose fixed point is the Feigenbaum point: since the root of its tuning window is 
the basilica map which has zero entropy, by Proposition \ref{tuneddim} we have, for each $n \geq 0$ and each $c \in \mathcal{M}$,
\begin{equation} \label{tt}
\textup{H.dim }H_{\tau^n(c)} = \textup{H.dim }\tau^n(H_c) 
\qquad \textup{H.dim }P_{\tau^n(c)} = \textup{H.dim }\tau^n(P_c). 
\end{equation}
Now, each renormalizable parameter $c \in \mathcal{M} \cap (-2, c_{Feig})$ is either of the form $c = \tau^n(c_0)$
with $c_0$ non-renormalizable, or $c = \tau^n(\tau_W(c_0))$ with $W$ a real hyperbolic component such that its root $r(W)$
is outside the baby Mandelbrot set determined by the image of $\tau$.
\begin{enumerate}
 \item 
In the first case we note that (since tuning operators behave well under the operation of concatenation of binary 
strings), by applying the operator $\tau^n$ to both sides of the inclusion of Proposition \ref{embedding} we get for each 
$c' > c_0$ a piecewise linear map $F_0$ such that
$$F_0(\tau^n(H_{c'})) \subseteq \tau^n(P_{c_0})$$
hence, by continuity of entropy and of tuning operators,
$$\textup{H.dim }H_{c} = \sup_{c' > c_0} \textup{H.dim }H_{\tau^n(c')} = 
\textup{H.dim }\tau^n(H_{c'}) \leq \textup{H.dim }\tau^n (P_{c_0}) = \textup{H.dim }P_{c}.$$
\item
In the latter case $c = \tau^n(\tau_W(c_0))$, by Proposition \ref{tuneddim} we get
$$\tau^n(P_{\tau_W(c_0)}) = \tau^n(P_{r(W)}) \cup \tau^n(\tau_W(P_{c_0}))$$
and since the period of $W$ is larger than $2$ we have the inequality 
$$\textup{H.dim } \tau^n(\tau_W(P_{c_0})) \leq \textup{H.dim } \tau^{n+1}(P_{c_0}) \leq \textup{H.dim } \tau^{n+1}(\mathcal{R}) \leq \tau^n(P_{r(W)})$$
where in the last inequality we used the fact that the set of rays $\tau(\mathcal{R})$ land to the right of the root $r(W)$.
Thus we proved that 
$$\textup{H.dim }\tau^n(P_{\tau_W(c_0)}) = \textup{H.dim } \tau^n(P_{r(W)})$$
and the same reasoning for $H_c$ yields
$$\textup{H.dim }\tau^n(H_{\tau_W(c_0)}) = \textup{H.dim } \tau^n(H_{r(W)}).$$
Finally, putting together the previous equalities with eq.\eqref{tt} and applying the case (1) to $\tau^n(r(W))$ (recall 
$r(W)$ is non-renormalizable), we have the equalities
$$\textup{H.dim }P_{c} = \textup{H.dim }\tau^n(P_{\tau_W(c_0)}) = \textup{H.dim } \tau^n(P_{r(W)}) = \textup{H.dim } P_{\tau^n(r(W))} = $$
$$= \textup{H.dim } H_{\tau^n(r(W))}  = \textup{H.dim } \tau^n(H_{r(W)}) = \textup{H.dim }\tau^n(H_{\tau_W(c_0)}) =  \textup{H.dim }H_{c}.$$
\end{enumerate}


\subsection{Density of dominant parameters} \label{proofdensity}

In order to prove Proposition \ref{densitydominant}, we will need the following definitions:
given a string $S$, the set of its \emph{prefixes-suffixes} is 
$$\begin{array}{lll} PS(S) & := & \{ Y :  Y \textup{ is both a prefix and a suffix of }S\} = \\
   & = & \{ Y :  Y \neq \emptyset, \exists \ X, Z\textup{ s.t. } S = XY = YZ \}.
  \end{array}$$
Note that an extremal string $S$ of even length is dominant if and only if
$PS(S)$ is empty. Moreover, let us define the set of \emph{residual suffixes} as
$$RS(S) := \{ Z \ : \ S = YZ, \ Y \in PS(S) \}.$$
\textit{Proof of Proposition \ref{densitydominant}.}
By density of the roots of the maximal tuning windows in the set of non-renormalizable angles, it is enough 
to prove that every $\theta \in (0, \frac{1}{2})$ which is root of a maximal tuning window, 
$\theta \neq 1/3$, can be approximated from the right by dominant points. Hence we can assume $w_\theta = \overline{S}$, 
$S$ an extremal string of even length, and $1$ is not a prefix of $S$. If $S$ is dominant, a sequence of approximating 
dominant parameters is given by the strings 
$$S^n11, \qquad n \geq 1.$$
The rest of the proof is by induction on $|S|$. If $|S| = 2$, then $S$ itself is dominant and we are in the previous case.
If $|S| > 2$, either $S$ is dominant and we are done, or $PS(S) \neq \emptyset$ and also $RS(S) \neq \emptyset$. Let us choose $Z_\star \in RS(S)$ 
such that $$\overline{Z_\star} := \min \{ \overline{Z} \ : \ Z \in RS(S) \}$$
and $Y_\star \in PS(S)$ such that $S = Y_\star Z_\star$. Let $\alpha(Y_\star)$ be the root 
of the maximal tuning window $\overline{Y_\star}$ belongs to. Then by Lemma  \ref{largerthanprim},  
$\overline{Z_\star} > \alpha(Y_\star)$, and by minimality
$$\alpha(Y_\star) < \overline{Z} \quad \forall Z \in RS(S).$$
Now, since $Y_\star$ has odd length and belongs to the window of root $\alpha(Y_\star)$, 
then one can write $\alpha(Y_\star) = \overline{P}$ with $Y_\star << P$, hence also $S << P$.
Moreover, 
$$|P| \leq |Y_\star| + 1 \leq |S|$$
and actually $|Y_\star| + 1 < |S|$ because otherwise the first digit of $Y_\star$ would appear twice at the beginning of $S$, contradicting the 
fact that $S$ is extremal. Suppose now $\alpha(Y_\star)  \neq \overline{1}$. Then 
 $|P| < |S|$ and by induction there exists $\gamma = \overline{T}$ such that $T$ is dominant, 
$$ \alpha(Y_\star) <  \overline{T} < \overline{Z} \quad \forall Z \in RS(S)$$
and $\gamma$ can also be chosen close enough to $\alpha(Y_\star)$ so that $P$ is prefix of $T$, which implies
$$S << T.$$
By Lemma \ref{inductivestep}, $S^nT^m$ is a dominant string for $m$ large enough, of even length if $m$ is even, and arbitrarily close to $\overline{S}$ 
as $n$ tends to infinity. If $\alpha(Y_\star) = \overline{1}$, the string $S^n1^{2m}$ is also dominant for $n$, $m$ large enough.
\qed

\begin{lemma} \label{prefsuffisodd}
If $S$ is an extremal string and $Y \in PS(S)$, then $Y$ is an extremal string of odd length. 
\end{lemma}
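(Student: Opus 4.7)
My plan is to prove the two conclusions—that $Y$ is extremal, and that $|Y|$ is odd—separately. Setting $p := |S|-|Y|$ and $k := |Y|$, I write $S = XY = YZ$ with $|X|=|Z|=p$. The prefix-suffix condition forces $s_i = s_{i+p}$ for $i = 1,\ldots,k$, so $S$ has period $p$; the extremality of $S$ prevents $S$ from being a proper power (a splitting into equal halves of a square would violate strict extremality), so the comparison sequence $(s_j,\; s_{k+j})$ must first disagree at some smallest index $j \ge 1$.

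For the parity assertion, I apply Lemma \ref{stringlemma} to the splittings $S = X\cdot Y$ and $S = Y\cdot Z$: extremality of $S$ yields $\overline{X} < \overline{Y}$ and $\overline{Y} < \overline{Z}$. Assume first $k \le p$. Then both $\overline{X}$ and $\overline{Y}$ begin with the word $Y$, so their first disagreement occurs at position $k+j$, comparing the entries $s_{k+j}$ against $s_j$; the alternating lex order forces $s_{k+j} > s_j$ when $k+j$ is odd and $s_{k+j} < s_j$ when $k+j$ is even. In parallel, $\overline{Y}$ and $\overline{Z}$ agree on the first $j-1$ positions and disagree at position $j$, where the entries are again $s_j$ versus $s_{k+j}$, with sign now dictated by the parity of $j$. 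Since the two forced inequalities impose opposite orderings on the single comparison $s_j$ vs $s_{k+j}$, the parities of $k+j$ and $j$ must differ, whence $k$ is odd. The complementary case $k > p$ is symmetric: $Y$ then has period $p$, and the same shifted comparison of $\overline X$, $\overline Y$, $\overline Z$ gives the analogous parity conclusion.

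For the extremality of $Y$, I argue by contradiction. Suppose $Y = AB$ is a non-trivial splitting with $AB \ge BA$, equivalently $\overline{B} \le \overline{A}$. The extremality of $S$ applied to the splittings $S = XA\cdot B$ and $S = A\cdot BZ$ gives, via Lemma \ref{stringlemma}, the inequalities $\overline{XA} < \overline{B}$ and $\overline{A} < \overline{BZ}$. Chaining these with $\overline{B} \le \overline{A}$ forces in particular $\overline{B} < \overline{BZ}$; the first disagreement between these two strings occurs at position $|B|+1$, comparing $b_1 = s_{|A|+1}$ to $z_1 = s_{k+1}$, with sign determined by the parity of $|B|+1$. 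A parallel analysis of the first disagreement in $\overline{XA} < \overline{B}$ then produces a contradictory sign condition at a specific position fixed by the shifts involved.

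The main obstacle is the extremality part: one must simultaneously track the first-disagreement positions of four periodic strings with different periods, and carry out a case analysis according to the relative magnitudes of $|A|$, $|B|$, and $|X|$. Degenerate configurations where several first-disagreement positions coincide, or where $j$ exceeds $\min(k, p-k)$ in the parity argument, likely require an induction on $|S|$, using that each such degeneracy produces a strictly smaller prefix-suffix structure (for instance $W = s_{k+1}\ldots s_p$ becomes itself a prefix of $Y$, reducing to a shorter instance of the same lemma) to which the statement can be applied recursively.
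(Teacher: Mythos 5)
Your proposal identifies the right tools (Lemma \ref{stringlemma} plus a first-disagreement analysis), but as you yourself acknowledge, it is not a complete proof, and the gaps are real. For the parity claim, the clean case you work out requires the first disagreement index $j$ to satisfy $j \leq \min(k, p-k)$: if $k+j > p$, then position $k+j$ of $\overline{X}$ wraps around and is no longer $s_{k+j}$, and if $j>k$ the same happens in $\overline{Y}$; you flag this but only say an induction ``likely'' handles it, without giving the inductive step. The case $k>p$ is asserted to be ``symmetric'' but the symmetry is not spelled out (and it is not a literal mirror image, since the roles of the period $p$ and the overlap length $k$ are not interchangeable in the wrap-around bookkeeping). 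For extremality, the claim that the first disagreement between $\overline{B}$ and $\overline{BZ}$ ``occurs at position $|B|+1$'' is simply false in general — $b_1$ may equal $z_1$ — and the ``parallel analysis'' that is supposed to produce the contradictory sign is not carried out at all.

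The paper sidesteps all of this with two short algebraic tricks. For parity: start from $XY < YX$, append $Y$ on the right to both sides (the first disagreement is at a position $\leq n$, so appending preserves the strict inequality), and substitute $XY = YZ$ to get $YZY < YYZ$. Both strings share the prefix $Y$; if $|Y|$ were even, cancelling an even-length common prefix preserves the alternate lexicographic order, giving $ZY < YZ$, which contradicts extremality of $S = YZ$. No case split, no wrap-around analysis, no induction. For extremality of $Y = AB$: apply extremality of $S$ to the single splitting $(XA)\cdot B$ to get $XAB < BXA$, then truncate both sides to their first $k$ characters. The prefix-suffix relation $s_i = s_{i+|X|}$ makes $(BXA)_1^k = BA$ while $(XAB)_1^k = Y = AB$, so $AB \leq BA$; equality would force $Y$ to be a proper power $P^m$, which is excluded by a short argument using Lemma \ref{stringlemma} and $\overline{P} = \overline{Y} < \overline{Z}$. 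If you want to salvage your route, the key missing ingredient is a lemma controlling the first-disagreement position of $\overline{U}$ versus $\overline{V}$ in terms of a single, non-wrapping comparison (essentially a quantitative Fine–Wilf statement), but the paper's approach avoids needing any such thing.
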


\begin{proof}
Suppose $S = XY = YZ$. Then by extremality $XY < YX$, hence $XYY < YXY$ and, by substituting $YZ$ for $XY$, $YZY < YYZ$.
If $|Y|$ were even, it would follow that $ZY < YZ$, which contradicts the extremality of $S = YZ < ZY$. Hence $|Y|$ is odd.
Suppose now $Y = AB$, with $A$ and $B$ non-empty strings. Then $S = XAB < BXA$. By considering the first $k := |Y|$ characters 
on both sides of this equation, $Y = AB = S_1^k \leq (BXA)_1^k = BA$. If $Y = AB = BA$, then $Y = P^k$ for some string $P$, 
hence by Lemma \ref{stringlemma} we have $P Z P^{k-1} < P^kZ = S$, which contradicts 
the extremality of $S$, hence $AB < BA$ and $Y$ is extremal.
\end{proof}

\begin{lemma}\label{inductivestep}
Let $S$ be an extremal string of even length, and $T$ be a dominant string. Suppose moreover that
\begin{enumerate}
  \item  $S << T$;
  \item $\overline{T} < \overline{Z}$ $\ \forall Z \in RS(S)$.
\end{enumerate}
Then, for any $n \geq 1$ and for $m$ sufficiently large, $S^n T^m$ is a dominant string. 
\end{lemma}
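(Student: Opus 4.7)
\medskip

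\noindent\textbf{Proof proposal.}
The plan is to verify directly the two defining properties of a dominant string for $U := S^n T^m$: that $|U|$ is even, and that $XY \ll Y$ for every nontrivial splitting $U = XY$. The even-length requirement is immediate, since $|S|$ is even by hypothesis and $|T|$ is even because $T$ is dominant, so $|U| = n|S|+m|T|$ is even.

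For the main condition, I would classify the splittings $U = XY$ according to where the cut $|X|$ falls relative to the block structure $\underbrace{S\cdots S}_{n}\underbrace{T\cdots T}_{m}$. \emph{Block-boundary splits}, i.e.\ $X = S^i$ with $1 \le i \le n$ or $X = S^n T^j$ with $1 \le j < m$, reduce (after discarding matching $S$-blocks, which is harmless since $|S|$ is even so the alternating-lex parity is preserved) to the hypothesis $S \ll T$. A \emph{split strictly inside a $T$-block} has the form $X = S^n T^j X'$ with $T = X'Z$ nontrivial, so $Y = ZT^{m-1-j}$; here a short transitivity argument using $S \ll T$ (hypothesis (1)) and $T \ll Z$ (dominance of $T$), by comparing the first differing positions of $S$ vs $T$ and of $T$ vs $Z$, yields $S \ll Z$ and therefore $U \ll Y$.

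The substantive case is a \emph{split strictly inside an $S$-block}, $X = S^i X'$ with $S = X'Z$ nontrivial, so $Y = Z S^{n-1-i} T^m$. If $Z$ is not a prefix of $S$, the first differing position of $X'Z$ and $ZX'$ lies within $[1,|Z|]$ and the extremality of $S$ directly delivers $U \ll Y$. The remaining case is $Z \in PS(S)$: write $S = ZX''$, so $X'' \in RS(S)$, and by Lemma~\ref{prefsuffisodd} $|Z|$ is odd. Then $U$ and $Y$ agree on their first $|Z|$ characters, and I split further according to whether $Y$ still has an $S$-block after the initial $Z$ or goes straight into $T^m$. If $n-1-i\geq 1$, then $Y$'s position $|Z|+k$ is $S_k$ while $U$'s position $|Z|+k$ is $S_{|Z|+k}=X''_k$; the extremality of $S$ applied to the split $X'|Z$ says $S \ll ZX'$ in standard alternating lex with first differing position exactly at some $|Z|+k$ with $k\leq|X''|$ (such $k$ must exist, else $S$ would be periodic with period $|Z|$, contradicting extremality), and the comparison direction at that position is precisely the one making $U \ll Y$. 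If $i=n-1$, then $Y$'s position $|Z|+k$ is $T_k$, so we are comparing $X''$ against the beginning of $T^m$; hypothesis (2), $\overline{T} < \overline{X''}$, applied to $X'' \in RS(S)$, exactly produces a first differing position between $\overline{T}$ and $\overline{X''}$ (at some index depending only on $S,T$) with the correct alternating-lex direction, which transfers to give $U \ll Y$. The quantifier "for $m$ sufficiently large" is used precisely here, to guarantee that $|T^m| = m|T|$ exceeds the first differing position of $\overline{T}$ and $\overline{X''}$, so that the comparison actually fits within the length of $Y$.

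The main obstacle, as expected, is the last sub-case: the alternating-lex order is shifted by the odd length $|Z|$, and one must carry out a careful bookkeeping of how the comparison proceeds through successive $S$- and $T$-blocks until hypothesis (2) kicks in. This is also where both hypotheses of the lemma, and the largeness of $m$, are genuinely needed, whereas the remaining sub-cases use only the extremality of $S$, the dominance of $T$, and hypothesis (1).
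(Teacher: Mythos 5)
Your overall strategy --- classifying the splittings of $U = S^n T^m$ by the location of the cut (block boundaries, inside a $T$-block, inside an $S$-block, and then further by whether the induced suffix of $S$ lies in $PS(S)$) --- is the same as the paper's, and your treatment of the routine cases is essentially right. The gap is in the final sub-case $i=n-1$ with $Z \in PS(S)$, where after stripping the common prefix $Z$ (of odd length, which flips the alternating-lex comparison) you must show $X'' S^{n-1} T^m >> T^m$. You deduce this from hypothesis (2), $\overline{T} < \overline{X''}$, asserting that the comparison ``transfers''; but the string you are actually comparing against $T^m$ is $X'' S^{n-1} T^m$, which agrees with $\overline{X''}$ only for the first $|X''|$ characters (at position $|X''|+1$ it continues with $Z_1$, the first entry of $S = Z X''$, not with $X''_1$). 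If the first differing position of $\overline{T}$ and $\overline{X''}$ lies beyond $|X''|$, your argument breaks down, and nothing in the hypotheses rules this out.

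The paper closes exactly this gap with two moves you omit. It first upgrades $\overline{T} < \overline{X''}$ to the stronger inequality $\overline{T} < \overline{X'' S^{n-1}}$ via the auxiliary fact $\overline{X''} < \overline{X'' S^{n-1}}$, which follows from extremality of $S$ by repeated use of Lemma \ref{stringlemma}. It then invokes Lemma \ref{newstringlemma} to conclude $X'' S^{n-1}\,\overline{T} > \overline{T}$. This is the statement that actually guarantees, for $m$ sufficiently large, that the finite comparison $X'' S^{n-1} T^m >> T^m$ is decided within the length of $T^m$; prepending $Z$ (odd length, parity flip) then yields $S^n T^m << Y$. Your sketch never derives the auxiliary inequality and never uses Lemma \ref{newstringlemma}, and these are precisely what is needed to justify the ``transfer'' step on which your proof hinges.
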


\begin{proof} Let us check that $S^nT^m$ by checking all its splittings. We have four cases:
\begin{enumerate}
 \item From (1), we have 
$$S^n T^m << T^a, \quad a \geq 1$$
$$S^n T^m << S^b T^m, \quad b < n.$$
\item If $S = XY$, $XY << YX$ by extremality, hence
$$S^n T^m << Y S^b T^m \quad \forall b \geq 1.$$
\item Since $T$ is dominant, $T << U$ whenever $T = QU$, thus
$$S^n T^m << T << U.$$
\item One is left to prove that $S^n T^m << Y T^m$ whenever $S = XY$.
If $Y \notin PS(S)$, then $XY << Y$ and the proof is complete. Otherwise, 
$S = XY = YZ$, $|Y| \equiv 1 \mod 2$ by Lemma \ref{prefsuffisodd}. Moreover, since 
$YZ < ZY$, by a few repeated applications of Lemma \ref{stringlemma}, we have $\overline{Z S^{n-1}} > \overline{Z}$, hence (2) 
implies $ \overline{T} < \overline{Z S^{n-1}}$, and by Lemma \ref{newstringlemma} we have
$Z S^{n-1} \overline{T} > \overline{T}$, hence for $m$ large enough
$Z S^{n-1} T^m >> T^m$ and then 
$$S^nT^m << Y T^m.$$ 
\end{enumerate}

\end{proof}

\begin{lemma} \label{newstringlemma}
Let $Y$, $Z$ be finite strings of positive integers such that $\overline{Y} < \overline{Z}$. Then 
$$Z \overline{Y} > \overline{Y}.$$ 
\end{lemma}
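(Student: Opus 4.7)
The plan is to exploit the classical order-preserving correspondence between infinite strings of positive integers and reals in $[0,1]$ via continued fractions. Define $\phi:\Sigma\to[0,1]$ by $\phi(\omega):=[0;\omega_1,\omega_2,\dots]$; it is a standard fact (implicitly underlying \cite{BCIT}) that $\phi$ is strictly increasing from the alternate lexicographic order to the usual order on $[0,1]$, and that it conjugates the shift $\sigma$ on $\Sigma$ with the Gauss map $G(x):=\{1/x\}$. Setting $\alpha:=\phi(\overline{Y})$, $\beta:=\phi(\overline{Z})$, and $\gamma:=\phi(Z\overline{Y})$, the hypothesis $\overline{Y}<\overline{Z}$ becomes $\alpha<\beta$, and the desired conclusion $Z\overline{Y}>\overline{Y}$ becomes $\gamma>\alpha$. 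Both $\beta$ and $\gamma$ belong to the cylinder $C_Z\subset[0,1]$ of numbers whose continued-fraction expansion begins with $Z$, and they satisfy $G^{|Z|}(\beta)=\beta$ together with $G^{|Z|}(\gamma)=\alpha$. The key structural fact is that the restriction $G^{|Z|}|_{C_Z}$ is an expanding Möbius bijection onto $[0,1]$, orientation-preserving when $|Z|$ is even and orientation-reversing when $|Z|$ is odd (because $G'<0$ on each cylinder).

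The dynamical conclusion follows by a two-case analysis around the repelling fixed point $\beta$ of $G^{|Z|}|_{C_Z}$. When $|Z|$ is even the map is orientation-preserving and expels points from $\beta$ on each side, so $G^{|Z|}(\gamma)=\alpha<\beta$ forces $\gamma$ to lie on the same side as $\alpha$ but strictly closer to $\beta$, i.e.\ $\alpha<\gamma<\beta$, yielding $\gamma>\alpha$. When $|Z|$ is odd the orientation-reversing map $G^{|Z|}|_{C_Z}$ swaps the two sides of $\beta$ in $C_Z$, so $G^{|Z|}(\gamma)=\alpha<\beta$ forces $\gamma>\beta>\alpha$, again yielding $\gamma>\alpha$. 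Pulling back via the monotone map $\phi^{-1}$ gives $Z\overline{Y}>\overline{Y}$, as required.

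The main obstacle in executing this plan is making the continued-fractions apparatus rigorous in the combinatorial framework of the paper: one must verify that $\phi$ intertwines alternate lex with the usual order (straightforward but parity-sensitive), and that the derivative $(G^{|Z|})'(\beta)$ has modulus strictly greater than $1$, so that $\beta$ is indeed hyperbolically repelling. A purely elementary alternative is to let $k$ be the first index at which $\overline{Y}$ and $\overline{Z}$ disagree and split on whether $k\le|Z|$ or $k>|Z|$. The first case is immediate, since $(Z\overline{Y})_j$ and $\overline{Y}_j$ agree for $j<k$ and at $j=k$ the characters involved are exactly $\overline{Z}_k$ and $\overline{Y}_k$, so the witness of $\overline{Y}<\overline{Z}$ carries over verbatim. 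The second case is delicate: it forces $\overline{Y}$ to begin with $Z$, and one must recurse by shifting by $|Z|$ and tracking the parity flip in the alternate lex when $|Z|$ is odd; the continued-fractions formulation above is preferable precisely because it absorbs this parity bookkeeping into the orientation-preserving versus orientation-reversing dichotomy of the Möbius transformation.
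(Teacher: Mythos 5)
Your continued-fraction route is correct in principle and genuinely different from the paper's argument. You translate the alternate lexicographic order into the usual order on $[0,1]$ via $\phi(\omega) = [0;\omega_1,\omega_2,\dots]$ (which is indeed strictly order-preserving because the Gauss map has negative derivative on each cylinder, so the dependence on the $k$-th partial quotient alternates in sign), and then you read off the conclusion from the hyperbolic repelling fixed-point structure of $G^{|Z|}$ restricted to the cylinder $C_Z$; the parity dichotomy you describe is exactly right, and the strictness of the expansion ($|(G^{|Z|})'| > 1$ since $|G'| = 1/x^2 > 1$ on $(0,1)$) gives strict inequality. By contrast, the paper dispatches the lemma in three lines entirely inside the combinatorial framework: apply Lemma~\ref{stringlemma} with $S = Y^k$, $T = Z$ (noting $\overline{Y^k}=\overline{Y}$) to get $Y^kZ < ZY^k$ for every $k$, pass to the limit $k\to\infty$ to obtain $Z\overline{Y}\geq\overline{Y}$, and rule out equality because $Z\overline{Y}=\overline{Y}$ would force $\overline{Y}$ to be periodic of period $|Z|$ beginning with $Z$, i.e.\ $\overline{Y}=\overline{Z}$, contradicting the strict hypothesis. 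The paper's argument is shorter, self-contained within the section, and avoids importing the dictionary between alternate lex and continued fractions (which is only used heuristically elsewhere in the paper); your approach is more conceptual in that it explains \emph{why} the statement is true via the repelling dynamics of periodic points of the Gauss map, and it absorbs the parity bookkeeping that makes a direct inductive attack (your "elementary alternative") awkward, but it would require you to supply the order-isomorphism and expansion facts as additional lemmas before it could be spliced in.
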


\begin{proof}
By Lemma \ref{stringlemma}, for any $k \geq 0$ we have
$$\overline{Y^k} < \overline{Z} \Rightarrow Y^kZ< ZY^k$$
hence, by taking the limit as $k \to \infty$, $Z \overline{Y} \geq \overline{Y}$. Equality cannot hold
because otherwise $Y$ and $Z$ have to be multiple of the same string, which contradicts the strict inequality 
$\overline{Y} < \overline{Z}$.
\end{proof}

\begin{lemma}\label{largerthanprim}
Let $\theta$ be a non-renormalizable, real parameter angle such that $w_\theta = \overline{S}$
and $S$ is an extremal string of even length, and let $Y \in PS(S)$, $S = YZ$. 
Let $\phi$ the parameter angle such that $w_\phi = \overline{Y}$, and let $\Omega = [\omega, \alpha]$ 
be the maximal tuning window which contains $\phi$. Then
if $w_\alpha = \overline{S_0}$, we have  
$$\overline{Z} > \overline{S_0}.$$ 
\end{lemma}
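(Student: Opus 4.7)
The plan is to express $\overline{Z} > \overline{S_0}$ as the concatenation of two simpler comparisons, both instances of the order-preserving correspondence between angles in $[0,1/2]$ and their run-length codes in the alternate lexicographic order. The first comparison is purely combinatorial, using only that $S$ is extremal: from the nontrivial splitting $S = Y\cdot Z$ we have $YZ < ZY$, and by Lemma \ref{stringlemma} this is equivalent to $\overline{Y} < \overline{Z}$ as infinite periodic sequences. Neither the parity of $|Y|$ nor the prefix-suffix condition plays any role at this step.

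The second comparison is $\overline{S_0} \leq \overline{Y}$. The maximal tuning window $\Omega$ has endpoints $\alpha = \tau_W(0) = 0.\overline{\Sigma_0}$ and $\omega = \tau_W(1/2) = 0.\Sigma_0\overline{\Sigma_1}$, and because $\eta_0 < \eta_1$ forces $\Sigma_0 < \Sigma_1$ in the binary lexicographic order, $\alpha$ is the \emph{smaller}-angle endpoint of $\Omega$; hence $\alpha \leq \phi$. The encoding $\theta \mapsto w_\theta$ is order-preserving on $[0,1/2]$ — a direct check obtained by comparing two binary expansions at the first differing run — so $\overline{S_0} = w_\alpha \leq w_\phi = \overline{Y}$. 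Chaining the two comparisons yields $\overline{S_0} \leq \overline{Y} < \overline{Z}$, which is precisely the claim.

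The only point that warrants care is the orientation check that $\alpha$ (rather than $\omega$) is the lower endpoint of $\Omega$; this is immediate from the explicit formulas recalled in Section \ref{section:renorm}. I do not expect the remaining hypotheses in the statement — non-renormalizability of $\theta$, oddness of $|Y|$, and the prefix-suffix condition — to enter the proof itself; they are present to guarantee that the setup of the lemma arises exactly in the cases in which it is invoked inside the inductive argument for Proposition \ref{densitydominant}.
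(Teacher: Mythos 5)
Your first comparison is fine: extremality of $S=YZ$ gives $YZ<ZY$, and Lemma \ref{stringlemma} converts this to $\overline{Y}<\overline{Z}$. The second comparison, however, has the inequality backwards, and this breaks the proof. The quantity $w_\phi$ in the lemma is not the run-length code of the parameter angle $\phi$ itself; it is the code $w_c$ of the \emph{postcharacteristic} leaf, i.e.\ the block-count encoding of $D(\phi)\in[1/2,1]$. On $[1/2,1]$ the block-count encoding is order-\emph{reversing} in the alternate lexicographic order (if $\theta=0.1^{a_1}0^{a_2}\cdots$ has fewer leading $1$'s, then $\theta$ is smaller but, since $k=1$ is an odd position and $a_1$ is smaller, $w_\theta$ is \emph{larger}). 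Composed with the order-preserving map $\theta_c\mapsto D(\theta_c)$ on $[1/4,1/2]$, the map $\theta_c\mapsto w_c$ is order-reversing. A concrete check: the basilica has $\theta_\alpha=1/3$ with $w_\alpha=\overline{(1)}$, and the airplane has $\theta=3/7$ with $w=\overline{(2,1)}$; here $1/3<3/7$, yet $\overline{(1)}>\overline{(2,1)}$ because at position $1$ (odd) the entry $1$ is smaller. So from $\alpha\le\phi$ you actually get $\overline{S_0}=w_\alpha\ge w_\phi=\overline{Y}$, and your chain $\overline{S_0}\le\overline{Y}<\overline{Z}$ collapses: knowing that $\overline{Y}$ is below both $\overline{Z}$ and $\overline{S_0}$ says nothing about how $\overline{Z}$ and $\overline{S_0}$ compare.

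Your closing prediction that non-renormalizability and the oddness of $|Y|$ should not enter the proof is also a red flag, and in fact both are essential. The correct inequality $\overline{Y}<\overline{S_0}$ (together with $\overline{Y}<\overline{Z}$) cannot by itself separate $\overline{Z}$ from $\overline{S_0}$; one must use that $\theta$ lies \emph{outside} the tuning window rooted at $\alpha$. The paper's argument does exactly this: assuming $\overline{ZY}\le\overline{S_0}$ forces $\overline{S}=Y\overline{ZY}$ to lie in $\Omega$, contradicting non-renormalizability; and then if $\overline{Z}\le\overline{S_0}\le\overline{ZY}$ one deduces $Z=S_0^kV$ with a nonempty proper prefix $V$ of $S_0$ (nonempty precisely because $|Z|$ is odd), whence extremality of $S_0$ gives $\overline{Z}>\overline{S_0}$ after all. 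So the hypotheses you proposed to discard are load-bearing, and the purely order-theoretic reduction you attempted cannot close the argument.
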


\begin{proof}
Since $\phi$ lies in the tuning window $\Omega$, $Y$ is a concatenation of the strings $S_0$ and $S_1$. 
As a consequence, $Y\overline{S_0}$ is also a concatenation of strings $S_0$ and $S_1$, 
so $Y \overline{S_0} \geq S_1 \overline{S_0}$.
Moreover, by Lemma \ref{stringlemma}, $\overline{S} < \overline{Y} < \overline{S_0}$. 
We now claim that
$$\beta := \overline{ZY} > \overline{S_0}.$$
Indeed, suppose $\beta \leq \overline{S_0}$; then, $\overline{S} = Y\beta \geq Y\overline{S_0} \geq S_1\overline{S_0}$, 
which combined with the fact that $\overline{S} < \overline{S_0}$ implies $\theta$ lies in the tuning window $\Omega$, 
contradicting the fact that $\theta$ is non-renormalizable.

Now, suppose $\overline{Z} \leq \overline{S_0}$; then $\overline{Z} \leq \overline{S_0} \leq \overline{ZY}$, 
which implies $Z$ has to be prefix of $\overline{S_0}$, hence $Z = S_0^k V$ with $V$ prefix of $S_0$, $V \neq \emptyset$ since $|Z|$ is odd. If $S_0 \neq (1, 1)$, 
then $S_0$ is extremal and, by Lemma \ref{stringlemma}, $\overline{Z} = \overline{S_0^k V} > \overline{S_0}$, contradiction. 
In the case $S_0 = (1, 1)$, then $Z$ must be just a sequence of $1$'s of odd length, which forces $\overline{S} = \overline{1}$,
hence $S$ cannot be extremal.
\end{proof}

\section{The complex case}

In the following sections we will develop in detail the tools needed to prove Theorem \ref{mainvein}.
In particular, in section \ref{section:knead} we prove continuity of entropy along 
principal veins by developing a generalization of kneading theory to tree maps. Then 
(section \ref{section:surgery}) we develop the combinatorial surgery map, which maps the combinatorial 
model of real Hubbard trees to Hubbard trees along the vein. Finally (section \ref{section:proofvein}), we use the surgery 
to transfer the inclusion of Hubbard tree in parameter space of section \ref{section:embed} from 
the real vein to the other principal veins. 




\subsection{Veins} \label{section:veins}

A \emph{vein} in the Mandelbrot set is a continuous, injective arc inside $\mathcal{M}$.
Branner and Douady \cite{BD} showed that there exists a vein joining the parameter at angle $\theta = 1/4$ 
to the main cardiod of $\mathcal{M}$. In his thesis, J. Riedl \cite{Ri} showed existence of veins 
connecting any tip at a dyadic angle $\theta = \frac{p}{2^q}$ to the main cardioid. Another proof of this fact is due to 
 J. Kahn (see \cite{DoCompact}, Section V.4, and  \cite{Sch}, Theorem 5.6). Riedl also shows that 
the quasiconformal surgery preserves local connectivity of Julia sets, hence by using the local connectivity 
of real Julia sets \cite{LvS} one concludes that all Julia sets of maps along the dyadic veins are locally connected
(\cite{Ri}, Corollary 6.5) .

Let us now see how to define veins combinatorially just in terms of laminations.
Recall that the quadratic minor lamination $QML$ is the union of all \emph{minor leaves} of all invariant laminations
corresponding to all quadratic polynomials.  
The degenerate leaf $\{0\}$ is the natural \emph{root} of $QML$. No other leaf of $QML$ 
contains the angle $0$ as its endpoint.
Given a rooted lamination, we define a partial order on the set of leaves by 
saying that $\ell_1 < \ell_2$ if $\ell_1$ separates $\ell_2$ from the root.

\begin{definition}
Let $\ell$ be a minor leaf. Then the \emph{combinatorial vein} defined by $\ell$ is the set 
$$P(\ell) := \{ \ell' \in QML \ :  \{0 \} < \ell' \leq \ell \}$$
of leaves which separate $\ell$ from the root of the lamination.
\end{definition}

\subsection{Principal veins}
  
Let $\frac{p}{q}$ be a rational number, with $0 < p < q$ and $p, q$ coprime. The $\frac{p}{q}$-limb in the Mandelbrot set is the set of 
parameters which have rotation number $\frac{p}{q}$ around the $\alpha$ fixed point. In each limb, there exists a
unique parameter $c = c_{p/q}$ such that the critical point maps to the $\beta$ fixed point after exactly $q$ steps, i.e.
$f^q_c(0) = \beta$. For instance, $c_{1/2} = -2$ is the Chebyshev polynomial. These parameters represent the 
``highest antennas'' in the limbs of the Mandelbrot set.
The \emph{principal vein} $v_{p/q}$ is the vein joining $c_{p/q}$ to the main cardioid. We shall denote 
by $\tau_{p/q}$ the external angle of the ray landing at $c_{p/q}$ in parameter space.

\begin{proposition} \label{qstar}
Each parameter $c \in v_{p/q}$ is topologically finite, and the Hubbard tree $T_c$ is a 
$q$-pronged star. Moreover, the valence of any point $x \in T_c$ is at most $2 q$. 
\end{proposition}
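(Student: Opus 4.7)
The plan is to handle the three claims of the proposition in sequence, using the rotation structure at $\alpha$ as the main tool. For topological finiteness, I would first observe that every $c \in v_{p/q}$ is biaccessible, since $v_{p/q}$ is an injective arc in $\mathcal{M}$ and away from countably many points $c$ is the landing point of two distinct parameter rays (one from each side of the vein). Local connectivity of $J(f_c)$ is known from \cite{LvS} in the real case $p/q = 1/2$; for $p/q \neq 1/2$, I would observe that $c_{p/q}$ has dyadic external angle (the identity $f_{c_{p/q}}^{q-1}(c_{p/q}) = \beta$ forces $2^{q-1}\tau_{p/q} \equiv 0 \pmod 1$), so $v_{p/q}$ is a dyadic vein in Riedl's sense and \cite{Ri} gives local connectivity along the whole vein. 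The proposition that biaccessibility plus local connectivity implies topological finiteness then completes step one.

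For the $q$-star structure, I exploit that $c \in v_{p/q}$ lies in the $p/q$-limb, so the $\alpha$ fixed point has combinatorial rotation number $p/q$; hence exactly $q$ external rays land at $\alpha$, with angles in arithmetic progression of step $1/q$ on $S^1$. Consequently $K(f_c) \setminus \{\alpha\}$ has exactly $q$ connected components $A_0, A_1, \dots, A_{q-1}$ (the arms at $\alpha$), and $f_c$ permutes them cyclically by $A_k \mapsto A_{k + p \bmod q}$. Label $A_0$ to be the arm containing the critical point $0$ (equivalently $\beta$). By Lemma \ref{treebasic}(2),
\[
T_c = \bigcup_{n \geq 0}[\alpha, f_c^n(0)],
\]
and since $f_c$ carries arms to arms, $[\alpha, f_c^n(0)] \subseteq A_{np \bmod q} \cup \{\alpha\}$. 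To conclude that $T_c$ is a $q$-star centered at $\alpha$, I need to show that for each residue class $k$ the family $\{[\alpha, f_c^n(0)] : np \equiv k \pmod q\}$ is totally ordered by inclusion; equivalently, that the orbit of $0$ under $f_c^q$ is monotone along the arm $A_0$.

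The main obstacle is this monotonicity along each arm, which is the precise place where the principal vein hypothesis (rather than membership in the $p/q$-limb alone) is used. My approach would be to verify it first at the Misiurewicz tip $c_{p/q}$, where $f_c^q(0) = \beta$ collapses the $f_c^q$-orbit of $0$ to the single segment $[\alpha,\beta] \subset A_0$, and the Hubbard tree is by inspection the $q$-star with tips $\beta, c, f_c(c), \dots, f_c^{q-2}(c)$ on the successive arms $A_0, A_p, A_{2p}, \dots$. Then for a general $c \in v_{p/q}$, I would transfer this combinatorial structure by pulling back along the vein: the combinatorial type of the Hubbard tree is constant along $v_{p/q}$ (as the characteristic leaf sweeps monotonically, no new branching is introduced on any arm because the critical $f_c^q$-orbit visits $A_0$ in an order dictated by the kneading data of the vein). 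Approximating an arbitrary $c$ by postcritically finite parameters on $v_{p/q}$ and passing to the Hausdorff limit in the locally connected Julia set preserves the $q$-star shape. Finally, once $T_c$ is known to be a $q$-star, $\deg(T_c) = q$ and the valence bound $\mathrm{val}(x) \leq 2q$ for every $x \in T_c$ is immediate from Proposition \ref{valencebound}.
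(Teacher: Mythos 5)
Your structural outline is close to the paper's picture, and you correctly isolate the crux: one must show that the iterates $f_c^{qn}(0)$ are monotone along the arm $A_0$, i.e., that no branching accumulates inside a single arm. But the place where you try to resolve that crux is exactly where the argument breaks down. The two mechanisms you offer are (i) ``the combinatorial type of the Hubbard tree is constant along $v_{p/q}$ \dots because the critical $f_c^q$-orbit visits $A_0$ in an order dictated by the kneading data of the vein,'' which simply asserts the desired monotonicity without deriving it; and (ii) passing to the Hausdorff limit of Hubbard trees of postcritically finite approximants. The second step is genuinely unsound: the Hubbard tree is defined as the smallest regulated tree containing the critical orbit, and there is no a priori reason it should coincide with the Hausdorff limit of the trees of nearby parameters. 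A Hausdorff limit of finite $q$-stars can easily acquire infinitely many branch points or fail to be a regulated tree at all; and even if the limit set were a $q$-star, you would still owe an argument that it contains the critical orbit of the limiting $c$ and is minimal with that property.

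The paper sidesteps the whole issue with one observation you did not use: the angle $\tau_{p/q}$ is dyadic with $D^{q-1}(\tau_{p/q}) = 0$, so for every $c$ on the vein the ray $R_c(\tau_{p/q})$ lands at a point $\tau$ with $f_c^{q-1}(\tau) = \beta$. Since $c \in [\alpha, \tau]$ (the critical value sits in the sector cut off by the $\alpha$-rays and the ray at angle $\tau_{p/q}$), applying $f_c^{q-1}$ gives $f_c^{q-1}(c) \in [\alpha, \beta]$, that is, $f_c^q(0) \in [\alpha, \beta]$. Proposition~\ref{top_finite} then yields topological finiteness and the explicit formula $T_c = \bigcup_{i=0}^{q}[\alpha, f_c^i(0)]$ in one stroke; the monotonicity you were chasing is already built into the proof of Proposition~\ref{top_finite} via Lemma~\ref{increase}, which shows that once the orbit enters $[\alpha, \beta]$ it moves monotonically toward $\alpha$. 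This also renders the appeal to Riedl and to biaccessibility unnecessary for this particular proposition: the return of the critical orbit to $[\alpha,\beta]$ after $q$ steps is the only input needed. Once $T_c$ is the $q$-star with its unique branch point of degree $q$ at $\alpha$, the valence bound $\mathrm{val}(x) \le 2q$ follows from Proposition~\ref{valencebound} exactly as you say.
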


\begin{proof}
Let $\tau$ be the point in the Julia set of $f_c$ where the ray at angle $\tau_{p/q}$ lands.
Since $c \in [\alpha, \tau]$, then $f^{q-1}(c) \in [\alpha, \beta]$, hence by Lemma \ref{top_finite}
the extended Hubbard tree is a $q$-pronged star. The unique point with degree 
larger than $1$ is the $\alpha$ fixed point, which has degree $q$, so the second claim follows 
from Lemma \ref{max_val}.
\end{proof}

Note that, by using combinatorial veins, the statement of Theorem \ref{mainvein} can be given in purely combinatorial form
as follows.
Given a set $\lambda$ of leaves in the unit disk, let us denote by $\textup{H.dim }\lambda$ the Hausdorff dimension of the 
set of endpoints of (non-degenerate) leaves of $\lambda$.
Moreover, if the leaf $\ell$ belongs to $QML$ we shall denote as $\lambda(\ell)$ the invariant quadratic lamination which has $\ell$ as minor leaf.
The statement of the theorem then becomes that, for each $\ell \in P(\tau_{p/q})$, the following equality holds:
$$\textup{H.dim }P(\ell) = \textup{H.dim }\lambda(\ell).$$
We conjecture that the same equality holds for every $\ell \in QML$. 

\subsection{A combinatorial bifurcation measure}

The approach to the geometry of the Mandelbrot set via entropy of Hubbard trees 
allows one  to define a transverse measure on the quadratic minor lamination $QML$. 
Let $\ell_1 < \ell_2$ be two ordered leaves of $QML$, corresponding to two parameters $c_1$ and 
$c_2$, and let $\gamma$ be a tranverse arc connecting $\ell_1$ and $\ell_2$. Then one can assign the measure of the 
arc $\gamma$ to be the difference between the entropy of the two Hubbard trees:
$$\mu(\gamma) := h(f_{c_2}\mid_{T_{c_2}}) - h(f_{c_1}\mid_{T_{c_1}}).$$
By the monotonicity result of \cite{TL}, such a measure can be interpreted as a \emph{transverse bifurcation measure}: 
in fact, as one crosses more and more leaves from the center of the Mandelbrot set to the periphery, 
i.e. as the map $f_c$ undergoes more and more bifurcations, one picks up more and more measure. 
The measure can also be interpreted as the derivative of the entropy in the direction transverse to the leaves: note also that, 
since period doubling bifurcations do not change the entropy, $\mu$ is non-atomic.

The dual to the lamination is an $\mathbb{R}$-tree, and the transverse measure $\mu$ defines a metric on such a tree. 
By pushing it forward to the actual Mandelbrot set, one endows the union of all veins in $\mathcal{M}$ with the structure 
of a metric $\mathbb{R}$-tree. It would be very interesting to analyze the properties of such transverse measure, and also 
comparing it to 
the other existing notions of bifurcation measure.

\smallskip

In the following sections we will 
develop the proof of Theorem \ref{mainvein}.





\section{Kneading theory for Hubbard trees} \label{section:knead}
 
In this section we will analyze the symbolic dynamics of some continuous maps of trees, in order 
to compute their entropy as zeros of some power series. As a consequence, we will see that the entropy 
of Hubbard trees varies continuously along principal veins. 
Our work is a generalization to tree maps of Milnor and Thurston's \emph{kneading theory} \cite{MT} for interval maps. 
The general strategy is similar to \cite{BdC}, but our view is towards application to Hubbard trees. Moreover, since
we are mostly interested in principal veins, we will treat in detail only the case of trees with a particular topological 
type. An alternative, independent approach to continuity is in \cite{BS}.

\subsection{Counting laps and entropy}

Let $f : T \to T$ be a continuous map of a finite tree $T$. We will assume $f$ is a local homeomorphism onto its image except at 
one point, which we call the \emph{critical point}. At the critical point, the map is a branched cover of degree $2$.
Let us moreover assume $T$ is a \emph{rooted tree}, i.e. it has a distinguished end $\beta$.
The choice of a root defines a partial ordering on the tree; namely, $x < y$ if $x$ disconnects $y$ from the root.

Let $C_f$ be a finite set of points of $T$ such that $T \setminus C_f$ is a union of disjoint open intervals $I_k$, 
and the map $f$ is monotone on each $I_k$ with respect to the above-mentioned ordering. The critical point and the branch 
points of the tree are included in $C_f$. 

For each subtree $J \subseteq T$, the \emph{number of laps} of the restriction of $f^n$ to $J$ is defined as 
$\ell(f^n \mid_{J}) := \#(J \cap  \bigcup_{i = 0}^{n-1} f^{-i}(C_f) ) + \#\textup{Ends}(J) - 1$, in analogy with the real case.
Denote $\ell(f^n) := \ell(f^n \mid_T)$. The \emph{growth number} $s$ of the map $f : T \to T$ is the exponential growth rate of the number of laps:

\begin{equation} \label{entrodef}
s := \lim_{n \to \infty} \sqrt[n]{\ell(f^n)}.
\end{equation}

\begin{lemma}[\cite{BdC}, Lemma 4.1]
The limit in eq. \eqref{entrodef} exists, and it is related to the topological entropy $h_{top}(f\mid_T)$
in the following way: 
$$s = e^{h_{top}(f\mid_T)}.$$ 
\end{lemma}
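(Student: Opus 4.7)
The plan is to imitate the classical Misiurewicz--Szlenk argument for interval maps, adapting it to the tree setting. The statement has two parts: existence of the limit, and the identification of the limit with $e^{h_{top}(f|_T)}$. I would handle existence first, then prove the inequality in each direction separately.

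For existence of the limit, the natural route is Fekete's lemma applied to $a_n := \log \ell(f^n)$. The key inequality is submultiplicativity of laps: if $J_1,\dots,J_{\ell(f^m)}$ are the maximal subtrees on which $f^m$ is monotone (with respect to the rooted order), then each $f^m(J_k)$ is a subtree of $T$, and the number of monotone pieces of $f^{n+m}$ restricted to $J_k$ is bounded by $\ell(f^n \mid_{f^m(J_k)}) \leq \ell(f^n) + O(1)$, the additive constant coming from the finitely many branch points and ends of $J_k$. Summing over $k$ yields $\ell(f^{n+m}) \leq \ell(f^n)\ell(f^m) + C\ell(f^m)$, which is enough to conclude that $\frac{1}{n}\log \ell(f^n)$ converges to $\log s := \inf_n \frac{1}{n}\log(\ell(f^n)+C)$, and hence that the limit in \eqref{entrodef} exists.

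For the two-sided bound relating laps to entropy, I would pick a generating partition. Let $\mathcal{P}$ be the partition of $T$ into the open monotonicity intervals $I_k$ together with the finite set $C_f$; then $\bigvee_{i=0}^{n-1} f^{-i}\mathcal{P}$ has exactly $\ell(f^n) + O(1)$ non-degenerate atoms. For the lower bound $h_{top}(f|_T) \geq \log s$, choose $\epsilon$ smaller than the minimum distance between two points of $C_f$ and note that any $(n,\epsilon)$-spanning set must hit every atom of this refined partition, so $N(n,\epsilon) \geq \ell(f^n)$. For the upper bound $h_{top}(f|_T) \leq \log s$, observe that any atom of the refined partition is a subtree on which $f^i$ is monotone for each $i < n$; such a subtree has diameter at most its $f^i$-image diameter in $T$, and by choosing one point per atom one produces an $(n,\epsilon)$-spanning set of size $\ell(f^n)\cdot K(\epsilon)$ where $K(\epsilon)$ counts how many $\epsilon$-balls are needed to cover $T$. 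Taking $n \to \infty$ then $\epsilon \to 0$ gives $h_{top}(f|_T) \leq \log s$.

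The main obstacle, and the reason this does not follow verbatim from Milnor--Thurston, is the presence of branch points: a single monotone lap on the tree can map across several branches of the image, so the image $f(I_k)$ of a monotonicity interval need not be a single interval but only a subtree, and one must be careful that the combinatorial bound $\ell(f^{n+m}) \lesssim \ell(f^n)\ell(f^m)$ is not spoiled by the growth in the number of ends of $f^i(I_k)$. This is handled by including the branch points of $T$ in $C_f$ from the start, which forces each $f^i(I_k)$ to be an embedded arc except possibly at its endpoints, and by absorbing the bounded number of ends of $T$ into the additive constants. Once this bookkeeping is made explicit, the rest of the argument is a routine transcription of the interval case, as carried out in \cite{BdC}.
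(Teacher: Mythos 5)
The paper itself gives essentially no proof: it cites \cite{BdC}, Lemma 4.1, and remarks that the argument is the same as Misiurewicz--Szlenk for interval maps (\cite{dMvS}, Theorem II.7.2). Your proposal attempts to carry out that adaptation, which is the right plan, and the Fekete/submultiplicativity part (showing the limit exists, with the $+C$ correction absorbing the bounded number of ends of $f^m(J_k)$) is correct. Your observation that one must include the branch points of $T$ in $C_f$ from the start is exactly the bookkeeping issue that arises when passing from intervals to trees.

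The lower bound as you wrote it, however, has a genuine gap: it is not true that an $(n,\epsilon)$-spanning set must hit every atom of $\bigvee_{i<n} f^{-i}\mathcal{P}$. A Bowen ball $B_f(x,\epsilon,n)$ is in general much larger than the atom through $x$ (neighbouring atoms can be arbitrarily small), so the balls can cover many atoms without the spanning set containing a point of each, and $N(n,\epsilon)\ge \ell(f^n)$ does not follow. Worse, for trees with branch points of degree $\ge 3$ even a crude counting of ``how many atoms a Bowen ball can intersect'' only yields $h_{top}\ge \log s - \log\deg(T)$. The actual Misiurewicz--Szlenk route to $h_{top}\ge\log s$ goes through a horseshoe/covering lemma (when $s>1$, a definite proportion of the laps of $f^n$ have image containing a fixed subinterval, producing an $(n,\epsilon)$-separated set of size comparable to $s^n$, or equivalently an $n$-horseshoe), and this is what would need to be adapted to the tree setting. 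Your upper bound sketch also has a smaller flaw --- the atoms of $\bigvee f^{-i}\mathcal{P}$ need not have diameter $<\epsilon$, so one point per atom is not $(n,\epsilon)$-spanning --- but that is easily repaired by further refining $\mathcal{P}$ by an $\epsilon$-fine partition, at the cost of a factor growing only polynomially in $n$ (roughly $nK(\epsilon)$, not $K(\epsilon)$), which vanishes in the exponential growth rate.
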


The proof is the same as in the analogous result of Misiurewicz and Szlenk for interval maps (\cite{dMvS}, Theorem II.7.2).
In order to compute the entropy of $f$, let us define the generating function 
$$\mathcal{L}(t) := 1 + \sum_{n = 1}^\infty \ell(f^n) t^{n}$$
where $\ell(f^n)$ is the number of laps of $f^n$ on all $T$. 
Moreover, for $a, b \in T$, let us denote as 
$\ell(f^n\mid_{[a, b]})$ the number of laps of the restriction of $f^n$  to the interval $[a, b]$.
Thus we can construct for each $x \in T$ the function 
$$\mathcal{L}(x, t) := 1 + \sum_{n = 1}^\infty \ell(f^n \mid_{[\beta, x]}) t^{n}$$
and for each $n$ we shall denote $L_{n, x} := \ell(f^n\mid_{[\beta, x]})$.
Let us now relate the generating function $\mathcal{L}$ to the kneading sequence.

Before doing so, let us introduce some notation;  for $x \notin C_f$, the sign 
$\epsilon(x) \in \{ \pm 1\}$ is defined according as to whether $f$ preserves or 
reverses the orientation of a neighbourhood of $x$. Finally, let us define
$$\eta_{k}(x) := \epsilon(x) \cdots \epsilon(f^{k-1} (x))$$
for $k \geq 1$, and $\eta_0(x) := 1$.
Moreover, let us introduce the notation 
$$\chi_{k}(x) := \left\{ \begin{array}{ll}
			      1 & \textup{if }f(x) \in I_k \\
			      0 & \textup{if }f(x) \notin I_k 
                            \end{array}
		    \right.
$$
and $\hat{\chi}_{k}(x) := 1 - \chi_{k}(x)$.

Let us now focus on the case when $T$ is the Hubbard tree of a quadratic polynomial along the principal vein $v_{p/q}$.
Then we can set $C_f := \{ \alpha, 0 \}$ the union of the $\alpha$ fixed point and the critical point, so that 
$$T \setminus C_f = I_0 \cup I_1 \cup \dots \cup I_q$$
where the critical point separates $I_0$ and $I_1$, and the $\alpha$ fixed point separates $I_1, I_2, \dots, I_q$. 
The dynamics is the following:
\begin{itemize}
 \item $f: I_k \mapsto I_{k+1}$  homeomorphically, for $1 \leq k \leq q-1$;
\item $f: I_q \mapsto I_0 \cup I_1$ homeomorphically;
\item $f(I_0) \subseteq I_0 \cup I_1 \cup I_2$.
\end{itemize}

We shall now write a formula to compute the entropy of $f$ on the tree as a function of the itinerary of the 
critical value. 

\begin{proposition} \label{c_fmla}
Suppose the critical point for $f$ is not periodic. Then we have the equality 
$$\mathcal{L}(c, t)\left[ 1 - 2t \Theta_1(t) + \frac{4t^2}{1+t} \Theta_2(t) \right] = \Theta_3(t)$$
as formal power series, where 
$$\Theta_1(t) := \sum_{k = 0}^\infty \eta_{k}(c) \hat{\chi}_{0}(f^k(c)) t^k$$
$$\Theta_2(t) := \sum_{k = 0}^\infty \eta_{k}(c) \chi_{2}(f^k(c)) t^k$$
depend only on the itinerary of the critical value $c$, and $\Theta_3(t)$ is some power series with real, non-negative, bounded coefficients.
(Note that, in order to deal with the prefixed case, we extend the definitions of $\epsilon$, $\hat{\chi}_0$ and $\chi_2$ by setting 
$\epsilon(\alpha) = \hat{\chi}_0(\alpha) = \chi_2(\alpha) = 1$.)
\end{proposition}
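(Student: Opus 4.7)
The plan is to establish the identity by an iterative kneading-style argument. The first step is to derive, for each $x$ in the interior of an interval $I_k$ away from the forward orbits of $0$ and $\alpha$, a local recursion expressing $\mathcal{L}(x, t)$ in terms of $\mathcal{L}(f(x), t)$, $\mathcal{L}(c, t)$, and $\mathcal{L}(\alpha, t)$. To obtain this, I decompose the arc $[\beta, x]$ at the points of $C_f = \{0, \alpha\}$ that it crosses, apply on each monotone piece the identity $\ell(f^n|_J) = \ell(f^{n-1}|_{f(J)}) + 1$, valid when $J$ has a $C_f$-endpoint and $f|_J$ is monotone with generic image endpoints, and use the additivity $\ell(f^n|_{J_1 \cup J_2}) = \ell(f^n|_{J_1}) + \ell(f^n|_{J_2}) - 2$ whenever $J_1, J_2$ share an endpoint in $\bigcup_i f^{-i}(C_f)$ (and the analogous $-1$ version at a generic shared endpoint). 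Each case then yields a recursion of the schematic form
$$\mathcal{L}(x, t) = \epsilon_k\, t\, \mathcal{L}(f(x), t) + 2t\, \hat{\chi}_0(x)\, \mathcal{L}(c, t) + \text{(rational remainder)},$$
where the sign $\epsilon_k = \pm 1$ records the orientation of $f|_{I_k}$ with respect to the partial order rooted at $\beta$, and the remainder is a rational function in $t$ involving $\mathcal{L}(\alpha, t)$ and $\frac{1}{1-t}$.

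Next, I iterate this recursion by repeatedly substituting the expansion of $\mathcal{L}(f^k(c), t)$, starting from $x = c$. The products of signs $\epsilon(f^i(c))$ accumulate into $\eta_k(c)$, each iteration contributes one factor of $t$, and each index $k$ with $f^k(c) \notin I_0$ produces an extra $2t\,\mathcal{L}(c, t)$ contribution; summing over $k$ gives rise to the series $2t\, \Theta_1(t)\,\mathcal{L}(c, t)$. The coefficient $\frac{4t^2}{1+t}$ in front of $\Theta_2$ reflects the cyclic structure of the dynamics on the $q$-star: whenever $f^k(c) \in I_2$, the subsequent iterates $f^{k+1}(c), \dots, f^{k+q-2}(c)$ traverse $I_3, \dots, I_q$ under a rotation-like action whose orientation produces a geometric series in $t$ with ratio $-t$, summing to $\frac{1}{1+t}$; the extra $4t^2 = 2 \cdot 2 t^2$ packages the two doublings (at $0$ and at $\alpha$) together with one time-step delay. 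Taking the formal limit as $k \to \infty$, the tail $t^k \mathcal{L}(f^k(c), t)$ vanishes as a formal power series, yielding the claimed identity.

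The main obstacle will be verifying that $\Theta_3(t)$ has non-negative, bounded coefficients. A priori the iteration produces terms involving $\mathcal{L}(\alpha, t)$, whose coefficients $L_n(\alpha)$ grow at the exponential rate of the topological entropy and so cannot belong directly to $\Theta_3$. I expect these contributions to telescope: each substitution contributes a term proportional to $-2t\mathcal{L}(\alpha, t)$, but so does every subsequent one, and the cumulative sum $\mathcal{L}(\alpha, t)$ times the same kneading series rearranges, via applying the analogous recursion at $\alpha$ instead of $c$, into combinations which cancel. Non-negativity of $\Theta_3$ should then follow from a direct combinatorial interpretation of the surviving boundary terms as counting laps of $f$ on fixed finite sub-arcs rooted at $\beta$, and boundedness from the fact that such sub-arcs can be chosen to avoid having the critical point in their interiors, so their lap counts stay bounded independently of $n$. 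The delicate combinatorial bookkeeping required to perform this cancellation, while correctly tracking the $\eta_k(c)$ signs through the cyclic excursion $I_2 \to I_3 \to \cdots \to I_q$, is the technical heart of the proof.
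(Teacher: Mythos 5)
Your overall strategy matches the paper's: establish a one-step recursion of the form $\mathcal{L}(x,t) = t\,\epsilon(x)\,\mathcal{L}(f(x),t) + 2t\,\hat{\chi}_0(x)\,\mathcal{L}(c,t) - 2t\,\chi_2(x)\,\tilde{\mathcal{L}}(\alpha,t) + S(x,t)$, then iterate along the orbit of $c$ and telescope, collecting $\eta_k(c)$ as the cumulative sign and $\Theta_1,\Theta_2$ as the kneading series. That part is sound.

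The genuine gap is your handling of the $\tilde{\mathcal{L}}(\alpha,t)$ contributions, which you correctly identify as the crux but then plan to resolve by a ``cancellation'' that will not occur. Nothing cancels: the accumulated $\alpha$-term is simply $-2t\,\Theta_2(t)\,\tilde{\mathcal{L}}(\alpha,t)$, which is nonzero, and it survives. The key observation you are missing is that $\alpha$ is a \emph{fixed point} of $f$, so ``applying the recursion at $\alpha$'' is a one-step affair rather than another infinite iteration. Concretely, $\ell(f^n\mid_{[\beta,\alpha]}) = 2\ell(f^{n-1}\mid_{[\beta,c]}) - \ell(f^{n-1}\mid_{[\beta,\alpha]})$, which upon multiplying by $t^n$ and summing gives the closed-form identity $\tilde{\mathcal{L}}(\alpha,t) = \frac{2t\,\mathcal{L}(c,t)}{1+t}$. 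Substituting this back into $-2t\,\Theta_2(t)\,\tilde{\mathcal{L}}(\alpha,t)$ yields exactly the $+\frac{4t^2}{1+t}\Theta_2(t)\,\mathcal{L}(c,t)$ term, which moves to the left-hand side and completes the bracket $1 - 2t\Theta_1 + \frac{4t^2}{1+t}\Theta_2$. Your proposed interpretation of $\frac{4t^2}{1+t}$ as a geometric series coming from cyclic traversal $I_2\to I_3\to\cdots\to I_q$ is therefore off the mark; the $\frac{1}{1+t}$ is produced entirely by the linear recursion at the fixed point $\alpha$, and the $q$-dependence has already been absorbed in the characteristic function $\chi_2$.

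Once the $\alpha$-terms are handled this way, the concern about $\Theta_3$ also evaporates, and no further ``combinatorial bookkeeping'' is needed: the remainder $\Theta_3(x,t)=\sum_{k\geq 0}\eta_k(x)\,S(f^k(x),t)\,t^k$ can be written out explicitly as $1 + \sum_{k\geq 1}\frac{1+\eta_{k-1}(x)(\epsilon(f^{k-1}(x))+2R(f^{k-1}(x)))}{2}\,t^k$, whose coefficients lie in $[0,1]$ by inspection. In short: replace the hoped-for cancellation with the explicit evaluation $\tilde{\mathcal{L}}(\alpha,t)=\frac{2t}{1+t}\mathcal{L}(c,t)$, and the rest of your argument goes through.
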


\begin{proof}
We can compute the number of laps recursively. Let us suppose $x \in T$ such that $f^n(x) \neq 0$ for all $n \geq 0$.
Then for $n \geq 2$ we have the following formulas: 
$$\ell(f^n\mid_{[\beta, x]}) = \left\{ \begin{array}{ll} \ell(f^{n-1}\mid_{[\beta, f(x)]}) & \textup{if } x \in I_0 \cup \{0\} \\
							 - \ell(f^{n-1}\mid_{[\beta, f(x)]}) + 2 \ell(f^{n-1}\mid_{[\beta, c]}) + 1  & \textup{if } x\in I_1 \\
				\ell(f^{n-1}\mid_{[\beta, f(x)]}) + 2 \ell(f^{n-1}\mid_{[\beta, c]}) - 2 \ell(f^{n-1}\mid_{[\beta, \alpha]}) & \textup{if } x \in I_2 \cup \dots \cup I_{q-1} \cup \{\alpha\} \\
				- \ell(f^{n-1}\mid_{[\beta, f(x)]}) + 2 \ell(f^{n-1}\mid_{[\beta, c]}) + 1 & \textup{if } x \in I_q 
\end{array} \right. $$
Now, recalling the notation $L_{n,x} := \ell(f^n\mid_{[\beta, x]})$, the previous formula can be rewritten as 
$$L_{n, x} = \epsilon(x)L_{n-1, f(x)} + 2 \hat{\chi}_{0}(x) L_{n-1, c} - 2 \chi_{2}(x) L_{n-1, \alpha} + \frac{1- \epsilon(x)}{2}.$$
Moreover, for $n = 1$ we have 
$$L_{1, x} = \epsilon(x) + 2 \hat{\chi}_{0}(x) +\frac{1-\epsilon(x)}{2} + R(x) $$
where 
$$R(x) := \left\{ \begin{array}{ll}
                   1 & \textup{if } x\in I_q \\
		   -1 & \textup{if } x = \alpha \\	
		   0 & \textup{otherwise.} 
                  \end{array} \right.$$
Hence by multiplying every term by $t^n$ and summing up we get
$$\mathcal{L}(x, t) = t \epsilon(x) \mathcal{L}(f(x), t) + 2t \hat{\chi}_0(x) \mathcal{L}(c, t) - 2t \chi_2(x) \tilde{\mathcal{L}}(\alpha, t) + S(x, t) $$
with $S(x,t) :=  \frac{1-\epsilon(x)}{2}\frac{t}{1-t} + tR(x) +1 $. If we now apply the formula to $f^k(x)$ and multiply everything by $\eta_k(x) t^k$ 
we have for each $k \geq 0$

$$\eta_k(x) t^k \mathcal{L}(f^k(x), t) - \eta_k(x) \epsilon(f^k(x)) t^{k+1} \mathcal{L}(f^{k+1}(x), t) =  $$
$$ = 2t^{k+1} \eta_k(x) \hat{\chi}_0(f^{k}(x)) \mathcal{L}(c, t)
 - 2t^{k+1} \eta_k(x) \chi_2(f^{k}(x)) \tilde{\mathcal{L}}(\alpha, t) + \eta_k(x) t^k S(f^k(x), t) $$
so, by summing over all $k \geq 0$, the left hand side is a telescopic series and we are left with 
\begin{equation} \label{eq_Lxt}
\mathcal{L}(x, t) = 2t \Theta_1(x, t) \mathcal{L}(c, t) - 2t \Theta_2(x, t) \tilde{\mathcal{L}}(\alpha, t)
+ \Theta_3(x, t)
\end{equation}
where we used the notation $\tilde{\mathcal{L}}(x, t):= \sum_{n = 1}^\infty \ell(f^n\mid_{[\beta, x]}) t^n$ and

$$ \Theta_3(x, t) :=  \sum_{k = 0}^\infty \eta_k(x) S(f^k(x), t) t^k = 1 + \sum_{k = 1}^\infty \frac{1 + \eta_{k-1}(x)(\epsilon(f^{k-1}(x)) + 2 R(f^{k-1}(x)))}{2} t^k$$
is a power series whose coefficients are all real and lie between $0$ and $1$.
The claim now follows by plugging in the value $x = c$ in eq. \eqref{eq_Lxt}, and using Lemma \ref{alpha_fmlas} to 
write $\tilde{\mathcal{L}}(\alpha, t)$ in terms of $\mathcal{L}(c, t)$.
\end{proof}

\begin{lemma} \label{alpha_fmlas}
We have the following equalities of formal power series:
\begin{enumerate}
 \item 
$$\tilde{\mathcal{L}}(\alpha, t) = \frac{ 2 t \mathcal{L}(c, t)}{1 + t}$$
\item
$$\mathcal{L}(t) t^{q-1} = \frac{(1-t^q) \mathcal{L}(c, t)}{1+t} + P(t)$$
where $P(t)$ is a polynomial.
\end{enumerate}
\end{lemma}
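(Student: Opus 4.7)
My plan is to derive both identities from the recursion already established in Proposition \ref{c_fmla}, specialized first to the fixed point $x=\alpha$ and then summed over the arms meeting at $\alpha$.

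For (1), I would substitute $x = \alpha$ into the general recursion
\[
L_{n, x} = \epsilon(x) L_{n-1, f(x)} + 2\hat{\chi}_0(x) L_{n-1, c} - 2\chi_2(x) L_{n-1, \alpha} + \tfrac{1-\epsilon(x)}{2},
\]
using $f(\alpha) = \alpha$ and the stated conventions $\epsilon(\alpha) = \hat{\chi}_0(\alpha) = \chi_2(\alpha) = 1$. The three terms involving $L_{n-1,\alpha}$ collapse to give the simple scalar recurrence $L_{n,\alpha} + L_{n-1,\alpha} = 2 L_{n-1,c}$. Multiplying by $t^n$ and summing over $n \geq 1$ then yields $(1+t)\tilde{\mathcal{L}}(\alpha, t) = 2t\mathcal{L}(c,t)$, provided one first checks that the $n=1$ boundary term vanishes thanks to the initial values $L_{0,c} = L_{0,\alpha} = 1$.

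For (2), I would decompose the extended Hubbard tree as the union of its $q$ arms meeting at $\alpha$, namely the ``$\beta$-arm'' $[\alpha,\beta]$ together with the arms $[\alpha, f^i(0)]$ for $i = 0, \ldots, q-2$. Because all arms share only the point $\alpha \in C_f$, the lap-count formula gives
\[
\ell(f^n\mid_T) = L_{n,\alpha} + \sum_{i=0}^{q-2} \ell(f^n\mid_{[\alpha, f^i(0)]}) - q.
\]
The key dynamical input is that, for $0 \leq i \leq q-1$, the iterate $f^i$ is monotone on $[\alpha,0]$ and maps it homeomorphically onto $[\alpha, f^i(0)]$, which gives the shift identity $\ell(f^n\mid_{[\alpha, f^i(0)]}) = \ell(f^{n+i}\mid_{[\alpha,0]})$. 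Combining with the arc-decomposition $[\beta,c] = [\beta,\alpha] \cup [\alpha,c]$ meeting at $\alpha$, which yields $\ell(f^m\mid_{[\alpha,c]}) = L_{m,c} - L_{m,\alpha} + 2$, rewrites every such lap count as an explicit linear combination of $L_{m,c}$ and $L_{m,\alpha}$.

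Passing to generating functions, the sum over $i$ introduces a geometric factor $1 + t^{-1} + \cdots + t^{-(q-2)}$, which after multiplying through by $t^{q-1}$ becomes $\sum_{k=1}^{q-1} t^k = t(1-t^{q-1})/(1-t)$, producing the expected ratio $(1-t^q)/(1+t)$ once part (1) is used to eliminate $\tilde{\mathcal{L}}(\alpha, t)$ in favor of $\mathcal{L}(c,t)$; all finite boundary terms collect into the polynomial $P(t)$. The main obstacle I expect is the bookkeeping in this last step: the various shifted series, the $-q$ correction for the shared vertex $\alpha$, and the boundary values coming from small $n$ must all fit together so that the non-polynomial part collapses exactly to $(1-t^q)\mathcal{L}(c,t)/(1+t)$, with no residual non-polynomial remainder.
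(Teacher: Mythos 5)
Your proof of part (1) is correct and coincides with the paper's argument: specializing the recursion of Proposition \ref{c_fmla} at $x=\alpha$ (using the conventions $\epsilon(\alpha)=\hat\chi_0(\alpha)=\chi_2(\alpha)=1$ and $f(\alpha)=\alpha$) gives $L_{n,\alpha}+L_{n-1,\alpha}=2L_{n-1,c}$, and summing yields $(1+t)\tilde{\mathcal L}(\alpha,t)=2t\,\mathcal L(c,t)$.

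For part (2), however, your decomposition of the tree is wrong, and this is a genuine gap rather than a bookkeeping issue. First, the generating function $\mathcal L(t)$ is defined through $\ell(f^n)=\ell(f^n\mid_T)$ on the \emph{Hubbard tree} $T$, not the extended tree $\widetilde T$, so one must decompose $T$. Second, the arcs $[\alpha,0]$ and $[\alpha,\beta]$ are \emph{not} disjoint arms: since $0\in(\alpha,\beta)$, one has $[\alpha,0]\subsetneq[\alpha,\beta]$, so the two lie on the same prong of the star and your sum double-counts that arm. Third, your list $[\alpha,\beta],\,[\alpha,f^i(0)]\;(i=0,\dots,q-2)$ omits the prong $[\alpha,f^{q-1}(0)]=[\alpha,f^{q-2}(c)]$ of $T$ altogether, and the extended tree actually has $q+1$ arms at $\alpha$ (there is also $[\alpha,-\beta]$), so ``$q$ arms of $\widetilde T$'' is not correct either. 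The paper instead writes $T=\bigcup_{i=0}^{q-1}[\alpha,f^i(c)]$ (i.e.\ arms $[\alpha,f^j(0)]$ for $j=1,\dots,q$, all genuinely disjoint except at $\alpha$), uses the shift $\ell(f^n\mid_{[\alpha,f^i(c)]})=\ell(f^{n+i}\mid_{[\alpha,c]})$ based at $[\alpha,c]$ rather than $[\alpha,0]$, forms the auxiliary series $\mathcal L_{[\alpha,c]}(t)=1+\sum\ell(f^n\mid_{[\alpha,c]})t^n$, and only then invokes part (1) together with the splitting $[\beta,c]=[\beta,\alpha]\cup[\alpha,c]$ to convert $\mathcal L_{[\alpha,c]}(t)$ into $\tfrac{(1-t)\mathcal L(c,t)}{1+t}$. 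Your plan should be repaired to use this decomposition; as written, the sum over arms does not reconstruct $\ell(f^n\mid_T)$ and the stated identity would not follow.
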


\begin{proof}
(1) We can compute $\ell(f^n \mid_{[\beta, \alpha]})$ recursively, since we have for $n \geq 2$
$$\ell(f^n\mid_{[\beta, \alpha]}) = 2 \ell(f^{n-1}\mid_{[\beta, c]}) - \ell(f^{n-1}\mid_{[\beta, \alpha]})$$ 
while $\ell(f\mid_{[\beta, \alpha]}) = 2$, hence by multiplying each side by $t^n$ and summing over $n$ we get
$$ \tilde{\mathcal{L}}(\alpha, t) = 2t \mathcal{L}(c, t) - t \tilde{\mathcal{L}}(\alpha, t)$$
and the claim holds.

(2) If we let $\mathcal{L}_{[\alpha, c]}(t) := 1 + \sum_{n = 1}^\infty \ell(f^n\mid_{[\alpha, c]}) t^n$, we have by (1) that
$$\mathcal{L}_{[\alpha, c]}(t) = \frac{(1-t) \mathcal{L}(c, t)}{1+t}.$$
Now, since the Hubbard tree can be written as the union $T = \bigcup_{i = 0}^{q-1} [\alpha, f^i(c)]$, for each $n \geq 1$ we have 
$$\ell(f^n\mid_T) = \sum_{i = 0}^{q-1} \ell(f^n \mid_{[\alpha, f^i(c)]}) = \sum_{i = 0}^{q-1} \ell(f^{n+i}\mid_{[\alpha, c]})$$
hence multiplying both sides by $t^{n+q-1}$ and summing over $n$ we get
$$\mathcal{L}(t) t^{q-1} = (1 + t + \dots + t^{q-1})\mathcal{L}_{[\alpha, c]}(t) + P(t)$$
for some polynomial $P(t)$. The claim follows by substituting $\mathcal{L}_{[\alpha, c]}(t)$ using (1).
\end{proof}

\begin{proposition} \label{entro_root}
Let $s$ be the growth number of the tree map $f : T \to T$. If $s > 1$, then the smallest 
positive, real zero of the function
$$\Delta(t) :=  1+t  - 2 t (1+t) \Theta_1(t) + 4t^2 \Theta_2(t)$$
lies at $t = \frac{1}{s}$. If $s = 1$, then $\Delta(t)$ has no zeros inside the interval $(0, 1)$.
\end{proposition}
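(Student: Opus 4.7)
The plan is to combine the kneading identity from Proposition \ref{c_fmla} with Lemma \ref{alpha_fmlas}(2). Multiplying the identity of Proposition \ref{c_fmla} through by $(1+t)$ produces the central equation
$$\Delta(t)\,\mathcal{L}(c,t) \;=\; (1+t)\,\Theta_3(t).$$
Since $\Theta_3(t)$ has non-negative coefficients bounded by $1$ with constant term $1$, it is analytic on $|t|<1$ and strictly positive on $[0,1)$. Likewise $\mathcal{L}(c,t)$ has non-negative coefficients and $\mathcal{L}(c,0) = 1$, so it is strictly positive on $[0,r)$, where $r$ denotes its radius of convergence.

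The next step is to identify $r = 1/s$. On one hand $L_{n,c} \leq \ell(f^n)$, so $r \geq 1/s$; on the other hand, the decomposition $T = \bigcup_{i=0}^{q-1}[\alpha, f^i(c)]$ combined with $[\alpha,c]\subseteq[\beta,c]$ gives $\ell(f^n) \leq q\max_{0\leq i < q} L_{n+i,c}$, so $L_{n,c}^{1/n}\to s$ and $r = 1/s$. Since $\mathcal{L}(t)$ has non-negative coefficients and radius of convergence $1/s$, Pringsheim's theorem implies $\mathcal{L}(t) \to +\infty$ as $t\to (1/s)^-$; and when $s > 1$, the factor $(1-t^q)/(1+t)$ appearing in Lemma \ref{alpha_fmlas}(2) is bounded and bounded away from zero at $t = 1/s < 1$, while $P(t)$ stays finite, so $\mathcal{L}(c,t) \to +\infty$ as $t\to (1/s)^-$ as well.

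From here the conclusion is immediate. If $s > 1$: for $t \in (0, 1/s)$ every factor in the central equation is finite and strictly positive, so $\Delta(t) > 0$ throughout $(0, 1/s)$; at $t = 1/s$ the right-hand side $(1+t)\Theta_3(t)$ is a finite positive number while $\mathcal{L}(c, 1/s) = +\infty$, forcing $\Delta(1/s) = 0$. If $s = 1$: then $r = 1$ and $\mathcal{L}(c,t)$ remains finite and positive throughout $(0,1)$, so dividing the central equation gives $\Delta(t) = (1+t)\Theta_3(t)/\mathcal{L}(c,t) > 0$ on $(0,1)$.

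The main technical obstacle is justifying the divergence $\mathcal{L}(c,t)\to +\infty$ as $t\to(1/s)^-$ when $s > 1$; this requires both the two-sided comparison between the coefficients of $\mathcal{L}(t)$ and $\mathcal{L}(c,t)$ sketched above, and the observation that in Lemma \ref{alpha_fmlas}(2) none of the auxiliary factors develop a singularity or a zero at $t = 1/s$. Once that is in hand, the rest of the argument is a sign-reading exercise on power series with non-negative coefficients.
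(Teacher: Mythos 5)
Your overall strategy — reducing the claim to the central identity $\Delta(t)\,\mathcal{L}(c,t) = (1+t)\Theta_3(t)$, using positivity of coefficients to force $\Delta>0$ on $(0,1/s)$, and showing the radius of convergence of $\mathcal{L}(c,t)$ is $1/s$ — is exactly the paper's route. But there is one genuine misstep: you write that ``Pringsheim's theorem implies $\mathcal{L}(t)\to+\infty$ as $t\to(1/s)^-$.'' Pringsheim's theorem only asserts that a power series with non-negative coefficients has a \emph{singularity} at the positive real point of its circle of convergence; it does not say the function diverges there (e.g.\ $\sum t^n/n^2$ has radius $1$, non-negative coefficients, and converges at $t=1$). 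So the passage from ``$1/s$ is a singularity'' to ``$\mathcal{L}(c,1/s)=+\infty$'' is unjustified as written, and that is precisely the step your conclusion that $\Delta(1/s)=0$ leans on.

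The gap is easy to close, and the paper does it by exploiting the structure you already have. Since $\Theta_1,\Theta_2,\Theta_3$ have coefficients bounded in absolute value, $\Delta(t)$ and $(1+t)\Theta_3(t)$ are analytic on the full unit disk, so the central identity exhibits $\mathcal{L}(c,t)=(1+t)\Theta_3(t)/\Delta(t)$ as a \emph{meromorphic} function on $|t|<1$; hence any singularity on $|t|=1/s<1$ must be a pole. Combined with Pringsheim (which guarantees a singularity at $t=1/s$), this shows $\mathcal{L}(c,t)$ has a pole at $t=1/s$ and thus diverges there — now your sign-reading conclusion goes through. Alternatively, one can avoid discussing divergence altogether: if $\Delta(1/s)\neq 0$ then $(1+t)\Theta_3(t)/\Delta(t)$ would be analytic in a neighborhood of $t=1/s$, giving an analytic continuation of $\mathcal{L}(c,t)$ past its radius of convergence through the positive real axis, contradicting Pringsheim. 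Either repair makes your argument correct. (As a side remark, the detour through $\mathcal{L}(t)$ and Lemma \ref{alpha_fmlas}(2) is not needed once you have identified the radius of convergence of $\mathcal{L}(c,t)$ directly; you can apply Pringsheim and the meromorphy argument to $\mathcal{L}(c,t)$ itself, which is what the paper does.)
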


\begin{proof}
Recall $s := \lim_{n \to \infty} \sqrt[n]{\ell(f^n) }$, so the convergence radius of the series $\mathcal{L}(t)$ 
is precisely $r = \frac{1}{s}$. By Proposition \ref{c_fmla}, $$\mathcal{L}(c, t) = \frac{\Theta_3(t)(1+t)}{\Delta(t)}$$
can be continued to a meromorphic function in the unit disk, and by Lemma \ref{alpha_fmlas}, 
also $\mathcal{L}(t)$ can be continued to a meromorphic function in the unit disk, and 
the set of poles of the two functions inside the unit disk coincide (note both power series expansions begin 
with $1$, hence they do not vanish at $0$).

Let us now assume $s > 1$. Then $\mathcal{L}(c, t)$ must have a pole on the circle $|t| = \frac{1}{s}$, and since 
the coefficients of its power series are all positive, it must have a pole on the positive real axis.
This implies $\Delta(1/s) = 0$. Moreover, since $\Theta_3(t)$ has real non-negative coefficients, it cannot vanish 
on the positive real axis, hence $\Delta(t) \neq 0$ for $0 < t < 1/s$.  

If instead $s = 1$, $\mathcal{L}(c, t)$ is holomorphic on the disk, so for the same reason 
$\Delta(t)$ cannot vanish inside the interval $(0,1)$.
\end{proof}

\subsection{Continuity of entropy along veins}

\begin{theorem} \label{continuous}
Let $v = v_{p/q}$ be the principal vein in the $p/q$-limb of the Mandelbrot set. Then the entropy 
$h_{top}(f_c \mid_{T_c})$ of $f_c$ restricted to its Hubbard tree depends continuously, as $c$ moves along the vein, 
on the angle of the external ray landing at $c$.
\end{theorem}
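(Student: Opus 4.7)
My plan is to deduce continuity of entropy from continuity of the kneading determinant of Proposition \ref{entro_root}. By that proposition, $h_{top}(f_c\mid_{T_c}) = -\log t^*(c)$, where $t^*(c) \in (0,1]$ is the smallest positive real zero of
$$\Delta_c(t) := 1 + t - 2t(1+t)\Theta_1^{(c)}(t) + 4t^2\Theta_2^{(c)}(t),$$
or $1$ if $\Delta_c$ has no zero in $(0,1)$. So it suffices to show that $\Delta_c(t)$ depends continuously on $c$, uniformly on compact subdisks of the open unit disk, and then that the smallest positive real zero varies continuously with the determinant.

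First I would parameterize $v_{p/q}$ by its characteristic angle. Local connectivity of Julia sets along the vein (from \cite{Ri} combined with \cite{LvS}) together with continuity of the Carath\'eodory loop ensures that $c \mapsto \theta_c$ is a continuous and monotone bijection onto a closed subinterval of $[0,1]$, so we may equivalently prove continuity in $\theta$. The coefficients of $\Theta_1^{(c)}$ and $\Theta_2^{(c)}$ are determined by the itinerary of the critical value with respect to the partition $T_c \setminus \{0,\alpha\} = I_0 \cup \cdots \cup I_q$, with signs tracked by $\eta_k(c)$. The key step is to show that for each $c$ on the vein whose forward critical orbit avoids $\{0,\alpha\}$ and each $N$, all sufficiently nearby parameters on $v_{p/q}$ share the first $N$ symbols of the itinerary; this follows from continuous dependence of $f_c$ on $c$ together with Hausdorff continuity of $T_c$ and of its distinguished points $0$ and $\alpha$ along the vein, using that $f_c^k(c)$ stays uniformly bounded away from $\{0,\alpha\}$ on compact neighborhoods avoiding postcritically finite parameters.

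With the itinerary locally eventually constant in each initial coordinate, and since the coefficients of $\Theta_i^{(c)}$ lie in $\{-1,0,1\}$, agreement of the first $N$ symbols forces $|\Theta_i^{(c)}(t) - \Theta_i^{(c')}(t)| \leq 2|t|^N/(1-|t|)$ on compact subdisks, so $\Delta_{c'}(t) \to \Delta_c(t)$ uniformly there as $c' \to c$. By Proposition \ref{entro_root}, when $s(c)>1$ the function $\Delta_c$ is strictly positive on $(0, t^*(c))$ and vanishes at $t^*(c)$, so a Hurwitz / intermediate-value argument gives continuity of $t^*$ at every non-postcritically-finite parameter, hence of $h_{top}(f_c\mid_{T_c})$; the case $s(c)=1$ is handled similarly since then $\Delta_c > 0$ on all of $(0,1)$.

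The only delicate point, and the step I expect to be the main obstacle, is the countable set of postcritically finite parameters $c^*$, where the itinerary has a genuine two-sided ambiguity and the argument above gives only one-sided convergence. Here I would combine the one-sided continuity produced above on each side with the monotonicity of entropy along the vein (from \cite{TL}): monotonicity restricts any discontinuity to a jump, and agreement of the two one-sided limits can be verified by computing $h_{top}(f_{c^*}\mid_{T_{c^*}})$ directly as the log spectral radius of the transition matrix of the finite Markov partition induced by the critical orbit on $T_{c^*}$, and checking that this value is consistent with both one-sided limits of the kneading determinant.
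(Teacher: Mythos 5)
Your plan follows the same kneading-determinant framework as the paper: express the entropy as $-\log t^*(c)$, where $t^*(c)$ is the smallest positive real zero of $\Delta_c(t)$, show $\Delta_{c'} \to \Delta_c$ uniformly on compact subdisks when the critical orbit avoids $\{0,\alpha\}$, and then argue that the smallest real zero varies continuously. The first two steps coincide with the paper's argument. However, there are two genuine gaps in the remaining steps.

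The step ``$\Delta_c > 0$ on $(0, t^*(c))$ and $\Delta_c(t^*(c)) = 0$, so a Hurwitz / intermediate-value argument gives continuity of $t^*$'' is incomplete. This data, together with uniform convergence $\Delta_{c'} \to \Delta_c$, gives only \emph{lower} semicontinuity of $t^*$ (equivalently, upper semicontinuity of entropy): $\Delta_c$ is bounded away from $0$ on $[0, t^*(c)-\epsilon]$, hence so is $\Delta_{c'}$ for $c'$ close. For \emph{upper} semicontinuity of $t^*$, you need nearby $\Delta_{c'}$ to acquire a real zero only slightly above $t^*(c)$. The intermediate value theorem only delivers this if $\Delta_c$ changes sign at $t^*(c)$, and Proposition \ref{entro_root} does not guarantee a simple (odd-order) zero there. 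Hurwitz's theorem alone gives complex zeros near $t^*(c)$ for $\Delta_{c'}$, but these could be a complex-conjugate pair off the real axis, in which case $t^*(c')$ could jump up (consider, schematically, $\Delta_c(t) = (1-2t)^2$ perturbed to $(1-2t)^2 + \epsilon^2$, which has no real zero at all). What rules this out in the paper is an extra argument using the numerator $F(t) = (1+t)\Theta_3(t)$ in $\mathcal{L}(c,t) = F(t)/\Delta(t)$: any non-real zero $z_{c'}$ of $\Delta_{c'}$ strictly inside the disk of convergence of $\mathcal{L}(c',\cdot)$ would force $F_{c'}(z_{c'}) = 0$, and in the limit $F_c(t^*(c)) = 0$, which contradicts the fact that $F_c$ has real non-negative coefficients starting with $1$ and hence is strictly positive on $[0,1)$. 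You need some version of this positivity-of-$F$ argument; ``Hurwitz / IVT'' alone does not close the case.

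The treatment of postcritically finite parameters is also a genuine departure from the paper, and it is not fleshed out enough to verify. Your proposal is to invoke monotonicity of entropy (citing \cite{TL}) to reduce to jump discontinuities, and then ``check'' that the value computed from the transition matrix agrees with both one-sided limits. But checking consistency in examples is not a proof; you would still need a general argument that the limits of the kneading determinants from both sides produce the same smallest zero as the PCF transition matrix, which is essentially the thing to be proved. The paper avoids monotonicity entirely here: it observes that on a principal vein the only problematic boundary parameters are those where the critical orbit hits the $\alpha$ fixed point (not $0$, since $c \in \partial \mathcal{M}$), and after the orbit lands on $\alpha$ the symbols $\epsilon$, $\hat{\chi}_0$, $\chi_2$ are all forced to be $1$; it then checks directly that as $c' \to c$ the itineraries converge to a preperiodic cycle and the coefficients of $\Delta_{c'}(t)$ converge termwise to those of $\Delta_c(t)$. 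This gives the same uniform convergence of $\Delta_{c'} \to \Delta_c$ even at these parameters, so no separate mechanism is needed. If you wish to retain the monotonicity route you should at least state precisely which monotonicity statement you are using (the paper's own monotonicity proposition along veins is proved independently and without the PCF restriction, unlike the thesis cited), and give an argument rather than a check for why the two one-sided limits agree with the PCF value.
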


\begin{proof}
Let $\ell \in P(\tau_{p/q})$ be the minor leaf associated to the parameter $c \in \partial \mathcal{M}$, $\ell = (\theta^-, \theta^+)$.
Since the entropy does not change under period doubling, we may assume that $c$ is not the period doubling of some other 
parameter along the vein; thus, there exist $\{\ell_n\}_{n \geq 1} \subseteq P(\tau_{p/q})$ a sequence of leaves of $QML$ 
which tends to $\ell$.
Since $c \in \partial \mathcal{M}$, the orbit $f_c^n(0)$ never goes back to $0$, so we can apply Propositions \ref{c_fmla} and 
\ref{entro_root}. 
Thus we can write
\begin{equation} \label{quotient}
\mathcal{L}(c, t) = \frac{F(t)}{\Delta(t)} 
\end{equation}
and the entropy $h_{top}(f_c\mid_{T_c})$ is then $\log s$, where $1/s$ is the smallest real positive root 
of $\Delta(t)$. Finally note that both $F(t)$ and $\mathcal{L}(c,t)$ have real non-negative coefficients, and do not vanish 
at $t = 0$. 
The coefficients of $\Delta(t)$ and $F(t)$ depend on the coefficients of $\Theta_1(t)$, $\Theta_2(t)$ and $\Theta_3(t)$, which in turn
 depend only on the itinerary of the angle $\theta^-$ with respect 
to the doubling map $D$ and the partition given by the complement, in the unit circle, of the set
$$\{\theta_1, \dots, \theta_q, \tau_{p/q}, \tau_{p/q} + 1/2 \}$$  
where $\theta_1, \dots, \theta_q$ are the angles of rays landing on the $\alpha$ fixed point. Let $\Delta_n(t), F_n(t)$ denote 
the functions $\Delta(t), F(t)$ of equation \ref{quotient} relative to the parameter corresponding to the leaf $\ell_n$.
If $f_c^n(0) \neq \alpha$ for all $n \geq 0$, then $D^n(\theta^-)$ always lies in the interior of the partition, 
so if $\theta_n^-$ is sufficiently close to $\theta^-$, its itinerary will share a large initial part with the itinerary 
of $\theta^-$, hence the power series for $\Delta(t)$ and $\Delta_n(t)$ share 
arbitrarily many initial coefficients and their coefficients are uniformly bounded, 
so $\Delta_n(t)$ converges uniformly on compact subsets of the disk to $\Delta(t)$, and similarly $F_n(t) \to F(t)$.
Let us now suppose, possibly after passing to a subsequence, that $s_n^{-1} \to s_*^{-1}$. Then by uniform convergeence
on compact subsets of $\mathbb{D}$, $s_*^{-1}$ is either $1$ or a real, non-negative root of $\Delta(t)$, so in either case 
$$\liminf_{n \to \infty} s_n^{-1} \geq s^{-1}.$$
Now, if we have $s_*^{-1} < s^{-1}$, then by Rouch\'e's theorem $\Delta_n$ must have a non-real zero $z_n$ inside the 
disk of radius $s_n^{-1}$ with $z_n \to s_*^{-1}$, hence by definition of $s_n$ and equation \ref{quotient} one also has 
$F_n(z_n) = 0$, but since $F$ has real coefficients then also its conjugate $\overline{z_n}$ is a zero of $F_n$, hence 
in the limit $s_*^{-1}$ is a real, non-negative zero of $F$ with multiplicity two, but this is a contradiction because 
the derivative $F'(t)$ also has real, non-negative coefficients so it does not vanish on the interval $[0, 1)$.
This proves the claim 
$$\lim_{n \to \infty} s_n^{-1} = s^{-1}$$
and continuity of entropy follows.

Things get a bit more complicated when some iterate $f_c^n(0)$ maps to the $\alpha$ fixed point. In this case, the 
iterates of $\theta$ under the doubling map hit the boundary of the partition, hence its itinerary is no longer stable under perturbation. 
However, a simple check proves that even in this case the coefficients for the function $\Delta_n(t)$ still converge 
to the coefficients of $\Delta(t)$. Indeed, if $n$ is the smallest step $k$ such that $f_c^k(c) = \alpha$, then for each $k \geq n$ 
we have $\epsilon(f_c^k(c)) = \hat{\chi}_0(f_c^k(c)) = \chi_2(f_c^k(c)) = 1$. On the other hand, as $\theta_n^-$ tends to $\theta^-$, 
the itinerary of the critical value with respect to the partition $I_0 \cup I_1 \cup \dots \cup I_q$ approaches a preperiodic cycle of period $q$,
where the period is either $(\overline{I_2, I_2, \dots, I_2, I_3, I_1})$ or $(\overline{I_1, I_2, I_2, \dots, I_2, I_3})$. 
In both cases one can check by explicit computation that 
the coefficients in the power series expansion of $\Delta_n(t)$ converge to the coefficients of $\Delta(t)$. 
\end{proof}

\section{Combinatorial surgery} \label{section:surgery}

The goal of this section is to transfer the result about the real line to 
the principal veins $v_{p/q}$; in order to do so, we will define
a surgery map (inspired by the construction of Branner-Douady \cite{BD} for the $1/3$-limb)
which carries the combinatorial principal vein in the real limb to the combinatorial 
principal vein in the $p/q$-limb.

\subsection{Orbit portraits}

Let $0 < p < q$, with $p, q$ coprime. There exists a unique set $C_{p/q}$ of $q$ points on the unit circle which is invariant for the doubling map $D$ and
such that the restriction of $D$ on $C_{p/q}$ preserves the cyclic order of the elements and acts as a rotation of angle $p/q$.
That is $C_{p/q} = \{x_1, \dots, x_q \}$, where $0 \leq x_1 < x_2 < \dots < x_q < 1$ 
are such that $D(x_i) = x_{i+p}$ (where the indices are computed mod $q$).

The $p/q$\emph{-limb} in the Mandelbrot set is the set of parameters $c$ for which the set of angles 
of rays landing on the $\alpha$ fixed point in the dynamical plane for $f_c$ is precisely $C_{p/q}$ 
 (for a reference, see \cite{Mi}). In Milnor's terminology, the set $C_{p/q}$ is an \emph{orbit portrait}: we shall call it 
the \emph{$\alpha$ portrait}.

Given $p/q$, there are exactly two rays landing on the intersection of the $p/q$-limb with the main cardioid:
let us denote these two rays as $\theta_0$ and $\theta_1$.
The angle $\theta_0$ can be found by computing the symbolic coding of the point $p/q$ with respect to the 
rotation of angle $p/q$ on the circle and using the following partition:
$$A_0 := \left(0, 1-\frac{p}{q}\right] \quad A_1 := \left(1-\frac{p}{q}, 1\right].$$
For instance, if $p/q = 2/5$, we have that the orbit is $(2/5, 4/5, 1/5, 3/5, 0)$, hence 
the itinerary is $(0, 1, 0, 0, 1)$ and the angle is $\theta_0 = 0.\overline{01001} = 9/31$.
The other angle $\theta_1$ is obtained by the same algorithm but using the partition:
$$A_0 := \left[0, 1-\frac{p}{q}\right) \quad A_1 := \left[1-\frac{p}{q}, 1\right)$$
(hence if $p/q = 2/5$, we have the itinerary $(0, 1, 0, 1, 0)$ and $\theta_1 = 0.\overline{01010} = 10/31$.)
Let us denote as $\Sigma_0$ the first $q-1$ binary digits of the expansion of $\theta_0$, 
and $\Sigma_1$ the first $q-1$ digits of the expansion of $\theta_1$.

\subsection{The surgery map}

Branner and Douady \cite{BD} constructed a continuous embedding of the $1/2$-limb of the Mandelbrot set into 
the $1/3$-limb, by surgery in the dynamical plane. The image of the real line under this surgery map
is a continuous arc inside the Mandelbrot set, joining the parameter at angle $\theta = 1/4$ with the cusp of $\mathcal{M}$.
Let us now describe, for each $p/q$-limb, the surgery map on a combinatorial level.

In order to construct the surgery map, let us first define the following coding for external angles:
for each $\theta \neq \frac{1}{3}, \frac{2}{3}$, we set
$$A_{p/q}(\theta) := \left\{ \begin{array}{ll} 0 & \textup{if }0 \leq \theta < \frac{1}{3} \\
                              \Sigma_0 & \textup{if }\frac{1}{3} < \theta < \frac{1}{2} \\
			      \Sigma_1 & \textup{if }\frac{1}{2} \leq \theta < \frac{2}{3} \\
				     1 & \textup{if }\frac{2}{3} < \theta < 1.
                             \end{array} \right.$$
Then we can define the following map on the set of external angles:

\begin{definition} 
Let $0 < p < q$, with $p, q$ coprime. The \emph{combinatorial surgery map} $\Psi_{p/q} : \mathbb{R}/\mathbb{Z} \to \mathbb{R}/\mathbb{Z}$ 
is defined on the set of external angles as follows.
\begin{itemize} 
\item If $\theta$ does not land on a preimage of the $\alpha$ fixed point 
(i.e. $D^k(\theta) \neq \frac{1}{3}, \frac{2}{3}$ for all $k \geq 0$), we define $\Psi_{p/q}(\theta)$ as the number with binary expansion
$$\Psi_{p/q}(\theta) := 0. s_1 s_2 s_3 \dots \qquad \textup{with }s_k := A_{p/q}(D^k (\theta)).$$
\item Otherwise, let $h$ be the smallest integer such that $D^h(\theta) \in \{\frac{1}{3}, \frac{2}{3}\}$. Then we define
$$\Psi_{p/q}(\theta) := 0. s_1 s_2 \dots s_{h-1} s_h$$
with $s_k := A_{p/q}(D^k (\theta))$ for $k < h$ and $s_h := \left\{ \begin{array}{ll} \overline{\Sigma_0 1} & \textup{if }D^h(\theta) = \frac{1}{3} \\ 
                                               \overline{\Sigma_1 0} & \textup{if }D^h(\theta) = \frac{2}{3}.  
                                              \end{array} \right.$
\end{itemize}
\end{definition}
Intuitively, the surgery takes the Hubbard tree of a real map, which is a segment, breaks it into two parts
$[c, \alpha]$ and $[\alpha, f(c)]$ and maps them to two different branches of a $q$-pronged star (see Figure \ref{surgery}). 

\begin{figure}[h] 
\includegraphics[width=0.95 \textwidth]{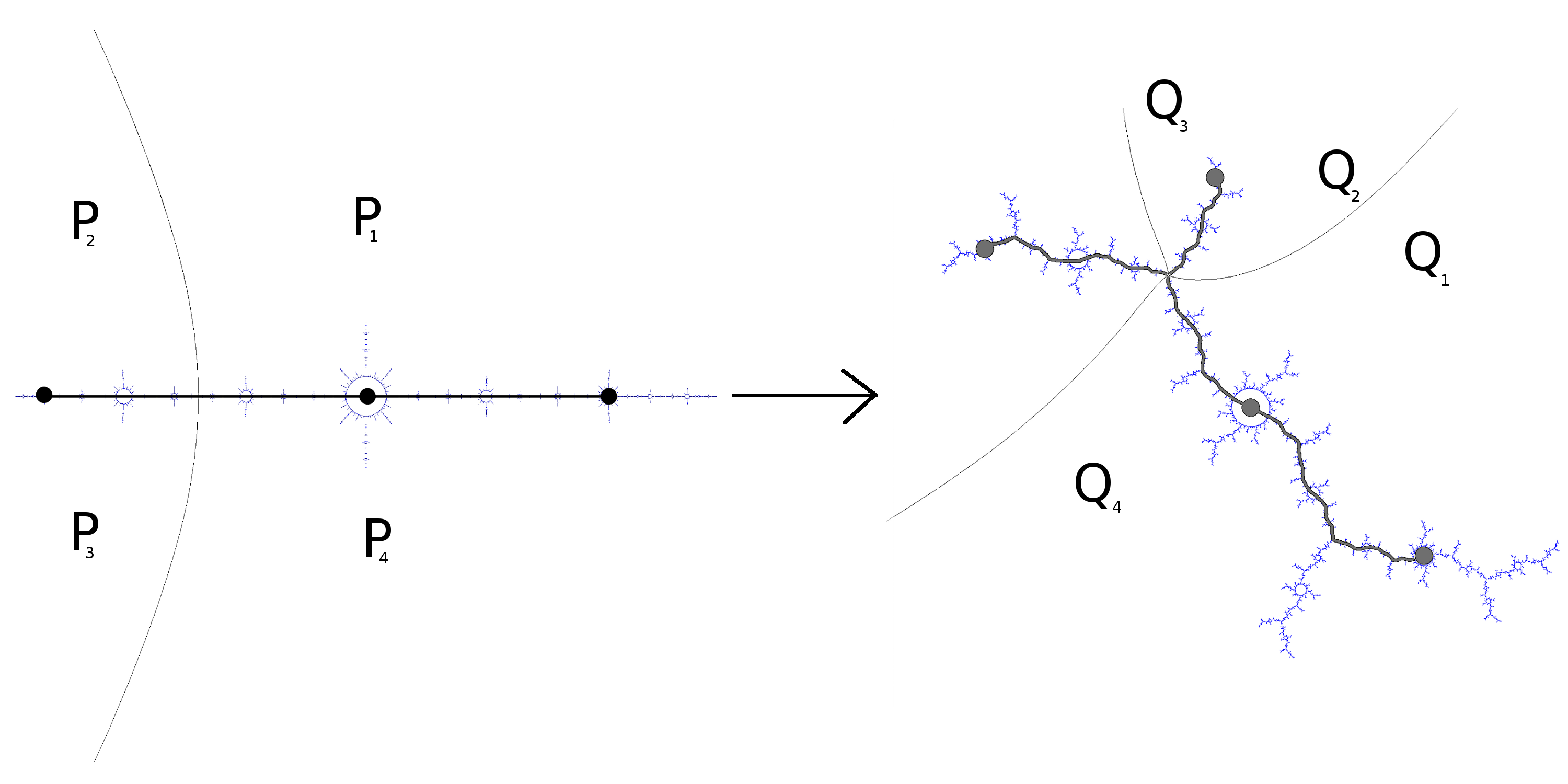}
\caption{The surgery map $\Psi_{1/3}$. The original tree (left) is a segment, which gets ``broken'' at the $\alpha$ fixed point
and a new branch is added so as to form a tripod (right). External rays belonging to the sectors $P_1, P_2, P_3, P_4$ 
are mapped to sectors $Q_1, Q_2, Q_3, Q_4$ respectively.
}   
\label{surgery}
 \end{figure}

The image of $1/2$ under $\Psi_{p/q}$ is the external angle of the ``tip of the highest antenna'' inside the 
$p/q$-limb and is denoted as $\tau_{p/q} := \Psi_{p/q}(1/2) = 0.\Sigma_1$.



Let us now fix a rotation number $p/q$ and denote the surgery map $\Psi_{p/q}$ simply as $\Psi$. 

\begin{lemma}  
The map $\Psi$ is strictly increasing (hence injective), in the sense that if $0 \leq \theta < \theta' < 1$, then 
$0 \leq \Psi(\theta) < \Psi(\theta') < 1$. 
\end{lemma}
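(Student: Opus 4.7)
The plan is to view $\Psi$ as a substitution coding. To $\theta$ one associates its $D$-orbit $\theta, D\theta, D^2\theta, \dots$, labels each iterate by its cell in the partition $\mathcal{P} = \{[0, \frac{1}{3}), (\frac{1}{3}, \frac{1}{2}), (\frac{1}{2}, \frac{2}{3}), (\frac{2}{3}, 1)\}$ using the symbols $0 < \Sigma_0 < \Sigma_1 < 1$ (ordered from left to right), and takes $\Psi(\theta)$ to be the concatenation of the corresponding binary blocks. The first step of the proof is a structural reduction: since $D$ is strictly increasing on each cell of $\mathcal{P}$ and $\mathcal{P}$ refines the standard binary partition (so distinct angles produce distinct itineraries), any two angles $\theta < \theta'$ have itineraries which coincide on a common initial segment and first disagree at some smallest $k^* \geq 0$, and an induction on $k < k^*$ forces the cell of $D^{k^*}\theta$ to lie strictly to the left of the cell of $D^{k^*}\theta'$. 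The substituted prefixes of $\Psi(\theta)$ and $\Psi(\theta')$ then coincide through position $k^*-1$, so the comparison reduces to the seam at position $k^*$.

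Next I would carry out the finite check that for each ordered pair $a < b$ in $\{0, \Sigma_0, \Sigma_1, 1\}$, the block $a$ followed by any dynamically admissible tail precedes the block $b$ followed by any dynamically admissible tail in binary order. The dynamical constraints are essential: $D$ sends $(\frac{1}{3}, \frac{1}{2})$ onto $(\frac{2}{3}, 1)$ and $(\frac{1}{2}, \frac{2}{3})$ onto $[0, \frac{1}{3})$, so the block immediately after $\Sigma_0$ is forced to be $1$ and the block immediately after $\Sigma_1$ is forced to be $0$. The pivotal arithmetic identity is $\Sigma_1 = \Sigma_0 + 1$ as $(q-1)$-bit integers, equivalently $0.\Sigma_0 \overline{1} = 0.\Sigma_1 \overline{0}$, which pastes the right endpoint of $\Psi((\frac{1}{3}, \frac{1}{2}))$ to the left endpoint of $\Psi([\frac{1}{2}, \frac{2}{3}))$ and makes $\Psi$ continuous across $\frac{1}{2}$. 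The same identity disposes of all six label pairs and locates the four image intervals in $[0, 1)$ in the correct strictly ordered configuration.

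The main obstacle is the bookkeeping at the remaining boundary points $\theta \in \{\frac{1}{3}, \frac{2}{3}\}$, where the defining expansion terminates with the periodic strings $0.\overline{\Sigma_0 1}$ and $0.\overline{\Sigma_1 0}$. Direct computation of one-sided itineraries gives $\Psi(\frac{1}{3}^-) = 0.\overline{0 \Sigma_1}$ and $\Psi(\frac{1}{3}^+) = \Psi(\frac{1}{3}) = 0.\overline{\Sigma_0 1}$, and the inequality $0.\overline{0 \Sigma_1} \leq 0.\overline{\Sigma_0 1}$ reduces to $\Sigma_1 \leq 2\Sigma_0 + 1$, which follows from $\Sigma_1 = \Sigma_0 + 1$ with equality iff $\Sigma_0 = 0$ (i.e.\ $p/q = 1/q$). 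Thus $\Psi$ is right-continuous at $\frac{1}{3}$ with an upward jump from the left when present, and an analogous computation at $\frac{2}{3}$ yields left-continuity with an upward jump to the right; in both cases the jumps are in the monotonicity-preserving direction. Combined with the scaling recursion $\Psi(\theta) = \mu_a + 2^{-\ell_a}\,\Psi(D\theta)$ on each cell (with $\mu_a$ the number $0.a$ and $\ell_a$ the length of the block $a$), a straightforward induction on the length of the common initial itinerary segment then yields $0 \leq \Psi(\theta) < \Psi(\theta') < 1$ whenever $0 \leq \theta < \theta' < 1$.
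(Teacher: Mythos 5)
Your proof is correct and follows essentially the same strategy as the paper's: reduce to the first time $k^*$ at which the itineraries of $\theta$ and $\theta'$ land in different cells of the four-element partition, then check once and for all that the image of each cell is an interval and that these four image intervals are strictly ordered (the paper calls them $Q_1, \dots, Q_4$). Your scaling recursion $\Psi(\theta) = \mu_a + 2^{-\ell_a}\Psi(D\theta)$ is just a rephrasing of the paper's prefix-concatenation identity $\Psi(\theta) = 0.s_1\dots s_{k-1}\Psi(D^k\theta)$, and your ``finite check on ordered label pairs'' is exactly the inclusion $\Psi(P_i) \subseteq Q_i$. Two remarks. First, the claim that the single identity $\Sigma_1 = \Sigma_0 + 1$ disposes of all adjacent seams is a slight overstatement: the seam $(\Sigma_1, 1)$, i.e.\ $0.\overline{\Sigma_1 0} \leq 0.\overline{1\Sigma_0}$, reduces to $\Sigma_0 \leq 2^{q-1} - 2$, so you additionally need that $\Sigma_0$ is not the all-ones string; this is immediate since otherwise $\theta_0 = 0$, but it should be said. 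Second, a genuine point in your favor: the paper's proof picks ``the smallest $k$ such that $D^k(\theta)$ and $D^k(\theta')$ lie in two different elements of the partition,'' which tacitly assumes both orbits avoid $\{1/3, 2/3\}$; your explicit computation of one-sided limits at $1/3$ and $2/3$ and the observation that the jumps are both upward (and that $\Psi$ is one-sidedly continuous in the way the special-case definition prescribes) makes the argument cover those cases cleanly, which the paper leaves to the reader.
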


\begin{proof}
Let us consider the partitions 
$P_1 := [0, 1/3), P_2 := [1/3, 1/2), P_3 := [1/2, 2/3)$, $P_4 := [2/3, 1)$ and $Q_1 := [0, 0.\overline{0\Sigma_1})$, 
$Q_2 := [0.\overline{\Sigma_0 1}, 0.\Sigma_1)$, $Q_3 := [0.\Sigma_1, 0.\overline{\Sigma_1 0})$,
$Q_4 := [0.\overline{1 \Sigma_0}, 1)$.
It is elementary (even though a bit tedious) to check that the map $\Psi$ respects the partitions, 
in the sense that $\Psi(P_i) \subseteq Q_i$ for each $i = 1, 2, 3, 4$. Indeed, we know 
$$\begin{array}{lll}
D(P_1) & \subseteq & P_1 \cup P_2 \cup P_3 \\
D(P_2) & = & P_4 \\
D(P_3) & = & P_1 \\
D(P_4) & \subseteq & P_2 \cup P_3 \cup P_4
\end{array}$$
so the binary expansion of any element $\Psi(\theta)$ is represented by an infinite path in the graph
$$\xymatrix{
 & \Sigma_0 \ar[dr] & \\
0 \ar@(ul,ur) \ar[ur] \ar[dr] &  & 1 \ar@(ul,ur) \ar[ul] \ar[dl] \\
& \Sigma_1 \ar[ul] &}$$
Let us now check for instance that $\Psi(P_1) \subseteq Q_1$. 
Indeed, if $\theta \in P_1$ then in the above graph the coding of $\varphi(\theta)$ 
starts from $0$ and hence by looking at the graph can be either of the form 
$$\Psi(\theta) = 0.(0\Sigma_1)^k 0^n \Sigma_0 \dots < 0.\overline{0 \Sigma_1} \qquad k \geq 0, n \geq 1$$
or
$$\Psi(\theta) = 0.(0\Sigma_1)^k 0^n \Sigma_1 \dots < 0.\overline{0 \Sigma_1} \qquad k \geq 0, n \geq 2$$
so in both cases $0 \leq \Psi(\theta) < 0.\overline{0 \Sigma}$ and the claim is proven.

Then, given $0 \leq \theta < \theta' < 1$, let $k$ the smallest integer such that $D^k(\theta)$ and $D^k(\theta')$
lie in two different elements of the partition $\bigcup_i P_i$.
Since the map $D^k$ is increasing and the preimage of $0$ lies on the boundary of the 
partition, we have $D^k(\theta) \in P_i$ and $D^k(\theta') \in P_j$ with $i < j$, so 
$\Psi(D^k(\theta)) < \Psi(D^k(\theta'))$ because the first one belongs 
to $Q_i$ and the second one to $Q_j$, hence we have 
$$\Psi(\theta) = 0.s_1 s_2 \dots s_{k-1} \Psi(D^k(\theta)) < 0.s_1 s_2 \dots s_{k-1} \Psi(D^k(\theta'))  = \Psi(\theta').$$
\end{proof}

We can also define the map $\Psi$ on the set of real leaves by defining the image of a leaf to be  
the leaf joining the two images (if $\ell = (\theta_1, \theta_2)$, we set $\Psi(\ell) := (\Psi(\theta_1), 
\Psi(\theta_2))$). From the previous lemma it follows monotonicity on the set of leaves:

\begin{lemma} \label{incr_leaves}
The surgery map $\Psi = \Psi_{p/q}$ is strictly increasing on the set of leaves. 
Indeed, if $\{0 \} \leq \ell_1 < \ell_2 \leq \{1/2\}$, 
then $\{0\} \leq \Psi(\ell_1) < \Psi(\ell_2) \leq \{\tau_{p/q} \}$.
\end{lemma}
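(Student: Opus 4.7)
The plan is to deduce the claim directly from the monotonicity of $\Psi$ on individual angles established in the previous lemma. The key observation is that a real leaf (one whose corresponding rays land on the real axis) is symmetric under complex conjugation, so it has the form $\ell = (\theta, 1-\theta)$ with $\theta \in [0, 1/2]$. Under this parametrization, the ordering $\{0\} \leq \ell_1 < \ell_2 \leq \{1/2\}$ on real leaves corresponds exactly to $0 \leq \theta_1 < \theta_2 \leq 1/2$, where $\ell_i = (\theta_i, 1-\theta_i)$. Note also that by the previous lemma, $\Psi$ maps $[0,1/2]$ monotonically into $[0,\tau_{p/q}]$ with $\Psi(0)=0$ and $\Psi(1/2) = \tau_{p/q} = 0.\overline{\Sigma_1}$, and maps $[1/2,1)$ monotonically into $[\tau_{p/q},1)$.

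First, I will apply the previous lemma to both endpoints of $\ell_1$ and $\ell_2$ to obtain the chain of inequalities
$$0 \leq \Psi(\theta_1) < \Psi(\theta_2) \leq \tau_{p/q} \leq \Psi(1-\theta_2) < \Psi(1-\theta_1) < 1.$$
This shows that on the circle, the two endpoints $\Psi(\theta_2)$ and $\Psi(1-\theta_2)$ of $\Psi(\ell_2)$ both lie in the arc from $\Psi(\theta_1)$ to $\Psi(1-\theta_1)$ that passes through $\tau_{p/q}$, while the angle $0$ lies in the complementary arc. Consequently the chord $\Psi(\ell_1)$ separates $\Psi(\ell_2)$ from the root $\{0\}$, which is precisely the definition of $\Psi(\ell_1) < \Psi(\ell_2)$ in the partial order on leaves. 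Symmetrically, $\tau_{p/q}$ lies on the closed arc between $\Psi(\theta_2)$ and $\Psi(1-\theta_2)$ not containing $0$, so $\Psi(\ell_2)$ separates $\{0\}$ from $\{\tau_{p/q}\}$, giving $\Psi(\ell_2) \leq \{\tau_{p/q}\}$ (with equality iff $\theta_2 = 1/2$, when $\Psi(\ell_2)$ is the degenerate leaf at $\tau_{p/q}$).

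The only things I need to watch for are the degenerate cases: when $\theta_1 = 0$ the leaf $\ell_1$ reduces to the point $\{0\}$ and the first inequality $\Psi(\theta_1) < \Psi(\theta_2)$ becomes $0 < \Psi(\theta_2)$, which is still strict; and when $\theta_2 = 1/2$ the image $\Psi(\ell_2)$ collapses to $\{\tau_{p/q}\}$, but the inequalities above remain valid with the non-strict $\leq \tau_{p/q}$. These boundary cases thus fit uniformly into the argument. I do not anticipate any substantive obstacle: the content of the lemma is essentially just bookkeeping on top of the angle-level monotonicity, the only subtlety being that one must carefully read off which arc (through $0$ or through $\tau_{p/q}$) the relevant endpoints lie on to translate the monotone inequalities into the separation statement defining the leaf ordering.
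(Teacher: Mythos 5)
Your proof is correct and is exactly the elaboration the paper intends: the paper states this lemma without proof as an immediate consequence of the preceding lemma on angle-level strict monotonicity of $\Psi$, and your argument carries that out faithfully, including the identification of real leaves with chords $(\theta, 1-\theta)$, the chain of inequalities obtained by applying angle-level monotonicity to both endpoints, the translation into the separation-from-the-root ordering, and the handling of the degenerate endpoints $\ell_1 = \{0\}$ and $\ell_2 = \{1/2\}$.
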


Let us now denote by $\Theta_0 := 0.\overline{1 \Sigma_0}$ and $\Theta_1 := 0.\overline{0 \Sigma_1}$
the two preimages of $\theta_0$ and $\theta_1$ which lie in the portrait $C_{p/q}$. Note 
that $D(\Theta_i) = \theta_i$ for $i = 0, 1$.

\subsection{Forbidden intervals}

The leaves $(\theta_0, \theta_1)$ and $(\Theta_0, \Theta_1)$ divide the circle in three parts. Let us 
denote by $\Delta_0$ the part containing $0$, and as $\Delta_1$ the part containing $\tau_{p/q}$.
Moreover, for $2 \leq i \leq q-1$, let us denote $\Delta_i := D^{i-1}(\Delta_1)$. 
With this choice, the intervals $\Delta_0, \Delta_1, \dots, \Delta_{q-1}$ are the connected components of the complement of 
the  $\alpha$ portrait $C_{p/q}$.

Let us also denote by $\hat{C}_{p/q} := C_{p/q} + \frac{1}{2}$ the set of angles of rays landing on the preimage 
of the $\alpha$ fixed point, and $\hat{\Delta}_i := \Delta_i + \frac{1}{2}$ for $0 \leq i \leq q-1$, so that 
$\hat{\Delta}_0, \hat{\Delta}_1, \dots, \hat{\Delta}_{q-1}$ are the connected components of the complement of $\hat{C}_{p/q}$.

\begin{figure} 
\begin{minipage}[b]{0.45 \textwidth}
\includegraphics[scale=0.8]{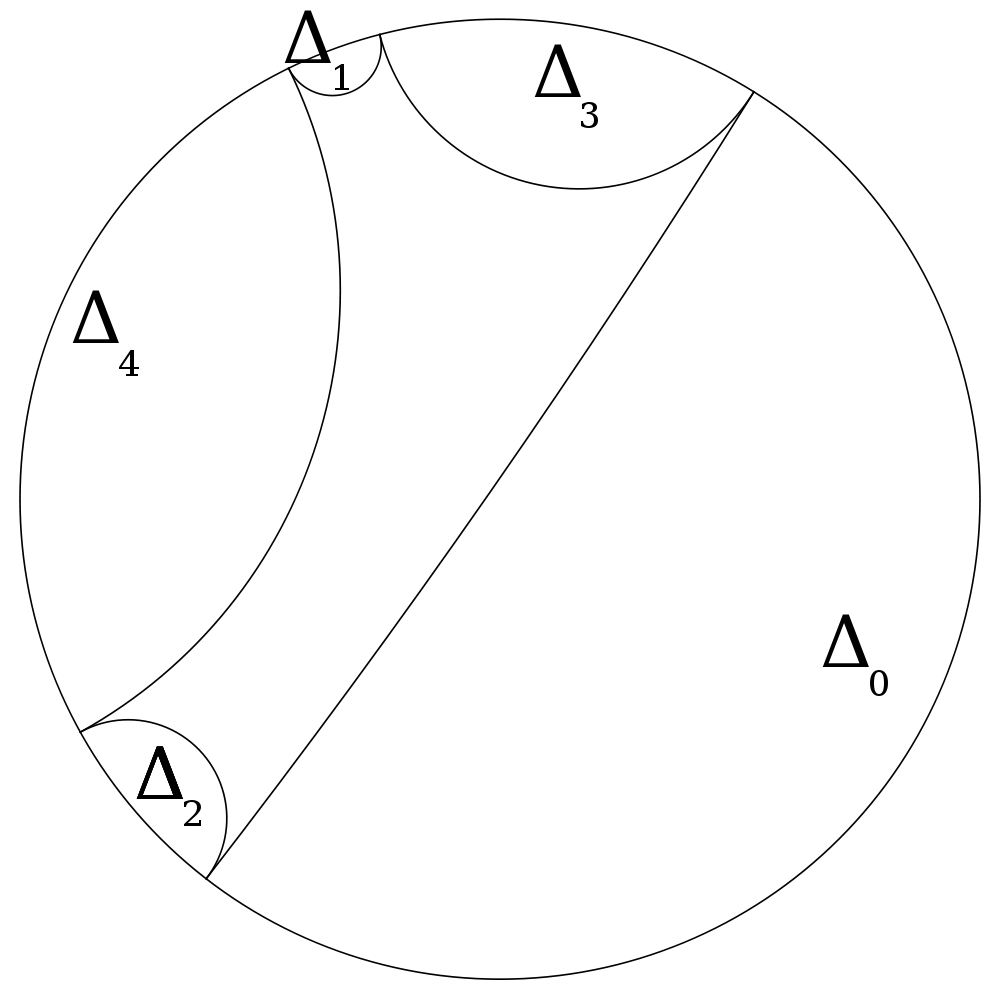}
\end{minipage}
\hfill
\begin{minipage}[b]{0.45 \textwidth}
 \includegraphics[scale=0.8]{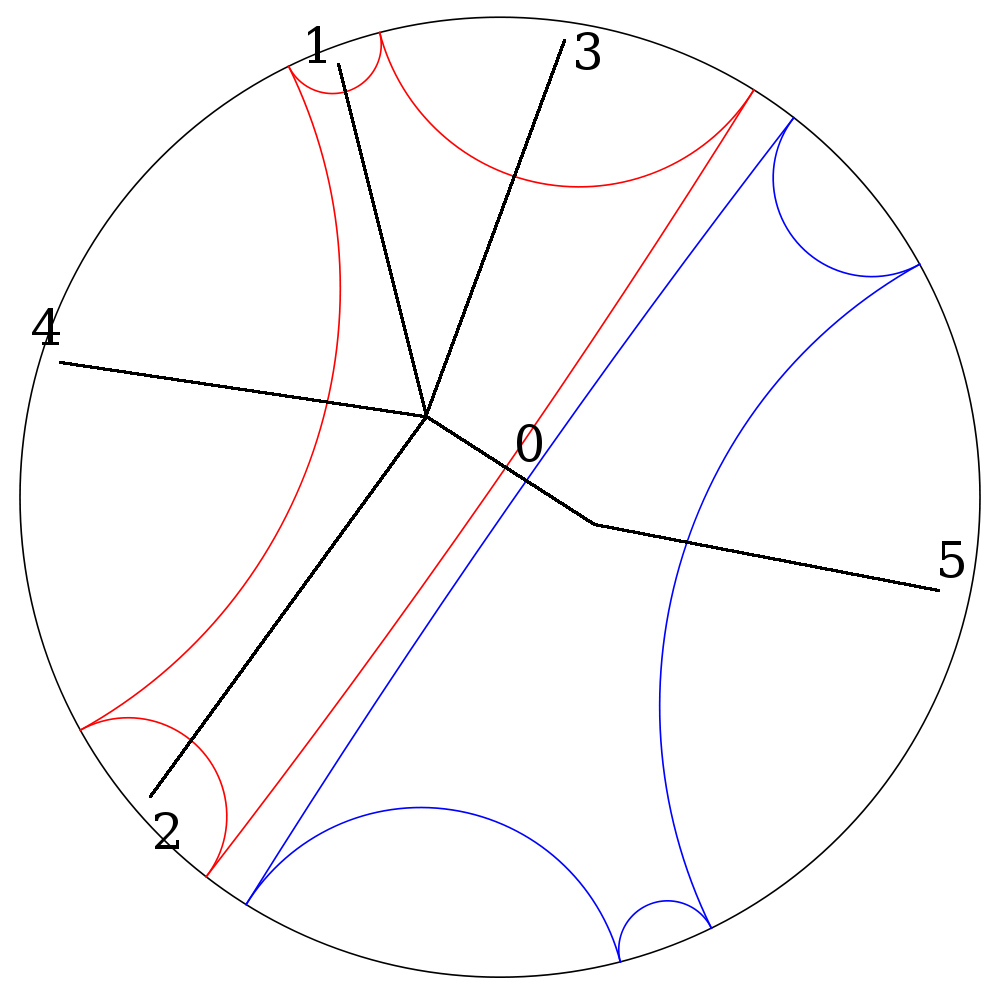}
\end{minipage}
\caption{Left: the $\alpha$ fixed portrait $C_{p/q}$ when $p/q = 2/5$, with the complementary intervals $\Delta_i$. 
Right: The portraits $C_{p/q}$ and $\hat{C}_{p/q}$, with the Hubbard tree drawn as dual to the lamination. The 
numbers indicate the position of the iterates of the critical value.}
\label{for_interval}
\end{figure}

The \emph{forbidden interval} $I_{p/q}$ is then defined as 
$$I_{p/q} := \bigcup_{i = 1}^{q-2} \hat{\Delta}_i.$$
The name ``forbidden interval'' arises from the fact that this interval is avoided by the trajectory of an angle landing 
on the Hubbard tree of some parameter on the vein $v_{p/q}$. Indeed, the following characterization is true:

\begin{proposition} \label{Htree_forbid}
Let $\ell \in P(\tau_{p/q})$ be the characteristic leaf of a parameter $c$ on the principal vein $v_{p/q}$, with 
$\ell = (\theta^-, \theta^+)$, and let $J := (D^{q-1}(\theta^-), D^{q-1}(\theta^+))$ the interval delimited by $D^{q-1}(\ell)$ 
and containing $0$. Then the set of rays landing on the Hubbard tree of $c$ is characterized as
$$H_c := \{ \theta \in S^1 \ : \ D^n(\theta) \notin I_{p/q} \cup J \ \ \forall n \geq 0 \}.$$  
\end{proposition}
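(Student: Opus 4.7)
The plan is to apply Lemma~\ref{inv_set} with $X := H_c$ and $U := I_{p/q} \cup J$. Since $H_c$ is closed and forward $D$-invariant, as already noted in Section~\ref{section:invsets} (because $T_c \cap J(f_c)$ is compact and $f_c$-invariant), it suffices to verify three geometric assertions: $(i)$ $U$ is open and disjoint from $H_c$; $(ii)$ $D^{-1}(H_c) \setminus H_c \subseteq U$; $(iii)$ $\partial U \subseteq H_c$. This is the same template used for the real case in Proposition~\ref{symb_Htree} and for the general topologically finite case in Proposition~\ref{forbidden_Htree}, but with $U$ tailored to the $q$-pronged geometry around the $\alpha$ fixed point.

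For $(i)$, each sector $\hat\Delta_i$ with $1 \le i \le q-2$ is an open interval in $S^1$ bounded by two angles of $\hat C_{p/q}$, i.e., by two rays landing at $-\alpha$. The region of $\hat{\mathbb{C}}$ they enclose is separated from $T_c$ by the point $-\alpha$; in particular, the part of $-T_c$ visible in that sector is a branch of $-T_c$ disjoint from $T_c$, so no ray in $\hat\Delta_i$ lands on $T_c$. Similarly, $J$ is bounded by the two rays landing at $f_c^{q-1}(c)$, the tip of the branch $B_{q-1}$ of $T_c$ pointing toward $\beta$; the side containing the angle~$0$ opens onto the part of the extended spine lying past this tip, outside $T_c$.

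For $(ii)$, because $f_c(z)=z^2+c$ is even we have $f_c^{-1}(T_c)=T_c\cup(-T_c)$, where $-T_c$ is the reflection of $T_c$ through $0$; it is a $q$-pronged star centered at $-\alpha$, meeting $T_c$ only at the critical point. A ray with $\theta\notin H_c$ but $D(\theta)\in H_c$ lands on $(-T_c)\setminus T_c$. The branches of $-T_c$ at $-\alpha$ are distributed among the $q$ sectors $\hat\Delta_i$. The branch $[-\alpha,0]$ (the reflection of $B_0$) and the branch toward $-f_c^{q-1}(0)$ occupy the two sectors $\hat\Delta_0$ and $\hat\Delta_{q-1}$, whose ``forbidden'' parts are in fact captured, after one pulls back the characteristic leaf, by the interval $J$ (via the identity $D\circ\ell=D(\ell)$ combined with the iteration $D^{q-1}$). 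The remaining $q-2$ branches of $-T_c$ contribute exactly the sectors $\hat\Delta_i$ for $1\le i\le q-2$, whose union is $I_{p/q}$.

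For $(iii)$, the boundary of $J$ is $\{D^{q-1}(\theta^-), D^{q-1}(\theta^+)\}$, the angles landing on $f_c^{q-1}(c) \in T_c$, hence in $H_c$. The boundary of each $\hat\Delta_i$ consists of angles in $\hat C_{p/q}$, landing at $-\alpha$. To see $-\alpha \in T_c$, note that by Proposition~\ref{qstar} the tree is a $q$-pronged star whose branch toward $-\beta$ reaches past $-\alpha$ (via $f_c^{q-1}(0)$), so the arc from $0$ to this tip passes through $-\alpha$; hence $\hat C_{p/q}\subseteq H_c$.

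The main obstacle is step~$(ii)$: the bookkeeping that identifies which sectors at $-\alpha$ correspond to which branches of $-T_c$, and in particular the recognition that the two ``boundary'' sectors $\hat\Delta_0$ and $\hat\Delta_{q-1}$ — together with the critical-point connection from $T_c$ to $-T_c$ at~$0$ — contribute exactly the interval $J$ rather than two separate forbidden sectors. This uses the rotation structure of the $\alpha$-portrait under $D$, which was already encoded in the definition of the $\Delta_i$ via $\Delta_i = D^{i-1}(\Delta_1)$.
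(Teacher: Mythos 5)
The plan — apply Lemma \ref{inv_set} to $X=H_c$ and $U=I_{p/q}\cup J$, and identify $U$ via the $q$-pronged star geometry — is exactly what the paper does (the paper's one-line proof appeals to the same general "forbidden intervals" principle via Propositions \ref{symb_Htree}/\ref{forbidden_Htree}, which are themselves applications of Lemma \ref{inv_set}). So the approach matches.

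However, there is a genuine gap in step~(iii), and a related error in step~(ii). In step~(iii) you assert that $-\alpha\in T_c$ for every $c\in v_{p/q}$, so that $\hat{C}_{p/q}\subseteq H_c$. This is false for parameters near the root of the $p/q$-limb: the tip of the $\beta$-branch of $T_c$ is $f^{q-1}(c)=f^q(0)$, and for $c$ on the vein between the limb's root and the Misiurewicz parameter where $f^q(0)=-\alpha$, the point $f^q(0)$ lies strictly between $\alpha$ and $-\alpha$ on the spine, so $-\alpha\notin T_c$ and the endpoints of the $\hat\Delta_i$ are \emph{not} in $H_c$. (For $q=2$ this is visible already for real parameters such as $c_{\mathrm{Feig}}$, where $f^2(0)<-\alpha$.) In that regime the condition $\partial U\subseteq H_c$ holds for a different reason: $D^{q-1}(\ell)$ is then so long that $J\supseteq I_{p/q}$, hence $U=J$ and $\partial U=\{D^{q-1}(\theta^{\pm})\}\subseteq H_c$. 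Your proof as written does not notice this case, and the displayed argument ("the arc from $0$ to this tip passes through $-\alpha$") simply fails there. Relatedly, in step~(ii) the claim that $-T_c$ meets $T_c$ "only at the critical point" is incorrect: $T_c\cap(-T_c)$ is a nontrivial spine segment (it contains $[\alpha,-\alpha]$ when $-\alpha\in T_c$, and $[-f^{q-1}(c),f^{q-1}(c)]$ otherwise), and in fact the entire $\hat\Delta_0$-branch of $-T_c$ is absorbed into $T_c$; the forbidden intervals you need to account for in $\hat\Delta_0$ and $\hat\Delta_{q-1}$ are not two separate pieces but a single one around the tip $f^{q-1}(c)$, which is what produces $J$. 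So the bookkeeping behind the sentence "whose forbidden parts are in fact captured ... by $J$" needs to be redone, and the two cases ($I_{p/q}\cap J=\emptyset$ versus $I_{p/q}\subseteq J$) treated explicitly.
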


\begin{proof}
It follows from the description of $H_c$ in Proposition \ref{symb_Htree} together with the fact that the Hubbard tree is a $q$-pronged star. 
\end{proof}

The explicit characterization also immediately implies that the sets $H_c$ are increasing along principal veins:

\begin{proposition}
Let $\ell < \ell'$ be the characteristic leaves of parameters $c, c'$ which belong to the principal vein $v_{p/q}$.   
\begin{enumerate}
\item Then we have the inclusion
$$H_c \subseteq H_{c'};$$
\item if $T_c$ and $T_{c'}$ are the respective Hubbard trees, we have 
$$h_{top}(f_c\mid_{T_c}) \leq h_{top}(f_c\mid_{T_{c'}}).$$
\end{enumerate}
\end{proposition}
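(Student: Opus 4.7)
The plan is to reduce both parts to the explicit combinatorial description of $H_c$ furnished by Proposition \ref{Htree_forbid}. Write $\ell = (\theta^-, \theta^+)$ and $\ell' = ((\theta')^-, (\theta')^+)$, and let $J_c$, $J_{c'}$ denote the arcs bounded by $D^{q-1}(\ell)$, respectively $D^{q-1}(\ell')$, and containing the angle $0$. Proposition \ref{Htree_forbid} says
\[
H_c = \{\theta \in S^1 : D^n(\theta) \notin I_{p/q} \cup J_c \text{ for all } n \geq 0\},
\]
and the same formula for $H_{c'}$ with $J_{c'}$ in place of $J_c$. Since the forbidden interval $I_{p/q}$ depends only on the rotation number and not on the parameter along the vein, proving $H_c \subseteq H_{c'}$ reduces to the single containment $J_{c'} \subseteq J_c$.

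To prove $J_{c'} \subseteq J_c$, I would first note that every parameter on $v_{p/q}$ lies in the $p/q$-limb, so its characteristic angles lie in the two arcs of $S^1 \setminus C_{p/q}$ adjacent to the tip angle $\tau_{p/q}$. The hypothesis $\ell < \ell'$ means that $\ell$ separates $\ell'$ from $\{0\}$, so the endpoints $(\theta')^\pm$ lie in the open arc cut out by $\ell$ that contains $\tau_{p/q}$. Next, using the defining property of the principal vein that at the tip $c_{p/q}$ one has $f_{c_{p/q}}^{q-1}(c_{p/q}) = \beta$, i.e.\ $D^{q-1}(\tau_{p/q}) = 0$, the iterate $D^{q-1}$ restricts to a continuous bijection from a neighbourhood of $\tau_{p/q}$ onto a neighbourhood of $0$, carrying each of the two arcs adjacent to $\tau_{p/q}$ homeomorphically onto one of the two arcs of $\Delta_0$ adjacent to $0$. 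Applying this local homeomorphism, the endpoints $D^{q-1}((\theta')^\pm)$ lie in the arc bounded by $D^{q-1}(\theta^\pm)$ that contains $0$, i.e.\ inside $J_c$. This gives $J_{c'} \subseteq J_c$, completing part (1).

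For part (2), by Proposition \ref{qstar} every parameter along $v_{p/q}$ is topologically finite, so Theorem \ref{entro_dim} applies at both $c$ and $c'$; combined with the monotonicity of Hausdorff dimension and part (1), this yields
\[
\frac{h_{top}(f_c\mid_{T_c})}{\log 2} = \textup{H.dim }H_c \leq \textup{H.dim }H_{c'} = \frac{h_{top}(f_{c'}\mid_{T_{c'}})}{\log 2}.
\]

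The main obstacle I anticipate is the clean verification that $D^{q-1}$ really is a continuous bijection on the relevant neighbourhood of $\tau_{p/q}$, carrying the two arcs adjacent to $\tau_{p/q}$ onto the two arcs of $\Delta_0$ adjacent to $0$ in a way compatible with the side-decomposition that defines $J_c$. This should follow by tracing the orbit $\tau_{p/q}, D(\tau_{p/q}), \dots, D^{q-1}(\tau_{p/q}) = 0$ through the partition induced by the $\alpha$ portrait $C_{p/q}$ and observing that, for $k = 0, \dots, q-2$, the image $D^k(\tau_{p/q})$ avoids $C_{p/q}$, so each iterate is a local homeomorphism near $\tau_{p/q}$. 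Once this local picture is set up, the containment $J_{c'} \subseteq J_c$ is a direct consequence of the order $\ell < \ell'$ in the lamination, and both parts of the proposition follow with no further work.
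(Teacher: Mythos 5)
Your argument is correct and follows the paper's proof essentially line for line: both reduce part (1) to the inclusion $J_{c'} \subseteq J_c$ via Proposition \ref{Htree_forbid}, and both obtain part (2) from Theorem \ref{entro_dim} together with monotonicity of Hausdorff dimension under inclusion. The one step you elaborate — the order reversal $\{0\} < D^{q-1}(\ell') < D^{q-1}(\ell)$, which the paper simply asserts — is handled in the right spirit (the $(q-1)$-fold iterate acts injectively on the relevant arcs of the wake since the orbit of $\tau_{p/q}$ avoids $C_{p/q}$ until landing at $0$), so your proposal is a sound and slightly more explicit version of the same proof.
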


\begin{proof}
(1) Let $J$ be the interval containing $0$ delimited by $D^{q-1}(\ell)$, and $J'$ the interval delimited by $D^{q-1}(\ell')$.
Since $\ell < \ell' < \{\tau_{p/q}\}$, one has $\{0 \} < D^{q-1}(\ell') < D^{q-1}(\ell)$, so $J' \subseteq J$.
If $\theta \in H_c$, then by Proposition \ref{Htree_forbid} its orbit avoids $I_{p/q} \cup J$, hence it also avoids $I_{p/q} \cup J'$
so it must belong to $H_{c'}$.  

(2) From (1) and Theorem \ref{entro_dim}, 
$$h_{top}(f_c\mid_{T_c}) = \textup{H.dim }H_c \cdot \log 2 \leq \textup{H.dim }H_{c'} \cdot \log 2 =  h_{top}(f_{c'}\mid_{T_{c'}}).$$

\end{proof}

Monotonicity of entropy along arbitrary veins is proven, for postcritically finite parameters, in Tao Li's thesis \cite{TL}.
Recently, the following very elegant argument, proving monotonicity along veins without the restriction to postcritically finite parameters, 
was suggested by Tan Lei. 

\begin{proposition}
Let $\ell < \ell'$ be the characteristic leaves of $c, c'$, with $\ell'$ non-degenerate. Then the entropies of the respective
Hubbard trees satisfy the inequality
$$h_{top}(f_c\mid_{T_c}) \leq h_{top}(f_{c'}\mid_{T_{c'}}).$$
\end{proposition}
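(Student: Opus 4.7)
The plan is to convert the entropy inequality into a Hausdorff-dimension inequality for sets of external angles and then establish the corresponding set-theoretic inclusion. By the density of post-critically finite parameters in $\mathcal M$ together with the continuity of $h_{top}(f_c\mid_{T_c})$ as a function of the characteristic leaf (Theorem \ref{continuous} along principal veins, extended in full generality by Bruin--Schleicher \cite{BS}), it suffices to treat the case when both $c$ and $c'$ are topologically finite. For such parameters Theorem \ref{entro_dim} gives
$$h_{top}(f_c\mid_{T_c}) = \log 2 \cdot \textup{H.dim } H_c, \qquad h_{top}(f_{c'}\mid_{T_{c'}}) = \log 2 \cdot \textup{H.dim } H_{c'},$$
so the entropy inequality is equivalent to the Hausdorff-dimension inequality $\textup{H.dim } H_c \le \textup{H.dim } H_{c'}$, which in turn follows at once from the set inclusion $H_c \subseteq H_{c'}$.

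The heart of the argument is therefore to prove that every external angle whose ray lands on $T_c$ in the dynamics of $f_c$ also has its ray landing on $T_{c'}$ in the dynamics of $f_{c'}$. When $\ell$ and $\ell'$ lie on a common principal vein, this is immediate from the explicit description of Proposition \ref{Htree_forbid}: the forbidden set defining $H_{c'}$ is contained in the forbidden set defining $H_c$, because $I_{p/q}$ is common to both and the interval $J$ determined by $D^{q-1}(\ell')$ is nested inside the interval determined by $D^{q-1}(\ell)$ as the characteristic leaf grows. For general $\ell < \ell'$, I would use the forbidden-set description of Proposition \ref{forbidden_Htree} together with the key geometric observation that the common landing point in $J(f_{c'})$ of the two characteristic angles $\theta_c^{\pm}$ of $f_c$ lies on the regulated arc from $\alpha_{c'}$ to the critical value $c'$, and hence on $T_{c'}$; this holds because $\ell$ separates $\ell'$ from the root of QML, so the leaf $\ell$ sits inside the combinatorial wake of $\ell'$. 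The forward invariance of $T_{c'}$ under $f_{c'}$ then propagates the landing statement to every iterate, and closedness of $H_{c'}$ in $S^1$ passes it to the whole of $H_c$.

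The main obstacle is the last combinatorial step outside the principal-vein setting: verifying that the landing point of $\theta_c^{\pm}$ in $J(f_{c'})$ genuinely falls on the Hubbard-tree side of $\alpha_{c'}$ rather than on the extension $[-\beta,\beta]\setminus T_{c'}$. One way to make this precise is to interpret $\ell$ as a non-minor leaf of the invariant lamination $\lambda(\ell')$: since $\ell$ separates $\ell'$ from $\{0\}$, the common equivalence class of $\theta_c^{\pm}$ under $\sim_{c'}$ determines a point whose forward orbit meets the critical value $c'$ along the regulated arc $[\alpha_{c'},c']$, thereby forcing it to lie on $T_{c'}$. Turning this informal picture into a clean combinatorial lemma—without invoking structures that are available only along veins—is the delicate core of the proof, but once achieved the rest of the argument is immediate.
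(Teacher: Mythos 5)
Your reduction to the Hausdorff-dimension inequality $\textup{H.dim }H_c \le \textup{H.dim }H_{c'}$ via Theorem~\ref{entro_dim} is sound, and your argument along a single principal vein via Proposition~\ref{Htree_forbid} is exactly what the paper does in the preceding proposition. However, the crucial general step — establishing $H_c \subseteq H_{c'}$ (or a dimension inequality) for arbitrary $\ell < \ell'$ — is left as a sketch that you yourself describe as ``the delicate core of the proof.'' This is a genuine gap, and it is precisely the difficulty that the paper's actual proof (attributed to Tan Lei) is designed to circumvent. A direct comparison of $H_c$ and $H_{c'}$ would require comparing forbidden-interval structures for Hubbard trees of possibly different topological type (different numbers of branches, different orbit portraits at $\alpha$), and there is no single combinatorial chart in which both are simultaneously described; this is essentially the content of Tao Li's thesis~\cite{TL} in the postcritically finite case, and it is not a short argument.

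The paper's proof is quite different and much shorter. Instead of comparing $H_c$ and $H_{c'}$ directly, it introduces, purely from the leaf $\ell = (\theta^-,\theta^+)$ and with no reference to any Hubbard tree, the two arcs $U_1, U_2$ obtained by removing the two preimages of $\ell$ from the circle, the set $\Sigma(\ell)$ of angle pairs $(\xi,\eta)$ whose entire forward orbits under doubling stay in $U_1 \cup U_2$ with matching itineraries, and $U(\ell)$ the set of endpoints of such pairs. One then shows the sandwich $H_c \subseteq U(\ell) \subseteq B_c$: the first inclusion because rays landing on $T_c$ avoid the two preimage arcs of $\ell$, the second because two angles with identical itineraries with respect to a critical partition must land at a common, hence biaccessible, point. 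By Proposition~\ref{biaccentro} this gives $\textup{H.dim }H_c = \textup{H.dim }U(\ell)$. Since $U_1$ and $U_2$ visibly grow as $\ell$ grows, $\Sigma(\ell) \subseteq \Sigma(\ell')$ and hence $U(\ell) \subseteq U(\ell')$ is immediate, giving the dimension inequality without ever establishing $H_c \subseteq H_{c'}$. No continuity or density reduction is needed either, so your initial appeal to Bruin--Schleicher becomes superfluous. The missing idea in your proposal is precisely this change of combinatorial coordinates: replace the dynamics-dependent set $H_c$ by the dynamics-free set $U(\ell)$, proving only a dimension equality between them, and let monotonicity become trivial.
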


\begin{proof}
Fix $\ell = (\theta^-, \theta^+)$ a leaf of $QML$, and consider the sets
$$U_1 := \left(\frac{\theta^+}{2}, 1+\frac{\theta^-}{2}\right)$$
$$U_2 := \left(1+\frac{\theta^+}{2}, \frac{\theta^-}{2}\right).$$
Define the set $\Sigma(\ell)$ to be the set of angle pairs whose forward orbit never lies outside $U_1 \cup U_2$, 
and which always have the same itinerary with respect to the partition $U_1 \cup U_2$:
$$\begin{array}{ll} 
\Sigma(\ell) := \{ (\xi, \eta) \in S^1 \times S^1 \ : \ \xi \neq \eta,\   \forall n \geq 0
 & D^n(\eta) \in U_1 \cup U_2,  \\ 
& D^n(\xi) \in U_1 \cup U_2, \\
& D^n(\xi) \in U_i \Leftrightarrow D^n(\eta) \in U_i \}.\\
                                                          \end{array}$$
and denote $U(\ell) := \{ \theta \in S^1 \ : \exists \eta \in S^1 \textup{ s.t. } (\theta, \eta) \in \Sigma(\ell)\}$
the set of endpoints of angle pairs in $\Sigma(\ell)$.
Since the intervals $(\theta^-/2, \theta^+/2)$ and $(1+\theta^-/2, 1+\theta^+/2)$ do not contain 
rays landing on the Hubbard tree, then we have (up to a countable set of angles which hit 
the boundary of the partition) the inclusion
$$H_c \subseteq U(\ell).$$
On the other hand, the two elements of an angle pair $(\xi, \eta)$ which belongs to $\Sigma(\ell)$ 
have the same itinerary with respect to the partition $(\theta^-/2, 1+\theta^-/2) \cup (1+\theta^-/2, \theta^-/2)$, 
so the rays at angles $\xi$ and $\eta$ must land at the same point, which is then biaccessible. This proves 
the inclusion
$$U(\ell) \subseteq B_c,$$
which combined with the previous one implies that $\textup{H.dim }H_c = \textup{H.dim }U(\ell)$.
Now, from the definition it is clear that if $\ell < \ell'$ then $\Sigma(\ell) \subseteq \Sigma(\ell')$, 
since the sets $U_1$ and $U_2$ become larger as $\ell$ gets larger. Hence we have $U(\ell) \subseteq U(\ell')$ and 
$\textup{H.dim }H_c \leq \textup{H.dim }H_{c'}$, which proves the claim.

\end{proof}

\subsection{Surgery in the dynamical and parameter planes}

The usefulness of the surgery map comes from the fact that it maps the real vein in parameter space to the other principal veins, 
and also the Hubbard trees of parameters along the real vein to Hubbard trees along the principal veins. As we will see in this 
subsection, the correspondence is almost bijective. 

Let $Z$ denote the set of angles which never map to the endpoints of fixed leaf $\ell_0 = (1/3, 2/3)$:
$$Z:= \{\theta \in S^1 \ : \ D^n(\theta) \neq 1/3, 2/3 \ \ \forall n \geq 0 \}.$$
Moreover, we denote by $\Omega$ the set of angles which never map to either the forbidden interval $I_{p/q}$ 
or the $\alpha$ portrait $C_{p/q}$:
$$\Omega := \{ \theta \in \Delta_0 \cup \Delta_1 \ : \   D^n(\theta) \notin  I_{p/q} \cup C_{p/q} \ \ \forall n \geq 0 \}.$$
It is easy to check the following 

\begin{lemma} \label{Psi_prop}
The map $\Psi$ is continuous on $Z$, and the image $\Psi(Z)$ is contained in $\Omega$. Given $\theta \in \Omega$, let 
$0 = n_0 < n_1 < n_2 < \dots$ be the return times of $\theta$ to $\Delta_0 \cup \Delta_1$. Then the map
$$\Phi(\theta) := 0.s_0 s_1 s_2 \dots \qquad \textup{with }s_k = \left\{ \begin{array}{ll} 0 & \textup{if }D^{n_k}(\theta) \in [0, \Theta_1) \cup (\theta_0 , \tau_{p/q}) \\
                                                                   1 & \textup{if }D^{n_k}(\theta) \in [\tau_{p/q}, \theta_1) \cup (\Theta_0 , 1) 
                                                                   \end{array} \right.
$$ 
defined on $\Omega$ is an inverse of $\Psi$, in the sense that $\Phi \circ \Psi(\theta) = \theta$ for all $\theta \in Z$. 

\end{lemma}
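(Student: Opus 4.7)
The plan is to verify the three claims by a careful analysis of how the symbolic substitution $\Psi$ interacts with the partitions defining the coding. A preliminary identity I would establish first is $0.\Sigma_0 \overline{1} = 0.\Sigma_1 \overline{0} = \tau_{p/q}$, equivalently $\Sigma_1 = \Sigma_0 + 1$ as $(q-1)$-digit binary integers, which follows because $\theta_0 = 0.\overline{\Sigma_0}$ and $\theta_1 = 0.\overline{\Sigma_1}$ are the two symbolic codings of the rotation-by-$p/q$ orbit under the two conventions for the partition boundary $1 - p/q$. For continuity of $\Psi$ on $Z$, if $\theta \in Z$ has no iterate hitting $\{1/3, 1/2, 2/3\}$, then its itinerary with respect to $P_1, \dots, P_4$ is locally constant and $\Psi$ is continuous at $\theta$. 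The remaining case is some smallest iterate $D^n(\theta) = 1/2$, which forces $\theta$ dyadic; by stability of the first $n$ symbols under perturbation this reduces to continuity at $1/2$, where the one-sided limits $0.\Sigma_0 \overline{1}$ and $0.\Sigma_1 \overline{0}$ both agree with $\Psi(1/2) = 0.\Sigma_1$ by the preliminary identity.

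For the inclusion $\Psi(Z) \subseteq \Omega$, I would trace the structure of $\Psi(\theta) = 0.s_1 s_2 \cdots$. Comparing with the expansions $\theta_0, \theta_1, \Theta_0, \Theta_1$ of the endpoints of $\Delta_0$ and $\Delta_1$, the first block $s_1$ places $\Psi(\theta)$ in $\Delta_0$ when $s_1 \in \{0,1\}$ and in $\Delta_1$ when $s_1 \in \{\Sigma_0, \Sigma_1\}$, via the allowed-transition graph from the proof that $\Psi$ is monotone. To rule out $D^n(\Psi(\theta)) \in I_{p/q} \cup C_{p/q}$ for any $n \geq 0$, I would split into two cases. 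If $n$ falls at a block boundary, the same first-block argument applied to the shifted sequence shows the iterate lies in $\Delta_0 \cup \Delta_1$. If $n$ is internal to a block $\Sigma_i$, a direct computation on the shifted block, using that $D^{i-1}$ maps $\Delta_1$ onto $\Delta_i$, shows the iterate lies in $\Delta_2 \cup \dots \cup \Delta_{q-1}$, which together with $\Delta_0 \cup \Delta_1$ exactly covers $S^1 \setminus C_{p/q}$ and is disjoint from $I_{p/q}$. Hitting $C_{p/q}$ itself would force the tail expansion to be $\overline{\Sigma_i}$, which in turn would require some iterate of $\theta$ under $D$ to equal $1/3$ or $2/3$, contradicting $\theta \in Z$.

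Finally, to check $\Phi \circ \Psi = \mathrm{id}$ on $Z$, I would first note that $\Phi$ is well-defined on $\Omega$ because orbits in $\Omega$ avoid $C_{p/q}$ and $D$ is expanding, so the return times $n_k$ to $\Delta_0 \cup \Delta_1$ are all finite. For $\theta \in Z$ with $\Psi(\theta) = 0.s_1 s_2 \cdots$, the inclusion argument above shows that $D^n(\Psi(\theta)) \in \Delta_0 \cup \Delta_1$ precisely when $n$ is a block-starting position, so the return times coincide with the block positions. At time $n_k$, the expansion of $D^{n_k}(\Psi(\theta))$ begins with the block $s_{k+1}$, and a case check on the four possibilities $s_{k+1} \in \{0, \Sigma_0, \Sigma_1, 1\}$ shows that $\Phi$ reads digit $0$ precisely when $D^k(\theta) \in [0, 1/3) \cup (1/3, 1/2)$ and digit $1$ precisely when $D^k(\theta) \in [1/2, 2/3) \cup (2/3, 1)$, which reconstructs the binary expansion of $\theta$. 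The main technical obstacle throughout is the combinatorial verification in the inclusion step that shifts of $\Psi(\theta)$ within a $\Sigma_i$-block avoid $I_{p/q}$; this is essentially an extremality statement about the rotation codings $\Sigma_0, \Sigma_1$ that requires a uniform argument using the structure of the orbit portrait for general $p/q$.
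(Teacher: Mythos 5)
The paper itself offers no proof of this lemma (it is introduced with ``It is easy to check''), so there is nothing to compare against; the question is only whether your proposal is sound. Your overall plan is reasonable, and the preliminary identity $\Sigma_1 = \Sigma_0 + 1$ (which comes from the fact that the two rotation codings of $p/q$ differ only at the positions $q-1$ and $q$) is the right tool for continuity at $1/2$, since the one-sided limits are $0.\Sigma_0\overline{1}$ and $0.\Sigma_1\overline{0}$ and these agree precisely when $\Sigma_0+1 = \Sigma_1$.

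There is, however, a genuine gap in the inclusion $\Psi(Z)\subseteq\Omega$, and it is not where you think it is. For the \emph{internal} indices you already have everything you need once you observe that $\hat{C}_{p/q}\subseteq\Delta_0$ (the rays landing on $-\alpha$ cannot cross those landing on $\alpha$, and $-\alpha$ lies in the $\Delta_0$-wake), which forces $\hat\Delta_i\subseteq\Delta_0$ for $1\le i\le q-1$ and hence $I_{p/q}\subseteq\Delta_0$, disjoint from $\Delta_2\cup\dots\cup\Delta_{q-1}$. No extremality argument on the rotation codings is needed there. The real difficulty is the \emph{block-boundary} case, which you dismiss by saying the iterate ``lies in $\Delta_0\cup\Delta_1$.'' This does not suffice, because $I_{p/q}\subseteq\Delta_0$: a block-boundary iterate whose block is $0$ or $1$ lands in $Q_1\cup Q_4\subseteq\Delta_0$, which does meet $I_{p/q}$. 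To close this gap, note that if the block at position $n$ is $0$ or $1$ (a single digit), then $n+1$ is again a block boundary, so $D^{n+1}(\Psi(\theta))\in\Delta_0\cup\Delta_1$. On the other hand $D(\hat\Delta_i)=D(\Delta_i+1/2)=D(\Delta_i)=\Delta_{i+1}$ for $1\le i\le q-2$, and $\Delta_{i+1}$ is disjoint from $\Delta_0\cup\Delta_1$. Hence $D^n(\Psi(\theta))\notin\hat\Delta_i$ for any such $i$, i.e.\ $D^n(\Psi(\theta))\notin I_{p/q}$. This same one-step argument, applied to $\theta\in\Omega$, also shows that from $\Delta_0$ the orbit passes only to $\Delta_0\cup\Delta_1$, which you should cite when arguing that the return times $n_k$ are finite (a statement you assert but do not justify beyond ``$D$ is expanding''). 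With these two corrections, the remainder — hitting $C_{p/q}$ forcing an eventual $\overline{\Sigma_i}$ tail and hence a hit of $1/3$ or $2/3$, and the block-by-block reconstruction $\Phi\circ\Psi=\mathrm{id}$ — is correct.
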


\begin{proposition} \label{par_surgery}
The surgery map $\Psi = \Psi_{p/q}$ maps the real combinatorial vein 
bijectively onto the principal combinatorial vein $P(\tau_{p/q})$ in the $p/q$-limb, up to a countable set of 
prefixed parameters; indeed, one has the inclusions 
$$P(\tau_{p/q}) \setminus \bigcup_{n \geq 0} D^{-n}(C_{p/q}) \subseteq \Psi(P(1/2)) \subseteq P(\tau_{p/q}).$$
\end{proposition}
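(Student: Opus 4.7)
The plan is to establish the two inclusions separately, using the combinatorial structure of $\Psi$ as a block substitution on binary expansions.

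For the forward inclusion $\Psi(P(1/2)) \subseteq P(\tau_{p/q})$, I would start with a minor leaf $\ell=(\theta^-,\theta^+)\in P(1/2)$ and show that $m:=\Psi(\ell)=(\Psi(\theta^-),\Psi(\theta^+))$ is a minor leaf in $P(\tau_{p/q})$. By Lemma~\ref{incr_leaves}, monotonicity immediately gives $\{0\}<m\leq\{\tau_{p/q}\}$, so the essential task is to verify that $m\in QML$ via Thurston's criterion (Proposition~\ref{critQML}). The key observation is that $\Psi$ semi-conjugates the doubling map through its block structure: if the next block in the $\Psi$-expansion of $\theta$ has length $\nu\in\{1,q-1\}$, then $D^\nu(\Psi(\theta))=\Psi(D\theta)$. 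Consequently the forward orbit of $m$ under $D$ is interleaved: at "milestone" iterates one lands on $\Psi(D^k\ell)$, while at intermediate iterates one lies in one of the $\hat{\Delta}_i$ or the inner $\Delta_i$ ($2\le i\le q-1$), outside $\Delta_0\cup\Delta_1$ where $m$ itself sits. Conditions (a), (b) of Proposition~\ref{critQML} follow from the corresponding conditions for $\ell$ together with an explicit verification that intermediate iterates miss the interior of $m$ and are not shorter than $m$ (since they lie outside the narrow arcs near $\{0\}$ and $\{\tau_{p/q}\}$). Condition (c) is checked by a direct comparison of preimage leaves.

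For the reverse inclusion $P(\tau_{p/q})\setminus\bigcup_{n\geq 0}D^{-n}(C_{p/q})\subseteq\Psi(P(1/2))$, I would use the partial inverse $\Phi$ from Lemma~\ref{Psi_prop}. Given $m=(\phi^-,\phi^+)\in P(\tau_{p/q})$ whose endpoints never hit $C_{p/q}$ under iteration, the crucial step is to verify $\phi^\pm\in\Omega$. Since $m$ is a minor leaf of a parameter on the principal vein, its endpoints lie in $\Delta_0\cup\Delta_1$, and all forward iterates must avoid the forbidden interval $I_{p/q}$: otherwise some iterate of $m$ would bridge across one of the chords of the $\alpha$ portrait, contradicting condition (b) of Proposition~\ref{critQML} since $m$ itself is shorter than any such bridging chord (this is essentially Proposition~\ref{Htree_forbid} applied to the endpoints). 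Combined with the hypothesis on $C_{p/q}$, we obtain $\phi^\pm\in\Omega$. Then $\ell:=(\Phi(\phi^-),\Phi(\phi^+))$ is well defined, $\Psi(\ell)=m$ by Lemma~\ref{Psi_prop}, and pulling the QML criterion back through $\Phi$ shows $\ell\in P(1/2)$.

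The main obstacle will be the careful verification of condition (c) of Proposition~\ref{critQML} under the block substitution. In the real case the two length-$\geq 1/3$ preimage leaves are symmetric about $\{0\}$, but after applying $\Psi$ one must locate their images precisely and check that the forward $D$-orbit of $m$ avoids their interiors. This reduces to a finite case analysis governed by the graph automaton implicit in the proof of the monotonicity of $\Psi$, namely the transition graph with vertices $\{0,1,\Sigma_0,\Sigma_1\}$. Once this bookkeeping is done, the two inclusions combine with monotonicity to give the bijectivity claim modulo the countable set $\bigcup_{n\geq 0}D^{-n}(C_{p/q})$ of parameters at which the iterate lands on the $\alpha$ portrait.
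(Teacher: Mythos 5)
Your proposal is correct and follows essentially the same route as the paper's proof: verify Thurston's $QML$ criterion (Proposition~\ref{critQML}) for $\Psi(m)$ using the order-preserving Lemma~\ref{incr_leaves} together with the length-doubling behaviour of leaves passing through the intermediate sectors $\Delta_2,\dots,\Delta_{q-1}$, and for the reverse inclusion apply the partial inverse $\Phi$ from Lemma~\ref{Psi_prop} after checking that the endpoints lie in $\Omega$. Your extra justification of the membership in $\Omega$ (via Proposition~\ref{Htree_forbid}) fills in a step the paper leaves implicit, but it is not a different approach, and like the paper you leave the case analysis for condition~(c) as an unverified but routine bookkeeping step.
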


\begin{proof}
Let $m \in P(1/2)$ be a minor leaf, and $M_1$, $M_2$ its major leaves. 
By the criterion of Proposition \ref{critQML}, all the elements of the forward orbit 
of $m$ have disjoint interior, and their interior is also disjoint from $m$, $M_1$ and $M_2$, 
so the set of leaves $\{ D^n(m)  \ : \ n\geq 0\} \cup \{ M_1, M_2 \}$ (which may be finite or infinite) is totally ordered, 
and they all lie between $\{0\}$ and $\{1/2\}$. Indeed, they are all smaller than $m$, which is also the shortest 
leaf of the set.
Now, by Lemma \ref{incr_leaves}, the set 
$$\{\Psi(D^n(m)) \ : \ n \geq 0 \} \cup \{ \Psi(M_1), \Psi(M_2) \}$$
is also totally ordered, and all its elements have disjoint interior and lie between $\{0\}$ and $\Psi(m)$. 
Note that all leaves smaller than $\ell_0 := (1/3, 2/3)$ map under $\Psi$ to leaves smaller than 
$(\Theta_0, \Theta_1)$, and all leaves larger than $\ell_0$ map to leaves larger than 
$\Psi(\ell_0) = (\theta_0, \theta_1)$. Note moreover that if a leaf $\mathcal{L}$ is larger than $(\theta_0, \theta_1)$, 
then its length increases under the first $q-1$ iterates (i.e. until it comes back to $\Delta_0$):
$$\ell(D^k(\mathcal{L})) = 2^k \ell(\mathcal{L}) \qquad 0 \leq k \leq q-1.$$
As a consequence, the shortest leaf in the set 
$$S := \{D^n(\Psi(m)) \ : \ n \geq 0 \} \cup \{ \Psi(M_1), \Psi(M_2) \}$$
is $\Psi(m)$, and its images all have disjoint interiors, hence by Proposition \ref{critQML} we have that $\Psi(m)$ 
belongs to $QML$, and it is smaller than $\tau_{p/q}$ by monotonicity of $\Psi$.
Conversely, any leaf $\ell$ of $P(\tau_{p/q})$ whose endpoints never map to the fixed orbit portrait $C_{p/q}$ belongs to $\Omega$, 
hence $\Psi(\ell)$ is well-defined and, since $\Psi$ preserves the ordering, it belongs to $P(1/2)$ by Proposition \ref{critQML}.
\end{proof}

\begin{proposition} \label{dyn_surgery}
Let $c \in [-2, 1/4]$ be a real parameter, with characteristic leaf $\ell$, and let $c'$ be 
a parameter with characteristic leaf $\ell' = \Psi(\ell)$. Moreover, let us set 
$\tilde{H}_{c'} := H_{c'} \cap (\Delta_0 \cup \Delta_1) \setminus \bigcup_{n} D^{-n}(C_{p/q})$. Then
the inclusions 
$$\tilde{H}_{c'} \subseteq \Psi(H_c) \subseteq H_{c'}$$ 
hold. As a consequence, 
$$\textup{H.dim }\Psi(H_c) = \textup{H.dim }H_{c'}.$$
\end{proposition}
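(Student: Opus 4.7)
The plan is to combine the symbolic description of $H_c$ (Proposition \ref{symb_Htree}) with that of $H_{c'}$ (Proposition \ref{Htree_forbid}), exploiting the semiconjugacy between the doubling map on real angles and its first-return map to $\Delta_0 \cup \Delta_1$ on $\Psi(Z) \subseteq \Omega$ provided by Lemma \ref{Psi_prop}. Concretely, for $\theta \in Z$ the orbit of $\Psi(\theta)$ under $D$ passes through $\Delta_0 \cup \Delta_1$ at return times $0 = n_0 < n_1 < n_2 < \cdots$, and $D^{n_k}(\Psi(\theta)) = \Psi(D^k(\theta))$ at these times. Moreover, since $\ell' = \Psi(\ell)$ is the characteristic leaf of $c'$ and the critical value of $f_{c'}$ requires exactly $q-1$ iterates to first return to $\Delta_0$ (Proposition \ref{qstar}), the interval $J \subseteq \Delta_0$ delimited by $D^{q-1}(\ell')$ appearing in the forbidden set for $H_{c'}$ is the image under $\Psi$ of the real forbidden interval $(D(\theta^-), D(\theta^+))$ appearing in the symbolic description of $H_c$, by monotonicity of $\Psi$ on leaves (Lemma \ref{incr_leaves}).

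For the inclusion $\Psi(H_c) \subseteq H_{c'}$, pick $\theta \in H_c$; the countable subset of angles whose orbit meets $\{1/3, 2/3\}$ is handled separately using the prefixed-case clause in the definition of $\Psi$, so assume $\theta \in Z$. Then $\Psi(\theta) \in \Omega$, so the orbit of $\Psi(\theta)$ under $D$ automatically avoids $I_{p/q}$. By the return-time identity, any orbit point of $\Psi(\theta)$ lying in $\Delta_0 \cup \Delta_1$ has the form $\Psi(D^k(\theta))$ for some $k \geq 0$, and Proposition \ref{symb_Htree} gives $D^k(\theta) \notin (D(\theta^-), D(\theta^+))$, which monotonicity of $\Psi$ converts into $\Psi(D^k(\theta)) \notin J$. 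All other orbit points lie in $\Delta_2 \cup \cdots \cup \Delta_{q-1}$, which is disjoint from $J \subseteq \Delta_0$. Hence the orbit of $\Psi(\theta)$ avoids $I_{p/q} \cup J$ and $\Psi(\theta) \in H_{c'}$.

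For the reverse inclusion $\tilde{H}_{c'} \subseteq \Psi(H_c)$, given $\theta' \in \tilde{H}_{c'}$, set $\theta := \Phi(\theta')$ via Lemma \ref{Psi_prop}. The same return-time correspondence shows that the orbit of $\theta$ under $D$ is the orbit of $\theta'$ sampled at the return times to $\Delta_0 \cup \Delta_1$; running the monotonicity argument backward, the hypothesis $\theta' \in H_{c'}$ forces this sampled orbit to avoid the real forbidden interval, whence $\theta \in H_c$. The identity $\Psi(\theta) = \theta'$ follows from $\Psi \circ \Phi = \mathrm{id}$ on $\Omega$, which holds by injectivity of $\Psi$ combined with $\Phi$ being a left inverse.

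The Hausdorff dimension equality is then a cleanup: the inclusions yield $\textup{H.dim}\,\tilde{H}_{c'} \leq \textup{H.dim}\,\Psi(H_c) \leq \textup{H.dim}\,H_{c'}$, and the residue $H_{c'} \setminus \tilde{H}_{c'}$ is contained in the countable set $\bigcup_{n \geq 0} D^{-n}(C_{p/q})$ together with preimages under at most $q-2$ iterates of $D$ of points of $\tilde{H}_{c'}$, since any angle in $H_{c'}$ lying outside $\Delta_0 \cup \Delta_1$ is in $\Delta_2 \cup \cdots \cup \Delta_{q-1}$ and so reaches $\Delta_0 \cup \Delta_1$ within $q-2$ iterates. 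Preimages under $D$ preserve Hausdorff dimension, so $\textup{H.dim}\,H_{c'} = \textup{H.dim}\,\tilde{H}_{c'} = \textup{H.dim}\,\Psi(H_c)$. The main technical obstacle is the identification $\Psi \circ D = D^{q-1} \circ \Psi$ on the relevant leaves, which requires verifying that the binary block produced by $\Psi$ on a point entering $\Delta_1$ has length exactly $q-1$; this encodes the combinatorics of the orbit portrait $C_{p/q}$ and the fact that, on the principal vein, the critical value's first return to $\Delta_0 \cup \Delta_1$ happens after $q-1$ steps.
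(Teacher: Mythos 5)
Your proposal is correct and follows essentially the same route as the paper: both exploit the symbolic descriptions of $H_c$ and $H_{c'}$, use the return-time semiconjugacy between the doubling map and $\Psi$, invoke $\Psi(Z)\subseteq\Omega$ to dispose of the forbidden interval $I_{p/q}$, use monotonicity of $\Psi$ on leaves to transfer the postcharacteristic condition, and recover dimensions from the observation that each $\Delta_i$ ($i\ge 2$) is reached from $\Delta_0\cup\Delta_1$ by a bounded number of doublings. One small slip: the deduction ``$\Psi\circ\Phi=\mathrm{id}$ on $\Omega$ follows from injectivity of $\Psi$ and $\Phi$ being a left inverse'' is not a valid implication (injective plus a left inverse does not force surjectivity); the fact you need is that $\Psi$ maps $Z$ \emph{onto} $\Omega$, which must be read off directly from the symbolic definitions of $\Psi$ and $\Phi$ (the paper states this as part of Lemma \ref{Psi_prop} without further comment, so the intended verification is the same, but the justification you give would not stand on its own).
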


\begin{proof}
Let $\theta \in H_c$ and $\ell := (\theta, 1-\theta)$ be its associated real leaf and  
let $\ell_c$ the postcharacteristic leaf for $f_c$. Let us first assume 
$D^n(\theta) \neq 1/3, 2/3$ for all $n$. Then by Lemma \ref{Psi_prop} $\Psi(\theta)$ lies in $\Omega$, 
so its orbit always avoids $I_{p/q}$. Moreover, by Proposition \ref{symb_Htree}
$$D^n(\ell) \geq \ell_c \qquad \textup{for all }n \geq 0.$$
Then, by monotonicity of the surgery map (Lemma \ref{incr_leaves})
$$\Psi(D^n(\ell)) \geq \Psi(\ell_c) \qquad \textup{for all }n \geq 0.$$
Moreover, given $N \geq 0$ either 
$$D^N(\Psi(\ell)) \notin \Delta_0 \cup \Delta_1$$
or one can write
$$D^N(\Psi(\ell)) = \Psi(D^n(\ell))$$
for some integer $n$, so the orbit of $\Psi(\theta)$ always avoids the interval delimited by the leaf $\Psi(\ell_c)$, 
hence by Proposition \ref{symb_Htree} we have $\Psi(\theta) \in H_{c'}$.
The case when $D^n(\theta)$ hits $\{1/3, 2/3\}$ is analogous, except that the leaf $\ell$ 
is eventually mapped to the leaf $(\theta_0, \theta_1)$ which belongs to the $\alpha$ portrait. 

Conversely, let $\theta' \in \tilde{H}_{c'}$ and $\ell'$ be its corresponding leaf. Then by Proposition 
\ref{Htree_forbid} it never maps to $I_{p/q}$, 
so by Lemma \ref{Psi_prop} there exists $\theta \in Z$ such that $\theta' = \Psi(\theta)$. Let $\ell := (\theta, 1-\theta)$ 
be its corresponding real leaf. Moreover, also by Proposition \ref{Htree_forbid}
all iterates of $\ell'$ are larger than $\Psi(\ell_c)$, so by monotonicity of the surgery map all iterates 
of $\ell$ are larger than $\ell_c$, so, by Proposition \ref{symb_Htree}, $\theta$ lies in $H_c$.
The equality of dimensions arises from the fact that for $2 \leq i \leq q-1$ one has
$$H_{c'} \cap \Delta_i = D^{q-1}(H_{c'} \cap \Delta_1)$$
and the doubling map preserves Hausdorff dimension.
\end{proof}

Finally, we need to check that the surgery map behaves well under renormalization. Indeed we have the 

\begin{lemma} \label{surg_renorm}
Let $W$ be a real hyperbolic component, and $\Psi$ the surgery map. Then for each $\theta \in \mathcal{R}$,
$$\Psi(\tau_W(\theta)) = \tau_{\Psi(W)}(\theta)$$
where $\Psi(W)$ is the hyperbolic component whose endpoints are the images via surgery of the endpoints of $W$.
\end{lemma}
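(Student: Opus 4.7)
The plan is to verify the identity directly at the level of binary expansions, exploiting that $\Psi = \Psi_{p/q}$ is a recoding of orbits under the doubling map $D$, while $\tau_W$ is a block substitution. Let $n$ be the period of $W$, so that the two rays landing at the root of $W$ are $\eta_i = 0.\overline{\Sigma_i^W}$ with $|\Sigma_i^W| = n$ for $i = 0, 1$. Since each $\eta_i$ is purely periodic of period $n$ under $D$, the definition of $\Psi$ gives $\Psi(\eta_i) = 0.\overline{B_i}$, where
\[
B_i := A_{p/q}(D(\eta_i))\, A_{p/q}(D^2(\eta_i))\, \cdots\, A_{p/q}(D^n(\eta_i)).
\]
By definition, the two root rays of $\Psi(W)$ are $\Psi(\eta_0)$ and $\Psi(\eta_1)$, so $\Sigma_i^{\Psi(W)} = B_i$. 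Writing $\theta = 0.\theta_1\theta_2\theta_3\cdots$ in binary, the right-hand side of the claim is therefore $\tau_{\Psi(W)}(\theta) = 0.B_{\theta_1}B_{\theta_2}B_{\theta_3}\cdots$.

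The left-hand side is $\Psi(\tau_W\theta) = 0.s_1s_2s_3\cdots$ with $s_k = A_{p/q}(D^k(\tau_W\theta))$. I would prove, by induction on $j \geq 1$, that the $j$-th block of length $n$ of this expansion equals $B_{\theta_j}$. Since $D^n(\tau_W\theta) = \tau_W(\sigma\theta)$, where $\sigma$ denotes the left shift on binary sequences, the inductive step is identical to the base case applied to $\sigma\theta$; it thus suffices to prove, for $k = 1, \ldots, n$, that $s_k = A_{p/q}(D^k(\eta_{\theta_1}))$.

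For $1 \leq k \leq n-1$, the binary expansions of $D^k(\tau_W\theta)$ and of $D^k(\eta_{\theta_1})$ agree in the first $n-k$ digits. Because the partition boundaries $\{0, 1/3, 1/2, 2/3\}$ of $A_{p/q}$ are all periodic of period at most $2$ under $D$, a purely periodic angle of period $n \geq 3$ cannot pass through them; hence $D^k(\eta_{\theta_1})$ lies in the interior of a single partition element, and sufficient agreement of initial digits forces $D^k(\tau_W\theta)$ into the same element. For $k = n$, one has $D^n(\tau_W\theta) = \tau_W(\sigma\theta)$, which lies in the tuning window $[\omega(W), \alpha(W)]$ if $\theta_2 = 0$ and in its reflection across $\tfrac{1}{2}$ if $\theta_2 = 1$; the key fact here is that each of these two intervals is contained in a single partition element of $A_{p/q}$, namely the one containing $\eta_0$ (resp.\ $\eta_1$), which forces $s_n = A_{p/q}(\eta_{\theta_1})$. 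The trivial case $n = 1$ (main cardioid, $\tau_W = \mathrm{id}$) and the edge case $n = 2$ (basilica, where $\eta_0 = 1/3$ and $\eta_1 = 2/3$ lie on partition boundaries) are handled separately using the second clause in the definition of $\Psi$.

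The main obstacle is the verification in the $k = n$ step: that the tuning window $[\omega(W), \alpha(W)] = [0.\Sigma_0^W\overline{\Sigma_1^W},\, 0.\overline{\Sigma_0^W}]$ and its mirror across $\tfrac{1}{2}$ lie inside single partition elements of $A_{p/q}$. This is a purely combinatorial check using the explicit endpoints and the observation that the partition boundaries $1/3, 1/2, 2/3$ lie outside every real tuning window of period $\geq 3$, combined with the $\theta \mapsto 1-\theta$ symmetry of real components. Together with the block induction, this yields that the digit string of the left-hand side equals $B_{\theta_1}B_{\theta_2}B_{\theta_3}\cdots$, matching the right-hand side.
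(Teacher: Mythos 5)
Your proposal follows the same high-level strategy as the paper: exploit the block-substitution form of $\tau_W$, use $D^n(\tau_W\theta) = \tau_W(\sigma\theta)$ to reduce to a single block, and verify that for each $k$ the angles $D^k(\tau_W\theta)$ and $D^k(\eta_{\theta_1})$ lie in the same element of the partition $P_1\cup\dots\cup P_4$ defining $A_{p/q}$. Identifying the tuning-window containment as the driving structural fact is the right idea, and your explicit treatment of the edge cases $n=1,2$ is correct and is a point the paper passes over.

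However, your argument for the intermediate indices $1 \le k \le n-1$ has a genuine gap. You claim that agreement of the first $n-k$ binary digits between $D^k(\tau_W\theta)$ and $D^k(\eta_{\theta_1})$, together with $D^k(\eta_{\theta_1})$ not lying on a partition boundary, forces the two angles into the same partition element. This implication fails: the boundaries $1/3 = 0.\overline{01}$ and $2/3 = 0.\overline{10}$ are not dyadic, so they can sit strictly inside any dyadic cylinder. Concretely, take $W$ the airplane component ($n = 3$, $\Sigma_0^W = 011$) and $k = n-1 = 2$: then $D^2(\eta_0) = 5/7 = 0.\overline{101} \in P_4$, but the agreement is a single digit, so all you know a priori is $D^2(\tau_W\theta) \in [1/2, 1) = P_3 \cup P_4$, which does not determine the partition element. (It does end up in $P_4$, but not for the reason you give.) What actually closes the argument, and what you invoke only at $k = n$, is that both $D^k(\tau_W\theta)$ and $D^k(\eta_{\theta_1})$ lie in $D^k$ applied to the tuning window $\Omega(W)$ (or its reflection), and the combinatorial fact that these iterated intervals avoid $\{0, 1/3, 1/2, 2/3\}$ for all $0 \le k < n$. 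That verification is the substantive content of the lemma, and it is also what the paper's brief ``one checks'' step is pointing at; replacing it by a raw digit-agreement count leaves the proof incomplete.
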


\begin{proof}
Let $\theta = 0.\theta_1 \theta_2 \dots$ be the binary expansion of $\theta$. Denote as $\theta^- = 0.\overline{S_0}$, 
$\theta^+ = 0.\overline{S_1}$ the angles of parameter rays landing at the root of $W$, and 
as $\Theta^- := \Psi(\theta^-) = 0.\overline{T_0}$ and $\Theta^+ := \Psi(\theta^+) = 0.\overline{T_1}$ the angles landing at the 
root of $\Psi(W)$. Finally, let $p := |S_0|$ denote the the period of $W$.
Then $\tau_W(\theta)$ has binary expansion 
$$\tau_W(\theta) = 0.S_{\theta_1}S_{\theta_2}\dots$$
By using the fact that either $\theta^- \leq \theta^+ < 1/3$ or $2/3 < \theta^- \leq \theta^+$, 
one checks that for each $0 \leq k < p$, the points
$$D^k(0.S_{\theta_1}S_{\theta_2}\dots)$$
and
$$D^k(0.\overline{S_{\theta_1}})$$
lie in the same element of the partition $\bigcup_{i =1}^4 P_i$. As a consequence, by definition of the surgery map $\Psi$, 
we get that 
$$\Psi(\tau_W(\theta)) = 0.T_{\theta_1}T_{\theta_2}\dots$$
and the claim follows.

\end{proof}

\subsection{Proof of Theorem \ref{mainvein}} \label{section:proofvein}

\begin{definition}
The set $\mathcal{D}_{p/q}$ of \emph{dominant parameters along} $v_{p/q}$ is the image of the set of (real) 
dominant parameters $\mathcal{D}$ under the surgery map:
$$\mathcal{D}_{p/q} := \Psi_{p/q}(\mathcal{D}).$$
\end{definition}




We can now use the surgery map to transfer the inclusion of the Hubbard trees of real maps in the real slice of 
the Mandelbrot set to an inclusion of the Hubbard trees in the set of angles landing on the vein in parameter space.

\begin{proposition} \label{vein_inc}
Let $c \in v_{p/q}$ be a parameter along the vein with non-renormalizable combinatorics, and $c'$ another parameter along 
the vein which separates $c$ from the main cardioid (i.e. if $\ell$ and $\ell'$ are the characteristic leaves, 
$\ell' < \ell \leq \{\tau_{p/q}\}$). Then there exists a piecewise linear map $F : \mathbb{R}/\mathbb{Z} \to 
\mathbb{R}/\mathbb{Z}$ such that 
$$F(\tilde{H}_{c'}) \subseteq P_c.$$
\end{proposition}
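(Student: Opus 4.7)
The plan is to reduce Proposition \ref{vein_inc} to its real counterpart (Proposition \ref{embedding}) by transferring the construction through the combinatorial surgery map $\Psi = \Psi_{p/q}$, using the dictionary established in Lemma \ref{Psi_prop}, Proposition \ref{par_surgery}, Proposition \ref{dyn_surgery}, and Lemma \ref{surg_renorm}.

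First I would lift the data to the real vein. Write the characteristic leaves of $c, c'$ as $\ell' < \ell \leq \{\tau_{p/q}\}$. By Proposition \ref{par_surgery}, there exist real leaves $\ell_0', \ell_0 \in P(1/2)$ with $\Psi(\ell_0) = \ell$ and $\Psi(\ell_0') = \ell'$, corresponding to real parameters $c_0 < c_0'$; monotonicity of $\Psi$ (Lemma \ref{incr_leaves}) gives $\ell_0' < \ell_0$. Because Lemma \ref{surg_renorm} shows $\Psi$ intertwines tuning with the corresponding tuning on the principal vein, non-renormalizability of $c$ along $v_{p/q}$ forces $c_0$ to be non-renormalizable as a real parameter (any real renormalization of $c_0$ would push forward to a renormalization of $c$).

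Next, I would invoke the real embedding. By Proposition \ref{embedding} applied to the pair $(c_0, c_0')$ there is a piecewise linear map $F_0 : \mathbb{R}/\mathbb{Z} \to \mathbb{R}/\mathbb{Z}$, of the explicit form \eqref{defF}, satisfying $F_0(H_{c_0'}) \subseteq P_{c_0}$. Internally, this uses a real dominant parameter strictly between $c_0$ and $c_0'$, whose existence follows from Proposition \ref{densitydominant}. Equivalently, one may start with a dominant parameter $c'' \in \mathcal{D}_{p/q}$ separating $c'$ from $c$ along the vein, defined as $c'' := \Psi(c_0'')$ for a suitable real dominant $c_0''$ obtained from Proposition \ref{densitydominant} between $\ell_0'$ and $\ell_0$.

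Now I would define $F := \Psi \circ F_0 \circ \Phi$ where $\Phi$ is the partial inverse of $\Psi$ from Lemma \ref{Psi_prop}, and check the inclusion. If $\theta \in \tilde{H}_{c'}$, then by the proof of Proposition \ref{dyn_surgery}, $\theta$ lies in the image $\Psi(H_{c_0'})$ and so $\Phi(\theta) \in H_{c_0'}$; Proposition \ref{embedding} then gives $F_0(\Phi(\theta)) \in P_{c_0}$; finally, since $\Psi$ is monotone and carries leaves $\ell^* \leq \ell_0$ in $P(1/2)$ to leaves $\Psi(\ell^*) \leq \ell$ in $P(\tau_{p/q})$ (Lemma \ref{incr_leaves} combined with Proposition \ref{par_surgery}), we conclude $F(\theta) = \Psi(F_0(\Phi(\theta))) \in P_c$.

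The main obstacle is verifying that $F$ is genuinely piecewise linear, because $\Psi$ and $\Phi$ act on binary expansions by substitution of the multi-character blocks $\Sigma_0, \Sigma_1$ rather than by a global affine formula. The key observation is that on each cylinder of the natural partition of $S^1$ induced by the orbit portraits $C_{p/q}$ and $\hat{C}_{p/q}$, the map $\Phi$ is affine with slope a fixed negative power of $2$ (namely $2^{-(q-1)}$ per return to $\Delta_0 \cup \Delta_1$), and $\Psi$ is its one-sided inverse with the corresponding positive power of $2$; composing with the piecewise affine $F_0$ of \eqref{defF} yields a map that is affine on each cylinder in the relevant partition with nonzero, uniformly bounded slope. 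This is the precise sense of ``piecewise linear'' used later: $F$ is a countable union of bi-Lipschitz affine pieces, which is exactly what is needed to transfer Hausdorff dimension estimates from $\tilde{H}_{c'}$ to $P_c$.
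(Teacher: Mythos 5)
Your overall strategy — transport the real inclusion of Proposition \ref{embedding} through the combinatorial surgery $\Psi = \Psi_{p/q}$ and its partial inverse — is exactly the approach the paper uses, and the chain of inclusions you set up is the right one. However, there is a gap at the very first step, and a minor imprecision at the end.

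The gap: you invoke Proposition \ref{par_surgery} to produce real leaves $\ell_0, \ell_0' \in P(1/2)$ with $\Psi(\ell_0)=\ell$ and $\Psi(\ell_0')=\ell'$. But Proposition \ref{par_surgery} only gives the inclusion $P(\tau_{p/q})\setminus\bigcup_{n\ge 0} D^{-n}(C_{p/q}) \subseteq \Psi(P(1/2))$; a leaf of $P(\tau_{p/q})$ whose endpoints eventually land on the fixed $\alpha$-portrait $C_{p/q}$ need not be in the image of $\Psi$. So neither $\ell_0$ nor $\ell_0'$ is guaranteed to exist, and $\Phi(\theta)$ for $\theta\in\tilde{H}_{c'}$ need not land in $H_{c_0'}$ (which hasn't been defined). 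The paper sidesteps this in two ways that your argument should borrow: it lifts only the \emph{outer} angle $\theta$ to $\theta_R$ (and at the very end handles the exceptional case, when the orbit of $\theta$ hits $C_{p/q}$, by replacing $c$ with a nearby $\tilde c$ whose orbit avoids $C_{p/q}$); and for the inner parameter it never attempts to lift $\ell'$ at all, instead choosing a real $\theta_R'<\theta_R$ close enough to $\theta_R$ that $\Psi(\theta_R')$ lands strictly between $c'$ and $c$, so that $\tilde{H}_{c'}\subseteq\tilde{H}_{\Psi(\theta_R')}\subseteq\Psi(H_{\theta_R'})$ follows from monotonicity along the vein together with Proposition \ref{dyn_surgery}. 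As written, your proposal neither perturbs $\ell$ nor avoids lifting $\ell'$, so both lifting claims remain unjustified.

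A smaller point: your description of the slope is off. The map $\Phi$ is not affine with a fixed slope $2^{-(q-1)}$ per return: returns to $\Delta_0\cup\Delta_1$ take either $1$ step or $q-1$ steps depending on the itinerary, since $A_{p/q}$ substitutes a single digit in the regions coded $0,1$ and a block of length $q-1$ in the regions coded $\Sigma_0,\Sigma_1$. The paper instead computes directly that, on the relevant pieces, $\Psi\circ F_R\circ\Psi^{-1}$ has the form $\theta\mapsto t+\theta\cdot 2^{-M}$ with $t$ dyadic and $M$ an integer depending only on the dyadic prefix $s$ from \eqref{defF} and on the partition element containing $\Psi^{-1}(\theta)$; the conclusion (piecewise affine with nonzero slope) is what you state, but the mechanism is this bookkeeping with $D^N$ and $D^M$, not a fixed conversion rate per return.
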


\begin{proof}
Let $\theta \in [0, \tau_{p/q}]$ be a characteristic angle for $c$. Let us first assume that the forward orbit of $\theta$ never hits 
$C_{p/q}$. Then by Proposition \ref{par_surgery} there exists an angle $\theta_R \in [0, 1/2] \cap \mathcal{R}$ such that $\theta = \Psi(\theta_R)$, 
and by Lemma \ref{surg_renorm} $\theta_R$ is not renormalizable. Then, by Proposition \ref{embedding},
there exist a $\theta'_R < \theta_R$ arbitrarily close to $\theta_R$ (and by continuity of $\Psi$ we can choose it so that 
$\Psi(\theta'_R)$ lands on the vein closer to $c$ than $c'$) and a piecewise linear map $F_R$ of the circle such that 
\begin{equation} \label{F_inc}
F_R(H_{\theta'_R}) \subseteq P_{\theta_R}. 
\end{equation}
We claim that the map $F := \Psi \circ F_R \circ \Psi^{-1}$ satisfies the claim. Indeed, if $\xi \in [0, 1/2)$ recall
that the map $F_R$ constructed in Proposition \ref{embedding} has the form 
$$F_R(\xi) = s + \xi \cdot 2^{-N}$$
where $s$ is a dyadic rational number and $N$ is some positive integer. Thus, $D^N(F_R(\xi)) = \xi$, so also 
$$\Psi(\xi) = \Psi(D^N(F_R(\xi))) = D^M(\Psi(F_R(\xi)))$$
for some integer $M$. Thus we can write for $\xi \in H_{\theta'_R} \cap Z$
$$\Psi(F_R(\xi)) = t + \Psi(\xi) \cdot 2^{-M}$$
where $t$ is a dyadic rational number, and $t$ and $M$ only depend on $s$ and the element of the partition $\bigcup P_i$ 
to which $\xi$ belongs. Thus we have proven that $F  = \Psi \circ F_R \circ \Psi^{-1}$ is piecewise linear.
Now, by Proposition \ref{dyn_surgery}, eq. \eqref{F_inc}, and Proposition \ref{par_surgery} we have the chain of inclusions
$$\Psi \circ F_R \circ \Psi^{-1}(\tilde{H}_{c'}) \subseteq \Psi \circ F_R (H_{\theta'_R}) \subseteq \Psi (P_{\theta_R}) \subseteq P_c.$$
Finally, if the forward orbit of $\theta$ hits $C_{p/q}$, then by density one can find an angle $\tilde{\theta} \in (\theta', \theta)$ 
such that its forward orbit does not hit $C_{p/q}$, and apply the previous argument to the parameter $\tilde{c}$ with characteristic angle 
$\tilde{\theta}$, thus getting the inclusion
$$F(\tilde{H}_{c'}) \subseteq P_{\tilde{c}} \subseteq P_c.$$
\end{proof}

\begin{proof}[Proof of Theorem \ref{mainvein}.]
Let $c$ be a parameter along the vein $v_{p/q}$. Then by Theorem \ref{dim_entro}
$$\frac{h_{top}(f_c\mid_{T_c})}{\log 2} = \textup{H.dim }H_c.$$
We shall prove that the right hand side equals $\textup{H.dim }P_c$.
Now, since $P_c \subseteq H_c$, it is immediate that 
$$\textup{H.dim }P_c \leq \textup{H.dim }H_c$$
hence we just have to prove the converse inequality.
Let us now assume $c \in v_{p/q}$ non-renormalizable. Then by Proposition \ref{vein_inc}
for each $c' \in [0, c]$ we have the inclusion
$$F(\tilde{H}_{c'}) \subseteq P_c$$ 
so, since $F$ is linear hence it preserves Hausdorff dimension, we have 
$$\textup{H.dim }H_{c'} = \textup{H.dim }\tilde{H}_{c'} \leq \textup{H.dim }P_c$$
and as a consequence 
$$\textup{H.dim }P_c \geq \sup_{c' \in [0, c]} \textup{H.dim }H_{c'}$$
where $[0, c]$ is the segment of the vein $v_{p/q}$ joining $0$ with $c$.
Now by continuity of entropy (Theorem \ref{continuous})
$$\sup_{c' \in [0, c]} \textup{H.dim }H_{c'} = \textup{H.dim }H_c$$
hence the claim is proven for all non-renormalizable parameters along the vein. Now, 
the general case follows as in the proof of Theorem \ref{equaldim} by successively renormalizing and 
using the formulas of Proposition \ref{tuneddim}.
\end{proof}

So far we have worked with the combinatorial model for the veins, which conjecturally coincide with the 
set of angles of rays which actually land on the vein.
Finally, the following proposition proves that the vein and its combinatorial model actually have the 
same dimension, independently of the MLC conjecture.

\begin{proposition} \label{combvsanal}
Let $c \in v_{p/q} \cap \partial \mathcal{M}$ and $\ell$ its characteristic leaf. 
Let
$$\overline{P}_c := \{\theta \in S^1 \ : \ R_M(\theta) \textup{ lands on }v \cap [0, c] \}$$ 
be the set of angles of rays landing on the vein $v$ closer than $c$ to the main cardioid, and 
$$P_c := \{\theta \in S^1 \ : \ \theta \textup{ is endpoint of some } \ell' \in QML, \  \ell' \leq \ell \}$$
its combinatorial model. Then the two sets have equal dimension: 
$$\textup{H.dim }\overline{P}_c = \textup{H.dim }P_c.$$ 
\end{proposition}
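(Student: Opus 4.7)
The plan is to prove that $P_c$ and $\overline{P}_c$ coincide modulo a set of zero Hausdorff dimension; the identity of their dimensions then follows immediately. The key input is Theorem \ref{yoccoz}(1), asserting that whenever two parameter rays land at the same point, their angles are $\sim_M$-equivalent; in other words, the analytic landing relation refines the combinatorial one, so the analytic and combinatorial versions of the vein differ only by angles whose ray fails to land and by a countable set of exceptional parameters.

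To establish $P_c\subseteq \overline{P}_c$ up to a small set, fix $\theta\in P_c$ and assume $R_M(\theta)$ lands at some $c'\in\partial\mathcal{M}$. By definition $\theta$ is an endpoint of a leaf $\ell_\theta\in QML$ with $\ell_\theta\leq \ell$, and by Theorem \ref{yoccoz}(1) every ray landing at $c'$ has its angle in the $\sim_M$-class of $\theta$; in particular the characteristic leaf $\ell_{c'}$ is a side of the polygon of which $\ell_\theta$ is a side. Whenever this polygon is a single chord (the generic case), $\ell_{c'}=\ell_\theta\leq \ell$, so $c'$ lies on the combinatorial arc $P(\ell)\subseteq P(\tau_{p/q})$; by the construction of the principal veins in \cite{Ri} and \cite{BD}, this combinatorial arc is precisely $v\cap [0,c]\cap \partial\mathcal{M}$, so $\theta\in\overline{P}_c$. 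The exceptional parameters $c'$ at which three or more rays land are combinatorially preperiodic under doubling, hence form a countable set (Misiurewicz parameters together with roots of hyperbolic components on the vein), and contribute at most countably many angles to $P_c\setminus \overline{P}_c$.

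The reverse inclusion $\overline{P}_c\subseteq P_c$ modulo a countable set is symmetric: if $R_M(\theta)$ lands at $c'\in v\cap [0,c]$, then $c'$ has characteristic leaf $\ell_{c'}\leq \ell$, and by Theorem \ref{yoccoz}(1) $\theta$ is a vertex of the equivalence-class polygon of which $\ell_{c'}$ is a side. Generically $\theta$ is then an endpoint of $\ell_{c'}$ itself, so $\theta\in P_c$; the remaining exceptional vertices, which arise only at the same countable set of parameters as above, contribute only countably many angles to $\overline{P}_c\setminus P_c$.

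Combining the two inclusions, $P_c\,\triangle\, \overline{P}_c$ is contained in the union of a countable set and the set of $\theta\in S^1$ for which $R_M(\theta)$ fails to land. The latter has zero logarithmic capacity, and therefore zero Hausdorff dimension, by classical potential theory (see e.g.\ \cite{Po}); countable sets likewise have zero Hausdorff dimension. Hence $\textup{H.dim}\,(P_c\,\triangle\, \overline{P}_c)=0$, which yields $\textup{H.dim}\,P_c=\textup{H.dim}\,\overline{P}_c$. The only real subtlety I anticipate is the clean identification of the combinatorial arc $P(\ell)$ with the actual arc $v_{p/q}\cap [0,c]$, which I would import from the construction of veins in \cite{Ri} and \cite{BD}; otherwise the argument uses only the ``easy'' direction of Theorem \ref{yoccoz} and is independent of MLC and of Yoccoz's landing theorem for non-infinitely-renormalizable rays.
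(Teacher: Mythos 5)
Your argument has a genuine gap: the claim that $P_c\,\triangle\,\overline{P}_c$ is contained in a countable set together with the non-landing angles is false, and Theorem~\ref{yoccoz}(1) is not strong enough to support it. Part~(1) says only that if two parameter rays \emph{do} land at the same point, their angles are $\sim_M$-equivalent. It does not say that the landing point of $R_M(\theta)$ is determined by the $\sim_M$-class of $\theta$. So if $\theta\in P_c$ is an endpoint of a leaf $\ell_\theta\in QML$ with $\ell_\theta\leq\ell$, and $R_M(\theta)$ lands at some $c'\in\partial\mathcal{M}$, you cannot conclude that $c'$ lies on the vein $v\cap[0,c]$, nor that $\ell_{c'}$ is a side of the polygon of $\theta$. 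That conclusion is exactly the ``hard'' direction of the correspondence between combinatorics and landing, which (2) and (3) give only for rational or finitely renormalizable angles. For infinitely renormalizable $\theta$, nothing in Theorem~\ref{yoccoz} tells you where $R_M(\theta)$ lands.

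This matters because the set of infinitely renormalizable angles in $P_c$ is, in general, uncountable (it contains, for instance, the entire image of any tuning operator applied to an uncountable set of angles below $\ell$). You cannot bury it in the ``countable exceptional parameters'' bucket, and it is not covered by the zero-capacity set of non-landing rays. The paper's own proof deals with precisely this: it first uses Theorem~\ref{yoccoz}(3) to get $P_c^{fr}\subseteq\overline{P}_c\subseteq P_c$, where $P_c^{fr}$ is the finitely-renormalizable part, and then spends the rest of the argument showing $\textup{H.dim}\,P_c^{fr}=\textup{H.dim}\,P_c$ by a quantitative analysis of how tuning (Proposition~\ref{tuneddim}) shrinks dimension --- essentially that the dimension of $P_c$ is always realized on a subset reached by a bounded number of renormalizations, with the deeply renormalizable remainder having strictly smaller dimension. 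This dimension-counting step through nested tuning windows is the actual content of the proposition, and your proof omits it entirely.

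A secondary, smaller issue: the identification of the combinatorial arc $P(\ell)$ with the analytic arc $v\cap[0,c]\cap\partial\mathcal{M}$, which you ``import from \cite{Ri} and \cite{BD}'', is itself essentially equivalent to local connectivity of $\mathcal{M}$ along the vein; it is not something you can take for granted for infinitely renormalizable parameters, which again points to the same missing case.
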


\begin{proof}
Fix a principal vein $v_{p/q}$, and let $\tau_W$ be the tuning operator relative to the hyperbolic component of period $q$ in $v_{p/q}$; 
moreover, denote as $\tau$ the tuning operator relative to the hyperbolic component of period $2$. 
Let $P_c^{fr}$ the set of angles which belong to the $P_c$ with finitely renormalizable combinatorics; then Proposition 
\ref{yoccoz} yields the inclusions  
$$\textup{H.dim }P_c^{fr} \subseteq \textup{H.dim }\overline{P}_c \subseteq \textup{H.dim }P_c$$ 
hence to prove the proposition it is sufficient to prove the equality 
$$\textup{H.dim }P_c^{fr} = \textup{H.dim }P_c.$$
Let now $c_n := \tau_W(\tau^n(-2))$ the tips of the chain of nested baby Mandelbrot sets which converge to the 
Feigenbaum parameter in the $p/q$-limb, and let $\ell_n$ be the characteristic leaf of $c_n$. Then if 
$\textup{H.dim }P_c > 0$, there exists a unique $n \geq 1$ such that 
$\ell_{n} < \ell \leq \ell_{n-1}$, hence by monotonicity and by Theorem \ref{mainvein} we know 
$$\textup{H.dim }P_c \geq \textup{H.dim }P_{c_{n}} = \frac{1}{2^n q}.$$
Now, each element of $P_c$ is either of the form $\tau_W\tau^{n-1}(c')$ with $c'$ non-renormalizable, 
or of the form $\tau_W(\tau^{n-1}(\tau_V(c')))$ where $V$ is some  hyperbolic window of period larger than $2$.
However, we know by Proposition \ref{tuneddim} that the image of $\tau_W \circ \tau^{n-1} \circ \tau_V$ has 
Hausdorff dimension at most $\frac{1}{q \cdot 2^{n-1} \cdot 3} < \textup{H.dim }P_c$, hence one must have 
$$\textup{H.dim }P_c = \textup{H.dim }\{\theta \in \overline{P}_c  \ : \ \theta = \tau_W \tau^{n-1}(\theta'), \ \theta' 
\textup{ non-renormalizable} \} \leq \textup{H.dim }P_c^{fr}$$
which yields the claim.
\end{proof}

\end{document}